\documentclass[12pt, reqno]{amsart}
\setcounter{tocdepth}{1}
\usepackage{amsmath}
\usepackage{amssymb}
\usepackage{epsfig}
\usepackage{graphicx}
\usepackage{color}
\usepackage{fullpage}
\definecolor{shadecolor}{gray}{0.875}
\usepackage{amscd}
\usepackage{comment}

\numberwithin{equation}{section}

\input xy
\xyoption{all}

\calclayout
\allowdisplaybreaks[3]

\theoremstyle{plain}
\newtheorem{prop}{Proposition}[section]

\newtheorem{theo}[prop]{Theorem}
\newtheorem{coro}[prop]{Corollary}

\newtheorem{lemm}[prop]{Lemma}

\theoremstyle{definition}
\newtheorem{defi}[prop]{Definition}

\newtheorem{conj}[prop]{Conjecture}

\newtheorem{rema}[prop]{Remark}

\newtheorem{exam}[prop]{Example}

\newtheorem{clai}[prop]{Claim}

\newtheorem{cons}[prop]{Construction}
\newtheorem{fact}[prop]{Fact}

\def\Br{\mathrm{Br}}

\def\Eff{\overline{\mathrm{Eff}}}
\def\Pic{\mathrm{Pic}}

\def\Chow{\mathrm{Chow}}
\def\Mor{\mathrm{Mor}}
\def\Nef{\mathrm{Nef}}
\def\Hilb{\mathrm{Hilb}}
\def\Supp{\mathrm{Supp}}

\def\Sym{\mathrm{Sym}}
\def\Pic{\mathrm{Pic}}
\def\neg{\mathrm{neg}}
\def\Sec{\mathrm{Sec}}
\def\Manin{\mathrm{Manin}}
\def\IJ{\mathrm{IJ}}
\def\AJ{\mathrm{AJ}}
\def\Jac{\mathrm{Jac}}
\def\Alb{\mathrm{Alb}}
\def\maxdef{\mathrm{maxdef}}
\def\MBB{\mathrm{MBBbound}}
\def\Gal{\mathrm{Gal}}

\def\expdim{\mathrm{expdim}}

\makeatother
\makeatletter

\author{Brian Lehmann}
\address{Department of Mathematics \\
Boston College  \\
Chestnut Hill, MA \, \, 02467}
\email{lehmannb@bc.edu}

\author{Sho Tanimoto}
\address{Graduate School of Mathematics, Nagoya University, Furocho Chikusa-ku, Nagoya, 464-8602, Japan}
\email{sho.tanimoto@math.nagoya-u.ac.jp}

\title[Classifying sections of del Pezzo fibrations]{Classifying sections of del Pezzo fibrations, I}

\subjclass[2010]{Primary : 14H10. Secondary : 14C05, 14J45.}

\begin{document}

\begin{abstract}
We develop a strategy to classify the components of the space of sections of a del Pezzo fibration over $\mathbb P^1$.  In particular, we prove the Movable Bend-and-Break lemma for del Pezzo fibrations.  Our approach is motivated by Geometric Manin's Conjecture and proves upper bounds on the associated counting function.  We also give applications to enumerativity of Gromov-Witten invariants and to the study of the Abel-Jacobi map.
\end{abstract}

\maketitle

\tableofcontents

\section{Introduction}

Manin's Conjecture predicts the asymptotic growth rate of the number of rational points of bounded height on a smooth Fano variety defined over a global field.  The modern formulation of the conjecture (\cite{FMT89}, \cite{BM}, \cite{Peyre}, \cite{BT}, \cite{LST18}) has been heavily influenced by unpublished notes of Batyrev (\cite{Bat88}).  In these notes Batyrev develops a heuristic for Manin's Conjecture over a global function field that is based on certain assumptions about the geometry of the moduli space of curves.  By translating the principles underlying Batyrev's heuristic from global function fields to complex function fields, we obtain a set of predictions known as Geometric Manin's Conjecture (described in more detail below).

We initiate the study of Geometric Manin's Conjecture for Fano varieties over $\mathbb{C}(t)$ by investigating del Pezzo surfaces.  It is usually more convenient to work directly with an integral model:

\begin{defi}
Let $k$ be an algebraically closed field of characteristic $0$.  A del Pezzo fibration over $\mathbb{P}^{1}$ is an algebraic fiber space $\pi: \mathcal{X} \to \mathbb{P}^{1}$ such that $\mathcal{X}$ has only Gorenstein terminal singularities and the general fiber of $\pi$ is a del Pezzo surface.  
\end{defi}

Our main results address the structure of the space of sections $\Sec(\mathcal{X}/\mathbb{P}^{1})$ of a del Pezzo fibration.  First, we describe an inductive procedure for generating all irreducible components of $\Sec(\mathcal{X}/\mathbb{P}^{1})$ from a finite set of components in low degree.  We illustrate this technique in many examples.  Second, we identify the exceptional set in Geometric Manin's Conjecture for del Pezzo surfaces and prove that it is controlled by the Fujita invariant.  Our work validates Batyrev's heuristic (in characteristic $0$): we define a formal counting function and prove that it has the properties predicted by the heuristic.  Third, we discuss applications to stabilization of the Abel-Jacobi map and to enumerativity of certain Gromov-Witten invariants associated to spaces of sections.

\subsection{Overview of goals and motivation}

Let us review Manin's Conjecture over a global function field for trivial families over $\mathbb P^1$.
Let $X$ be a smooth Fano variety defined over a finite field $\mathbb F_q$. The main protagonist is the moduli space of rational curves $\mathrm{Mor}(\mathbb P^1, X)$. For each numerical curve class $\alpha$, we denote by $\mathrm{Mor}(\mathbb P^1, X, \alpha)$ the fine moduli scheme parametrizing morphisms $f : \mathbb P^1 \to X$ such that $f_*[\mathbb P^1] = \alpha$. In this setting Manin's Conjecture concerns the behavior of the number of $\mathbb F_q$-points
\[
\mathrm{Mor}(\mathbb P^1, X, \alpha)(\mathbb F_q)
\]
as the anticanonical degree $d = -K_X \cdot \alpha$ goes to $+\infty$.

In \cite{Bat88}, Batyrev developed a heuristic to establish the asymptotic formula for this counting function based on the following assumptions:
\begin{enumerate}
\item There exists some proper closed subset $V$ such that any irreducible component of $\mathrm{Mor}(\mathbb P^1, X)$ parametrizing rational curves not contained in $V$ has the expected dimension;
\item for each nef numerical class $\alpha$, $\mathrm{Mor}(\mathbb P^1, X, \alpha)$ is irreducible;
\item the cardinality of $\mathrm{Mor}(\mathbb P^1, X, \alpha)(\mathbb F_q)$ is approximated by $q^{\dim \mathrm{Mor}(\mathbb P^1, X, \alpha)}$.
\end{enumerate}
Although these assumptions are not valid in general, based on \cite{LST18} we can expect that their failure is controlled by an invariant from birational geometry known as the Fujita invariant.  In \cite{LT17} we investigated these assumptions for rational curves on complex Fano varieties.

Manin's Conjecture makes predictions in a more general setting. For a non-trivial family of Fano varieties $\pi : \mathcal X \to \mathbb P^1$, Manin's Conjecture concerns the space of sections
\[
\Sec(\mathcal X/\mathbb P^1)
\]
instead of the space of rational curves $\mathrm{Mor}(\mathbb P^1, X)$. We denote by $\Sec(\mathcal{X}/\mathbb{P}^{1},\alpha)$ the finite type fine moduli scheme parametrizing sections of numerical class $\alpha$.  We expect the following principles to hold for fibrations with Fano fibers over an arbitrary ground field:
\begin{enumerate}
\item There exists some proper closed subset $V$ such that any irreducible component of $\Sec(\mathcal{X}/\mathbb{P}^{1})$ parametrizing rational curves not contained in $V$ has the expected dimension;
\item for each sufficiently positive nef numerical class $\alpha$, $\Sec(\mathcal{X}/\mathbb{P}^{1},\alpha)$ consists of $|\Br(\mathcal{X})|$ irreducible components parametrizing sections not coming from exceptional sets; 
\item the irreducible components of $\Sec(\mathcal{X}/\mathbb{P}^{1},\alpha)$ exhibit motivic or homological stability as the anticanonical degree of $\alpha$ goes to $+\infty$.
\end{enumerate}
(The third principle was proposed by Ellenberg and Venkatesh who noted that homological stability of $\mathrm{Mor}(\mathbb P^1, X, \alpha)$ combined with the Grothendieck-Lefschetz trace formula could prove Batyrev's heuristic on point counting; see \cite{EV05}.)   Furthermore, Geometric Manin's Conjecture predicts that the exceptional set can be explicitly identified using the Fujita invariant as explained in \cite{LST18}.

 In this paper we study Geometric Manin's Conjecture in characteristic $0$ for $\Sec(\mathcal X/\mathbb P^1)$ when $\pi : \mathcal X \to \mathbb P^1$ is a del Pezzo fibration over $\mathbb P^1$. The main results of this paper are the following.
For simplicity suppose that $\mathcal X$ is smooth, the relative anticanonical class $-K_{\mathcal X/\mathbb P^1}$ is relatively ample and the degree of a general del Pezzo fiber is $\geq 2$. Under these assumptions, we prove:
\begin{enumerate}
\item there exists a proper closed $V \subset \mathcal X$ such that any irreducible component of $\Sec(\mathcal X/\mathbb P^1)$ parametrizing sections not contained $V$ has the expected dimension;
\item Movable Bend-and-Break theorem which is an inductive technique to prove the irreducibility of $\Sec(\mathcal X/\mathbb P^1, \alpha)$;
\item a polynomial upper bound $P(d)$ for the number of components of $\Sec(\mathcal X/\mathbb P^1)$ parametrizing sections of height $-K_{\mathcal X/\mathbb P^1} \cdot C \leq d$;
\item a certain stabilization of the Stein factorization of Abel-Jacobi mappings;
\item the enumerativity of certain Gromov-Witten invariants on $\mathcal X$.
\end{enumerate}

The main theorem of this paper is the Movable Bend-and-Break theorem for del Pezzo fibrations.  Recall that Mori's Bend-and-Break lemma states that if we deform a rational curve while fixing two points on it, then it degenerates into a non-integral curve with rational components.  However, in general it is hard to control the properties of the degenerate curve.  For sufficiently positive sections of del Pezzo fibrations we perform a ``controlled'' degeneration so that the resulting curve has exactly two irreducible components and represents a smooth point of the moduli space of stable maps. This enables us to study properties of irreducible components of the space of sections based on induction on the height of sections, and we use this strategy to study multiple examples in Section~\ref{sect:examples}.

We also give a refinement of Batyrev's heuristic in the setting of del Pezzo fibrations in Section~\ref{sect:Manin}. This is the first study of such a heuristic in the setting of non-trivial fibrations.

\subsection{Main results}

We next describe our main results in more detail.

\subsubsection{Classifying sections}

Let $\pi : \mathcal X \to \mathbb P^1$ be a del Pezzo fibration over $\mathbb P^1$. There are two types of sections $C$ of $\pi$:
\begin{itemize}
\item sections which deform to cover $\mathcal X$, or;
\item sections which do not deform to cover $\mathcal X$.
\end{itemize}
Sections of the first type have nice deformation properties. One can show that a general member of an irreducible component of  $\Sec(\mathcal{X}/\mathbb{P}^{1})$ parametrizing such sections is free and in particular the component has the expected dimension. 

Sections of the second type are pathological in some sense. For example, it is possible that a component parametrizing such sections has dimension higher than expected dimension or is everywhere non-reduced.  For this reason such sections should be considered as part of the ``exceptional set'' in Manin's Conjecture and it is important to have some control on these components for Batyrev's heuristic.

Our first main theorem describes the components of $\Sec(\mathcal{X}/\mathbb{P}^{1})$ which parametrize a non-dominant family of sections.

\begin{theo} \label{theo:maintheorem1}
Let $\pi: \mathcal{X} \to \mathbb{P}^{1}$ be a del Pezzo fibration such that $-K_{\mathcal X/\mathbb P^1}$ is relatively nef. Then there is a proper closed subset $V \subsetneq \mathcal{X}$ such that any component $M \subset \Sec(\mathcal{X}/\mathbb{P}^{1})$ parametrizing a non-dominant family of sections will parametrize sections contained in $V$.
\end{theo}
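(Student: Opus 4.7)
The plan is to take $V$ to be the union of \emph{accumulating subvarieties} for the pair $(\mathcal{X}, -K_{\mathcal{X}/\mathbb{P}^{1}})$: proper subvarieties $Y \subsetneq \mathcal{X}$ dominating $\mathbb{P}^{1}$ along which the relative anticanonical has abnormally large Fujita invariant, in the sense that $a(\tilde{Y}, -f^{*}K_{\mathcal{X}/\mathbb{P}^{1}}) \geq 1$ where $f \colon \tilde{Y} \to Y \hookrightarrow \mathcal{X}$ is a resolution and $1 = a(F, -K_{F})$ for the general del Pezzo fiber $F$. The strategy has three parts: (i) every non-dominant component of $\Sec(\mathcal{X}/\mathbb{P}^{1})$ sweeps out such a $Y$; (ii) the accumulating subvarieties form a bounded family; (iii) this bounded family is contained in a proper closed subset of $\mathcal{X}$.

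For (i), let $M \subset \Sec(\mathcal{X}/\mathbb{P}^{1})$ be a non-dominant component, and let $Y \subsetneq \mathcal{X}$ be the Zariski closure of the union of the sections parametrized by $M$. Since each section dominates $\mathbb{P}^{1}$, so does $Y$, and thus $\dim Y \in \{1,2\}$. If $\dim Y = 1$, then $Y$ is itself a section and trivially forms an accumulating locus. If $\dim Y = 2$, take a resolution $f \colon \tilde{Y} \to Y$ with induced fibration $\tilde{Y} \to \mathbb{P}^{1}$; the sections in $M$ lift to a covering family of sections of $\tilde{Y} \to \mathbb{P}^{1}$. Deformation theory on $\tilde{Y}$ then forces the general fiber of $\tilde{Y} \to \mathbb{P}^{1}$ to be $\mathbb{P}^{1}$ and $-K_{\tilde{Y}/\mathbb{P}^{1}}$ to be relatively big. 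Comparing $K_{\tilde{Y}}$ with the pullback of $K_{\mathcal{X}}$ via adjunction on $Y \subset \mathcal{X}$, and invoking the relative nefness of $-K_{\mathcal{X}/\mathbb{P}^{1}}$, yields the inequality $a(\tilde{Y}, -f^{*}K_{\mathcal{X}/\mathbb{P}^{1}}) \geq 1$, so $Y$ is accumulating.

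For (ii), boundedness of the accumulating $Y$'s should follow from BAB-type results (Birkar's boundedness for singular Fano varieties) applied to appropriate log pairs on $\tilde{Y}$, together with degree bounds coming from the fixed geometry of the general del Pezzo fiber intersected with $Y$. For (iii), the key observation is that a covering family of subvarieties of $\mathcal{X}$ has a general member with Fujita invariant equal to $a(\mathcal{X}, -K_{\mathcal{X}/\mathbb{P}^{1}}) = 1$, so \emph{strict} accumulation $a(\tilde{Y}, -f^{*}K_{\mathcal{X}/\mathbb{P}^{1}}) > 1$ automatically rules out covering families. The main obstacle I anticipate is the borderline case $a(\tilde{Y}, -f^{*}K_{\mathcal{X}/\mathbb{P}^{1}}) = 1$: here one must refine the analysis, probably by considering the Iitaka fibration of $K_{\tilde{Y}} + f^{*}(-K_{\mathcal{X}/\mathbb{P}^{1}})$ together with the classification of del Pezzo surfaces arising as general fibers, to rule out covering families of accumulating subvarieties exactly at the boundary of the Fujita invariant inequality.
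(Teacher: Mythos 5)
Your overall philosophy (Fujita invariants detect non-dominant families) matches the paper's, but there are two genuine gaps. First, step (i) fails for non-dominant families of low height. If the family of sections covering the surface $Y$ is only $1$-dimensional, there are no deformations through two general points of $\widetilde{Y}$, so Bend-and-Break does not apply and the general fiber of $\widetilde{Y} \to \mathbb{P}^{1}$ need not be $\mathbb{P}^{1}$; consequently no inequality on any $a$-invariant is forced. This is not a hypothetical: in Example \ref{exam:blowupcubic} the exceptional divisor $E \cong Z \times \mathbb{P}^{1}$ over the elliptic base locus $Z$ is covered by a $1$-parameter non-dominant family of height $-1$ sections, and $E \to \mathbb{P}^{1}$ has elliptic fibers. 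The paper's Theorem \ref{theo:toomuchdeforming} therefore carries an explicit height lower bound, and the finitely many low-height non-dominant families are disposed of separately by the Northcott property (Lemma \ref{lemm:northcott}); this is why the closed set $V$ must include a ``locus swept out by low height sections'' in addition to the accumulating subvarieties. Relatedly, you take the $a$-invariant of the total surface $\widetilde{Y}$ with respect to $-f^{*}K_{\mathcal{X}/\mathbb{P}^{1}}$; the paper works with the \emph{generic} $a$-invariant $a(Y_{\eta}, -K_{\mathcal{X}/\mathbb{P}^{1}}|_{Y_{\eta}})$ of the generic fiber over $k(\mathbb{P}^{1})$, which is the invariant that actually interacts with the fiberwise geometry (and note $-f^{*}K_{\mathcal{X}/\mathbb{P}^{1}}$ need not even be big on $\widetilde{Y}$).

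Second, the borderline case $a(Y_{\eta}, -K_{\mathcal{X}/\mathbb{P}^{1}}|_{Y_{\eta}}) = a(\mathcal{X}_{\eta},-K_{\mathcal{X}/\mathbb{P}^{1}}) = 1$, which you flag as ``the main obstacle,'' is exactly where the substance of the theorem lies, and neither of your proposed tools resolves it. The paper explicitly observes (after Lemma \ref{lemm:genericainvfordp}) that these surfaces --- those meeting a general fiber in conics, etc. --- need \emph{not} form a bounded family on $\mathcal{X}$, even though their traces on $\mathcal{X}_{\eta}$ do; so a BAB-type boundedness statement for the fibers does not bound the surfaces inside $\mathcal{X}$, and the covering-family argument only excludes the strict inequality. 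The paper's resolution is Theorem \ref{theo:sameainvprop}: a surface $Y$ with equal generic $a$-invariant that carries a non-dominant family of sections of large height must \emph{also} carry a non-dominant family of sections of height bounded above by $-\neg(\mathcal{X},-K_{\mathcal{X}/\mathbb{P}^{1}})-1$ (obtained by repeatedly breaking off fibers of the ruling of $\widetilde{Y}$ down to the minimal moving section of the associated Hirzebruch surface $\mathbb{F}_{e}$), after which Lemma \ref{lemm:northcott} bounds the classes of these low-height sections and hence the union of all such $Y$. Only the strictly-larger case ($(-1)$-curves in fibers) is handled by a direct closedness statement (Lemma \ref{lemm:genericainvfordp}). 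You would need to supply an argument of this kind --- producing low-height sections on each borderline $Y$ and invoking Northcott --- to close the proof.
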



Our proof of Theorem \ref{theo:maintheorem1} is constructive: we show that $V$ is the union of the subvarieties of $\mathcal{X}$ which have larger generic Fujita invariant with a locus swept out by low height sections. 

\begin{rema}
\cite[3.3 Theorem]{Corti} shows that any del Pezzo surface over the function field of $\mathbb{P}^{1}$ admits an integral model with Gorenstein terminal singularities such that $-K_{\mathcal{X}/\mathbb{P}^{1}}$ is relatively nef.  Thus every del Pezzo fibration admits a birational model where Theorem \ref{theo:maintheorem1} applies.
\end{rema}

For dominant families of sections, the key question is the number of components of $\Sec(\mathcal{X}/\mathbb{P}^{1})$ parametrizing sections of a given nef numerical class $\alpha$. In Batyrev's heuristic, it is important to control this number when the degree of $\alpha$ is sufficiently large. 

\begin{conj}\label{conj:GMC_intro}
Let $\pi : \mathcal X \to \mathbb P^1$ be a del Pezzo fibration over $\mathbb P^1$ such that every smooth del Pezzo fiber has degree $\geq 2$. Let $\mathcal X_\eta$ denote the generic fiber of $\pi$ and let $E_1, \cdots, E_r$'s be generators of the pseudo-effective cone of divisors for $\mathcal X_\eta$. Then 
\begin{enumerate} 
\item for a nef numerical class $\alpha$ of sections the number of dominant components of $\Sec(\mathcal{X}/\mathbb{P}^{1})$ parametrizing sections of class $\alpha$ is uniformly bounded; 
\item there exists a constant $C > 0$ such that if $\alpha$ is a nef numerical class of sections such that $E_i \cdot \alpha \geq C$ for any $i$ then the number of dominant components of $\Sec(\mathcal{X}/\mathbb{P}^{1})$ parametrizing sections of class $\alpha$ is equal to $|\mathrm{Br}(\mathcal X)|$.
\end{enumerate}
\end{conj}

We will analyze this conjecture using the inductive approach of \cite{HRS04} based on breaking and gluing rational curves. 
Our second main theorem shows that we can break curves while remaining in the setting of free curves so that the outcome of our controlled Bend-and-Break will be a smooth point of the moduli space:

\begin{theo}[Movable Bend-and-Break for sections] \label{theo:maintheorem2}
Let $\pi: \mathcal{X} \to \mathbb{P}^{1}$ be a del Pezzo fibration such that $-K_{\mathcal{X}/\mathbb{P}^{1}}$ is relatively ample.  There is a constant $Q(\mathcal{X})$ satisfying the following property.  Suppose that $M \subset \Sec(\mathcal{X}/\mathbb{P}^{1})$ is a component that parametrizes a dominant family of sections $C$ satisfying $-K_{\mathcal{X}/\mathbb{P}^{1}} \cdot C \geq Q(\mathcal{X})$.  Then the closure of $M$ in $\overline{M}_{0,0}(\mathcal{X})$ contains a point representing a stable map whose domain has exactly two components each mapping birationally onto a free curve.
\end{theo}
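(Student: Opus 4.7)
The approach adapts the Movable Bend-and-Break strategy of \cite{HRS04} for rationally connected Fano manifolds to the relative setting of sections of $\pi$, using Theorem \ref{theo:maintheorem1} as the crucial geometric input that controls where non-dominant subfamilies of sections can live. The goal is to produce, inside $\overline{M} \subset \overline{M}_{0,0}(\mathcal{X})$, a boundary point representing a stable map from a two-component nodal curve whose pieces are a free section and a free vertical rational curve in a general fiber.

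First, I will set up a dimension count and apply Bend-and-Break. Since $M$ parametrizes a dominant family of free sections $C$, deformation theory gives $\dim M = -K_{\mathcal{X}} \cdot C = -K_{\mathcal{X}/\mathbb{P}^1} \cdot C + 2$. For a general point $x$ in a general fiber $F_p$, the evaluation map $M \to \mathcal{X}$ is dominant, so the locus $M_x \subset M$ of sections through $x$ has dimension $-K_{\mathcal{X}/\mathbb{P}^1} \cdot C$. Cutting this down to a general $1$-parameter subfamily $T \subset M_x$ and compactifying, I will invoke Mori's Bend-and-Break: because $\overline{M}_{0,0}(\mathcal{X})$ is proper and a nontrivial $1$-parameter family of rational curves through a fixed point cannot have every member irreducible, some fiber of $\overline{T}$ degenerates to a reducible stable map $f : C' \to \mathcal{X}$. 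Since $\pi \circ f$ has degree $1$ onto $\mathbb{P}^1$, the tree $C'$ has a unique horizontal component mapped as a section, while every other component is contracted by $\pi$ into a single fiber. After partial smoothings of excess nodes (or a more careful choice of $T$), I may arrange that $C' = C_1 \cup_y C_2$ with $C_1$ the section component and $f|_{C_2}$ mapping birationally into a fiber $F_q$.

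Second, I will verify freeness of both pieces. For a generic choice of $T$, the point $x$ lies on $f(C_1)$ (the alternative $x \in f(C_2)$ requires $q = p$ and is codimension two). The component $M'$ of $\Sec(\mathcal{X}/\mathbb{P}^1)$ containing $[f|_{C_1}]$ then contains a section through a general point of $\mathcal{X}$, so $M'$ is dominant; by Theorem \ref{theo:maintheorem1}, $f(C_1) \not\subset V$, and a generic representative of $M'$ is free, giving the section part. For the vertical component $f|_{C_2}$, taking $Q(\mathcal{X})$ large ensures that $C_2$ has large anticanonical degree in the del Pezzo surface $F_q$; arranging that $F_q$ is a general (smooth) fiber, the classical theory of rational curves on del Pezzo surfaces produces freeness.

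The principal obstacle is engineering the degeneration so that it has exactly two components, each mapped birationally, with the vertical piece in a general fiber. A naive application of Bend-and-Break can produce a long chain of components, a multiple cover, or a vertical piece lying in one of the finitely many special fibers where the analysis of free rational curves breaks down. Navigating these possibilities through a mixture of partial smoothings, secondary Bend-and-Break steps, and the classification of non-dominant sections given by Theorem \ref{theo:maintheorem1} is the heart of the argument, and it dictates the precise value of $Q(\mathcal{X})$, which must depend on the dimensions of the components of $V$, the behavior of $\pi$ over its singular locus, and the minimal anticanonical degree of a free vertical rational curve in a general fiber.
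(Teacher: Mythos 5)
Your proposal has genuine gaps, both in the Bend-and-Break step and in its overall architecture. First, the degeneration you invoke is not forced: a nontrivial $1$-parameter family of rational curves through a \emph{single} fixed point can consist entirely of irreducible curves (lines through a point in $\mathbb{P}^{2}$, or sections of $\mathbb{P}^{1}\times\mathbb{P}^{2}\to\mathbb{P}^{1}$ through a point). Mori's Bend-and-Break forces a reducible or non-reduced limit only when \emph{two} points are fixed and the family still moves (this is exactly what Lemma \ref{lemm:strongerbandb} and Corollary \ref{coro:improvedbandb} are set up for). To get a degeneration you must impose roughly the maximal number of point conditions the family can bear — in the paper, $n-1$ general points plus incidence with a general vertical curve $Z$ for a section of height $2n-2$ — and this is where the constant $Q(\mathcal{X})$ actually enters.

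Second, once you do fix the correct number of points, the strategy only works when the normal bundle $N_{C/\mathcal{X}}=\mathcal{O}(a)\oplus\mathcal{O}(b)$ is balanced: by Lemma \ref{lemm:normalbundleestimate} a section can pass through at most $a+1$ general points, so when $b\geq a+2$ the curves through $a+1$ general points sweep out only a surface $\Sigma\subsetneq\mathcal{X}$ and the point-counting argument collapses. The paper's proof of Theorem \ref{theo:mbbmain} therefore splits into cases on $(a,b)$: the unbalanced cases are handled by an entirely different mechanism (Shen's swept surface, its resolution as a ruled surface over $\mathbb{P}^{1}$, the contraction to a Hirzebruch surface in Theorem \ref{theo:surfacebreaking}, and inequalities involving $\neg(\mathcal{X},-K_{\mathcal{X}/\mathbb{P}^{1}})$ and $\maxdef(\mathcal{X})$ to show $b>e$), and only the balanced case uses the Bend-and-Break-through-points route you describe. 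Finally, the part you defer as "the principal obstacle" — showing the limit has exactly two components, each mapping birationally onto a free curve, rather than a long chain with a negative-height section and vertical junk — is the technical core of the paper (Lemma \ref{lemm:maxdefbound} and Propositions \ref{prop:evendegreebound} and \ref{prop:odddegreebound}, which classify stable maps of height $2n-2$ or $2n-1$ through $n$ general points). Acknowledging that this must be done is not the same as doing it, so as written the proposal does not constitute a proof.
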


Here $Q(\mathcal{X})$ is an explicit constant determined by the behavior of the Fujita invariant and low degree sections.

\begin{rema}
\cite[1.10 Theorem]{Corti} shows that any del Pezzo fibration whose generic fiber has degree $\geq 3$ will admit a birational model which has Gorenstein terminal singularities and a relatively ample anticanonical divisor.
\end{rema}


Based on the results above, we can use the following strategy to classify sections of a del Pezzo fibration with $-K_{\mathcal{X}/\mathbb{P}^{1}}$ relatively ample:
\begin{enumerate}
\item Classify all non-dominant families of sections using Theorem \ref{theo:maintheorem1}.
\item Classify all dominant families of sections of small height using the explicit geometry of the fibration.
\item Classify all dominant families of sections of large height by induction using Theorem \ref{theo:maintheorem2}.
\end{enumerate}
We illustrate this strategy in several examples:

\begin{exam} \label{intro:cubicexample}
Let $X_{3}$ denote a smooth cubic threefold in $\mathbb{P}^{4}$.  By taking a general pencil of hyperplane sections and resolving the base locus we obtain a del Pezzo fibration $\pi: \mathcal{X} \to \mathbb{P}^{1}$.  In Example \ref{exam:blowupcubic} we will use the strategy above to show that $\Sec(\mathcal{X}/\mathbb{P}^{1})$ admits a unique component representing sections of any height $\geq -1$ and that each of these components has the expected dimension.

A similar argument works for other del Pezzo fibrations constructed by blowing up a Fano $3$-fold, and we give several examples of this type.
\end{exam}

\begin{exam} \label{exam:diagonalcubicexample}
Consider the smooth threefold $\mathcal{X} \subset \mathbb{P}^{1}_{s,t} \times \mathbb{P}^{3}_{x,y,z,w}$ defined by the equation $sx^{3} + ty^{3} + (t+s)z^{3} + (t+2s)w^{3}=0$ equipped with the cubic surface fibration $\pi: \mathcal{X} \to \mathbb{P}^{1}$.  Using the above strategy one can show that for any integer $d \geq -1$ there is a unique component of $\Sec(\mathcal{X}/\mathbb{P}^{1})$ with height $d$.  Since the Galois action on $\mathrm{Pic}(\mathcal X_{\overline{\eta}})$ is $(\mathbb Z/3\mathbb Z)^3$ and under this action the $27$ lines split into orbits of size $(9,9,9)$ it follows from \cite[the Appendix, No. 71]{Jahnel14} that $\mathcal{X}_{\eta}$ has Brauer group $\mathbb{Z}/3\mathbb{Z}$. Thus this example demonstrates that it is the Brauer group of the total space and not the Brauer group of the generic fiber which plays a role in Geometric Manin's Conjecture over a complex curve (see Section \ref{sect:countingcomponents}).
\end{exam}


\subsubsection{Upper bounds in Manin's Conjecture and Geometric Manin's Conjecture}

The following invariant plays a central role in the development of Manin's Conjecture:

\begin{defi}
Let $X$ be a smooth projective variety over a field of characteristic $0$.  Let $L$ be a big and nef $\mathbb{Q}$-Cartier divisor on $X$.  We define the Fujita invariant, or the $a$-invariant, to be
\begin{equation} \label{eq: ainv}
a(X, L) = \min \{ t \in \mathbb R \mid K_{X} + tL \in \overline{\mathrm{Eff}}^1(X)\}.
\end{equation}
When $L$ is nef but not big, we formally set $a(X, L) = +\infty$.

When $X$ is singular, we define the Fujita invariant as the Fujita invariant of the pullback of $L$ to any smooth model. This is well-defined because of \cite[Proposition 2.7]{HTT15}.
\end{defi}

Just as in \cite{LST18}, we expect the ``exceptional set'' of sections in Manin's Conjecture to be controlled by the generic $a$-invariant.  The following definition describes the sections which are allowed to contribute to the counting function.

\begin{defi}
\label{def:Manincomp}
We say that a component $M \subset \Sec(\mathcal{X}/\mathbb{P}^{1})$ is a Manin component if for the universal family $\mathcal{U} \to M$ the evaluation map $\mathcal{U} \to \mathcal X$ does not factor rationally through any proper subvariety $Y$ satisfying
\begin{equation*}
a(Y_{\eta},-K_{\mathcal{X}/\mathbb{P}^{1}}) \geq a(\mathcal{X}_{\eta},-K_{\mathcal{X}/\mathbb{P}^{1}}).
\end{equation*}
We let $\Manin_{i}$ denote the set of Manin components that parametrize sections $C$ satisfying $-K_{\mathcal{X}/\mathbb{P}^{1}} \cdot C = i$.
\end{defi}

\begin{rema}
The above formulation is slightly different from the formulation in \cite{LST18} which predicts the exceptional set for rational points in the general setting.  (The difference is in how we handle subvarieties with the same $a$-value.)  However, in our situation the difference is negligible for the asymptotic formula of the counting function in Geometric Manin's Conjecture, so for the sake of simplicity we will use the definition of a Manin Component given above.
\end{rema}

The counting function encodes the number and dimension of Manin components representing sections of height at most $d$.

\begin{defi}
Fix a real number $q>1$.  For any positive integer $d$ define
\begin{equation*}
N(\mathcal{X},-K_{\mathcal{X}/\mathbb{P}^{1}},q,d) := \sum_{i = 1}^{d} \sum_{M \in \Manin_{i}} q^{\dim M}.
\end{equation*}
\end{defi}



The exceptional set in Geometric Manin's Conjecture is contained in the closed set $V$ of Theorem \ref{theo:maintheorem1}.  The theorem implies that after removing the sections in $V$ every family has the expected dimension, giving the correct exponential term in the asymptotic formula for $N(\mathcal{X},-K_{\mathcal{X}/\mathbb{P}^{1}},q,d)$.  In order to control the subexponential term, we apply ideas from \cite{LT17} to prove a special case of a conjectural heuristic of Batyrev:

\begin{coro} \label{coro:batyrev}
Let $\pi: \mathcal{X} \to \mathbb{P}^{1}$ be a del Pezzo fibration such that $-K_{\mathcal{X}/\mathbb{P}^{1}}$ is relatively ample.  There is a polynomial $P(d)$ which is an upper bound for the number of components of $\Sec(\mathcal{X}/\mathbb{P}^{1})$ that parametrize sections $C$ satisfying
\begin{equation*}
-K_{\mathcal{X}/\mathbb{P}^{1}} \cdot C \leq d.
\end{equation*}
\end{coro}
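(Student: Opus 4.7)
The plan is to split the components of $\Sec(\mathcal{X}/\mathbb{P}^{1})$ according to whether the parametrized family is dominant, control each class using one of the two main theorems of the paper, and finish with a counting argument in the spirit of \cite{LT17}.

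For the non-dominant components I would appeal to Theorem \ref{theo:maintheorem1}: each such component parametrizes sections contained in the fixed proper closed subset $V \subsetneq \mathcal{X}$. Only those irreducible components of $V$ that dominate $\mathbb{P}^{1}$---necessarily surfaces $Y$---can carry a positive-dimensional family of sections of $\pi$. For each such $Y$ I would take a resolution $\widetilde{Y} \to Y$; sections of $\pi$ contained in $Y$ lift to sections of $\widetilde{Y} \to \mathbb{P}^{1}$. Since the N\'eron--Severi lattice of $\widetilde{Y}$ has finite rank and the pullback of $-K_{\mathcal{X}/\mathbb{P}^{1}}$ has positive degree on every section, lattice-point counting bounds the number of numerical classes of sections of bounded height by a polynomial in $d$, and each numerical class carries only finitely many components by boundedness of Hilbert schemes. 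Summing over the finitely many surface components of $V$ yields a polynomial upper bound on the non-dominant contribution.

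For the dominant components I would iterate Theorem \ref{theo:maintheorem2}. Any dominant component $M$ parametrizing sections $C$ with $-K_{\mathcal{X}/\mathbb{P}^{1}} \cdot C \geq Q(\mathcal{X})$ degenerates to a union of two free sections of strictly smaller height. Using a pigeonhole on the two heights one can always split off a free piece whose height lies in a fixed bounded range, and repeating produces, for a generic $C \in M$, a decomposition into free sections $C_{1}, \ldots, C_{n}$ of bounded height with $n$ of order $d$. Since only finitely many components---say $r$ of them---parametrize free sections of height at most $Q(\mathcal{X})$, the possible decompositions are indexed by multisets of size $n$ drawn from a set of size $r$, giving $O(d^{r-1})$ possibilities. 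Provided the natural gluing-and-smoothing map from such multisets of free components to components of $\Sec(\mathcal{X}/\mathbb{P}^{1})$ is finite-to-one, this polynomially bounds the dominant contribution and, combined with the non-dominant bound, produces the required polynomial $P(d)$.

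The main obstacle is establishing the finite-to-one property of this gluing map: one must show that generic smoothings of a comb of free sections constructed from a fixed multiset land in only boundedly many components of $\Sec(\mathcal{X}/\mathbb{P}^{1})$. This is precisely the technical heart of the \cite{LT17} approach, and is where the ampleness of $-K_{\mathcal{X}/\mathbb{P}^{1}}$ and the freeness of the constituent sections are used most forcefully. Surjectivity of the map, by contrast, is essentially immediate from iterating Theorem \ref{theo:maintheorem2}.
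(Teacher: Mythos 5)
Your overall architecture matches the paper's proof: handle non-dominant families via Theorem \ref{theo:maintheorem1} together with lattice-point counting on the finitely many surfaces involved, and handle dominant families by iterating Movable Bend-and-Break and then invoking the chain-counting result of \cite{LT17} (the paper cites \cite[Theorem 5.13]{LT17} for exactly the finiteness you flag as the ``main obstacle''). However, there is a concrete error in your dominant-family argument. Theorem \ref{theo:maintheorem2} does \emph{not} degenerate a section into ``a union of two free sections'': it produces a stable map with two components each mapping birationally onto a free \emph{curve}, and since a section meets a general fiber $F$ in one point, exactly one of the two components is a section and the other is necessarily $\pi$-vertical. A decomposition of a section into $n$ free sections is impossible for $n \geq 2$ (the $F$-degree would be $n$, not $1$), so the multiset of bounded-height free sections you propose to count does not exist. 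The correct iterated output, which the paper records as Corollary \ref{coro:breakstocomb} and Proposition \ref{prop:breakofflowdegree}, is a comb consisting of \emph{one} free section of height $< \MBB(\mathcal{X})$ together with free $\pi$-vertical curves, which can further be broken (Lemma \ref{lemm:BBfordelPezzo}) into vertical free curves of anticanonical degree $\leq 3$. Your multiset count survives this correction --- one chooses a section-type from a finite list and a multiset of vertical-curve families from a finite list --- and this is precisely the input to \cite[Theorem 5.13]{LT17}, but as written your decomposition and hence your indexing set are wrong.

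A smaller issue in the non-dominant part: bounding the count by (polynomially many numerical classes) $\times$ (finitely many components per class via boundedness of Hilbert schemes) does not yield a polynomial unless the per-class count is \emph{uniformly} bounded in $d$. The paper closes this by observing that $\widetilde{Y}$ is rationally connected (it carries a one-parameter family of rational curves through a general point once $d \geq 0$), so numerical and linear equivalence of sections coincide on $\widetilde{Y}$; a fixed linear equivalence class of divisors on a smooth surface is parametrized by an irreducible linear system, giving at most one component per class. You should add this step (or an equivalent uniform bound) to make the non-dominant estimate genuinely polynomial. Rigid sections, which your dichotomy silently drops, contribute only finitely many components and can be set aside as in the paper.
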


Corollary \ref{coro:batyrev} implies an upper bound on the counting function of the expected form
\begin{equation*}
N(\mathcal{X},-K_{\mathcal{X}/\mathbb{P}^{1}},q,d) = O(q^{d}d^{r})
\end{equation*}
for some positive integer $r$.

Of course, whenever one can completely classify sections one obtains a precise asymptotic formula for the counting function. Assuming Conjecture \ref{conj:GMC_intro}, we develop a refined heuristic of Batyrev showing the asymptotic formula for the above formal counting function:

\begin{theo} \label{theo:asymptoticformula_intro}
Let $\pi: \mathcal{X} \to \mathbb{P}^{1}$ be a del Pezzo fibration such that $\mathcal X$ is smooth, $-K_{\mathcal{X}/\mathbb{P}^{1}}$ is relatively ample, and the general fiber is a del Pezzo surface of degree $\geq 2$ that is not $\mathbb{P}^{2}$ or $\mathbb{P}^{1} \times \mathbb{P}^{1}$.
Assume that Conjecture \ref{conj:GMC_intro} holds.  Then 
\begin{equation*}
N(\mathcal{X},-K_{\mathcal{X}/\mathbb{P}^{1}},q,d) \mathrel{\mathop{\sim}_{\mathrm{d \to \infty}}} Cq^{d} d^{\rho(\mathcal{X}_{\eta})-1},
\end{equation*}
for some $C>0$.
\end{theo} 

In this way we can analyze the counting function for some low degree del Pezzo surfaces over $k(\mathbb{P}^{1})$.

\subsubsection{Stabilization of the Abel-Jacobi map}

For a del Pezzo fibration $\pi: \mathcal{X} \to \mathbb{P}^{1}$ over $\mathbb{C}$ such that $\mathcal X$ is smooth, we let $\IJ(\mathcal{X})$ denote the intermediate Jacobian of $\mathcal{X}$.  For any reduced component $M \subset \Sec(\mathcal{X}/\mathbb{P}^{1})$ the universal family of curves over $M$ induces a rational map $\AJ_{M}: M \dashrightarrow \IJ(\mathcal{X})$.  Using our main theorems we show that the Stein factorizations of the Abel-Jacobi maps ``stabilize'' as we increase the height.

\begin{theo} \label{theo:maintheoremaj}
Let $\pi: \mathcal{X} \to \mathbb{P}^{1}$ be a del Pezzo fibration such that $-K_{\mathcal{X}/\mathbb{P}^{1}}$ is relatively ample, $\mathcal X$ is smooth, and the general fiber is a del Pezzo surface with degree $\geq 3$.  Consider the set of components $M \subset \Sec(\mathcal{X}/\mathbb{P}^{1})$ such that the Abel-Jacobi map $\AJ_{M}: M \dashrightarrow \IJ(\mathcal{X})$ is dominant.  There are only finitely many morphisms $\{ Z_{i} \to \IJ(\mathcal{X}) \}$ which occur as a Stein factorization of a resolution of a projective compactification of one of these $\AJ_{M}$.
\end{theo}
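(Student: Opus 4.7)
\textit{Proof proposal.} The plan is to induct on the height $h = -K_{\mathcal{X}/\mathbb{P}^{1}} \cdot C$ of the sections parametrized by $M$, using the constant $Q(\mathcal{X})$ from Theorem~\ref{theo:maintheorem2} as the threshold between base case and inductive step. For the base case $h \leq Q(\mathcal{X})$, Corollary~\ref{coro:batyrev} supplies a polynomial upper bound on the number of components of $\Sec(\mathcal{X}/\mathbb{P}^{1})$ up to any fixed height, so only finitely many $M$ arise in this range and so only finitely many Stein factorizations $Z_{M} \to \IJ(\mathcal{X})$ are produced.

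For the inductive step, let $M$ parametrize a dominant family of sections of height $> Q(\mathcal{X})$ with $\AJ_{M}$ dominant. Theorem~\ref{theo:maintheorem2} provides a boundary point of $\overline{M} \subset \overline{M}_{0,0}(\mathcal{X})$ representing a stable map $f : C'_{1} \cup C'_{2} \to \mathcal{X}$ with each component mapping birationally onto a free curve. Since the section has $\pi$-degree one, exactly one component (say $C'_{1}$) is a section of strictly smaller height while $C'_{2}$ is a free rational curve contained in a fiber $\mathcal{X}_{t}$. Thus there exist a component $M_{1} \subset \Sec(\mathcal{X}/\mathbb{P}^{1})$ of strictly smaller height, a family $M_{2}$ of vertical free rational curves, and a rational gluing map $\mu : M_{1} \times_{\mathcal{X}} M_{2} \dashrightarrow M$ with dense image. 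Additivity of the Abel--Jacobi map under degeneration yields
\begin{equation*}
\AJ_{M} \circ \mu = \AJ_{M_{1}} \circ \mathrm{pr}_{1} + \AJ_{M_{2}} \circ \mathrm{pr}_{2}.
\end{equation*}

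The crucial observation is that $M_{2}$ is rationally connected: it fibers over $\mathbb{P}^{1}$ with fibers the moduli of free rational curves on the del Pezzo surface $\mathcal{X}_{t}$ (of degree $\geq 3$), which are themselves rationally connected. Hence $\AJ_{M_{2}}$ is constant with value some $v \in \IJ(\mathcal{X})$, and $\overline{\AJ_{M}(M)}$ is the $v$-translate of $\overline{\AJ_{M_{1}}(M_{1})}$. In particular dominance of $\AJ_{M}$ forces dominance of $\AJ_{M_{1}}$, so the inductive hypothesis applies to $M_{1}$. Moreover the generic fiber of $\mu$ over $M_{1}$, parametrizing a choice of intersection point on $C_{1}$ together with a vertical free curve through it, is rationally connected; after passing to a projective compactification and resolution, it is contracted by $\AJ_{M}$, so the Stein factorization $Z_{M} \to \IJ(\mathcal{X})$ is forced to coincide, up to translation by $v$, with $Z_{M_{1}} \to \IJ(\mathcal{X})$.

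The main obstacle is the last rigidity step: confirming that the rationally connected generic fibers of $\mu$ introduce no extra branched-cover data in $Z_{M}$ beyond what is already present in $Z_{M_{1}}$, and controlling the finite collection of admissible pairs $(M_{1}, M_{2})$ that can produce a given $M$ so that each $Z_{M_{1}}$ contributes only boundedly many $Z_{M}$. Once this rigidity is established, induction on height combined with the finiteness provided by the base case yields only finitely many morphisms $Z_{i} \to \IJ(\mathcal{X})$ in total.
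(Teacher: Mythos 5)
Your overall strategy (finitely many low-height components plus Movable Bend-and-Break to relate high-height families to lower ones via gluing vertical free curves, whose moduli are rationally connected so that their Abel--Jacobi contribution is a constant translation) is the right skeleton, and it matches the paper's. But the step you yourself flag as ``the main obstacle'' is a genuine gap, and it is not a technical rigidity check --- it is false as stated, and the paper's proof is organized precisely to avoid needing it.

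First, the Stein factorization of $\AJ_{M}$ does \emph{not} coincide with that of $\AJ_{M_{1}}$ up to translation. The glued locus $G \cong M_{1}^{(1)} \times_{\mathcal{X}} M_{2}^{(1)}$ is a codimension-one boundary stratum of $\overline{M}$, and what the universal property of Stein factorization gives (Proposition \ref{prop:firstajgluing}) is only a factorization $Z_{M_{1}} \to Z_{M} \to \IJ(\mathcal{X})$: smoothing can connect fibers that were disconnected on the boundary, so the degree of the finite part can strictly drop. Relatedly, your induction runs in the wrong direction: dominance of $\AJ_{M}$ does not imply dominance of $\AJ_{M_{1}}$, since the restriction of a dominant map to a divisor in $\overline{M}$ can fail to dominate $\IJ(\mathcal{X})$; so you cannot invoke the inductive hypothesis on $M_{1}$. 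Finally, even granting one bounded contribution per step, a descending induction on height produces at worst ``finitely many new Stein factorizations at each height,'' which over infinitely many heights does not give global finiteness without a stabilization mechanism.

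The paper resolves all three points by arguing bottom-up with a combinatorial stabilization. Using Proposition \ref{prop:breakofflowdegree} and Corollary \ref{coro:evaluationmap2} (plus reordering of components), every high-height component is indexed by a triple $(M_{i},\vec{a},\vec{b})$, where $M_{i}$ runs over the finitely many components of height at most $\MBB(\mathcal{X})+2$ and $(\vec{a},\vec{b}) \in \mathbb{Z}_{\geq 0}^{r+s}$ records how many vertical conics and cubics from each of the finitely many families are glued on. Both the dominance of the Abel--Jacobi map and the degree of the finite part of its Stein factorization are monotone in the partial order on $\mathbb{Z}_{\geq 0}^{r+s}$ (dominance is preserved and the degree can only divide/decrease under gluing, by Proposition \ref{prop:firstajgluing}). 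Fact \ref{fact:combinatorics} (a Dickson's-lemma statement) then shows each monotone function has finitely many ``corners,'' and every Stein factorization occurring is dominated by one attached to a corner. That is the mechanism your proposal is missing; without it the argument does not close.
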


\begin{exam}
Suppose that $\pi: \mathcal{X} \to \mathbb{P}^{1}$ is constructed by taking a general pencil of hyperplane sections of a Fano threefold of Picard rank $1$, index $2$, and degree $5$.  In this case $\IJ(\mathcal{X})$ is an elliptic curve.  We prove that the Abel-Jacobi map for every family of sections of height $\geq 1$ is dominant with connected fibers, so that the finite part of the Stein factorization is always the identity map.  (We expect that in this example the Abel-Jacobi map coincides with the MRC fibration for every family of sections.)
\end{exam}

Our original motivation for Theorem \ref{theo:maintheoremaj} was to analyze whether the Abel-Jacobi map relates to other possible types of stability results for components of $\Sec(\mathcal{X}/\mathbb{P}^{1})$:
\begin{itemize}
\item Does the MRC fibration for components of $\Sec(\mathcal{X}/\mathbb{P}^{1})$ stabilize as the degree increases? (\cite{dJS04}, \cite{Voisin13}, \cite{Voisin15} provide negative answers to this question in related settings.)
\item Do components of $\Sec(\mathcal{X}/\mathbb{P}^{1})$ exhibit cohomological stability? (See \cite{EV05}, \cite{Bou09}, \cite{CEF14} for positive results and \cite{Cas04} for some negative results in related settings.) 
\end{itemize}
However, Examples \ref{exam: fibrationoverdegree2} and \ref{exam: fibrationoverdegree1} show that for components of $\Sec(\mathcal{X}/\mathbb{P}^{1})$ neither the MRC fibration nor the Albanese map needs to coincide with the Abel-Jacobi map.  

\begin{exam}
Let $S$ be a del Pezzo surface of degree $2$ equipped with a conic fibration $g: S \to \mathbb{P}^{1}$.  Let $\mathcal{X}$ be a conic bundle over $S$ whose discriminant locus $D$ is a very general element of $|-2K_{S}|$.  By composing with $g$ we obtain a degree $4$ del Pezzo surface fibration $\pi: \mathcal{X} \to \mathbb{P}^{1}$.  In this case $\IJ(\mathcal{X})$ is a $2$-dimensional abelian variety. 

We show that:
\begin{itemize}
\item If the Abel-Jacobi map for a component of $\Sec(\mathcal{X}/\mathbb{P}^{1})$ is dominant, then it is not birational to the MRC fibration. (This follows from results of \cite{HKT16b} and \cite{Voisin13}.)
\item There are components of $\Sec(\mathcal{X}/\mathbb{P}^{1})$ whose Albanese variety has dimension larger than $2$.
\end{itemize}
In particular the dimension of the intermediate Jacobian does not provide an a priori bound on the dimension of $H^{1}(\overline{M},\mathbb{C})$ for the resolution $\overline{M}$ of a projective closure of a component $M$ of $\Sec(\mathcal{X}/\mathbb{P}^{1})$.
\end{exam}

\subsubsection{Gromov-Witten theory}

The arguments we use to prove Theorem \ref{theo:maintheorem2} also show that certain Gromov-Witten invariants for classes of sections are enumerative. 

\begin{prop} \label{prop:introgw}
Let $\pi: \mathcal{X} \to \mathbb{P}^{1}$ be a del Pezzo fibration such that $-K_{\mathcal{X}/\mathbb{P}^{1}}$ is relatively ample and $\mathcal X$ is smooth. Let $F$ be a general fiber of $\pi$. There is a constant $C(\mathcal{X})$ such that the following is true.  Fix an integer $n \geq C(\mathcal{X})$.  Let $\beta \in N_{1}(\mathcal{X})_{\mathbb{Z}}$ denote a curve class satisfying $-K_{\mathcal{X}/\mathbb{P}^{1}} \cdot \beta = 2n-2$ and $F \cdot \beta = 1$ for a general fiber $F$ of $\pi$.  Then the GW invariant $\langle [pt]^{n} \rangle_{0,n}^{\mathcal{X},\beta}$ is enumerative: it counts the number of rational curves of class $\beta$ through $n$ general points of $\mathcal{X}$.
\end{prop}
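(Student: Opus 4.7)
My plan is to show that, for $n \geq C(\mathcal{X})$ sufficiently large, capping the virtual fundamental class of $\overline{M}_{0,n}(\mathcal{X},\beta)$ against $\prod_{i}\mathrm{ev}_{i}^{*}[pt]$ is supported on the open locus of irreducible free sections through $n$ general points, each contributing $+1$. First I would verify the dimension count: $-K_{\mathcal{X}}\cdot\beta = -K_{\mathcal{X}/\mathbb{P}^{1}}\cdot\beta + 2\,F\cdot\beta = 2n$, so the virtual dimension of $\overline{M}_{0,n}(\mathcal{X},\beta)$ is $-K_{\mathcal{X}}\cdot\beta + \dim\mathcal{X} - 3 + n = 3n$, matching the codimension of $[pt]^{n}$. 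Since $F\cdot\beta = 1$, the class $\beta$ is horizontally primitive, so no multiple-cover contributions arise, and any stable map of class $\beta$ has exactly one section component with its remaining components contained in fibers.

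For the main stratum I would choose $n$ general points $p_{1},\ldots,p_{n}\in\mathcal{X}$ avoiding the proper closed subset $V\subsetneq\mathcal{X}$ of Theorem \ref{theo:maintheorem1}, so that every section of class $\beta$ passing through these points belongs to a dominant component of $\Sec(\mathcal{X}/\mathbb{P}^{1})$. Choosing $C(\mathcal{X})$ so that $2n - 2 \geq Q(\mathcal{X})$, Theorem \ref{theo:maintheorem2} produces, in any such dominant component, a limiting stable map with two free irreducible components; semicontinuity then implies the component contains free sections and is smooth of the expected dimension $2n$. The $n$ general point constraints, each of codimension $2$ on the family of sections, cut out a finite reduced scheme of free sections through $(p_{1},\ldots,p_{n})$; freeness guarantees the evaluation map to $\mathcal{X}^{n}$ is \'etale at each point, contributing $+1$ to the GW invariant.

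The main obstacle is to rule out boundary contributions. A boundary stratum $\overline{M}_{\Gamma}$ parametrizes a section component of class $\beta_{0}$ glued to $k\geq 1$ vertical rational bubbles of classes $\beta_{1},\ldots,\beta_{k}$ summing to $\beta$. A fiber-product calculation gives $\mathrm{vdim}\,\overline{M}_{\Gamma} = 3n - k < 3n$. To upgrade this to a bound on the \emph{actual} dimension, one must control each factor: when $-K_{\mathcal{X}/\mathbb{P}^{1}}\cdot\beta_{0} \geq Q(\mathcal{X})$ Theorem \ref{theo:maintheorem2} again forces expected dimension for the section moduli, while for the finitely many section classes with smaller anticanonical degree the moduli is of finite type and has bounded dimension (which, together with Corollary \ref{coro:batyrev}, can be absorbed into $C(\mathcal{X})$); the vertical bubble moduli lie in del Pezzo fibers with ample anticanonical class, so standard Mori-theoretic arguments on these surfaces bound their dimensions. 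Once $\dim\overline{M}_{\Gamma}\leq 3n - k$ is established, the image of $\mathrm{ev}\colon\overline{M}_{\Gamma}\to\mathcal{X}^{n}$ is a proper closed subset that $n$ general points avoid, and summing over the finitely many relevant dual graphs $\Gamma$ yields no boundary contribution. The GW integral therefore equals the enumerative count.
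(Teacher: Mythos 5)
Your reduction (show that capping against the point classes is supported on irreducible free sections, each counting $+1$) is the right shape, but the mechanism you propose for killing boundary contributions has a genuine gap. The bound $\dim\overline{M}_{\Gamma}\leq 3n-k$ that your argument hinges on is false in general, and increasing $n$ does not repair it. Concretely, take a stratum with $k=1$: a section component $C_{0}$ of some fixed negative height $d$ whose family has dimension $\maxdef(d)>d+2$, glued to a single vertical bubble of degree $2n-2-d$. With its $n$ marked points this stratum has dimension roughly $3n-1+\bigl(\maxdef(d)-d-2\bigr)\geq 3n$ once the excess is positive; that excess is a fixed constant of $\mathcal{X}$, while the virtual codimension $k$ of the offending graph stays equal to $1$ no matter how large $n$ is, so nothing gets ``absorbed into $C(\mathcal{X})$'' by a dimension count (and Corollary \ref{coro:batyrev} bounds the \emph{number} of components, not their dimensions, so it is not the relevant tool). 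Such a stratum still contributes nothing, but for a reason a pure dimension count cannot see: its evaluation image is degenerate because $n\geq 2$ general points cannot all lie on one vertical bubble or on a non-dominant section.

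The missing idea is the point-budget argument of Proposition \ref{prop:evendegreebound}, which is exactly what the paper uses (its proof of Proposition \ref{prop:gwenumerative} is a one-line reduction to that statement). Write the image as $C_{0}+\sum a_{i}T_{i}$. Distinct general points lie in distinct fibers, hence on distinct vertical components, and an irreducible vertical rational curve through a general point of a del Pezzo fiber has anticanonical degree $\geq 2$; a non-dominant section contains no general point, a dominant one of height $d$ contains at most $\lfloor d/2\rfloor+1$; and the finitely many conics through the general points impose independent codimension-one incidence conditions on $C_{0}$ (Lemma \ref{lemm:maxdefbound} when $d<0$ — this is where $\maxdef(\mathcal{X})$ and $\neg(\mathcal{X},-K_{\mathcal{X}/\mathbb{P}^{1}})$ enter $C(\mathcal{X})$). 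Summing these costs against the budget $2n-2$ forces irreducibility, and then genericity plus Lemma \ref{lemm:normalbundleestimate} gives $N_{C/\mathcal{X}}=\mathcal{O}(a)\oplus\mathcal{O}(b)$ with $a,b\geq n-1$, which is what actually makes the evaluation map \'etale — freeness alone does not suffice for that step. (Minor point: your main stratum is handled more directly by Lemma \ref{lemm:dominantimpliesfree}; Theorem \ref{theo:maintheorem2} is not needed there.)
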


Proposition \ref{prop:gwenumerative2} gives a similar result for classes of odd height and Proposition \ref{prop:balancednormalbundle} gives conditions which guarantee that these GW-invariants do not vanish.  These results complement \cite[Theorem 4.1]{Tian12} which proves the non-vanishing of other GW-invariants for del Pezzo fibrations over $\mathbb{P}^{1}$.

\subsection{Relations with other work}

Here we discuss relations of the current paper to other work in the subject.

\subsubsection{Past work}

As mentioned above, Batyrev developed a heuristic (``Batyrev's dream'') for Manin's Conjecture over finite fields in \cite{Bat88} based on three assumptions. (This heuristic is explained in \cite[Section 4.7]{Tsc09} and \cite[Section 1.2]{Bou}.) This perspective leads to the formulation of the Batyrev-Manin Conjecture in \cite{BM}. We investigated this heuristic for Fano varieties in characteristic $0$ in \cite{LT17} and confirmed that the first assumption is always valid. However, the second assumption fails in general mainly due to the presence of Zariski dense thin exceptional sets which are intensively studied in the case of rational points over number fields. (\cite{HTT15},  \cite{LTT14}, \cite{HJ16}, \cite{LT16}, \cite{Sen17b}, and \cite{LST18}). Moreover in \cite{LT17}, we proposed Geometric Manin's Conjecture which predicts that for a Fano variety $X$ the number of irreducible components of $\mathrm{Mor}(\mathbb P^1, X, \alpha)$ which contribute to the counting function is constant when the degree of $\alpha$ is sufficiently large.  (The appellation ``Geometric Manin's Conjecture'' comes from the fact that this conjecture over complex function fields is derived from Manin's Conjecture over global function fields, just as the geometric Langlands Program over complex function fields is derived from the Langlands Program over global function fields.)
In this paper, we extend this perspective to the settings of del Pezzo fibrations over $\mathbb P^1$. See Section~\ref{sect:Manin} for more details.

The classification of components of $\mathrm{Mor}(\mathbb P^1, X)$ for a smooth Fano variety has a long and rich history. This has been done for homogeneous spaces in \cite{Thomsen98} and \cite{KP01}.  There was also a pioneering work \cite{HRS04} analyzing the inductive structure using Mori's Bend-and-Break when $X$ is a smooth Fano hypersurface. This has been subsequently generalized in \cite{BK13} and \cite{RY16} completing the analysis for most Fano hypersurfaces. (See \cite{BV16} and \cite{BS20} for another approach using an idea from analytic number theory.) Toric varieties have been analyzed in \cite{Bou16} and the moduli spaces of vector bundles on curves have been studied in \cite{Cas04}. Smooth del Pezzo surfaces have been handled by Testa in \cite{Testa09}, and smooth Fano threefolds have been the focus of many studies (\cite{CS09}, \cite{Cas04}, \cite{LT17}, and \cite{LT18}).  

The stabilization of MRC fibrations/Abel-Jacobi mapping has been also well-studied. In positive directions, there are some results \cite{HRS02} for cubic threefolds, \cite{Cas04} for moduli spaces of vector bundles on curves, \cite{dJHS11} for $2$-Fano fibrations, and \cite{Zhu19} for homogenous fibrations. \cite{dJS04} provides examples where MRC fibrations do not stabilize and \cite{Voisin13} characterizes when the Abel-Jacobi mappings coincide with the MRC fibrations for rationally connected threefolds.

Del Pezzo fibrations have been studied extensively due to their prominent role in the minimal model program for threefolds.  \cite{GHS03} guarantees that a del Pezzo fibration admits a section. There is a large body of literature on the Weak Approximation Conjecture for sections of del Pezzo fibrations, see e.g.~\cite{KMM92}, \cite{CTG04}, \cite{HT06}, \cite{HT08}, \cite{Xu12}, \cite{Xu12b}, \cite{Knecht13}, \cite{Tian15}, and \cite{STZ18}.  The Abel-Jacobi map for sections has been studied for quadric surface fibrations by \cite{HT12} and for degree $4$ del Pezzo fibrations by \cite{HT14}.  \cite{Tian12} has studied Gromov-Witten invariants for del Pezzo fibrations over $\mathbb{P}^{1}$. 

\subsubsection{Comparison to the sequel} 
In the sequel paper \cite{LT21}, we study sections of del Pezzo fibrations over higher genus curves.  Using ideas in the current paper, we will generalize Theorem~\ref{theo:maintheorem1} and Theorem~\ref{theo:maintheorem2} to the case when the base curve has a higher genus. However, the various constants (such as $Q(\mathcal X)$ from Theorem \ref{theo:maintheorem2}) are rather impractical compared to the constants in the current paper.  In this paper we are able to analyze multiple examples of non-trivial del Pezzo fibrations, while in \cite{LT21} the only examples are trivial del Pezzo fibrations over a higher genus curve. 
In addition, the applications to stabilization of Abel-Jacobi mappings and to enumerativity of Gromov-Witten invariants are only discussed in this paper (although similar arguments will work for del Pezzo fibrations over higher genus curves).

\

\bigskip

\noindent
{\bf Acknowledgements:}
The authors thank Brendan Hassett for an enlightening discussion and in particular for suggesting Examples \ref{exam: fibrationoverdegree2} and \ref{exam: fibrationoverdegree1}.  They thank John Lesieutre and Yohsuke Matsuzawa for helpful conversations about dynamics and Asher Auel for discussions regarding Example \ref{exam: fibrationoverdegree2}. We thank Eric Jovinelly for pointing out a mistake in Example~\ref{exam: fibrationoverdegree1}. We thank the referees for constructive criticisms which improve the results in the paper. We also thank the referees for detailed suggestions which significantly improved the exposition of the paper. Part of this work was done at an AIM SQuaRE workshop, and the authors thank AIM for the excellent working environment.  

Brian Lehmann was supported by NSF grant 1600875.  Sho Tanimoto was partially supported by MEXT Japan, Leading Initiative for Excellent Young Researchers (LEADER), by Inamori Foundation, by JSPS KAKENHI Early-Career Scientists Grant number 19K14512, by JSPS Bilateral Joint Research Projects Grant number JPJSBP120219935, and by JST FOREST program Grant number JPMJFR212Z.

\section{Preliminaries}

Fix a field $k$ which is algebraically closed and of characteristic $0$.  In this paper our ground field will usually be $k$ or $k(\mathbb{P}^{1})$.  A variety is a reduced irreducible separated scheme of finite type over the ground field.  Unless otherwise we state, a component means an irreducible component. When we take a component of a scheme, we always endow it with its reduced structure.

Throughout the paper we will use $\sim_{rat}$ to denote rational equivalence of cycles and $\sim_{alg}$ to denote algebraic equivalence of cycles.

Let $X$ be a projective separated scheme of finite type over the ground field. We will let $N^{1}(X)_{\mathbb{R}}$ denote the space of $\mathbb{R}$-Cartier divisors up to numerical equivalence on $X$ and let $\Eff^{1}(X)$ and $\Nef^{1}(X)$ denote respectively the pseudo-effective and nef cones of divisors.  Dually, $N_{1}(X)_{\mathbb{R}}$ denotes the space of $\mathbb{R}$-curves up to numerical equivalence and $\Eff_{1}(X)$ and $\Nef_{1}(X)$ denote respectively the pseudo-effective and nef cones of curves. We denote the lattices generated by integral cycles by $N^1(X)_{\mathbb Z} \subset N^1(X)_{\mathbb{R}}$ and $N_1(X)_{\mathbb Z} \subset N_1(X)_{\mathbb{R}}$. We define $\Nef_{1}(X)_{\mathbb Z} := \Nef_{1}(X) \cap N_1(X)_{\mathbb Z}$.

We say that a reduced irreducible curve $C$ is movable on $X$ if $C$ is a member of a family of curves which dominates $X$.

\subsection{Free rational curves on Gorenstein terminal threefolds}

We recall some deformation theory of rational curves on threefolds with Gorenstein terminal singularities.

\begin{defi}
Let $X$ be a Gorenstein threefold with only terminal singularities.
Let $f : \mathbb P^1 \to X$ be a rational curve. We say $f$ is free if the image $f(\mathbb P^1)$ is contained in the smooth locus of $X$ and we have
\[
f^*T_X = \mathcal O(a_1) \oplus \mathcal O(a_2) \oplus \mathcal O(a_3)
\]
with $0 \leq a_1 \leq a_2 \leq a_3$.
\end{defi}

It is well-known that for a smooth quasi-projective variety $Y$ and any component $M \subset \mathrm{Mor}(\mathbb P^1, Y)$ we have
\[
\dim M \geq -K_Y \cdot C + \dim Y
\]
where $C \in M$ is a general member. (See \cite[Theorem II.1.2]{Kollar} for this statement.)
We have the following lemma for Gorenstein terminal threefolds:

\begin{lemm}
\label{lemm:expected}
Let $X$ be a Gorenstein terminal threefold.
Let $M$ be a component of $\mathrm{Mor}(\mathbb P^1, X)$. Then we have
\[
\dim M \geq -K_X \cdot C +3,
\]
where $C \in M$ is a general member.
\end{lemm}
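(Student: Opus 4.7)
The plan is to apply the standard deformation-theoretic dimension estimate for $\Mor(\mathbb{P}^1, X)$, extended to the singular setting via the LCI structure on $X$. In characteristic $0$, a Gorenstein terminal threefold singularity is compound Du Val and hence (analytically) a hypersurface in a smooth fourfold; in particular $X$ is a local complete intersection scheme. Consequently the cotangent complex $L_X^\bullet$ is perfect of tor-amplitude $[-1,0]$, and the tangent complex $T_X^\bullet := R\mathcal{H}om(L_X^\bullet, \mathcal{O}_X)$ is perfect of amplitude $[0,1]$.

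For any $[f] \in M$, I would invoke the deformation-obstruction theory for morphisms to an LCI scheme (see, e.g., Koll\'ar, \emph{Rational Curves on Algebraic Varieties}, Ch.~I). This identifies the Zariski tangent space at $[f]$ with $\mathrm{Ext}^0(Lf^*L_X^\bullet, \mathcal{O}_{\mathbb{P}^1})$ and places obstructions in $\mathrm{Ext}^1(Lf^*L_X^\bullet, \mathcal{O}_{\mathbb{P}^1})$, giving
\[
\dim_{[f]} M \;\geq\; \chi(\mathbb{P}^1, Lf^* T_X^\bullet).
\]
The Euler characteristic is then computed by Hirzebruch--Riemann--Roch for perfect complexes on $\mathbb{P}^1$: one has $\chi = \deg c_1 + \rk$. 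The virtual rank of $T_X^\bullet$ equals $\dim X = 3$, and since $X$ is Gorenstein we have $\det L_X^\bullet \cong \omega_X$, so $c_1(T_X^\bullet) = -K_X$. Intersecting with the image of $f$ yields $\deg c_1(Lf^*T_X^\bullet) = -K_X \cdot C$, whence $\dim_{[f]} M \geq -K_X \cdot C + 3$.

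The main technical point worth flagging is that the deformation theory must apply uniformly regardless of whether $f(\mathbb{P}^1)$ meets $\mathrm{Sing}(X)$. Because the singularities are hypersurface, this is verifiable locally: near a singular point one writes $X = V(g) \subset Y$ with $Y$ smooth, so that $L_X^\bullet$ is represented by the explicit two-term complex $[\mathcal{O}_X \xrightarrow{dg} \Omega^1_Y|_X]$ in degrees $[-1,0]$. Derived pullback by $f$ preserves this presentation, and the standard Ext dimension count goes through without restricting to the smooth locus of $X$. Alternatively, one may avoid derived language entirely by embedding a neighborhood of the singular points in a smooth ambient and applying the known relative estimate for $\Mor(\mathbb{P}^1, Y)$ constrained to $X$; this amounts to the same calculation.
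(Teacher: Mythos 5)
Your proposal is correct and follows essentially the same route as the paper: reduce to the locally complete intersection case by observing that Gorenstein terminal threefold singularities are (isolated) cDV and hence analytically hypersurface singularities, then invoke the standard dimension estimate for $\Mor(\mathbb{P}^1,X)$ with LCI target from Koll\'ar's book. The only difference is cosmetic — you unwind the cited estimate via the cotangent complex and Riemann--Roch, whereas the paper simply cites \cite[Chapter II.1 Theorem 1.3]{Kollar}.
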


\begin{proof}
\cite[Theorem II.1.3]{Kollar} shows that our assertion holds for locally complete intersection varieties, so we just need to verify this condition for $X$.

Since $X$ has Gorenstein terminal singularities, it has isolated cDV singularities (\cite[Corollary 5.38]{KM98}).  It is well-known that isolated cDV singularities are analytically isomorphic to hypersurface singularities.  For any local ring which is the homomorphic image of a regular local ring the complete intersection property can be detected on the completion (see e.g.~the discussion in Section 18.5 p.462 in \cite{Eisenbud}), and we deduce that $X$ is locally complete intersection in the Zariski topology.
\end{proof}


\begin{lemm}
\label{lemm:dominantimpliesfree}
Let $X$ be a Gorenstein threefold with terminal singularities. Suppose that $M \subset \mathrm{Mor}(\mathbb P^1, X)$ is a component parametrizing a dominant family of curves.  Then a general member of $M$ is free.  
\end{lemm}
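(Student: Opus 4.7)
The plan is to argue in two stages: first, reduce to the case where a general $f \in M$ has image disjoint from $\mathrm{Sing}(X)$; second, apply the standard generic-smoothness argument on the smooth locus of $X$.

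\emph{Stage 1: Avoidance of singularities.} Since $X$ has only isolated cDV singularities, $\mathrm{Sing}(X)$ is a finite set of points. For each $p \in \mathrm{Sing}(X)$, let $M_p \subset M$ denote the closed sublocus of maps whose image contains $p$; I want to show $M_p \subsetneq M$. Suppose for contradiction that $M = M_p$. Form the incidence
\[
M^{\mathrm{pt}} := \{(f, t_0) \in M \times \mathbb{P}^1 : f(t_0) = p\},
\]
which is finite over $M = M_p$ and hence has the same dimension as $M$. Analyzing its Zariski tangent space at a general $(f, t_0)$, a tangent vector is a pair $(v, \tau) \in T_{M, f} \oplus T_{\mathbb{P}^1, t_0}$ satisfying the linearized constraint $v(t_0) + \tau \, df_{t_0} = 0$ in $T_{X, p}$. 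The crucial observation is that at a cDV singularity the Zariski tangent space $T_{X, p}$ has dimension $4$, one higher than $\dim X$. Combined with the lower bound $\dim M \geq -K_X \cdot C + 3$ from Lemma~\ref{lemm:expected}, this extra constraint should force $\dim M^{\mathrm{pt}} < \dim M$, contradicting $M = M_p$. The principal obstacle of the proof lies precisely here: carefully controlling the rank of the evaluation map $H^0(f^*T_X) \to T_{X, p}$, particularly when $M$ is obstructed at $f$, requires additional input beyond Lemma~\ref{lemm:expected}.

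\emph{Stage 2: Generic smoothness.} Granting Stage~1, let $M^{\circ} \subset M$ be the dense open sublocus of maps $f$ with $f(\mathbb{P}^1) \subset X_{\mathrm{sm}}$. For $f \in M^{\circ}$, the sheaf $f^*T_X$ is locally free of rank $3$ on $\mathbb{P}^1$ and therefore splits as $\bigoplus_{i=1}^{3} \mathcal{O}(a_i)$. The restricted evaluation map $\mathrm{ev}: M^{\circ} \times \mathbb{P}^1 \to X_{\mathrm{sm}}$ is dominant, and by generic smoothness in characteristic zero there is a dense open on which its differential is surjective. Fixing a general $(f, t)$ in this locus, the map
\[
T_{M, f} \oplus T_{\mathbb{P}^1, t} \longrightarrow T_{X, f(t)}
\]
is surjective. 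Now $T_{M, f} \subseteq H^0(f^*T_X)$, and the induced map $T_{M, f} \to T_{X, f(t)}$ is evaluation at $t$. Since $M$ is $\mathrm{Aut}(\mathbb{P}^1)$-invariant, $T_{M, f}$ contains the image of the reparametrization infinitesimal action, whose value at $t$ includes $df_t(T_{\mathbb{P}^1, t})$. Hence surjectivity of the differential of $\mathrm{ev}$ already forces the evaluation map $H^0(f^*T_X) \twoheadrightarrow (f^*T_X)|_t$ to be surjective. A vector bundle on $\mathbb{P}^1$ is globally generated at a single point iff all of its summands have non-negative degree, so every $a_i \geq 0$ and $f$ is free.
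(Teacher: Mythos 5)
Your Stage 2 is the standard characteristic-zero generic-smoothness argument and is fine. The gap is in Stage 1, and you have correctly located it yourself: the claim $M_p \subsetneq M$ does not follow from the tangent-space computation you sketch. The fact that $\dim T_{X,p}=4$ at a cDV point gives one extra linear condition on pairs $(v,\tau)$, but that only constrains the Zariski tangent space of $M^{\mathrm{pt}}$, not its dimension; since $M$ may be everywhere obstructed at maps through $p$ (Lemma~\ref{lemm:expected} is only a lower bound, and there is no matching upper bound on $\dim M$ available from local deformation theory at the singular point), you cannot convert this into $\dim M^{\mathrm{pt}}<\dim M$. As written the argument does not close.

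The missing idea --- and the route the paper takes --- is to move the dimension count to a resolution $\phi:Y\to X$ and use terminality through discrepancies rather than through the local structure of the singularity. If every member of $M$ met $\mathrm{Sing}(X)$, the strict transforms $C'$ of general members would give a dominant family on the smooth variety $Y$ whose general member meets the $\phi$-exceptional locus. On $Y$ dominance forces freeness, hence the family has exactly the expected dimension, so $\dim M \leq -K_Y\cdot C'+3$. Writing $K_Y=\phi^*K_X+E$ with $E$ effective and exceptional (terminality gives strictly positive discrepancies) and using $E\cdot C'>0$, one gets $-K_Y\cdot C'<-K_X\cdot C$, hence $\dim M<-K_X\cdot C+3$, contradicting Lemma~\ref{lemm:expected}. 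Note that Lemma~\ref{lemm:expected} enters here as a global lower bound on the singular $X$, played off against an exact count on the smooth model; this is precisely the ``additional input'' you flag as missing. I would rewrite Stage 1 along these lines and keep your Stage 2 as is.
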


\begin{proof}
Suppose that all members of $M$ pass through singular points of $X$.
Let $\phi: Y \to X$ be a resolution. Let $C$ be a general member of $M$ and let $C'$ denote its strict transform on $Y$. Then $C'$ must meet with a $\phi$-exceptional divisor. However, the terminal condition implies that
\[
-K_Y \cdot C < -K_X \cdot C'.
\]
Since deformations of $C'$ dominate $Y$, the family of deformations of $C'$ has the expected dimension.  We conclude that $\dim(M) = -K_{Y} \cdot C' + 3$.  This contradicts Lemma \ref{lemm:expected} which shows that $\dim(M) \geq -K_{X} \cdot C + 3$.
\end{proof}

\subsection{Fano fibrations}

In this paper, a Fano fibration will always be an integral model with mild singularities for a Fano variety over $k(\mathbb{P}^{1})$.  More precisely:

\begin{defi}
A Fano fibration is a morphism $\pi: \mathcal{X} \to \mathbb{P}^{1}$ satisfying the following properties:
\begin{enumerate}
\item $\mathcal{X}$ is a normal projective variety with Gorenstein terminal singularities,
\item $f$ is an algebraic fiber space, and
\item the general fiber of $\pi$ is a smooth Fano variety.
\end{enumerate}
We will always denote the generic point of $\mathbb{P}^{1}$ by $\eta$ and the generic fiber of $\pi$ by $\mathcal{X}_{\eta}$.
\end{defi}

Fano fibrations satisfy the following important properties:

\begin{itemize}
\item Every fiber of $f$ is rationally chain connected (see \cite[IV.3.5 Corollary]{Kollar}).
\item As we vary over all smooth fibers $F$ the dimension of $N^{1}(F)_{\mathbb{R}}$ is constant.  Indeed, the dimension of $N^{1}(F)_{\mathbb{R}}$ does not change under a base change of algebraically closed fields, so one may assume that our ground field $k$ is an algebraic closure of a finitely generated field over $\mathbb Q$. Such a field admits an embedding into $\mathbb C$, so we may assume that $k = \mathbb C$. Now the previous property combined with \cite[2.1]{KMM92b} shows that $F$ is rationally connected.  In particular $\dim(N^{1}(F)_{\mathbb{R}}) = \dim H^{2}(X,\mathbb{R})$ and the latter is constant in smooth families by \cite[pg.~154 Proposition]{Ehresmann95}.
\end{itemize}

Furthermore, Fano fibrations have the nice property that the positivity of divisors on the generic fiber is related to positivity of divisors on a general fiber.

\begin{lemm}[\cite{Wisniewski09} Theorem 1, \cite{dFH11} Theorem 6.8 and preceding discussion] \label{lemm:relativeprops}
Let $\pi: \mathcal{X} \to \mathbb{P}^{1}$ be a Fano fibration.  Let $L$ be a $\mathbb{Q}$-Cartier divisor on $\mathcal X$.  Then the following are equivalent:
\begin{enumerate}
\item $L|_{F}$ is ample for some smooth Fano fiber $F$.
\item $L|_{F}$ is ample for all smooth Fano fibers $F$.
\item $L|_{\mathcal{X}_{\eta}}$ is ample.
\end{enumerate}
The analogous statement is true for nefness, for bigness, and for pseudo-effectiveness.
\end{lemm}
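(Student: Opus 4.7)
The plan is to establish each of the four equivalences separately, but in each case by exactly the same two-step scheme: first transfer the positivity of $L|_{F}$ for some smooth Fano fiber to the positivity of $L|_{F}$ for \emph{all} smooth Fano fibers, and then relate the positivity on a very general closed fiber to that on the generic fiber. The starting point is the fact already recorded in the excerpt that, as $F$ varies over smooth Fano fibers of $\pi$, the groups $N^{1}(F)$ all have the same dimension. Combined with flatness of $\pi$, one sees that the classes $L|_{F} \in N^{1}(F)$ may be compared in a meaningful way and that the limit of these classes at the generic point is precisely $L|_{\mathcal{X}_{\eta}}$.

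For ampleness and nefness, the implication (2)$\Rightarrow$(1) is trivial, and (3)$\Rightarrow$(2) follows from the usual openness of ampleness (resp.\ standard spreading-out plus specialization for nefness) in flat families, applied to the smooth locus of $\pi$. The non-trivial direction is (1)$\Rightarrow$(3). Here I would invoke Wi\'sniewski's theorem on the deformation invariance of Mori cones of smooth Fano varieties in smooth families: as one moves among smooth Fano fibers, the nef cone in $N^{1}(F)$ stays constant under parallel transport, so if $L|_{F_{0}}$ is ample (resp.\ nef) for one smooth Fano fiber $F_{0}$ then the same holds for every smooth Fano fiber. Applying this to fibers whose image is a Zariski-dense open subset of $\mathbb{P}^{1}$ and then passing to the generic fiber gives (3).

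For bigness and pseudo-effectiveness the topological openness/closedness arguments fail, and one needs the restriction theorem of de Fernex--Hacon (\cite{dFH11}, Theorem 6.8). That theorem says precisely that for a family with the kind of mild singularities we have and with rationally connected fibers, the pseudo-effective cone in $N^{1}$ of a smooth Fano fiber is independent of the fiber (and matches the pseudo-effective cone of the generic fiber under the identification above); the same is true for the big cone, since the big cone is the interior of the pseudo-effective cone. Combining this with the flat base change argument for the generic fiber (which for bigness is just semicontinuity of $h^{0}(mL|_{F})$, and for pseudo-effectiveness is the identification of $N^{1}$ of the generic fiber with $N^{1}(F)$) yields the four equivalences (1)$\Leftrightarrow$(2)$\Leftrightarrow$(3).

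The main obstacle is precisely the step from ``general'' to ``every smooth Fano fiber''. Without either Wi\'sniewski or de Fernex--Hacon the conclusion would fail in principle, because over a positive-dimensional base neither nefness nor pseudo-effectiveness is in general open or closed in flat families; the very special feature of Fano fibrations used in both cited theorems is that the rational connectedness of the fibers forces the positivity cones to be locally constant in the smooth locus of $\pi$. Once these two ingredients are in place, the proof reduces to routine bookkeeping about cones of divisors in families.
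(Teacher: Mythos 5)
Your proposal is correct and matches the paper exactly: the paper gives no proof of this lemma beyond citing Wi\'sniewski's deformation invariance of the nef cone for smooth families of Fanos and de Fernex--Hacon's Theorem 6.8 for the pseudo-effective and big cones, which are precisely the two ingredients you assemble. Your closing paragraph correctly identifies that the real content is the local constancy of the positivity cones among smooth Fano fibers (neither nefness nor pseudo-effectiveness is open or closed in general flat families), and the remaining steps are routine.
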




\subsection{Height functions}

Let $\pi: \mathcal{X} \to \mathbb{P}^{1}$ be a projective morphism of finite type of separated schemes over the ground field. Then the choice of an ample $\mathbb{Q}$-Cartier divisor on the generic fiber $\mathcal{X}_{\eta}$ will define a height function on sections of $\pi$.  It is well-known that a Northcott property holds in this setting:

\begin{lemm} \label{lemm:northcott}
Let $\pi: \mathcal{X} \to \mathbb{P}^{1}$ be a dominant projective morphism of finite type of separated schemes over the ground field.  Let $L$ be a $\mathbb{Q}$-Cartier divisor on $\mathcal{X}$ whose restriction to a general fiber is ample.  For any fixed constant $\gamma$, the set of classes of sections $C$ satisfying $L \cdot C \leq \gamma$ lies in a bounded subset of $N_{1}(\mathcal{X})_{\mathbb{R}}$.
\end{lemm}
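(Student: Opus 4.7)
The plan is to reduce the problem to uniformly bounding $H \cdot C$ for a single fixed ample divisor $H$ on $\mathcal{X}$. This reduction follows from Kleiman's criterion: the closed pseudo-effective cone $\Eff_{1}(\mathcal{X})$ is salient, and $H$ pairs strictly positively on it, so the half-space $\{H \cdot x \leq M\}$ meets $\Eff_{1}(\mathcal{X})$ in a bounded subset of $N_{1}(\mathcal{X})$. Every section class lies in $\Eff_{1}(\mathcal{X})$, so it suffices to bound $H \cdot C$ in terms of $\gamma$.

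To exhibit such a bound I would compare $H$ to $L$ via an effective divisor on $\mathcal{X}$. Since both $L|_{\mathcal{X}_{\eta}}$ and $H|_{\mathcal{X}_{\eta}}$ are ample on the generic fiber, there exists $m \in \mathbb{Q}_{>0}$ such that $(mL - H)|_{\mathcal{X}_{\eta}}$ is ample and in particular effective. Taking the closure of a representative and absorbing the necessary correction from $\mathbb{P}^{1}$ (permissible because $\Pic(\mathbb{P}^{1}) = \mathbb{Z}$, so any divisor restricting trivially to $\mathcal{X}_{\eta}$ is numerically a multiple of a fiber $F$), I obtain an effective $\mathbb{Q}$-Cartier divisor $E$ on $\mathcal{X}$ and a rational number $k$ with
\[
E \equiv m L - H + k F \qquad \text{in } N^{1}(\mathcal{X}).
\]
For a section $C$ with $F \cdot C = 1$ this yields
\[
H \cdot C = m\, L \cdot C - E \cdot C + k.
\]
When $C \not\subset \Supp(E)$, effectiveness gives $E \cdot C \geq 0$, producing the uniform bound $H \cdot C \leq m \gamma + k$.

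The main obstacle is the residual case $C \subset \Supp(E)$. I would resolve it by induction on $\dim \mathcal{X}$. Since a section is irreducible and dominates $\mathbb{P}^{1}$, such a $C$ must lie in one of the finitely many components $W_{i}$ of $\Supp(E)$ that surject onto $\mathbb{P}^{1}$. Each restricted morphism $\pi|_{W_{i}}: W_{i} \to \mathbb{P}^{1}$ is projective with strictly smaller relative dimension, and $L|_{W_{i}}$ remains ample on its generic fiber (being the restriction of an ample class to a subvariety of $\mathcal{X}_{\eta}$). The inductive hypothesis then furnishes a bounded subset of $N_{1}(W_{i})$ containing the classes of the sections of $\pi|_{W_{i}}$ with $L$-degree at most $\gamma$; pushing forward to $N_{1}(\mathcal{X})$ contributes only a bounded set. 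The base case, where the generic fiber of $\pi$ is zero-dimensional, is immediate since sections are then finite in number. Combining the two cases completes the proof.
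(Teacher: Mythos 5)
The paper records this lemma without proof, as a well-known Northcott-type property, so there is no argument of the authors' to compare against; your proof has to stand on its own. Its architecture is sound and is the standard one: Kleiman's criterion reduces boundedness in $N_{1}(\mathcal{X})$ to a uniform bound on $H \cdot C$ for a single ample $H$, that bound is extracted from an effective divisor interpolating between $mL$ and $H$, and the sections trapped in the support of that divisor are disposed of by induction on the relative dimension. The Kleiman reduction, the base case, and the inductive step are all fine.

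The one step that fails as written is the parenthetical justifying $E \equiv mL - H + kF$. A divisor class restricting to zero on $\mathcal{X}_{\eta}$ is merely \emph{vertical} — a $\mathbb{Q}$-combination of irreducible components of finitely many fibers — and when a fiber is reducible its individual components are not numerically proportional to $F$ (blow up a point of $\mathbb{P}^{1} \times \mathbb{P}^{1}$: the exceptional curve is vertical with self-intersection $-1$, while $F^{2}=0$). So your ``constant'' $k$ is really a term $V \cdot C$ depending on which fiber components $C$ meets; moreover the closure of a divisor on $\mathcal{X}_{\eta}$ need not be $\mathbb{Q}$-Cartier when $\mathcal{X}$ is not $\mathbb{Q}$-factorial, so $E \cdot C$ is not a priori defined. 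Both defects are repaired by constructing $E$ via pushforward rather than closure: choose $m$ so that $(mL-H)|_{\mathcal{X}_{\eta}}$ is (after clearing denominators) very ample; then $\pi_{*}\mathcal{O}_{\mathcal{X}}(mL-H)$ has positive generic rank, so for $N \gg 0$ the twist by $\mathcal{O}_{\mathbb{P}^{1}}(N)$ has a global section, producing an honest effective Cartier divisor $E \sim mL - H + NF$. With this $E$ your inequality $H \cdot C \leq m\gamma + N$ for $C \not\subset \Supp(E)$, and the induction for $C \subset \Supp(E)$, go through verbatim. (Alternatively one can keep your $E$ and bound the vertical correction uniformly on sections by sandwiching it between $\pm N\pi^{*}P$, where $P$ is the set of points of $\mathbb{P}^{1}$ under its support, but this still requires addressing the $\mathbb{Q}$-Cartier issue.)
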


\begin{proof}
The proof is by induction on the dimension of the generic fiber of $\pi$.  The base case is when the relative dimension is $0$.  Then the set of sections is finite and the statement is obviously true.

Now suppose that the generic fiber of $\pi$ has dimension $n > 0$.  Let $\{ \mathcal{X}_{i} \}_{i=1}^{r}$ denote the set of  (reduced) irreducible components of $\mathcal{X}$ which map dominantly to $\mathbb{P}^{1}$.  The inclusions induce a linear map $\oplus N_{1}(\mathcal{X}_{i})_{\mathbb{R}} \to N_{1}(\mathcal{X})_{\mathbb{R}}$ whose image contains the numerical classes of all sections of $\pi$.  Since any section of $\pi$ will also be a section of $\pi|_{\mathcal{X}_{i}}$ for some $i$, it suffices to prove the statement for each irreducible component $\mathcal{X}_{i}$.  Thus we may assume that $\mathcal{X}$ is a projective variety of dimension $n$ that maps dominantly to $\mathbb{P}^{1}$.

By assumption there is an open subset $U \subset \mathbb{P}^{1}$ such that $L|_{\pi^{-1}(U)}$ is $\pi|_{\pi^{-1}(U)}$-relatively ample.  
In turn this implies that $L$ is $\pi$-relatively big over $\mathbb{P}^{1}$.  Let $F$ denote a general fiber of $\pi$ and choose a positive integer $m$ such that $L + mF$ is big on $\mathcal{X}$.  We can write $L+mF \equiv H + E$ for some ample $\mathbb{Q}$-divisor $H$ and some effective $\mathbb{Q}$-divisor $E$.  Then every section $C$ in the statement of the theorem either satisfies:
\begin{itemize}
\item $E \cdot C \geq 0$, in which case $H \cdot C \leq \gamma + m$, or
\item $E \cdot C < 0$.
\end{itemize}
The numerical classes of sections of the first type lie in a bounded subset of $N_{1}(\mathcal{X})_{\mathbb{R}}$.  Sections of the second type must be contained in $\Supp(E)$.  We then conclude the desired statement by the induction hypothesis applied to $\Supp(E)$ equipped with the $\mathbb{Q}$-Cartier divisor $L|_{E}$.
\end{proof}

One consequence of Lemma \ref{lemm:northcott} is that there is a lower bound on the possible values of $L \cdot C$ as we vary $C$ over all sections.

\begin{defi}
Let $\pi: \mathcal{X} \to \mathbb{P}^{1}$ be a Fano fibration and let $L$ be a $\mathbb{Q}$-Cartier divisor on $\mathcal{X}$ whose restriction to the generic fiber is ample.  We define $\neg(\mathcal{X},L)$ to be the smallest value of $L \cdot C$ as we vary $C$ over all sections of $\pi: \mathcal{X} \to \mathbb{P}^{1}$.
\end{defi}

\subsection{Spaces of sections} \label{sect:spacesofsections}

Let $\pi : \mathcal X \rightarrow \mathbb P^1$ be a Fano fibration.  We let $\Sec(\mathcal{X}/\mathbb{P}^{1})$ denote the open subset of $\Hilb(\mathcal{X})$ parametrizing sections of $\pi$.  Using the functorial definitions, we see that there is an inclusion $\Sec(\mathcal{X}/\mathbb{P}^{1}) \subset \overline{M}_{0,0}(\mathcal{X})$.  In particular, any component $M$ of $\Sec(\mathcal{X}/\mathbb{P}^{1})$ embeds as a dense open subset of a unique component $\overline{M}$ of $\overline{M}_{0,0}(\mathcal{X})$.

The expected dimension of a component $M \subset \Sec(\mathcal{X}/\mathbb{P}^{1})$ is
\begin{equation*}
-K_{\mathcal X/\mathbb P^1} \cdot C + (\dim \mathcal X-1)
\end{equation*}
where $C$ denotes a general section parametrized by $M$.  Lemma~\ref{lemm:expected} shows that the expected dimension is a lower bound for $\dim(M)$ whenever $\dim(\mathcal{X}) \leq 3$.  
If $M$ parametrizes a family of free sections then the actual dimension agrees with the expected dimension.

\begin{lemm} \label{lemm:normalbundleestimate}
Let $\pi: \mathcal{X} \to \mathbb{P}^{1}$ be a Fano fibration with $\dim \mathcal X \leq 3$ and let $n$ be a positive integer.  Suppose that $C$ is a section such that any $n$ general points of $\mathcal{X}$ are contained in a deformation of $C$.  If we let $C'$ be a general deformation of $C$ and write $N_{C'/X} = \oplus \mathcal{O}_{\mathbb{P}^{1}}(a_{i})$ then each $a_{i} \geq n-1$. Moreover we can find such a section $C'$ through any $n$ general points.

Conversely, if $C$ is a free section of $\mathcal{X}$ with $N_{C/\mathcal X} = \oplus \mathcal{O}_{\mathbb{P}^{1}}(a_{i})$ and each $a_{i} \geq n-1$, then any $n$ general points of $\mathcal{X}$ are contained in some deformation of $C$.
\end{lemm}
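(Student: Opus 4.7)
The plan is to reinterpret both directions of the lemma as statements about an $n$-fold evaluation morphism out of the universal family of sections, and to use the standard dictionary between tangent map surjectivity and $H^1$ vanishing of twists of the normal bundle on $\mathbb{P}^{1}$.

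Let $M$ denote the component of $\Sec(\mathcal{X}/\mathbb{P}^{1})$ containing $C$. Identify the universal family of sections over $M$ with $M \times \mathbb{P}^{1}$ via $([s], t) \mapsto s(t)$, so that taking the $n$-fold fiber product yields an evaluation morphism $ev^{n}: M \times (\mathbb{P}^{1})^{n} \to \mathcal{X}^{n}$. The statement that any $n$ general points of $\mathcal{X}$ lie on a deformation of $C$ is exactly the statement that $ev^{n}$ is dominant.

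For the first direction, dominance of $ev^{n}$ implies dominance of the single-point evaluation, so Lemma \ref{lemm:dominantimpliesfree} gives that a general $C' \in M$ is free, and in particular $N_{C'/\mathcal{X}}$ is a locally free sheaf on $C' \cong \mathbb{P}^{1}$. By generic smoothness in characteristic zero, the tangent map of $ev^{n}$ is surjective at a general point $(C', (p_{1}, \ldots, p_{n}))$. Using
\begin{equation*}
T_{(C', p_{1}, \ldots, p_{n})}(M \times (\mathbb{P}^{1})^{n}) = H^{0}(N_{C'/\mathcal{X}}) \oplus \bigoplus_{i=1}^{n} T_{p_{i}}\mathbb{P}^{1},
\end{equation*}
together with the fact that the $i$-th summand $T_{p_{i}}\mathbb{P}^{1}$ maps isomorphically onto $T_{p_{i}}C' \subset T_{p_{i}}\mathcal{X}$, surjectivity of the tangent map onto $\bigoplus_{i} T_{p_{i}}\mathcal{X}$ reduces to surjectivity of the evaluation $H^{0}(N_{C'/\mathcal{X}}) \to \bigoplus_{i} N_{C'/\mathcal{X}}|_{p_{i}}$. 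The long exact sequence of $0 \to N_{C'/\mathcal{X}}(-\sum p_{i}) \to N_{C'/\mathcal{X}} \to \bigoplus_{i} N_{C'/\mathcal{X}}|_{p_{i}} \to 0$ converts this into $H^{1}(N_{C'/\mathcal{X}}(-p_{1}-\cdots-p_{n})) = 0$. Writing $N_{C'/\mathcal{X}} = \bigoplus_{j}\mathcal{O}(a_{j})$, this splits as $H^{1}(\mathcal{O}(a_{j}-n)) = 0$ for each $j$, forcing $a_{j} \geq n-1$. The \emph{moreover} clause is immediate: at such a point $ev^{n}$ is smooth and therefore open, and since $\mathcal{X}^{n}$ is irreducible its image is a dense open subset, so for any $n$ general points of $\mathcal{X}$ there is a deformation of $C'$ passing through them.

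For the converse, given a free section $C$ with $N_{C/\mathcal{X}} = \bigoplus_{j}\mathcal{O}(a_{j})$ and $a_{j} \geq n-1$, and any distinct $p_{1}, \ldots, p_{n} \in \mathbb{P}^{1}$, one has $H^{1}(N_{C/\mathcal{X}}(-\sum p_{i})) = \bigoplus_{j} H^{1}(\mathcal{O}(a_{j}-n)) = 0$, so the tangent space computation above, read in reverse, shows $ev^{n}$ is smooth at $(C, (p_{1}, \ldots, p_{n}))$ and hence dominant by irreducibility of $\mathcal{X}^{n}$. The main point requiring care is bookkeeping in the tangent space computation: one must cleanly separate the $(\mathbb{P}^{1})^{n}$-directions, which cover $T_{p_{i}}C'$, from the deformation-of-$C'$ directions, which cover the normal directions, and verify that in characteristic zero the generic smoothness step applies at a point where $C'$ is free and the $p_{i}$ are generic, so that the normal bundle splitting computation is meaningful.
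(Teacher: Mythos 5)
Your argument is correct, and its engine is the same as the paper's: the equivalence between surjectivity of the differential of the evaluation map and surjectivity of $H^{0}(N_{C'/\mathcal{X}}) \to \bigoplus_{i} N_{C'/\mathcal{X}}|_{p_{i}}$, i.e.\ the vanishing of $H^{1}(N_{C'/\mathcal{X}}(-p_{1}-\cdots-p_{n}))$. The difference is in packaging. The paper treats that equivalence as a black box (the commutative diagram (2) of \cite{Shen12}), proves the forward direction by fixing $n-1$ of the points and studying the residual family, and proves the converse by induction on $n$, adding one point at a time; you instead work with the $n$-fold evaluation map $ev^{n}: M \times (\mathbb{P}^{1})^{n} \to \mathcal{X}^{n}$ and derive the equivalence explicitly from the tangent space decomposition, which makes the proof self-contained and lets both directions fall out of a single smoothness-versus-dominance statement with no induction. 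One clause deserves a word more: the ``moreover'' part of the lemma asks for a section with the stated (generic) normal bundle through the $n$ general points, while your open-image argument a priori only produces some member of $M$ through them; this is repaired by restricting $ev^{n}$ to the dense open locus of general members of $M$ before taking images, since the image of a dense subset under a dominant morphism is dense and constructible, hence contains a dense open subset of $\mathcal{X}^{n}$. With that one-line addition the proof is complete.
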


\begin{proof}
This follows from the commutative diagram (2) in \cite[p. 240]{Shen12}.

First suppose that some deformation of $C$ contains $n$ general points. By Lemma \ref{lemm:dominantimpliesfree} we know that a general deformation of $C$ is free. Consider the sublocus of $\Sec(\mathcal{X}/\mathbb{P}^{1})$ parametrizing free deformations of $C$ through a fixed set of $n-1$ general points. We know that there is some component of this locus whose universal family maps dominantly onto $\mathcal{X}$.  It follows that the differential map of tangent spaces for the universal family is generically full rank. Thus the commutative diagram (2) in \cite{Shen12} implies that $a_i \geq n-1$.

Conversely suppose that $a_i \geq n-1$ for some free section $C$. We prove our assertion by induction on $n$. For the base case when $n = 1$,  $C$ is free so that our assertion is clear. For the induction step, we suppose that the statement is true for $n$ and deduce the statement for $n+1$.  Assume that $a_i \geq n$ for every $i$. By the induction hypothesis, a general deformation of $C$ passes through $n$ general points. We fix $n$ general points and we consider the deformation space of curves passing through $n$ general points. Shen shows that the differential of the map from the universal family to $X$ is generically full rank. This means that this evaluation map must be dominant, so a general deformation of $C$ will pass through $n + 1$ general points.
\end{proof}

\subsection{Rational curves on del Pezzo surfaces}

In this section we review some results of \cite{Testa09} concerning the moduli spaces of rational curves on del Pezzo surfaces defined over an algebraically closed field of characteristic $0$.  When we discuss rational curves on del Pezzo surfaces we will always refer to the anticanonical polarization: lines mean $-K_{S}$-lines, conics mean $-K_{S}$-conics, etc.  We start by discussing the classification of rational curves of low anticanonical degree.

\begin{lemm} \label{lemm:conicscubicsdescription}
Let $S$ be a del Pezzo surface of degree $d$ defined over an algebraically closed field of characteristic $0$. Then: 
\begin{itemize}
\item The lines on $S$ are either $(-1)$-curves or singular elements of $|-K_{S}|$ (when $d=1$).
\item The conics on $S$ are either fibers of a conic fibration or the pullback of rational curves in the anticanonical linear series on a degree $2$ del Pezzo surface via a birational map (when $d=1,2$) or rational curves which lie in $|-2K_{F}|$ (when $d=1$).
\item Suppose $d \geq 2$.  Then the cubics on $S$ are pullbacks of elements of $|\mathcal{O}_{\mathbb{P}^{2}}(1)|$ under birational maps $\phi: S \to \mathbb{P}^{2}$ or the pullback of rational curves in the anticanonical linear series on a degree $3$ del Pezzo surface $S'$ via a birational map (when $d=2,3$).
\end{itemize}
In particular, suppose that $C$ is a cubic curve on a del Pezzo surface $S$ of degree between $2$ and $8$ that contains a point $p_{0}$ not lying on any line in $S$.  Then we can deform $C$ while keeping $p_{0}$ fixed to a stable map whose image is the union of a conic containing $p_{0}$ and a line.
\end{lemm}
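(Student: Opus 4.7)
The first two bullet points are classification results proved in \cite{Testa09}, so I would cite them directly.

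For the deformation claim, I would first dispense with the case where $C$ is reducible: the constraint $-K_{S} \cdot C = 3$ together with positivity of $-K_{S}$ on each component forces $C$ to decompose either as three lines or as (line) $+$ (conic). The hypothesis that $p_{0}$ lies on no line in $S$ excludes the former and forces $p_{0}$ to lie on the conic component in the latter, so $C$ itself already represents the desired stable map.

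Assume therefore that $C$ is irreducible. By the classification of cubics, either (i) $C$ is the strict transform of a line $\ell \subset \mathbb{P}^{2}$ under a birational morphism $\phi : S \to \mathbb{P}^{2}$, or (ii) when $\deg S \in \{2,3\}$, $C = \phi^{*}C'$ for a singular irreducible anticanonical section $C'$ on a degree $3$ del Pezzo $S'$ and a birational morphism $\phi : S \to S'$. Noting that $\mathbb{P}^{1}\times\mathbb{P}^{1}$ admits no cubic curves (every curve class has even anticanonical degree), in case (i) I can realize $S$ as the blowup of $\mathbb{P}^{2}$ at $k = 9-d \geq 1$ points $p_{1},\dots,p_{k}$. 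The hypothesis on $p_{0}$ ensures that $\phi(p_{0}) \notin \{p_{1},\dots,p_{k}\}$. I would then use the pencil of lines in $\mathbb{P}^{2}$ through $\phi(p_{0})$: pulling back along $\phi$ gives a $\mathbb{P}^{1}$-family of divisors of class $\phi^{*}\mathcal{O}(1)$, all passing through $p_{0}$; the generic member is an irreducible cubic (representing $C$ for an appropriate choice of initial line) and over a line through $p_{i}$ the pullback decomposes as $\tilde{\ell}_{i} + E_{i}$, with $\tilde{\ell}_{i}$ a conic through $p_{0}$ and $E_{i}$ a line. This is the required stable map.

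For case (ii) I would run an analogous pencil argument inside the $3$-dimensional linear system $|-K_{S'}|$ on $S'$: choose a pencil of anticanonical sections through $\phi(p_{0})$ connecting $C'$ to a reducible section (line) $+$ (conic through $\phi(p_{0})$), which exists since conics on $S'$ move in a $1$-parameter family through any point by the classification of conics. Pulling the pencil back to $S$ and tracking the contribution of the exceptional locus of $\phi$ produces the desired deformation. The main delicate step, which I expect to be the principal obstacle, is precisely this bookkeeping in case (ii): one must check that the pulled-back limit on $S$ decomposes as exactly (line) $+$ (conic containing $p_{0}$), rather than as a longer chain picking up additional exceptional components of $\phi$.
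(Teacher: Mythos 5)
The paper offers no proof of this lemma at all: the surrounding section is explicitly a review of \cite{Testa09}, so your citation for the two classification bullets is exactly what the authors do, and your pencil-of-lines argument for an irreducible cubic of type (i) is correct and complete (the pullback of the pencil of lines through $\phi(p_0)$ really is a one-parameter family of genus-$0$ stable maps, since the general member is an irreducible rational curve, and the limit over the line through a blown-up point $p_i$ has cycle class $(H-E_i)+E_i$ with $p_0$ forced onto the conic $H-E_i$ because $E_i$ is a line).

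Case (ii), however, contains a genuine gap beyond the bookkeeping you flag. A pencil inside $|-K_{S'}|$ is a family of \emph{divisors}, not of genus-$0$ stable maps: its general member is a smooth curve of genus $1$, so connecting $C'$ to a reducible member $L+Q$ through such a pencil does not produce a deformation of $C'$ inside $\overline{M}_{0,0}(S')$, which is what the statement requires ("deform $C$ as a stable map"). The correct family is the one-parameter family of \emph{rational} (i.e.\ singular) anticanonical members through $\phi(p_0)$ -- a curve in the two-dimensional discriminant locus of $|-K_{S'}|\cong\mathbb{P}^3$ -- and one must degenerate it by Mori's Bend-and-Break with the point $\phi(p_0)$ fixed. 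In fact, once you invoke Bend-and-Break you can run the whole argument directly on $S$ and dispense with both case (ii) and its exceptional-divisor bookkeeping: deformation theory gives $\dim\overline{M}_{0,0}(S,[C])\geq -K_S\cdot C-1=2$, so the stable maps deforming $C$ through the fixed point $p_0$ form a family of dimension $\geq 1$ with varying image; Bend-and-Break then yields a limit whose image cycle is a connected, reducible or non-reduced effective $1$-cycle of anticanonical degree $3$ containing $p_0$; since $p_0$ lies on no line, the component through $p_0$ has degree $\geq 2$, which forces the cycle to be a reduced conic through $p_0$ plus a line. This uniform argument subsumes your case (i) as well and does not rely on the classification bullets.
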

\begin{proof}
(1) follows from \cite[Lemma 3.3]{BLRT21}.  (2) follows from \cite[Lemma 3.4]{BLRT21}. (3) follows from \cite[Lemma 4.3]{BLRT21} and its proof.  It only remains to prove the last claim.  If $C$ is smooth, then there is a birational map $\phi: S \to \mathbb{P}^{2}$ such that $C$ is the strict transform of a line.  Let $p_{0}'$ denote the $\phi$-image of $p_0$ let $q'$ denote a point where $\phi^{-1}$ is not defined.  Then $C$ deforms to the union of the strict transfrom of the line passing through $p_0'$ and $q'$ and the $(-1)$-curve over $q'$.  If $C$ is singular, then there is a birational map $\phi: S \to \mathbb{P}^{2}$ such that $C$ is the strict transform of a rational plane cubic $C$ on $\mathbb P^2$ passing through the image $p_0'$ and $6$ points where $\phi^{-1}$ is not defined. Such a curve degenerates to the union of the strict transform of a conic passing through $p_0'$ and $4$ of the points with the strict transform of a line passing through the other $2$ points. Thus our assertion follows.
\end{proof}

By combining with \cite[Proposition 2.4]{Testa09} we obtain the following.

\begin{lemm} \label{lemm:uniruledspaces}
Let $S$ be a del Pezzo surface of degree $\geq 3$.  Then the spaces of conics and cubics on $S$ are rational.
\end{lemm}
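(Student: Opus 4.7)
The plan is to parametrize each component of the space of conics (resp.\ cubics) on $S$ using the explicit description provided by Lemma \ref{lemm:conicscubicsdescription}, and then combine with \cite[Proposition 2.4]{Testa09} to see that each such parametrization is birational onto a component.

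For conics, since $\deg S \geq 3$, Lemma \ref{lemm:conicscubicsdescription} shows that every conic on $S$ arises as a fiber of a conic fibration $f: S \to \mathbb{P}^{1}$. A del Pezzo surface of degree $\geq 3$ admits only finitely many conic fibration structures --- one for each effective class $\mathfrak{c} \in N_{1}(S)$ satisfying $\mathfrak{c}^{2} = 0$ and $-K_{S} \cdot \mathfrak{c} = 2$ --- and each gives a component of the space of conics naturally parametrized by the base $\mathbb{P}^{1}$ of the fibration. Hence the space of conics is a finite disjoint union of copies of $\mathbb{P}^{1}$, which is rational.

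For cubics, I would fix an irreducible component and identify the numerical class $\alpha$ of cubics it parametrizes. By Lemma \ref{lemm:conicscubicsdescription}, each cubic of class $\alpha$ is either (i) the pullback $\phi^{*}L$ for a birational morphism $\phi: S \to \mathbb{P}^{2}$ and a line $L \in |\mathcal{O}_{\mathbb{P}^{2}}(1)|$, or (ii) (only when $\deg S = 3$ and $\alpha = -K_{S}$) a singular anticanonical section. In case (i), the classes of the $(-1)$-curves contracted by $\phi$ are determined by $\alpha$, so the set of admissible $\phi$ forms a single $\mathrm{PGL}_{3}$-orbit; fixing a representative $\phi_{0}$, the assignment $L \mapsto \phi_{0}^{*}L$ yields a morphism $\mathbb{P}^{2} \to \{\text{cubics of class } \alpha\}$ which is birational onto its image, since $\phi_{0}$ is birational and so different general lines pull back to distinct cubics on $S$. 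In case (ii), the parameter space is the discriminant locus in $|-K_{S}| \cong \mathbb{P}^{3}$ parametrizing singular members of the anticanonical series, which is a rational surface.

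The main obstacle in carrying out this plan is ensuring that each dominant morphism from a rational parameter space onto a component of the space of cubics is in fact birational --- that is, ruling out the possibility that our parametrization is merely generically finite of higher degree. This is exactly where \cite[Proposition 2.4]{Testa09} enters, providing the correct dimension and irreducibility of each component of the space of cubics, so that birationality follows from the dimension count against the explicit rational model constructed above.
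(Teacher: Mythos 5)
Your proposal is correct and follows essentially the same route as the paper, which obtains the lemma precisely by combining the explicit classification of conics and cubics in Lemma \ref{lemm:conicscubicsdescription} with the irreducibility/dimension statement of \cite[Proposition 2.4]{Testa09}; your elaboration (conic classes give pencils, cubic classes of square $1$ give the linear system $|\phi^{*}\mathcal{O}_{\mathbb{P}^{2}}(1)| \cong \mathbb{P}^{2}$, and the anticanonical class on a cubic surface gives the discriminant surface in $|-K_{S}|$) is the intended argument. The only claim left implicit is that the discriminant locus in $|-K_{S}|$ is rational, which holds because it is the dual variety of the cubic surface and the Gauss map of a smooth hypersurface is birational onto its image in characteristic $0$.
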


Testa's main theorem is the following.

\begin{theo}[\cite{Testa09} Theorem 5.1] 
Let $S$ be a del Pezzo surface of degree $\geq 2$.  Then for every numerical class $\alpha$, there is at most one component of $\overline{M}_{0,0}(S,\alpha)$ that generically parametrizes stable maps that are birational onto their image.
\end{theo}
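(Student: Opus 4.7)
The plan is to induct on the anticanonical degree $d := -K_S \cdot \alpha$. For the base case $d \leq 3$, I would invoke Lemma~\ref{lemm:conicscubicsdescription} directly: lines are isolated $(-1)$-curves and on a del Pezzo of degree $\geq 2$ each numerical line class contains a unique line; conics form a single rational family per numerical class (either the fibers of a conic fibration or, in the degree-$2$ case, singular members of $|-K_S|$); and cubics arise as pullbacks of lines under a birational contraction to $\bP^2$ or as pullbacks of singular anticanonical sections from a degree-$3$ del Pezzo. In each case $\overline{M}_{0,0}(S,\alpha)$ contains at most one component generically parametrizing birational stable maps.

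For the inductive step, assume $d \geq 4$ and the statement for all effective classes of strictly smaller anticanonical degree. Let $M_1, M_2 \subset \overline{M}_{0,0}(S,\alpha)$ be two components generically parametrizing birational stable maps; each has dimension $\geq d-1$. Fix $d-2$ general points $p_1, \dots, p_{d-2} \in S$; the sublocus $N_i \subset M_i$ parametrizing stable maps whose image contains all of these points has dimension $\geq 1$. Applying Mori's bend-and-break to $N_i$ produces, in the boundary of $M_i$, a reducible stable map $\sigma_i$ with components of classes $\alpha_{i,1} + \alpha_{i,2} = \alpha$ of anticanonical degrees strictly less than $d$. Iterating bend-and-break on the higher-degree piece, I may further arrange that one component of $\sigma_i$ is a line $L_i$ through a prescribed general point and the other is a birational stable map of class $\alpha - [L_i]$.

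By the inductive hypothesis, the class $\alpha - [L_i]$ determines a unique component in $\overline{M}_{0,0}(S, \alpha - [L_i])$; the incidence conditions together with the discreteness of the finite set of line classes force $[L_1] = [L_2]$. Thus $\sigma_1$ and $\sigma_2$ lie in a common boundary stratum $B \subset \overline{M}_1 \cap \overline{M}_2$. It remains to show $B$ lies in the closure of a unique component of $\overline{M}_{0,0}(S, \alpha)$: since its node sits at a general smooth point of $S$, the local smoothing space is unobstructed and irreducible, and combined with the inductive uniqueness of the two irreducible factors this singles out a unique smoothing component of $B$ in class $\alpha$, so $M_1 = M_2$.

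The hardest step is \emph{coordination}: ensuring that the two bend-and-break degenerations produce the same stratum $B$ rather than two distinct strata of the same combinatorial type. The way around this is to exploit the freedom in choosing the $d-2$ general points together with the rich supply of lines through a general point on a del Pezzo of degree $\geq 2$, iterating bend-and-break on the residual factor until both $\sigma_1$ and $\sigma_2$ are forced into one canonical configuration. This coordination, together with the unobstructedness of smoothings at a generic node, is the technical heart of the argument.
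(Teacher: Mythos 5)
First, note that this paper does not prove the statement: it is quoted verbatim from \cite{Testa09} (Theorem 5.1 there), so there is no internal proof to compare against; your proposal has to stand on its own. Its overall shape (induction on anticanonical degree via Bend-and-Break through $d-2$ general points, then identifying a common boundary stratum of free broken curves) is indeed the strategy Testa uses, and it is echoed in Lemma \ref{lemm:BBfordelPezzo} of this paper. But as written the argument has a concrete error and two unproved steps. The error: on a del Pezzo surface of degree $\geq 2$ the $-K_S$-lines are exactly the $(-1)$-curves, which are rigid and finite in number, so \emph{no} line passes through a prescribed general point. The ``rich supply of lines through a general point'' does not exist, and you cannot arrange for one component of $\sigma_i$ to be a line through a general point. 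The correct target configuration is a chain of \emph{free} curves of anticanonical degree $2$ or $3$ (conics and cubics), as in Lemma \ref{lemm:BBfordelPezzo}; lines are never free and gluing a line at a general point is impossible.

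The two unproved steps are exactly where the content of Testa's proof lies. (1) Bend-and-Break only produces \emph{some} reducible degeneration; it does not guarantee that the limit is a union of two curves that are free, birational onto their images, and attached at a node so that the stable map is an unobstructed (smooth) point of $\overline{M}_{0,0}(S,\alpha)$. Without that, the ``unique smoothing component'' conclusion fails: $M_1$ and $M_2$ could each contain a singular boundary point of the same combinatorial type without containing the whole stratum $B$. This freeness of the broken pieces is \cite[Lemma 1.14]{Testa09}, a substantive input you are implicitly assuming. (2) The coordination step — forcing $\sigma_1$ and $\sigma_2$ into the \emph{same} irreducible stratum $B$, which requires knowing that the decomposition $\alpha = \alpha_{i,1} + \alpha_{i,2}$ can be normalized identically for both components and that the resulting gluing locus is irreducible — is acknowledged in your last paragraph but never carried out. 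Declaring it ``the technical heart'' and gesturing at iterating Bend-and-Break is not a proof of it; in Testa's paper this occupies most of the argument and uses the explicit classification of low-degree classes and careful monodromy/irreducibility statements for the gluing loci. As it stands the proposal is a correct outline of the known strategy, not a proof.
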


The following lemma is a key step in his proof and we will need it as well. 

\begin{lemm}
\label{lemm:BBfordelPezzo}
Let $S$ be a del Pezzo surface. Let $C$ be a general free curve on $S$ such that $$-K_S \cdot C \geq 4.$$
Fix two general points $x_0, x_1$ of $C$. Then we can deform $C$ as a stable map, keeping $x_{0}$ and $x_{1}$ fixed, so that the image of the resulting stable map is the union of two free curves $C_0 \cup C_1$ such that $x_0 \in C_0$ and $x_1 \in C_1$.  

Moreover, by continuing this inductively we can deform $C$ as a stable map into a chain of free curves of anticanonical degree $\leq 3$ such that the end curves $C_0$ and $C_1$ contain $x_0, x_1$ respectively.
\end{lemm}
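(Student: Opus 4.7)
The plan is to combine a dimension count with a Mori-style bend-and-break argument in the moduli space of two-pointed stable maps, and then to reorganize the resulting broken limit into a two-component chain of free curves.

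Setting $\beta = [C]$, let $M \subset \overline{M}_{0,2}(S,\beta)$ denote the closure of the component containing $(C; x_0, x_1)$. The standard expected-dimension formula for stable maps to a smooth surface gives $\dim M \geq -K_S \cdot \beta + 1$, so the fiber $F$ of the two-point evaluation $\mathrm{ev}: M \to S \times S$ over $(x_0, x_1)$ has dimension at least $-K_S \cdot \beta - 3 \geq 1$. Thus $F$ is a positive-dimensional proper family of pointed stable maps whose two markings are sent to $x_0$ and $x_1$. A classical Mori bend-and-break argument applied to a one-parameter subfamily of $F$ then produces a point of $F$ represented by a stable map $f : D \to S$ whose domain is a reducible tree of $\mathbb{P}^1$'s, with marked points $p_0, p_1 \in D$ sent to $x_0, x_1$.

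The principal obstacle is to arrange the reducible limit to have exactly two components, both mapping to free curves, with $x_0$ and $x_1$ on different components. I would handle this in two reductions. First, if both marked points lie on the same component of $D$, or if the chain of components joining them has length greater than two, I smooth a node not separating $p_0$ from $p_1$; this move stays in $F$, preserves the images of $p_0, p_1$, and strictly decreases the number of components, so iterating reduces to a chain of exactly two components with the marked points at opposite ends. Second, for freeness, I would use that on a smooth del Pezzo surface the only irreducible rational curves with negative self-intersection are $(-1)$-curves. An endpoint component whose image is a $(-1)$-curve contradicts the generality of $x_0$ or $x_1$, since $(-1)$-curves are rigid. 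If a $(-1)$-curve arises as an interior component, I absorb it into an adjacent component by smoothing their shared node: the resulting class is nef with positive anticanonical degree, and on a smooth del Pezzo surface such a class is represented by a free rational curve, which can be substituted for the two absorbed components.

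For the moreover part I would iterate the first statement. Given a two-component decomposition $C_0 \cup C_1$ with $x_i \in C_i$, if $-K_S \cdot C_0 \geq 4$ I apply the first statement to $C_0$ with the two marked points $x_0$ and the node $q_0 = C_0 \cap C_1$; the node $q_0$ is sufficiently general on $C_0$ because, as we vary within the positive-dimensional family of reducible limits constructed above, $q_0$ traces out a positive-dimensional subfamily of points of $C_0$. Iterating on each piece of anticanonical degree at least $4$ terminates in finitely many steps since the degrees strictly decrease at each splitting, producing the desired chain of free curves of anticanonical degree at most $3$ with end curves containing $x_0, x_1$.
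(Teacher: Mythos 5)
Your overall architecture (dimension count, bend-and-break through the two points, then clean up the limit, then induct) is reasonable, but the cleanup step has a genuine gap, and it is exactly the step the paper's proof is designed to avoid. With only the two points $x_0,x_1$ fixed, the family of curves through them has dimension $-K_S\cdot C-3$, which can be large, and the bend-and-break limit can be correspondingly degenerate: it may contain multiple covers, contracted components, and long chains of non-free components. Your two repair moves are not justified. Smoothing a node of an arbitrary limit stable map, while staying in the closure of the original component $M$ and preserving the images of the marked points, requires the limit to be unobstructed (e.g.\ a union of free curves glued at general points) --- which is precisely what you are trying to prove; a reducible limit can be a non-smooth point of $\overline{M}_{0,2}(S,\beta)$ lying on several components, so a partial smoothing need not lie in $M$ at all. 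Likewise, ``absorbing'' an interior $(-1)$-curve by replacing two components with an irreducible free curve in the sum class is not a deformation unless you exhibit a connecting family; and the classification you invoke (the only non-free irreducible rational curves are $(-1)$-curves) fails on degree~$1$ del Pezzo surfaces, where the rational anticanonical members are non-free of positive self-intersection, while the lemma is stated for all del Pezzo surfaces.

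The paper's proof of the first statement avoids all of this with one move: instead of fixing only $x_0,x_1$, it fixes $-K_S\cdot C-2$ general points (including $x_0,x_1$), so that the family through them is still one-dimensional and Bend-and-Break applies, but now each piece of the broken curve is forced to contain enough general points; Testa's Lemma~1.14 then says the limit is automatically a union of exactly two free curves. If you want to keep your two-point setup, you would need to prove a substitute for that lemma, which is the hard content here. Your treatment of the ``moreover'' part (inducting with the node as the new marked point, after arguing the node can be taken general) is in the same spirit as the paper's, but it inherits the gap above and also silently uses that the two-component decomposition of $C_0$ can be glued back to $C_1$ inside the original component --- the paper handles this via smoothness of the comb point and the fiber products $\overline{M}_{0}^{(1)}\times_{S}\overline{M}_{1}^{(1)}$, and you should make the analogous argument explicit.
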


\begin{proof}
To see the first statement, consider the deformations of $C$ which contain a fixed set of  $-K_{S} \cdot C - 2$ general points on $S$.  By Bend-and-Break $C$ deforms into a broken curve through these points.  By \cite[Lemma 1.14]{Testa09} this broken curve must be the union of two free curves, and we deduce the statement. 

We prove the second statement by induction on $-K_S \cdot C = r$.  Let $\overline{M}$ denote the component of $\overline{M}_{0,0}(S)$ containing the stable map represented by the normalization $\mathbb{P}^{1} \to C$.  The first statement shows that $\overline{M}$ contains stable maps $g$ whose domain has two components and whose image in $S$ is $C_0\cup C_1$ where $C_{0}$ and $C_{1}$ are free curves and $x_i \in C_i$.  If both of the $C_{i}$ have degree $\leq 3$ then we are done.  

If exactly one of the $C_{i}$ has degree $\leq 3$, we may suppose without loss of generality that this component is $C_{0}$.  Choose a general point $x_{2}$ on $C_0$.  By choosing $x_0$ general in $S$, we may assume that $x_2$ is also general in $S$. By deforming $C_{1}$ (and deforming the attachment point to $C_{0}$), we may suppose that $C_{1}$ contains both $x_{1}$ and $x_{2}$ so that the attachment is $x_2$. Then we apply the induction hypothesis to $C_1$ with $x_1$ and $x_2$.

Finally, suppose that both of the $C_{i}$ have degree $>3$.  Let $\overline{M}_{0}$ and $\overline{M}_{1}$ denote the components of $\overline{M}_{0,0}(S)$ containing $C_{0}$ and $C_{1}$ and let $\overline{M}_{0}^{(1)}$ and $\overline{M}_{1}^{(1)}$ denote the corresponding components of $\overline{M}_{0,1}(S)$.  Since $g$ is a smooth point of $\overline{M}_{0,0}(S)$, we see that $\overline{M}$ contains a component of $\overline{M}_{0}^{(1)} \times_{S} \overline{M}_{1}^{(1)}$ representing gluings of deformations of $C_{0}$ and $C_{1}$.  Since $C_{0}$ is free and $x_{0}$ is general, if we fix a general point $x_{2} \in S$ then the sublocus of $\overline{M}_{0}$ parametrizing curves through $x_{0}$ and $x_{2}$ has codimension $2$.  By our degree assumption this locus is non-empty, and a general such curve will not contain $x_{1}$.  Similarly, the sublocus of $\overline{M}_{1}$ parametrizing curves through $x_{1}$ and $x_{2}$ has codimension $2$.  By our degree assumption this locus is non-empty, and a general such curve will not contain $x_{0}$.  Thus the corresponding component of $\overline{M}_{0}^{(1)} \times_{S} \overline{M}_{1}^{(1)}$ contains a stable map onto two free curves $\widetilde{C}_{0}$ and $\widetilde{C}_{1}$ such that $\widetilde{C}_{0}$ contains $x_{0}$ but not $x_{1}$ and $\widetilde{C}_{1}$ contains $x_{1}$ but not $x_{0}$ and the image of the node is the general point $x_{2}$. Then we may apply the induction hypothesis to $\widetilde{C}_0$ with $x_0, x_2$ and to $\widetilde{C}_1$ with $x_1, x_2$ and our assertion follows.
\end{proof}

We will also need the following lemma:

\begin{lemm} \label{lemm:nefconedelpezzo}
Let $S$ be a smooth del Pezzo surface of degree $\geq 2$.  Then every element of $\Nef_{1}(S)_{\mathbb{Z}}$ is represented by a sum of free rational curves.
\end{lemm}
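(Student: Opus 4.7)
The plan is to proceed by induction on the non-negative integer $n := -K_S \cdot \alpha$. For the base case $n = 0$, nefness gives $\alpha^2 \geq 0$, and then the Hodge index theorem (using that $-K_S$ is ample with $(-K_S)^2 > 0$ and $\alpha \cdot (-K_S) = 0$) forces $\alpha = 0$, the empty sum. Note also that $-K_S \cdot \alpha = 1$ cannot occur for any nonzero nef integral $\alpha$: Hodge index forces $\alpha^2 = 0$, and on the boundary of the positive cone any nef class of self-intersection zero is a positive integer multiple of a primitive conic fibration class $F$ with $-K_S \cdot F = 2$, so $-K_S \cdot \alpha$ is even.

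For the inductive step the strategy is to produce a free rational curve class $\beta$ with $\alpha - \beta$ still integral and nef, and then apply the induction hypothesis to $\alpha - \beta$. Two natural families of free rational curve classes will serve as building blocks: (i) the primitive conic fibration classes $F$ on $S$ (those nef classes with $F^2 = 0$ and $-K_S \cdot F = 2$), each represented by a smooth rational fiber of the associated $\mathbb{P}^1$-fibration $S \to \mathbb{P}^1$; and (ii) the pullback classes $\phi^* \mathcal{O}_{\mathbb{P}^2}(1)$ under birational morphisms $\phi : S \to \mathbb{P}^2$ obtained by contracting a disjoint collection of $(-1)$-curves, each represented by the strict transform of a general line.

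When $\alpha^2 = 0$, the argument above shows $\alpha = kF$ for a primitive conic fibration class $F$ and a positive integer $k$, and the sum of $k$ general fibers of the associated fibration realizes $\alpha$ as required. When $\alpha^2 > 0$, I would locate $\beta$ among the generators of types (i) and (ii) with $\alpha \cdot E \geq \beta \cdot E$ for every $(-1)$-curve $E$; then $\alpha - \beta$ remains nef (and integral) while its anticanonical degree strictly drops, and induction concludes.

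The main obstacle is to establish that such a $\beta$ always exists, i.e., that the integral nef monoid $\Nef_1(S)_{\mathbb{Z}}$ is generated as a $\mathbb{Z}_{\geq 0}$-semigroup (not merely as a cone) by the free rational curve classes of types (i) and (ii). Since the nef cone of a del Pezzo of degree $\geq 2$ is rational polyhedral and the lists of $(-1)$-curves, conic fibrations, and birational contractions to $\mathbb{P}^2$ are classical and finite, this reduces to a finite check degree by degree. For $d \in \{6,7,8,9\}$ the low Picard rank makes the check straightforward; for $d \in \{2,3,4,5\}$ I would use the Weyl group action on $\Pic(S)$ to cut down the bookkeeping to a small list of orbit representatives, and verify the semigroup generation on those.
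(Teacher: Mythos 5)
Your reduction to a semigroup\hyp{}generation statement is where the argument fails, and the claimed generation is in fact false, not merely unverified. Take $S$ of degree $2$ and $\alpha=-K_{S}$, which certainly lies in $\Nef_{1}(S)_{\mathbb{Z}}$. Any $\mathbb{Z}_{\geq 0}$-combination of your generators with $-K_{S}$-degree equal to $2$ must consist of a single conic class (since $2a+3b=2$ forces $(a,b)=(1,0)$), but conic classes have self-intersection $0$ while $(-K_{S})^{2}=2$. The same obstruction occurs on a cubic surface: $-K_{S}$ has anticanonical degree $3$ and square $3$, whereas your only degree-$3$ generators, the classes $\phi^{*}\mathcal{O}_{\mathbb{P}^{2}}(1)$, have square $1$. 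So for these classes no $\beta$ of type (i) or (ii) can be subtracted, and the proposed finite check would not succeed in degrees $2$ and $3$. (Even setting this aside, the check is only announced, not performed, so the load-bearing step of the proposal is missing.) The parts that do work are the base case, the parity exclusion of $-K_{S}\cdot\alpha=1$, and the square-zero case via multiples of conic fibrations.

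The missing generators are exactly the free \emph{rational anticanonical curves}: on a del Pezzo surface of degree $2$ or $3$ the class $-K_{S}$ is represented by a free rational member of $|-K_{S}|$, and this is what realizes it as (a one-term sum of) free rational curves. This is precisely the pivot of the paper's proof, which invokes \cite[Corollary 2.3]{Testa09} to reduce the whole lemma to exhibiting one free rational curve in $|-K_{S}|$, and then produces such a curve as the strict transform of a rational plane cubic through the blown-up points (the case of $\mathbb{P}^{1}\times\mathbb{P}^{1}$ being clear). Your inductive-subtraction strategy is close in spirit to Testa's proof of that corollary and could be repaired by enlarging the generating set to include the free anticanonical classes and then actually carrying out the monoid computation, but as written the inductive step breaks down.
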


\begin{proof}
By \cite[Corollary 2.3]{Testa09}, it suffices to show that for every del Pezzo surface $S$ of degree $\geq 2$ there is a free rational curve in the anticanonical linear series $|-K_{S}|$.  This is clear for $\mathbb{P}^{1} \times \mathbb{P}^{1}$; for any other del Pezzo surface, we can find a birational map $\phi: S \to \mathbb{P}^{2}$.  Then the strict transform $C$ of a rational cubic on $\mathbb{P}^{2}$ which contains the $\phi$-exceptional locus will be a rational curve in $|-K_{S}|$.  Using deformation theory we see that $C$ must deform in at least a $1$-dimensional family on $S$, showing that a general deformation will be free.
\end{proof}

\section{Fujita invariants}

In this section we work over an arbitrary field of characteristic $0$. 
Recall that we have the following definition of the Fujita invariants
\begin{defi}
Let $X$ be a smooth projective variety over a field of characteristic $0$.  Let $L$ be a big and nef $\mathbb{Q}$-Cartier divisor on $X$.  The Fujita invariant, or the $a$-invariant is
\begin{equation}
a(X, L) = \min \{ t \in \mathbb R \mid K_{X} + tL \in \overline{\mathrm{Eff}}^1(X)\}.
\end{equation}
\end{defi}

Note that the $a$-invariant is geometric: it does not change under field extension.  We will be interested in how the Fujita invariant behaves over $k(\mathbb{P}^{1})$.

\begin{defi}
Let $\pi: \mathcal{X} \to \mathbb{P}^{1}$ be a morphism with irreducible generic fiber and let $L$ be a $\mathbb{Q}$-Cartier divisor on $\mathcal{X}$ whose restriction to the generic fiber is big and nef.  The generic $a$-invariant of $\mathcal{X}$ with respect to $L$ is $a(\mathcal X_{\eta},L|_{\mathcal X_{\eta}})$.  
\end{defi}

When $\pi$ is a Fano fibration, we obtain a geometric interpretation of the generic $a$-invariant using Lemma \ref{lemm:relativeprops}.

\begin{lemm}
Let $\pi: \mathcal{X} \to \mathbb{P}^{1}$ be a Fano fibration and let $L$ be a $\mathbb{Q}$-Cartier divisor on $\mathcal X$ whose restriction to a general fiber is big and nef.  Then for any smooth Fano fiber $F$ we have
\begin{equation*}
a(\mathcal X_{\eta},L|_{\mathcal X_{\eta}}) = a(F,L|_{F}).
\end{equation*}
\end{lemm}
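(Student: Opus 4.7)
The plan is to reduce the statement to a direct application of Lemma \ref{lemm:relativeprops} by interpreting both Fujita invariants as the threshold for pseudo-effectiveness of a single divisor on $\mathcal X$ restricted to different fibers. First, note that in characteristic $0$ generic smoothness guarantees that $\mathcal X_\eta$ is smooth, and $F$ is smooth by the definition of a Fano fibration, so in both cases the Fujita invariant is given by its direct definition without passing to a resolution.

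Next, I would use adjunction to compute the relevant canonical classes. Since $\mathcal X$ is Gorenstein, $K_{\mathcal X}$ is Cartier; since $\mathbb P^1$ is smooth, $K_{\mathcal X/\mathbb P^1} = K_{\mathcal X} - \pi^*K_{\mathbb P^1}$ is therefore Cartier, so $K_{\mathcal X/\mathbb P^1} + tL$ is $\mathbb Q$-Cartier on $\mathcal X$ for every $t \in \mathbb Q$. Because a smooth Fano fiber $F$ satisfies $F|_F \equiv 0$ and because $\pi^*K_{\mathbb P^1}|_F \equiv 0$, adjunction gives $K_F = K_{\mathcal X/\mathbb P^1}|_F$. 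The analogous identity $K_{\mathcal X_\eta} = K_{\mathcal X/\mathbb P^1}|_{\mathcal X_\eta}$ holds over the generic point. Consequently,
\[
K_F + tL|_F = (K_{\mathcal X/\mathbb P^1}+tL)|_F, \qquad K_{\mathcal X_\eta} + tL|_{\mathcal X_\eta} = (K_{\mathcal X/\mathbb P^1}+tL)|_{\mathcal X_\eta}.
\]

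Now I apply Lemma \ref{lemm:relativeprops} to the $\mathbb Q$-Cartier divisor $K_{\mathcal X/\mathbb P^1} + tL$: its restriction to the smooth Fano fiber $F$ is pseudo-effective if and only if its restriction to $\mathcal X_\eta$ is pseudo-effective. Taking the infimum over $t \in \mathbb Q$ on both sides, the defining thresholds in \eqref{eq: ainv} coincide, yielding $a(F,L|_F) = a(\mathcal X_\eta,L|_{\mathcal X_\eta})$. One should also observe that the hypothesis forces both invariants to be finite simultaneously: by Lemma \ref{lemm:relativeprops} applied to bigness, $L|_F$ is big and nef for a general smooth fiber if and only if $L|_{\mathcal X_\eta}$ is big and nef, so the convention $a=+\infty$ is triggered for both sides at once.

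The only mild subtlety—and the place where one must be careful—is the passage from rational $t$ to real $t$ in the infimum: the pseudo-effective cone is closed, so the minimum in \eqref{eq: ainv} is attained and a density argument with rational $t$ suffices. Apart from this bookkeeping, the statement is essentially immediate from Lemma \ref{lemm:relativeprops}, and there is no genuine obstacle to overcome.
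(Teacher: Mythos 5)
Your proposal is correct and matches the paper's (essentially unstated) argument: the paper presents this lemma as an immediate consequence of Lemma \ref{lemm:relativeprops}, exactly via the identification $K_F+tL|_F=(K_{\mathcal X/\mathbb P^1}+tL)|_F$ and $K_{\mathcal X_\eta}+tL|_{\mathcal X_\eta}=(K_{\mathcal X/\mathbb P^1}+tL)|_{\mathcal X_\eta}$ together with the pseudo-effectiveness part of that lemma. Your additional checks (smoothness of $\mathcal X_\eta$ and of $\mathcal X$ near a smooth fiber so that adjunction applies, and the simultaneous finiteness of the two invariants via the bigness part of Lemma \ref{lemm:relativeprops}) are correct and harmless bookkeeping.
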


\begin{proof}
By Lemma \ref{lemm:relativeprops} we see that $K_{\mathcal{X}_{\eta}} + tL|_{\mathcal{X}_{\eta}}$ is pseudo-effective if and only if $K_{F} + tL|_{F}$ is pseudo-effective for every smooth Fano fiber $F$.  In particular the corresponding Fujita invariants must coincide.
\end{proof}

For del Pezzo surfaces, it is easy to work out the behavior of the $a$-invariant of the anticanonical divisor when restricted to subvarieties.  This leads to the following description:

\begin{lemm} \label{lemm:genericainvfordp}
Let $\pi: \mathcal{X} \to \mathbb{P}^{1}$ be a del Pezzo fibration.  Then:
\begin{itemize}
\item A subvariety $Y$ will have $a(Y_{\eta},-K_{\mathcal{X}/\mathbb{P}^{1}}) > 1$ if and only if its intersection with a general fiber $F$ is a union of curves of the following types: $(-1)$-curves, or rational curves in $|-K_F|$ when $F$ has degree $1$.
\item A subvariety $Y$ will have $a(Y_{\eta},-K_{\mathcal{X}/\mathbb{P}^{1}}) = 1$ if and only if its intersection with a general fiber $F$ is a union of curves of the following types: irreducible fibers of a conic fibration on $F$, the rational curves in $|-K_{F}|$ if $F$ has degree $2$, and the rational curves which lie in $|-2K_{F}|$ or the pullback of the anticanonical linear series on a degree $2$ del Pezzo surface if $F$ has degree $1$.
\end{itemize}
\end{lemm}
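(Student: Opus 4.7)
The plan is to reduce the computation to a general fiber of $\pi$ and then enumerate rational curves of low anticanonical degree on a del Pezzo surface.

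Because the Fujita invariant is geometric (invariant under base field extension) and because Lemma~\ref{lemm:relativeprops} lets us transfer positivity between the generic and general smooth fiber $F$, one has
\begin{equation*}
a(Y_\eta, -K_{\mathcal{X}/\mathbb{P}^{1}}|_{Y_\eta}) = a(Y\cap F, -K_F|_{Y\cap F}),
\end{equation*}
and the right hand side may be computed componentwise on $Y\cap F$. Thus it suffices to determine $a(C, -K_F|_C)$ for an irreducible curve $C$ on the smooth del Pezzo surface $F$. Passing to the normalization $\nu : \tilde C \to C$ of geometric genus $g$ and using the definition on a smooth model, one obtains
\begin{equation*}
a(C,-K_F|_C) = \frac{2-2g}{-K_F\cdot C}.
\end{equation*}
In particular $a(C, -K_F|_C) > 1$ forces $g=0$ and $-K_F\cdot C = 1$, while $a(C, -K_F|_C) = 1$ forces $g=0$ and $-K_F\cdot C = 2$.

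The core step is then to classify irreducible rational curves $C$ on $F$ with $-K_F\cdot C \in \{1,2\}$. By adjunction $C^2 = 2g_a(C) - 2 - K_F\cdot C$, and by Hodge index applied to $-K_F$ and $C$, one has $d\cdot C^2 \leq (-K_F\cdot C)^2$ where $d = (-K_F)^2$; this bounds $g_a(C)$ in terms of $d$. A short case analysis then yields the stated lists. For $-K_F\cdot C = 1$: either $g_a = 0$ and $C$ is a $(-1)$-curve, or $d = 1$ and $g_a = 1$, in which case $C \equiv -K_F$ by a second application of Hodge index. For $-K_F\cdot C = 2$: either $g_a = 0$ and $C$ is a smooth fiber of a conic fibration, or $g_a = 1$ which forces $d\leq 2$ (on $d=2$ one has $C\in |-K_F|$; on $d=1$ one has $C\in \phi^{*}|-K_{F'}|$ for some contraction $\phi$ to a degree $2$ del Pezzo $F'$), or $g_a = 2$ which forces $d=1$ and $C\in |-2K_F|$.

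The main obstacle is to verify the case analysis on degree $1$ del Pezzo surfaces, where multiple linear systems with the same $-K_F$-degree can coexist. I would handle this by parametrizing divisor classes in the standard basis $H, E_1,\ldots,E_8$, writing out the Diophantine system imposed by adjunction and the degree constraint, and using Cauchy--Schwarz together with Hodge index to reduce to finitely many classes that can be identified by inspection.
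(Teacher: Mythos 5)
Your argument is correct and is essentially the intended one: the paper states this lemma without proof (remarking only that it is ``easy to work out''), and your reduction to a general fiber, the formula $a(C,-K_F|_C)=(2-2g)/(-K_F\cdot C)$ via the normalization, and the adjunction/Hodge-index case analysis for $-K_F\cdot C\in\{1,2\}$ supply exactly the missing verification. One small simplification: the lattice computation you propose for the degree $1$ case is not needed, since the Hodge index inequality $d\,C^{2}\leq(-K_F\cdot C)^{2}$ together with its equality case already pins down $C\equiv -K_F$, $C\equiv -2K_F$, or $C\equiv \phi^{*}(-K_{F'})$ directly in each subcase.
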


\begin{proof}
By \cite[Proposition 4.4]{LST18} every irreducible component of $Y_{\overline{K(B)}}$ will have the same Fujita invariant as $\mathcal{Y}_{\eta}$.  Thus it suffices to show that if $S$ is a del Pezzo surface over an algebraically closed field of characteristic $0$ then any subvariety $Z \subset S$ with Fujita invariant $\geq 1$ must have one of the types described in the statement.  Any such curve $Z$ must be rational.  The desired statement then follows from Lemma \ref{lemm:conicscubicsdescription} and the computation $a(Z,-K_{S}) = \frac{2}{-K_{S} \cdot Z}$. 
\end{proof}

In particular, the subvarieties $Y$ with larger generic $a$-invariant than $\mathcal{X}$ form a closed subset.  The subvarieties $Y$ with the same generic $a$-invariant as $\mathcal{X}$ are a little more complicated; note that they need not form a bounded family on $\mathcal{X}$ (even though the corresponding subvarieties of $\mathcal{X}_{\eta}$ do form a bounded family).

\section{Bend-and-Break}

We will frequently use Bend-and-Break (as in \cite{Mori82} and \cite[Lemma 1.9]{KM98}) for families of rational curves through two fixed points, but we will need a slightly more precise version than is usually stated in the literature.  In order to explain this precise version we review the following lemma (used in the proof of Bend-and-Break) which identifies the exact criterion we will need.

\begin{lemm} \label{lemm:strongerbandb}  
Let $\pi: Y \to C$ be a fibration with general fiber $\mathbb{P}^{1}$ from a smooth projective surface $Y$ to a smooth projective curve $C$. Let $X$ be a projective variety and suppose $f: Y \to X$ is a morphism such that $\dim(f(Y)) = 2$ and two sections $C_{1}, C_{2}$ are contracted to different points $x_{1}, x_{2}$ in $X$.  Then there is some fiber $F$ of $\pi$ admitting two different components $F_{1}, F_{2}$ which satisfy:
\begin{itemize}
\item each $F_{i}$ is not contracted by $f$, and
\item the $f$-image of $F_{1}$ contains $x_{1}$ and the $f$-image of $F_{2}$ contains $x_{2}$.  
\end{itemize}
\end{lemm}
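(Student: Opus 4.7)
The lemma is an intersection-theoretic refinement of Mori's Bend-and-Break (cf.\ \cite[Lemma 1.9]{KM98}): not only must some fiber of $\pi$ be reducible, but a reducible fiber must exist with the specified component structure. Since this lemma is used as an ingredient in the proof of Bend-and-Break, the argument should rely only on elementary intersection theory. My plan is to proceed in two stages: first show that some fiber of $\pi$ must be reducible, then show the reducible fiber can be chosen with the specified structure.

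For the first stage, I would argue by contradiction. Suppose every fiber of $\pi$ is irreducible; then $\pi: Y \to C$ is a $\mathbb{P}^{1}$-bundle and $N^{1}(Y)$ has rank two, generated by a general fiber $F$ and the section $C_{1}$. The section $C_{2}$ satisfies $[C_{2}] = [C_{1}] + bF$ for some integer $b$, since $(C_{1} - C_{2})\cdot F = 0$. Pull back an ample divisor $H$ on $X$. The hypotheses $f(C_{i}) = x_{i}$ give $f^{*}H \cdot C_{i} = 0$, while $\dim f(Y) = 2$ guarantees $d := f^{*}H \cdot F > 0$ (a general fiber is not contracted) and $(f^{*}H)^{2} > 0$ (the map is generically finite onto its image, so $(f^*H)^2 = \deg(f) \cdot H^2|_{f(Y)} > 0$). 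Evaluating $f^{*}H \cdot (C_{2} - C_{1}) = bd = 0$ forces $b = 0$, so $[C_{1}] = [C_{2}]$ in $N^{1}(Y)$. Writing $f^{*}H = \alpha F + \beta C_{1}$, the relations yield $\beta = d$ and $\alpha = -d\,C_{1}^{2}$, hence $(f^{*}H)^{2} = -d^{2} C_{1}^{2} > 0$, so $C_{1}^{2} < 0$. But then $C_{1} \cdot C_{2} = [C_{1}]^{2} = C_{1}^{2} < 0$, contradicting the non-negativity of intersection products between two distinct irreducible curves on a smooth projective surface.

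For the second stage, let $F$ be a reducible fiber and denote by $A(F), B(F)$ the components containing $C_{1} \cap F, C_{2} \cap F$ respectively. Suppose, for contradiction, that no reducible fiber admits two distinct non-contracted components each containing one of $x_{1}, x_{2}$ in its $f$-image. Then for every reducible fiber $F$, either $A(F) = B(F)$, or $A(F) \neq B(F)$ with at least one of these components contracted by $f$. I would iteratively contract leaf $(-1)$-curves of reducible fibers that are $f$-contracted. Each fiber of $\pi$ is a tree of $\mathbb{P}^{1}$'s on the smooth surface $Y$ and hence has $(-1)$-curves at its leaves; an $f$-contracted leaf can be blown down to produce a new smooth $\mathbb{P}^{1}$-fibration through which $f$ descends. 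Iterating, one arrives at a smooth $\mathbb{P}^{1}$-bundle $Y' \to C$ equipped with a morphism $f' : Y' \to X$ having two-dimensional image and with the strict transforms of $C_{1}, C_{2}$ still sections of $Y' \to C$ contracted to the distinct points $x_{1}, x_{2}$. This contradicts the first stage.

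The main obstacle is carrying out the iterative contraction carefully, verifying at each step both that the chosen leaf is a $(-1)$-curve on the current smooth surface and that it is contracted by the current descent of $f$. The bad hypothesis is crucial: when $A(F) \neq B(F)$ with one of them $f$-contracted, that contracted endpoint is a leaf and can be absorbed; when $A(F) = B(F)$, connectedness of $f(F)$ together with the absence of any second non-contracted component mapping through $x_{1}$ or $x_{2}$ forces the side components branching off $A(F) = B(F)$ to be $f$-contracted chains, which can be peeled off inductively from their leaves. Combining these cases yields the reduction to a $\mathbb{P}^{1}$-bundle and therefore the desired contradiction.
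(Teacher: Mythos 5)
Your Stage 1 (the case where every fiber is irreducible) is correct and is the standard Bend-and-Break intersection computation. The genuine gap is in Stage 2: the reduction to a $\mathbb{P}^{1}$-bundle by contracting $f$-contracted curves does not go through. The negated conclusion only says that in each fiber at most one non-contracted component has $x_{1}$ or $x_{2}$ in its image; it says nothing about non-contracted components whose images avoid both points. Concretely, a fiber can be a tree with components $A, G, B, H$, where $G$ meets $A$, $B$ and $H$, the component $A$ meets $C_{1}$ and is contracted to $x_{1}$, the component $B$ meets $C_{2}$ and is contracted to $x_{2}$, and $G, H$ are both non-contracted with only $f(G)$ passing through $x_{1}$ and $x_{2}$. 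This configuration is consistent with the negated conclusion, but after contracting $A$ and $B$ the fiber $G + H$ is still reducible and neither component is $f$-contracted, so $f$ cannot descend any further and you never reach a $\mathbb{P}^{1}$-bundle. Your claim that the side branches off $A(F)=B(F)$ are forced to be $f$-contracted chains is exactly the step that fails: nothing forces a non-contracted branch to pass through $x_{1}$ or $x_{2}$. A secondary inaccuracy: the leaves of a reducible genus-$0$ fiber need not be $(-1)$-curves once multiplicities exceed one (e.g.\ blowing up the node of a two-component fiber yields $\widetilde{F} + \widetilde{E}_{1} + 2E_{2}$ whose two leaves are $(-2)$-curves), so even locating a contractible $f$-contracted leaf is not automatic.

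The paper avoids any contraction and argues numerically on $Y$ itself. For $i=1,2$ let $D_{i}$ be the connected component of $f^{-1}(x_{i})$ containing $C_{i}$, and use the Hodge index theorem (Zariski's lemma) to build a divisor $\widetilde{D}_{i}$ supported on $D_{i}$ with coefficient $1$ on $C_{i}$, coefficient $0$ on the other horizontal components, and zero intersection against every $\pi$-vertical component of $D_{i}$. Under the negated conclusion each $\pi$-fiber has a unique non-contracted component meeting both $D_{1}$ and $D_{2}$, which forces $\widetilde{D}_{1}\cdot E = \widetilde{D}_{2}\cdot E$ for every vertical curve $E$, hence $\widetilde{D}_{1}-\widetilde{D}_{2}$ is numerically a multiple of the fiber. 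Squaring then gives $0=\widetilde{D}_{1}^{2}+\widetilde{D}_{2}^{2}$ since $\widetilde{D}_{1}\cdot\widetilde{D}_{2}=0$, contradicting $\widetilde{D}_{i}^{2}<0$ (both are $f$-contracted). If you want to keep your two-stage structure, Stage 2 would need to be replaced by an argument of this numerical type rather than a birational reduction.
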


\begin{proof}
We assume otherwise for a contradiction.  Let $D_{1}$ denote the connected component of the $f$-fiber over $x_{1}$ that contains $C_{1}$.  Using the Hodge index theorem, we can assign coefficients to the components of $D_{1}$ so that the resulting divisor $\widetilde{D}_{1}$ satisfies the following properties:
\begin{itemize}
\item $C_{1}$ occurs in $\widetilde{D}_{1}$ with coefficient $1$,
\item every other $\pi$-horizontal component of $D_{1}$ occurs with coefficient $0$ in $\widetilde{D}_{1}$, and
\item $\widetilde{D}_{1} \cdot E = 0$ for every $\pi$-vertical component $E$ of $D_{1}$.
\end{itemize}
In the same way we use $C_{2}$ to construct $D_{2}$ and $\widetilde{D}_{2}$ in the fiber over $x_{2}$.  Note that $\widetilde{D}_{1}$ and $\widetilde{D}_{2}$ do not intersect.

By our assumption, every fiber $F$ of $\pi$ has a unique component $F_{0}$ such that
\begin{itemize}
\item $F_{0}$ is not contracted by $f$,
\item $F_{0}$ is the unique component of $F$ that meets $D_{1}$ but is not contained in $D_{1}$, and
\item $F_{0}$ is the unique component of $F$ that meets $D_{2}$ but is not contained in $D_{2}$.
\end{itemize}
If $F_{0}$ occurs with coefficient $m_{0}$ in $F$, then by construction
\begin{equation*}
m_{0}F_{0} \cdot \widetilde{D}_{1} = F \cdot \widetilde{D}_{1} = 1.
\end{equation*}
Similarly $m_{0}F_{0} \cdot \widetilde{D}_{2} = 1$.  We deduce that $\widetilde{D}_{1} - \widetilde{D}_{2}$ has vanishing intersection against every $\pi$-vertical curve.  Since the general fiber of $\pi$ is a rational curve, this condition implies that $\widetilde{D}_{1} - \widetilde{D}_{2}$ is numerically equivalent to a curve whose support is $\pi$-vertical.  By \cite[Lemma 1-2-10]{Matsuki02} we deduce that $\widetilde{D}_{1} - \widetilde{D}_{2}$ is numerically equivalent to a multiple of the fiber $F$.  Thus
\begin{equation*}
0 = (\widetilde{D}_{1} - \widetilde{D}_{2})^{2} = \widetilde{D}_{1}^{2} + \widetilde{D}_{2}^{2} - 2 \widetilde{D}_{1} \cdot \widetilde{D}_{2}.
\end{equation*}
But since $\widetilde{D}_{1}$ and $\widetilde{D}_{2}$ are contracted by $f$ they both have negative self-intersection.  Furthermore, $\widetilde{D}_{1} \cdot \widetilde{D}_{2} = 0$.  This yields a contradiction.
\end{proof}

Following through the proof of Bend-and-Break, we obtain the following corollary.

\begin{coro} \label{coro:improvedbandb}
Let $X$ be a projective variety.  Let $f: \mathbb{P}^{1} \to X$ be a birational map onto a rational curve in $X$.  Suppose that that there are two different points $p_{1},p_{2} \in \mathbb{P}^{1}$ and a curve $B^{\circ} \subset \Mor(\mathbb{P}^{1},X,f|_{p_{1},p_{2}})$ such that the images of the maps parametrized by $B^{\circ}$ sweep out a surface in $X$.  Then $f$ deforms as a stable map to a morphism $g: C \to X$ such that $C$ has (at least) two components $C_{1}, C_{2}$ which are not contracted by $g$ and which satisfy $f(p_{i}) \in g(C_{i})$ for $i=1,2$.
\end{coro}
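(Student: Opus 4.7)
My plan is to run the classical Bend-and-Break setup on the given one-parameter family of morphisms and then invoke Lemma~\ref{lemm:strongerbandb} to extract the finer combinatorial information we need about the limit stable map. To start, I would compactify $B^\circ$ to a smooth projective curve $B$ and form the evaluation map $\mathrm{ev}: B \times \mathbb{P}^1 \dashrightarrow X$ coming from the universal family, which is regular on $B^\circ \times \mathbb{P}^1$. Resolving its indeterminacies by finitely many blow-ups produces a smooth projective surface $Y$ with a fibration $\pi: Y \to B$ whose general fiber is $\mathbb{P}^1$, together with a morphism $f_Y: Y \to X$.

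Next, I would verify that the hypotheses of Lemma~\ref{lemm:strongerbandb} are met in this setup. The constant sections $B \times \{p_i\} \subset B \times \mathbb{P}^1$ are sent by $\mathrm{ev}$ to the single points $f(p_i)$, so their proper transforms in $Y$ are sections of $\pi$ contracted by $f_Y$ to the distinct points $f(p_1), f(p_2) \in X$. The hypothesis that the images of the morphisms parametrized by $B^\circ$ sweep out a surface in $X$ yields $\dim f_Y(Y) = 2$. The lemma then supplies a fiber $F$ of $\pi$ containing two distinct components $F_1, F_2$, each not contracted by $f_Y$, each with $f_Y$-image meeting $\{f(p_1), f(p_2)\}$.

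To finish, I would stabilize the morphism $f_Y|_F: F \to X$ by contracting only the unstable contracted components, producing a stable map $g: C \to X$ in the closure of our family in $\overline{M}_{0,0}(X)$. It remains to select two distinct non-contracted components $C_1, C_2$ of $F$ with $f(p_1) \in f_Y(C_1)$ and $f(p_2) \in f_Y(C_2)$. Since $F$ is connected and $f_Y(F) \supset \{f(p_1), f(p_2)\}$, there is at least one non-contracted component of $F$ whose image contains $f(p_1)$ and at least one whose image contains $f(p_2)$; combining this observation with the two components provided by the lemma, a short case analysis always yields a pair splitting the marked points between them.

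The main obstacle I anticipate is this last bookkeeping step: upgrading Lemma~\ref{lemm:strongerbandb}, which a priori only asserts the existence of two distinct non-contracted components each hitting \emph{one of} the two marked points (and so could in principle both sit over $f(p_1)$), to a genuine splitting pair. The resolution will come from observing that the connectedness of $F$ combined with $\dim f_Y(Y) = 2$ (which precludes any entire fiber of $\pi$ from being contracted by $f_Y$) forces both $f(p_1)$ and $f(p_2)$ to lie in the image of some non-contracted component of $F$; this extra input together with the lemma's conclusion delivers the required split configuration.
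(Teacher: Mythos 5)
Your proposal is correct and follows essentially the same route as the paper: compactify and normalize $B^{\circ}$, resolve the resulting rational map $\mathbb{P}^{1}\times B \dashrightarrow X$ to get a ruled surface over $B$ with two contracted sections, apply Lemma~\ref{lemm:strongerbandb}, and stabilize the selected fiber. Your closing case analysis (using connectedness of the fiber to guarantee that $f(p_1)$ and $f(p_2)$ each lie on a non-contracted component, so the two components from the lemma can be arranged to split the marked points) is a point the paper's proof passes over silently, and it is a sound way to close that gap.
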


\begin{proof}
By construction, there is some curve $B^{\circ} \subset \Mor(\mathbb{P}^{1},X,f|_{p_{1},p_{2}})$ containing $f$ such that we get a morphism $s: \mathbb{P}^{1} \times B^{\circ} \to X$ that contracts two sections.  If we let $B$ denote a projective closure of a normalization of $B^{\circ}$ then $s$ defines a rational map $\mathbb{P}^{1} \times B \dashrightarrow X$.  Let $\phi: S \to \mathbb{P}^{1} \times B$ be a birational map of smooth varieties resolving $s$.  Note that any fiber of $S \to B$ is a union of rational curves.  By applying Lemma \ref{lemm:strongerbandb} to $s$, we find a reducible fiber $T$ of $S \to B$ and two components $T_{1},T_{2}$ that are not contracted by $s|_{T}$ whose images contain $p_{1}$ and $p_{2}$.  Although $s|_{T}$ may be a prestable map that is not stable, after a stabilization procedure as in \cite[Discussion before Proposition 3]{Behrend97} 
we obtain a stable map $g: C \to X$ satisfying the desired properties.
\end{proof}

\subsection{Breaking curves on surfaces}

It will also be helpful to have more precise breaking results for sections of a fibration $f: S \to \mathbb{P}^{1}$ whose generic fiber is $\mathbb{P}^{1}$.  We first need a lemma:

\begin{lemm} \label{lemm:blowingup}
Let $Y$ be a smooth projective surface with a morphism $\pi : Y \rightarrow \mathbb P^1$ such that a general fiber of $\pi$ is isomorphic to $\mathbb P^1$.  Let $F$ be a singular fiber of $\pi$ with components $\{ E_{i} \}_{i=1}^{r}$.  Suppose that $E_{1}$ is a $(-1)$-curve that has multiplicity $1$ in the fiber $F$.  Then there is another $(-1)$-curve in the fiber $F$.
\end{lemm}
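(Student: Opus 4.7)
The plan is to contract $E_1$ and induct on the number of components $r$ of $F$. First, $F \cdot E_1 = 0$ together with $E_1^2 = -1$ and $m_1 = 1$ gives $\sum_{i \geq 2} m_i (E_i \cdot E_1) = 1$, so a unique component $E_2$ meets $E_1$ and it satisfies $m_2 = E_1 \cdot E_2 = 1$. By Castelnuovo's criterion and the multiplicity-one hypothesis, the contraction $\phi : Y \to Y'$ of $E_1$ is a morphism to a smooth surface through which $\pi$ factors, giving a $\mathbb P^1$-fibration $\pi' : Y' \to \mathbb P^1$ with $F = \phi^* F'$ for the new fiber $F'$. Setting $E_i' := \phi(E_i)$, we have $F' = E_2' + \sum_{i \geq 3} m_i E_i'$, and the blowup formula yields $(E_2')^2 = E_2^2 + 1$ while $(E_i')^2 = E_i^2$ for $i \geq 3$.

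For the base case $r = 2$, $F' = E_2'$ is a reduced irreducible fiber, hence a smooth rational curve of self-intersection $0$, which forces $E_2^2 = -1$. For $r \geq 3$, $F'$ has $r - 1 \geq 2$ components and is therefore reducible. The classical fact that every reducible fiber of a $\mathbb P^1$-fibration contains a $(-1)$-curve (proved via $p_a(F') = 0$ hence $K_{Y'} \cdot F' = -2$, combined with the fact that components of reducible fibers have negative self-intersection and the adjunction formula) yields a $(-1)$-curve $E_j'$ in $F'$. If $j \geq 3$, then $E_j \cdot E_1 = 0$ gives $E_j^2 = (E_j')^2 = -1$, so the strict transform $E_j$ on $Y$ is the desired second $(-1)$-curve. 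If $j = 2$, then $E_2'$ is a multiplicity-one $(-1)$-curve in $F'$, and we apply the inductive hypothesis to $(Y', F', E_2')$ to obtain another $(-1)$-curve $E_k'$ in $F'$, necessarily with $k \geq 3$, which lifts to a $(-1)$-curve $E_k$ in $F$.

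The main obstacle is the $j = 2$ case: contracting $E_1$ raises the self-intersection of $E_2$ by one, so a $(-1)$-curve appearing in $F'$ need not descend to a $(-1)$-curve in $F$. The induction on $r$ resolves this by iteratively contracting along the chain of multiplicity-one components, which is legitimate because $E_2'$ inherits multiplicity one from $E_2$, until we reach a component of $F$ disjoint from $E_1$, where self-intersections are preserved and a genuine $(-1)$-curve on $Y$ is produced.
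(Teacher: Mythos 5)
Your proof is correct and follows essentially the same strategy as the paper's: induct on the number of components, contract $E_1$, use the multiplicity-one hypothesis to control its unique neighbour $E_2$, and observe that any $(-1)$-curve in the contracted fiber disjoint from $E_1$ lifts to a $(-1)$-curve on $Y$. The only cosmetic difference is that the paper organizes the induction around a full sequence of contractions down to a relatively minimal model and examines the second exceptional curve, whereas you contract once per step and invoke the classical fact that every reducible fiber of a genus-zero fibration contains a $(-1)$-curve; both routes amount to the same argument.
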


\begin{proof}
The argument is by induction on the number of components of $F$.  When $F$ has two components the claim is clear.

We next prove the inductive step.  There is a sequence of contractions of $(-1)$-curves
\begin{equation*}
Y=Y_{1} \xrightarrow{\phi_{1}} Y_{2} \xrightarrow{\phi_{2}} Y_{3} \xrightarrow{\phi_{3}} \ldots \xrightarrow{\phi_{r-1}} Y_{r-1}
\end{equation*}
which contract all the components of $F$ but one.  We may suppose that $E_1$ is the exceptional divisor for $\phi_{1}$ since otherwise the statement is clear.  Let $E_{2}$ denote the exceptional divisor for $\phi_{2}$ and also (by abuse of notation) its strict transform on $Y$.  If $E_{2}$ does not intersect $E_{1}$ on $Y$ then it is another $(-1)$-curve, proving the claim.  If $E_{2}$ does intersect $E_{1}$ on $Y$, then on $Y_{2}$ the curve $E_{2}$ is a $(-1)$-curve and it must have multiplicity $1$ (since the multiplicity of $E_{2}$ on $Y_{2}$ is a lower bound for the multiplicity of $E_{1}$ on $Y$).  By applying the induction hypothesis to $Y_{2}$ we find another $(-1)$ curve $E_{j}$.  The strict transform of $E_{j}$ to $Y$ can not intersect $E_{1}$ (since this would make the multiplicity of $E_{1}$ in the fiber $F$ larger than $1$), showing that $E_{j}$ is a $(-1)$-curve on $Y$ in the fiber $F$.
\end{proof}

\begin{coro} \label{coro:surfacecontraction}
Suppose that $\pi: Y \to \mathbb{P}^{1}$ is a Fano fibration of relative dimension $1$.  Fix a section $C$ of $\pi$.  There is a birational map $\rho$ from $Y$ to a Hirzebruch surface $\mathbb{F}_{e} = \mathbb{P}_{\mathbb{P}^{1}}(\mathcal{O} \oplus \mathcal{O}(-e))$ that is an isomorphism on an open neighborhood of $C$.
\end{coro}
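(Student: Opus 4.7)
The plan is to run a relative minimal model program on $\pi : Y \to \mathbb P^1$ by iteratively contracting $(-1)$-curves contained in singular fibers, arranging at each step to contract a curve disjoint from $C$. Since $\pi$ is a Fano fibration of relative dimension $1$, the general fiber is $\mathbb P^1$ and $Y$ is smooth, because Gorenstein terminal singularities on a surface are smooth points. The relative minimal models of such a fibration with a section are precisely the Hirzebruch surfaces $\mathbb F_e$, so the procedure must terminate at the desired target.

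First I would record the key consequence of $C$ being a section. For any singular fiber $F = \sum m_i E_i$, intersecting with $C$ gives $1 = \sum m_i (C \cdot E_i)$, so there is a unique component $E_0$ with $m_0 = 1$ and $C \cdot E_0 = 1$, while every other component satisfies $C \cdot E_i = 0$. Since $C$ is irreducible and distinct from these $E_i$, this last condition means $C \cap E_i = \varnothing$ for $i \neq 0$.

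Next I would show that any reducible fiber $F$ contains a $(-1)$-curve different from $E_0$. Zariski's lemma forces $E_i^2 < 0$ for every component of a reducible fiber, and combining $K_Y \cdot F = -2$ with adjunction $E_i^2 + K_Y \cdot E_i = -2$ shows that at least one component satisfies $E_i^2 = -1$. If this $(-1)$-curve is not $E_0$ we contract it. If it is $E_0$, then $m_0 = 1$ places us exactly in the hypothesis of Lemma \ref{lemm:blowingup}, which produces a second $(-1)$-curve in $F$, and we contract that one.

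Because the chosen $(-1)$-curve $E$ satisfies $E \cdot C = 0$ and is irreducible and distinct from $C$, it is in fact disjoint from $C$. Hence the Castelnuovo contraction of $E$ is an isomorphism on an open neighborhood of $C$, preserves smoothness, preserves the fibration to $\mathbb P^1$, and keeps $C$ a section. Each contraction strictly decreases the Picard rank, so the procedure terminates. The endpoint is a smooth projective surface ruled over $\mathbb P^1$ with every fiber irreducible, hence isomorphic to some $\mathbb F_e = \mathbb P_{\mathbb P^1}(\mathcal O \oplus \mathcal O(-e))$. The composition of the Castelnuovo contractions is the required birational map $\rho$. The only non-routine point — and the reason Lemma \ref{lemm:blowingup} is proved just above — is precisely the guarantee that the $(-1)$-curve to be contracted can always be chosen distinct from $E_0$; once that is in hand, the rest is a standard relative MMP argument.
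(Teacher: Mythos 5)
Your proof is correct and follows essentially the same route as the paper: locate a $(-1)$-curve in each reducible fiber, observe that any component meeting the section has multiplicity $1$ so that Lemma \ref{lemm:blowingup} supplies an alternative $(-1)$-curve disjoint from $C$, and contract inductively down to a Hirzebruch surface. Your write-up merely fills in details the paper leaves implicit (the adjunction/Zariski argument for the existence of a $(-1)$-curve and the identification of the unique fiber component meeting $C$).
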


\begin{proof}
Suppose that $\pi$ admits a reducible fiber $F_{0}$.  Then $F_{0}$ must contain a $(-1)$-curve.  This curve may or may not be disjoint from $C$.  However, if it is not disjoint from $C$ then it must have multiplicity $1$ in $F_{0}$ and Lemma \ref{lemm:blowingup} guarantees that there is a different $(-1)$-curve in this fiber.  Thus either way we are guaranteed to have a $(-1)$-curve in $F_{0}$ disjoint from $C$.  Contracting such $(-1)$-curves inductively yields the desired Hirzebruch surface.
\end{proof}

The following theorem gives us two breaking results for sections on surfaces, depending on whether or not we want the resulting section to be movable.

\begin{theo} \label{theo:surfacebreaking}
Let $\pi: Y \to \mathbb{P}^{1}$ be a Fano fibration of relative dimension $1$ and let $F$ denote a general fiber of $\pi$.  Fix a dominant family of sections $C$ on $Y$ and set $q = -K_{Y/\mathbb{P}^{1}} \cdot C$.  Let $\rho: Y \to \mathbb{F}_{e}$ be the birational map constructed by Corollary \ref{coro:surfacecontraction} applied to $C$.  Then $e \leq q$ and
\begin{enumerate}
\item There exists a section $C_{0}$ such that $C \sim_{rat} C_{0} + \frac{q+e}{2}F + T$ for some $\pi$-vertical effective cycle $T$.
\item There exists a dominant family of sections $C_{1}$ such that $C \sim_{rat} C_{1} + \frac{q-e}{2}F.$
\end{enumerate}
In both situations the coefficient of $F$ is an integer.
\end{theo}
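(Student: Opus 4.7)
The overall plan is to transfer the problem to the Hirzebruch surface $\mathbb{F}_e$ via the birational morphism $\rho\colon Y \to \mathbb{F}_e$ of Corollary~\ref{coro:surfacecontraction}, which is an isomorphism on an open neighborhood $U$ of $C$. On $\mathbb{F}_e$ the Picard group is freely generated by the negative section $s_0$ (with $s_0^2 = -e$) and a fiber class $\tilde F$, and $-K_{\mathbb{F}_e/\mathbb{P}^1} = 2s_0 + e\tilde F$. Since $\rho$ is an isomorphism near $C$, the image $\tilde C = \rho(C)$ is a section of $\mathbb{F}_e\to\mathbb{P}^1$ satisfying $-K_{\mathbb{F}_e/\mathbb{P}^1}\cdot\tilde C = -K_{Y/\mathbb{P}^1}\cdot C = q$. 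Writing $\tilde C \sim_{rat} s_0 + n\tilde F$, the identity $q = 2n-e$ forces $n = (q+e)/2$, so in particular $q \equiv e \pmod 2$ and both $(q\pm e)/2$ are integers. The dominant family of $C$ on $Y$ induces a dominant family of images $\tilde C$ on $\mathbb{F}_e$, and since dominant sections of $\mathbb{F}_e$ must have class $s_0 + n\tilde F$ with $n \geq e$, we conclude $e \leq q$.

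For part (1), I would pull back the rational equivalence $\tilde C \sim_{rat} s_0 + n\tilde F$ via $\rho$. Because $\rho$ is an isomorphism on an open set containing $C$, the curve $\tilde C$ lies in $\rho(U)$ and so is disjoint from the (finitely many) indeterminacy points of $\rho^{-1}$; hence $\rho^*\tilde C = C$ as divisors on $Y$. Writing $\rho^* s_0 = C_0 + T$, the strict transform $C_0$ is a section of $\pi$ and $T$ is a non-negative integral combination of $\rho$-exceptional divisors, all of which are $\pi$-vertical since $\rho$ commutes with the projections to $\mathbb{P}^1$; thus $T$ is effective and $\pi$-vertical. Finally $\rho^*\tilde F$ is rationally equivalent to $F$. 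Assembling these equalities yields $C \sim_{rat} C_0 + \tfrac{q+e}{2}F + T$, as required.

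For part (2), I would choose a general section $\tilde C_1$ on $\mathbb{F}_e$ of class $s_0 + e\tilde F$; such sections have self-intersection $e \geq 0$ and form a dominant family. Since $\rho^{-1}$ has only finitely many indeterminacy points, a general $\tilde C_1$ avoids them, so its strict transform $C_1\subset Y$ satisfies $\rho^*\tilde C_1 = C_1$ and is itself a section; as $\tilde C_1$ varies the $C_1$ sweep out $Y$, producing a dominant family of sections. The rational equivalence $\tilde C \sim_{rat} \tilde C_1 + \tfrac{q-e}{2}\tilde F$ on $\mathbb{F}_e$ (both sides lying in the class $s_0 + \tfrac{q+e}{2}\tilde F$) pulls back via $\rho$ to $C \sim_{rat} C_1 + \tfrac{q-e}{2}F$ on $Y$. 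The main technical point in both parts is keeping precise track of how $\rho$ interacts with divisor pullback: that the $\rho$-exceptional contribution $T$ in the pullback of $s_0$ is effective and $\pi$-vertical, and that a general dominant section on $\mathbb{F}_e$ avoids the finite indeterminacy locus of $\rho^{-1}$. The first is automatic because $\rho$ is a composition of contractions of $(-1)$-curves (so all pullback multiplicities are non-negative), and the second is a standard dimension count.
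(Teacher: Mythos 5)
Your proposal is correct and follows essentially the same route as the paper: push the section down to $\mathbb{F}_{e}$, use the two standard decompositions of its class there (rigid section $s_{0}$ plus $\frac{q+e}{2}$ fibers, and minimal moving section plus $\frac{q-e}{2}$ fibers), and pull back, with the exceptional part of $\rho^{*}s_{0}$ supplying the $\pi$-vertical cycle $T$. The paper's proof is terser but identical in substance; your extra care about the indeterminacy locus of $\rho^{-1}$ and the effectivity of the exceptional contribution just makes explicit what the paper leaves implicit.
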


\begin{proof}
Pushing forward general deformations of $C$ to $\mathbb{F}_{e}$, we obtain a dominant family of sections $\widetilde{C}$ on $\mathbb{F}_{e}$ of height $q$.  Since the minimal height of a moving section of $\mathbb{F}_{e}$ is $e$, we have $e \leq q$.

We have $\widetilde{C} \sim_{rat} \widetilde{C}_{0} + \frac{q+e}{2} \widetilde{F}$ where $\widetilde{C}_{0}$ is the rigid section and $\widetilde{F}$ is a fiber of the projective bundle structure.  We also have $\widetilde{C} \sim_{rat} \widetilde{C}_{1} + \frac{q-e}{2} \widetilde{F}$ where $\widetilde{C}_{1}$ is the minimal free section of $\mathbb{F}_{e}$.  Pulling back general deformations to $Y$ gives us the two desired expressions.
\end{proof}

\section{Expected dimension of sections}

\cite{LT17} proves that if $X$ is a smooth Fano variety and $M$ is a component of $\Mor(\mathbb{P}^{1},X)$ with dimension larger than expected then the curves parametrized by $M$ will all be contained in a subvariety $Y \subset X$ with $a(Y,-K_{X}) > a(X,-K_{X})$. We would like to formulate an analogous result for Fano fibrations using the generic $a$-invariant with respect to $-K_{\mathcal{X}/\mathbb{P}^{1}}$.  However, the analogy fails in two different ways.

First, there can be components with higher than expected dimension that are not observed by the $a$-invariant at all.

\begin{exam}
Consider the Fano fibration $f: \mathbb{F}_{e} \to \mathbb{P}^{1}$ where $\mathbb{F}_{e} = \mathbb{P}_{\mathbb{P}^{1}}(\mathcal{O} \oplus \mathcal{O}(-e))$.  Since $\mathbb{F}_{e,\eta} = \mathbb{P}^{1}_{k(\mathbb{P}^{1})}$ there are no subvarieties with larger generic $a$-invariant.  However, the rigid section has higher than expected dimension as soon as $e > 1$.
\end{exam}

Examples of this type indicate that our proposed analogy should only address sections with large anticanonical degree.  (Note that this setting is sufficient for applications to Manin's Conjecture.)

Second, even sections which have large anticanonical degree and which deform more than expected need not be contained in subvarieties with higher generic $a$-invariant.  The following example shows that such sections can sweep out a subvariety $Y$ whose generic $a$-invariant is the same as the generic $a$-invariant of $\mathcal{X}$.  As above we will let $\mathbb{F}_{e}$ denote the Hirzebruch surface defined by $\mathcal{O} \oplus \mathcal{O}(-e)$.

\begin{exam} \label{exam:sameainv}
Let $h: \mathbb{F}_{2} \to \mathbb{P}^{1}$ denote the projection map, let $C_{0}$ denote the rigid section of $h$ and let $D$ denote a fiber of $h$.  We define $g: \mathcal{X} \to \mathbb{F}_{2}$ to be the $\mathbb{P}^{1}$-bundle defined by $\mathcal{O} \oplus \mathcal{O}(-C_{0}-D)$.  The composed map $\pi: \mathcal{X} \to \mathbb{P}^{1}$ realizes $X$ as an $\mathbb{F}_{1}$-fibration over $\mathbb{P}^{1}$.  We let $\zeta$ denote the unique effective section of $\mathcal{O}_{\mathcal{X}/\mathbb{F}_{2}}(1)$.  Note that $K_{\mathcal{X}/\mathbb{P}^{1}} = -2\zeta + g^{*}(-3C_{0} - 3D)$.

Let $Y \subset \mathcal{X}$ denote the $g$-preimage of $C_{0}$.  Then $Y$ is isomorphic to $\mathbb{F}_{1}$ and we have $K_{\mathcal{X}/\mathbb{P}^{1}} |_{Y} = K_{Y/\mathbb{P}^{1}} + 2T$ where $T$ is the fiber of $Y \to \mathbb{P}^{1}$.  Note that the generic a-invariants of $\mathcal{X}$ and $Y$ with respect to $-K_{\mathcal{X}/\mathbb{P}^{1}}$ are equal.

Let $C$ be any section of $Y \to \mathbb{P}^{1}$ whose deformations cover $Y$.  The calculation above shows that the expected dimension of the moduli space of deformations of $C$ on $\mathcal{X}$ is always one less than the actual dimension.  Furthermore we can find such sections $C$ with arbitrarily large height. 
\end{exam}

In light of the examples above, the following conjectural statements should be seen as the correct analogues of \cite[Theorem 1.1]{LT17}:
\begin{enumerate}
\item Any section with sufficiently large anticanonical degree that deforms more than expected will sweep out a subvariety $Y \subset \mathcal{X}$ whose generic $a$-invariant with respect to $-K_{\mathcal{X}/\mathbb{P}^{1}}$ is at least as large as that of $\mathcal{X}$.
\item If $Y$ is an ``accumulating subvariety'' (in the sense that sections on $Y$ will dominate the expected exponential term in Geometric Manin's Conjecture), then the generic $a$-invariant of $Y$ with respect to $-K_{\mathcal{X}/\mathbb{P}^{1}}$ is strictly larger than that of $\mathcal{X}$.
\end{enumerate}

In this section we will prove statements of this type for del Pezzo fibrations.

\begin{rema}
Another distinction with the case of trivial families is that one does not expect any sort of converse to hold.  That is, there can be subvarieties $Y$ satisfying $a(Y_{\eta},-K_{\mathcal{X}/\mathbb{P}^{1}}) > a(X_{\eta},-K_{\mathcal{X}/\mathbb{P}^{1}})$ which do not admit any families of sections with higher than the expected dimension.  This is a consequence of the fact that the resolution of the structure map $Y \to \mathbb{P}^{1}$ need not have connected fibers.
\end{rema}

\begin{theo} \label{theo:toomuchdeforming}
Let $\pi: \mathcal{X} \to \mathbb{P}^{1}$ be a del Pezzo fibration such that $-K_{\mathcal{X}/\mathbb{P}^{1}}$ is relatively nef.  Let $M$ denote a component of $\Sec(\mathcal{X}/\mathbb{P}^{1})$ parametrizing sections $C$ satisfying
\begin{equation*}
-K_{\mathcal{X}/\mathbb{P}^{1}} \cdot C \geq  \sup\{-2\mathrm{neg}(\mathcal{X},-K_{\mathcal{X}/\mathbb{P}^{1}}) - 1, 1 \}
\end{equation*}
Let $Y$ denote the closure of the locus swept out by the corresponding sections.  Suppose that either
\begin{itemize}
\item $Y \subsetneq \mathcal{X}$, or
\item $\dim(M) > -K_{\mathcal{X}/\mathbb{P}^{1}} \cdot C + 2$.
\end{itemize}
Then $a(Y_\eta, -K_{\mathcal X/\mathbb P^1}|_{Y}) \geq a(\mathcal{X}_\eta, -K_{\mathcal X/\mathbb P^1})$.
\end{theo}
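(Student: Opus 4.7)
The conclusion is trivial if $Y=\mathcal X$, and in the case $\dim(M)>-K_{\mathcal X/\mathbb P^1}\cdot C+2$ Lemma~\ref{lemm:dominantimpliesfree} already rules out $Y=\mathcal X$, since a dominant $M$ would admit a free general member $C$ and then $\dim M$ would equal the expected value. We therefore assume $Y\subsetneq\mathcal X$. Since $-K_{\mathcal X/\mathbb P^1}\cdot C\geq 1$, Lemma~\ref{lemm:expected} gives $\dim M\geq 3$, so $Y$ cannot be a single section; thus $Y$ is a proper surface and $Y_\eta\subset\mathcal X_\eta$ is an irreducible curve.

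Next I would resolve $\iota:\widetilde Y\to Y$ and study the smooth surface fibration $\widetilde Y\to\mathbb P^1$. Sections of $\mathcal X\to\mathbb P^1$ contained in $Y$ correspond (generically) to sections of $\widetilde Y\to\mathbb P^1$, so $M$ yields a $(\geq 3)$-dimensional section family on $\widetilde Y$. The first key step is to show that $Y_\eta$ must be rational: if $\widetilde Y_\eta$ had positive genus, then $\widetilde Y$ would not be birationally ruled over $\mathbb P^1$, and a section family of dimension $\geq 2$ would sweep out $\widetilde Y$ by rational curves transverse to $\pi$, forcing a second ruling incompatible with the non-ruled surface structure. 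With $Y_\eta$ rational, $\widetilde Y\to\mathbb P^1$ is a Fano fibration of relative dimension one and Theorem~\ref{theo:surfacebreaking} and Corollary~\ref{coro:surfacecontraction} apply.

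Now assume for contradiction that $a(Y_\eta,-K_{\mathcal X/\mathbb P^1}|_{Y_\eta})<1$, which by Lemma~\ref{lemm:genericainvfordp} forces $d:=-K_{\mathcal X/\mathbb P^1}\cdot Y_\eta\geq 3$. A general $C\in M$ is free on $\widetilde Y$, so the moduli on $\widetilde Y$ has dimension $C^2+1=q+1$ with $q:=-K_{\widetilde Y/\mathbb P^1}\cdot C$. Adjunction on $\widetilde Y\subset\mathcal X$ (away from the $\iota$-exceptional locus) gives
\[
q \;=\; -K_{\mathcal X/\mathbb P^1}\cdot C \;-\; Y\cdot C,
\]
and combining with $\dim M\geq -K_{\mathcal X/\mathbb P^1}\cdot C+2$ yields $Y\cdot C\leq -1$, equivalently $q\geq -K_{\mathcal X/\mathbb P^1}\cdot C+1$. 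Applying Theorem~\ref{theo:surfacebreaking}(1) to $\widetilde Y$ with Hirzebruch model $\mathbb F_e$, there is a rigid section $C_0$ and a $\pi$-vertical effective cycle $T$ such that
\[
C \;\sim_{rat}\; C_0 \;+\; \tfrac{q+e}{2}\, F_{\widetilde Y} \;+\; T
\]
on $\widetilde Y$. Pushing forward to $\mathcal X$ (so that $\iota_* F_{\widetilde Y}=Y_\eta$) and intersecting with $-K_{\mathcal X/\mathbb P^1}$, using $-K_{\mathcal X/\mathbb P^1}\cdot\iota_* C_0\geq \neg(\mathcal X,-K_{\mathcal X/\mathbb P^1})$ and $-K_{\mathcal X/\mathbb P^1}\cdot\iota_* T\geq 0$ (by relative nefness), we obtain
\[
-K_{\mathcal X/\mathbb P^1}\cdot C \;\geq\; \neg(\mathcal X,-K_{\mathcal X/\mathbb P^1}) \;+\; \tfrac{q+e}{2}\, d.
\]

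Substituting $q\geq -K_{\mathcal X/\mathbb P^1}\cdot C+1$ and rearranging gives
\[
(2-d)\bigl(-K_{\mathcal X/\mathbb P^1}\cdot C\bigr) \;\geq\; 2\neg \;+\; (1+e)\, d.
\]
For $d\geq 3$ and $e\geq 0$ this produces an upper bound on $-K_{\mathcal X/\mathbb P^1}\cdot C$ that contradicts the hypothesis $-K_{\mathcal X/\mathbb P^1}\cdot C\geq \sup\{-2\neg-1,1\}$; a short case split on whether $\neg\leq -1$ or $\neg\geq -1$ verifies the relevant strict inequality in either regime (using $e\geq 0$ and $d\geq 3$). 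The main obstacles I anticipate are the rationality step, which requires a careful Mordell--Weil-type or ``two transverse rulings'' argument to exclude positive-genus generic fibers on $\widetilde Y$, and the adjunction step, where one must control possible discrepancy contributions from $\iota$ on the (possibly non-Cartier) locus of $Y$.
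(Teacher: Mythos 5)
Your proposal is essentially the paper's own argument: reduce to the case where $Y$ is a surface, resolve it, apply Theorem~\ref{theo:surfacebreaking}(1) to write $\widetilde C \sim_{rat} \widetilde C_0 + \tfrac{q+e}{2}F + T$, push forward to $\mathcal X$, and play the relative nefness of $-K_{\mathcal X/\mathbb P^1}$ together with the bound $-K_{\mathcal X/\mathbb P^1}\cdot \iota_*\widetilde C_0 \geq \neg(\mathcal X,-K_{\mathcal X/\mathbb P^1})$ against the height hypothesis; your closing arithmetic is a correct repackaging of inequality~\eqref{eqn:toomuchdeformingequation}, and the case split on the sign of $\neg$ does close the contradiction. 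Two remarks. First, the adjunction step is an unnecessary detour: the only inequality you actually use, $q \geq -K_{\mathcal X/\mathbb P^1}\cdot C + 1$, follows immediately from $\dim M = q+1$ combined with the expected-dimension lower bound $\dim M \geq -K_{\mathcal X/\mathbb P^1}\cdot C + 2$ of Lemma~\ref{lemm:expected}; this is exactly how the paper proceeds (via $(K_{\widetilde Y/\mathbb P^1}-\phi^*K_{\mathcal X/\mathbb P^1}|_{Y})\cdot\widetilde C<0$), so the worry about discrepancy terms on the non-Cartier locus of $Y$ simply evaporates. Second, the rationality of the general fiber of $\psi:\widetilde Y\to\mathbb P^1$ genuinely needs an argument, and your sketch as written does not supply one: knowing that $\widetilde Y$ is uniruled, or even rational, does not force the fibers of $\psi$ to be rational (a rational surface can fiber over $\mathbb P^1$ with elliptic fibers). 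The paper's route is that the height bound yields a positive-dimensional family of sections through two general points of $\widetilde Y$, so Bend-and-Break produces a $\psi$-vertical rational curve through a general point, whence the general irreducible fiber of $\psi$ is rational. Your transversality idea can be completed instead as follows: the sections form a dominant family of rational curves of dimension $\geq 2$, so the base $B$ of the ruling they generate must be rational (if $g(B)>0$ every rational curve would be a ruling fiber and the family would be one-dimensional), and since a general fiber of $\psi$ meets each ruling curve exactly once it maps birationally onto $B\cong\mathbb P^1$ and is therefore rational.
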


\begin{proof}
Note that every rigid section $C$ will satisfy $-K_{\mathcal{X}/\mathbb{P}^{1}} \cdot C \leq -2$.  Due to our restriction on height we see that in either of the two cases of the theorem we must have $\dim(Y) = 2$. Let $\phi: \widetilde{Y} \to Y$ denote a resolution of singularities.  Let $\widetilde{C}$ denote the strict transform of a general deformation of the section $C$.  By assumption the deformations of $\widetilde{C}$ are Zariski dense on $\widetilde{Y}$; thus the natural map $\psi: \widetilde{Y} \to \mathbb{P}^{1}$ is an algebraic fiber space. Moreover our height bound guarantees that there is at least a $1$-parameter family of deformations of $\widetilde{C}$ through two general points of $\widetilde{Y}$.  By Bend-and-Break we see that $\widetilde{Y}$ is generically a $\mathbb P^1$-bundle over the base.  Since the $\widetilde{C}$ dominate $\widetilde{Y}$, we have
\begin{equation*}
\dim(M) = -K_{\widetilde{Y}/\mathbb{P}^{1}} \cdot \widetilde{C} + 1.
\end{equation*}
Thus in either of the two cases in the statement of the theorem we are guaranteed to have
\begin{equation*}
(K_{\widetilde{Y}/\mathbb{P}^{1}} - \phi^{*}K_{\mathcal{X}/\mathbb{P}^{1}}|_{Y}) \cdot \widetilde{C}  < 0.
\end{equation*}

By applying Theorem~\ref{theo:surfacebreaking} to $\widetilde{Y}$, we find a Hirzebruch surface $\mathbb{F}_{e}$ and a birational map $\rho: \widetilde{Y} \to \mathbb{F}_{e}$ which is an isomorphism on a neighborhood of $\widetilde{C}$.  As in the statement of Theorem \ref{theo:surfacebreaking} there exists a section $\widetilde{C}_{0}$ such that $\widetilde{C} \sim_{rat} \widetilde{C}_{0} + \frac{q+e}{2}F + T$ for some $\pi$-vertical effective cycle $T$, where
\begin{align*}
q & = -K_{Y/\mathbb{P}^{1}} \cdot \widetilde{C} \\
& > -K_{\mathcal{X}/\mathbb{P}^{1}} \cdot C \\
& \geq  -2\mathrm{neg}(\mathcal{X},-K_{\mathcal{X}/\mathbb{P}^{1}}) - 1
\end{align*}
the last inequality following from our height bounds.
In particular, since the coefficient of $F$ is an integer we have $\frac{q+e}{2} \geq -\neg(\mathcal{X},-K_{\mathcal{X}/\mathbb{P}^{1}})$.  Note that
\begin{align}
0 & > (K_{\widetilde{Y}/\mathbb{P}^{1}} - \phi^{*}K_{\mathcal{X}/\mathbb{P}^{1}}|_{Y}) \cdot \widetilde{C}\nonumber \\
& = -q - \phi^{*}K_{\mathcal{X}/\mathbb{P}^{1}}|_{Y} \cdot \left(\widetilde{C}_{0} + \frac{q+e}{2}F + T \right) \nonumber\\
&  = e + \frac{q+e}{2} ( -  \phi^{*}K_{\mathcal{X}/\mathbb{P}^{1}}|_{Y} \cdot F - 2)  -  \phi^{*}K_{\mathcal{X}/\mathbb{P}^{1}} \cdot (\widetilde{C}_{0} + T) \nonumber\\
&  \label{eqn:toomuchdeformingequation} \geq e + \frac{q+e}{2} ( -  \phi^{*}K_{\mathcal{X}/\mathbb{P}^{1}}|_{Y} \cdot F - 2)  -  K_{\mathcal{X}/\mathbb{P}^{1}} \cdot \phi_{*}\widetilde{C}_{0} 
\end{align}
where we use the relative nefness of $-K_{\mathcal{X}/\mathbb{P}^{1}}$ at the last step.  Suppose for a contradiction that there is an inequality $a(Y_\eta, -K_{\mathcal X/\mathbb P^1}|_{Y}) < a(\mathcal{X}_\eta, -K_{\mathcal X/\mathbb P^1})$.  This is equivalent to saying that $-\phi^{*}K_{\mathcal{X}/\mathbb{P}^{1}}|_{Y} \cdot F \geq 3$.  Then by combining the two inequalities above we obtain 
\begin{equation*}
0 > e + \frac{q+e}{2} + \mathrm{neg}(\mathcal{X},-K_{\mathcal{X}/\mathbb{P}^{1}}) \geq e \geq 0
\end{equation*}
yielding a contradiction.
\end{proof}

The most interesting situation in Theorem \ref{theo:toomuchdeforming} is when $a(Y_{\eta},-K_{\mathcal{X}/\mathbb{P}^{1}}|_{Y}) = a(\mathcal{X}_{\eta},-K_{\mathcal{X}/\mathbb{P}^{1}})$.  In this case the same argument gives a little more:

\begin{theo} \label{theo:sameainvprop}
Let $\pi: \mathcal{X} \to \mathbb{P}^{1}$ be a del Pezzo fibration such that $-K_{\mathcal{X}/\mathbb{P}^{1}}$ is relatively nef.  Let $M$ denote a component of $\Sec(\mathcal{X}/\mathbb{P}^{1})$.
Suppose that the sections $C$ parametrized by $M$ satisfy 
\begin{equation*}
-K_{\mathcal{X}/\mathbb{P}^{1}} \cdot C \geq  \sup\{-2\mathrm{neg}(\mathcal{X},-K_{\mathcal{X}/\mathbb{P}^{1}}) - 1, 1 \}
\end{equation*}
and sweep out a surface $Y \subsetneq X$ satisfying
\begin{equation*}
a(Y_\eta, -K_{\mathcal{X}/\mathbb P^1}|_{Y}) = a(\mathcal{X}_\eta, -K_{\mathcal X/\mathbb P^1}).
\end{equation*}
Then there is a component $M'$ of $\Sec(\mathcal{X}/\mathbb{P}^{1})$ parametrizing sections $C_{1}$ satisfying $-K_{\mathcal{X}/\mathbb{P}^{1}} \cdot C_{1} < -\neg(\mathcal X, -K_{\mathcal X/\mathbb P^1}) - 1$ such that the corresponding sections sweep out $Y$.  Furthermore, the difference between the expected and actual dimension of $M$ is the same as the difference between the expected and actual dimension of $M'$.
\end{theo}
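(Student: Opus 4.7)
The plan is to run the setup from the proof of Theorem \ref{theo:toomuchdeforming} — a resolution $\phi: \widetilde{Y} \to Y$, the strict transform $\widetilde{C}$ of a general $C \in M$, and the birational contraction $\rho: \widetilde{Y} \to \mathbb{F}_{e}$ produced by Corollary \ref{coro:surfacecontraction} — but this time to apply part~(2) of Theorem \ref{theo:surfacebreaking} (in place of part~(1)) in order to peel off $\pi$-fibers from $\widetilde{C}$. Setting $q := -K_{\widetilde{Y}/\mathbb{P}^{1}} \cdot \widetilde{C}$, that theorem produces a dominant family of sections $\widetilde{C}_{1}$ on $\widetilde{Y}$ with
\[
\widetilde{C} \sim_{rat} \widetilde{C}_{1} + \tfrac{q-e}{2}\,F.
\]
I would set $C_{1} := \phi_{*}\widetilde{C}_{1}$, take $M'$ to be the component of $\Sec(\mathcal{X}/\mathbb{P}^{1})$ containing $C_{1}$, and write $d := -K_{\mathcal{X}/\mathbb{P}^{1}} \cdot C$, $d' := -K_{\mathcal{X}/\mathbb{P}^{1}} \cdot C_{1}$. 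The projection formula together with $-\phi^{*}K_{\mathcal{X}/\mathbb{P}^{1}}|_{Y} \cdot F = 2$ (which comes from the same $a$-invariant hypothesis) immediately gives $d' = d - (q - e)$.

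For the height bound I would reuse the displayed inequality \eqref{eqn:toomuchdeformingequation} from the proof of Theorem \ref{theo:toomuchdeforming}: in the same $a$-invariant case the middle term vanishes and the inequality reads
\[
0 > e + (-K_{\mathcal{X}/\mathbb{P}^{1}} \cdot \phi_{*}\widetilde{C}_{0}) + (-K_{\mathcal{X}/\mathbb{P}^{1}} \cdot \phi_{*}T).
\]
Since $\phi_{*}\widetilde{C}_{0}$ is a section, one has $-K_{\mathcal{X}/\mathbb{P}^{1}} \cdot \phi_{*}\widetilde{C}_{0} \geq \neg(\mathcal{X},-K_{\mathcal{X}/\mathbb{P}^{1}})$, and relative nefness applied to the $\pi$-vertical effective cycle $T$ gives $-K_{\mathcal{X}/\mathbb{P}^{1}} \cdot \phi_{*}T \geq 0$. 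Hence $e < -\neg$, so by integrality $e \leq -\neg - 1$. Combining with $q \geq d + 1$ (which follows from $\dim M = q + 1$, valid because $M$ parametrizes a dominant family on the surface $Y$, and Lemma \ref{lemm:expected}) yields $d' = d - q + e \leq -\neg - 2 < -\neg - 1$, as required.

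The remaining and most delicate step is to match the excess dimensions (and simultaneously verify that the sections in $M'$ sweep out $Y$). Pushing forward the $(e+1)$-dimensional family of deformations of $\widetilde{C}_{1}$ on $\widetilde{Y}$ yields an $(e+1)$-dimensional sublocus of $M'$ whose members cover $Y$, so $\dim M' \geq e + 1$. For the reverse inequality I would argue by contradiction: if $\dim M' > e + 1$, then the general member of $M'$ does not lie in the closed sublocus of sections contained in $Y$, so the sweep of $M'$ is irreducible and strictly contains $Y$, forcing it to equal $\mathcal{X}$ since $Y$ is a divisor. Lemma \ref{lemm:dominantimpliesfree} then gives that a general member of $M'$ is free, so $\dim M' = d' + 2 = d - q + e + 2$. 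Substituting into $\dim M' > e + 1$ yields $q < d + 1$, contradicting $q \geq d + 1$. Hence $\dim M' = e + 1$; in particular every section in $M'$ is contained in $Y$, so the corresponding sections sweep out $Y$, and
\[
\dim M' - (d' + 2) = (e + 1) - (d - q + e + 2) = q - d - 1 = \dim M - (d + 2).
\]
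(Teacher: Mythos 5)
Your proposal is correct and follows essentially the same route as the paper's proof: the same resolution $\widetilde{Y}\to Y$ and contraction $\rho:\widetilde{Y}\to\mathbb{F}_{e}$, the same choice of $\widetilde{C}_{1}$ as the minimal moving section, the bound $0 > e + \neg(\mathcal{X},-K_{\mathcal{X}/\mathbb{P}^{1}})$ extracted from \eqref{eqn:toomuchdeformingequation}, and the same dominance-contradiction to show that the sections of $M'$ stay inside $Y$. The only differences are cosmetic: you obtain the height bound on $C_{1}$ from $d'=d-(q-e)$ together with $q\geq d+1$ and $e\leq -\neg(\mathcal{X},-K_{\mathcal{X}/\mathbb{P}^{1}})-1$ rather than from the identity $(K_{\widetilde{Y}/\mathbb{P}^{1}}-\phi^{*}K_{\mathcal{X}/\mathbb{P}^{1}}|_{Y})\cdot\widetilde{C}_{1}=(K_{\widetilde{Y}/\mathbb{P}^{1}}-\phi^{*}K_{\mathcal{X}/\mathbb{P}^{1}}|_{Y})\cdot\widetilde{C}$, and you match the excess dimensions via $\dim M'=e+1$ instead of the paper's ``differ by $mF$'' bookkeeping, which amounts to the same computation.
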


\begin{proof}
We return to the setting of the proof of Theorem \ref{theo:toomuchdeforming}, keeping the constructions and notation established there.
In this situation we have $-\phi^{*}K_{\mathcal{X}/\mathbb{P}^{1}}|_{Y} \cdot F = 2$ and
\[
(K_{\widetilde{Y}/\mathbb{P}^{1}} - \phi^{*}K_{\mathcal{X}/\mathbb{P}^{1}}|_{Y}) \cdot F = 0.
\]
Let $\widetilde{C}_1$ denote the strict transform on $\widetilde{Y}$ of a general moving section of $\mathbb{F}_{e}$ of minimal height. Since $\rho$ is an isomorphism on a neighborhood of $\widetilde{C}_{1}$, we have
\[
0 > (K_{\widetilde{Y}/\mathbb{P}^{1}} - \phi^{*}K_{\mathcal{X}/\mathbb{P}^{1}}|_{Y}) \cdot \widetilde{C} = (K_{\widetilde{Y}/\mathbb{P}^{1}} - \phi^{*}K_{\mathcal{X}/\mathbb{P}^{1}}|_{Y}) \cdot \widetilde{C}_1 = -e - \phi^{*}K_{\mathcal{X}/\mathbb{P}^{1}}|_{Y} \cdot \widetilde{C}_1 .
\]
On the other hand by Equation \eqref{eqn:toomuchdeformingequation} in the proof of Theorem \ref{theo:toomuchdeforming} we also have
\[
0 >  e + \neg(\mathcal X, -K_{\mathcal X/\mathbb P^1}).
\]
Altogether it follows that
\[
- \phi^{*}K_{\mathcal{X}/\mathbb{P}^{1}}|_{Y} \cdot \widetilde{C}_1 < -\neg(\mathcal X, -K_{\mathcal X/\mathbb P^1}) - 1.
\]
Let $C_{1}$ be the image in $\mathcal{X}$ of $\widetilde{C}_{1}$.  We let $M'_{Y}$ denote the component of $\Sec(\widetilde{Y}/\mathbb{P}^{1})$ parametrizing deformations of $\widetilde{C}_{1}$ and let $M'$ denote the component of $\Sec(\mathcal{X}/\mathbb{P}^{1})$ which parametrizes deformations of $C_{1}$.  Since $\widetilde{C}$ and $\widetilde{C}_{1}$ differ by $mF$ for some integer $m$, we have
\begin{align*}
\dim(M) = -K_{\widetilde{Y}/\mathbb{P}^{1}} \cdot \widetilde{C} + 1 & =  -K_{\widetilde{Y}/\mathbb{P}^{1}} \cdot \widetilde{C}_{1} + 2m + 1 = \dim(M'_{Y}) + 2m \\
\expdim(M) = -K_{\mathcal{X}/\mathbb{P}^{1}} \cdot C + 2 & =  -K_{\mathcal{X}/\mathbb{P}^{1}} \cdot C' + 2m + 2 = \expdim(M') + 2m
\end{align*}
Note that there is a natural pushforward map $M'_{Y} \to M'$.  We claim that in fact this map is birational.  In other words, we must show that $Y$ is the closure of the locus swept out by the sections defined by $M'$.  It is clear that this locus contains $Y$, and we only must verify that it is not a larger set.  We know that $\dim(M) - \expdim(M) \geq 0$, since any family of curves on $\mathcal{X}$ deforms at least as much as the expected dimension.  By the equations above $\dim(M'_{Y}) - \expdim(M') \geq 0$ as well.  Suppose for a contradiction that $M'$ parametrizes a dominant family of sections.  Then $\dim(M') = \expdim(M')$ so that $\dim(M'_{Y}) - \dim(M') \geq 0$.  Since the reverse inequality is also true, we deduce $\dim(M'_{Y}) = \dim(M')$.  But then a general deformation of $C_{1}$ in $\mathcal{X}$ should be coming from a deformation of $\widetilde{C}_{1}$ in $Y$, contradicting the fact that $M'$ parametrizes a dominant family.  Thus the curves parametrized by $M'$ cannot sweep out all of $\mathcal{X}$ and must be contained in $Y$.  The final statement is a consequence of the identification $\dim(M'_{Y}) = \dim(M')$ and the earlier equations.
\end{proof}

\begin{exam}
We return to Example \ref{exam:sameainv}, keeping the notation established there.  Recall that the fibers $T$ of $Y \to \mathbb{P}^{1}$ satisfy $-K_{\mathcal{X}/\mathbb{P}^{1}} \cdot T = -K_{Y/\mathbb{P}^{1}} \cdot T$.  Thus if we take any fixed section $C$ in $Y$, glue on copies of the free curve $T$, and smooth the resulting curve, the difference between the expected dimension of deformations in $\mathcal{X}$ and the actual dimension of deformations in $Y$ stays constant.

Conversely, starting from any movable section in $Y$, we can repeatedly break off copies of $T$.  Eventually we will obtain a movable section of smallest degree, namely $\zeta|_{Y}$.  This section satisfies $-K_{\mathcal{X}/\mathbb{P}^{1}} \cdot \zeta|_Y = -1$.  This verifies Theorem \ref{theo:sameainvprop} which claims that there is a section of height less than $-\neg(\mathcal X, -K_{\mathcal X/\mathbb P^1}) - 1= 2$ whose deformations cover $Y$.
\end{exam}

\begin{rema} \label{rema:accumulatingsubvars}
Note that in the situation of Theorem \ref{theo:sameainvprop} the difference between the actual and expected dimension is constant for the sections which sweep out $Y$.  This means that the contributions of $Y$ to the counting function in Geometric Manin's Conjecture have the same exponential term as the expected value.  In other words, the ``accumulating subvarieties'' which contribute a larger exponential term are exactly the subvarieties with larger generic $a$-value. 
\end{rema}

\subsection{Constructing a closed set}
Combining previous results, we prove Theorem \ref{theo:maintheorem1}.

\begin{proof}[Proof of Theorem \ref{theo:maintheorem1}:]
By Theorem \ref{theo:toomuchdeforming}, any component $M \subset \Sec(\mathcal{X}/\mathbb{P}^{1})$ parametrizing a non-dominant family will satisfy one of the following properties: 
\begin{enumerate}
\item $M$ will parametrize sections $C$ satisfying $-K_{\mathcal{X}/\mathbb{P}^{1}} \cdot C <  \sup\{ - 2 \neg(\mathcal{X},-K_{\mathcal{X}/\mathbb{P}^{1}})-2, 1 \}$, or
\item $M$ will parametrize sections $C$ satisfying $-K_{\mathcal{X}/\mathbb{P}^{1}} \cdot C \geq  \sup\{ - 2 \neg(\mathcal{X},-K_{\mathcal{X}/\mathbb{P}^{1}})-2, 1 \}$ which sweep out a $2$-dimensional subvariety $Y$ satisfying $a(Y_{\eta},-K_{\mathcal{X}/\mathbb{P}^{1}}|_{Y}) > a(\mathcal{X}_{\eta},-K_{\mathcal{X}/\mathbb{P}^{1}})$, or
\item $M$ will parametrize sections $C$ satisfying $-K_{\mathcal{X}/\mathbb{P}^{1}} \cdot C \geq  \sup\{ - 2 \neg(\mathcal{X},-K_{\mathcal{X}/\mathbb{P}^{1}})-2, 1 \}$  which sweep out a $2$-dimensional subvariety $Y$ satisfying $a(Y_{\eta},-K_{\mathcal{X}/\mathbb{P}^{1}}|_{Y}) = a(\mathcal{X}_{\eta},-K_{\mathcal{X}/\mathbb{P}^{1}})$.
\end{enumerate}
Lemma \ref{lemm:northcott} shows that curves of the first type lie in a bounded family.  Thus, the union of the subvarieties swept out by the non-dominant families will be a proper closed subset of $\mathcal{X}$. Lemma \ref{lemm:genericainvfordp} shows that the subvarieties defined by the components of the second type will lie in a proper closed subset of $\mathcal{X}$.  We still need to address the third type.

Suppose that $Y$ is a $2$-dimensional subvariety satisfying $a(Y_{\eta},-K_{\mathcal{X}/\mathbb{P}^{1}}|_{Y}) = a(\mathcal{X}_{\eta},-K_{\mathcal{X}/\mathbb{P}^{1}})$ that is swept out by the sections parametrized by $M$.  By Theorem \ref{theo:sameainvprop}, we know that there will be a component $M' \subset \Sec(\mathcal{X}/\mathbb{P}^{1})$ parametrizing a non-dominant family of sections $C$ which satisfy $-K_{\mathcal{X}/\mathbb{P}^{1}} \cdot C< -\neg(\mathcal X, -K_{\mathcal X/\mathbb P^1}) - 1$ and which sweep out $Y$.  By Lemma \ref{lemm:northcott} we conclude that the union of all such subvarieties $Y$ is a proper closed subset of $\mathcal{X}$.
\end{proof}

\section{Stable maps through general points}

Let $\pi: \mathcal{X} \to \mathbb{P}^{1}$ be a del Pezzo fibration.  Suppose that $f: C \to \mathcal{X}$ is a genus $0$ stable map whose image contains $n$ general points of $\mathcal{X}$.  In this section we study what restrictions this condition imposes on $C$ and $f$.  We first discuss the bound for irreducible curves:

\begin{lemm}
Let $\pi: \mathcal{X} \to \mathbb{P}^{1}$ be a del Pezzo fibration.  Let $M \subset \Sec(\mathcal{X}/\mathbb{P}^{1})$ denote a component such that for any $n$ general points of $\mathcal{X}$ there exists a member of $M$ containing those points.  Then the curves $C$ parametrized by $M$ have height $\geq 2n-2$ with respect to $-K_{\mathcal{X}/\mathbb{P}^{1}}$. Furthermore, if the height is exactly $2n-2$ then there are only finitely many curves parametrized by $M$ through $n$ fixed general points and if the height is $2n-1$ then there is sublocus of $M$ of dimension at most $1$ parametrizing curves through $n$ fixed general points.
\end{lemm}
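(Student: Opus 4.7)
The hypothesis on $M$ matches exactly the hypothesis of Lemma \ref{lemm:normalbundleestimate}, so this is the tool I would apply first. Concretely, since any $n$ general points of $\mathcal{X}$ lie on a section parametrized by $M$, a general member $C' \in M$ is free (by Lemma \ref{lemm:dominantimpliesfree}) and has normal bundle
\[
N_{C'/\mathcal{X}} = \mathcal{O}_{\mathbb{P}^1}(a_1) \oplus \mathcal{O}_{\mathbb{P}^1}(a_2), \qquad a_1, a_2 \geq n-1.
\]

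For the height bound, I would use adjunction on the section $C' \cong \mathbb{P}^1$ (which avoids the singular locus since it is free). We have $\deg \omega_{C'} = -2 = K_{\mathcal{X}} \cdot C' + (a_1 + a_2)$, hence $-K_{\mathcal{X}} \cdot C' = 2 + a_1 + a_2$. Since $C'$ is a section, $\pi^*K_{\mathbb{P}^1} \cdot C' = -2$, and therefore
\[
-K_{\mathcal{X}/\mathbb{P}^1} \cdot C' \;=\; -K_{\mathcal{X}} \cdot C' - 2 \;=\; a_1 + a_2 \;\geq\; 2(n-1) \;=\; 2n-2,
\]
giving the stated lower bound.

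For the second part, I would combine the normal bundle computation with a dimension-of-fibers argument. Since $N_{C'/\mathcal{X}}$ is globally generated we have $h^1(N_{C'/\mathcal{X}})=0$, so $M$ is smooth at $[C']$ of dimension
\[
\dim M \;=\; h^0(N_{C'/\mathcal{X}}) \;=\; (a_1+1)+(a_2+1) \;=\; -K_{\mathcal{X}/\mathbb{P}^1} \cdot C' + 2.
\]
Let $\mathcal{C} \to M$ be the universal section and consider the iterated fiber product $\mathcal{C}^{\times_M n} \to \mathcal{X}^n$ given by evaluation. By hypothesis this map is dominant, and $\dim \mathcal{C}^{\times_M n} = \dim M + n$. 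By the fiber dimension theorem, the fiber over a general $n$-tuple $(p_1,\dots,p_n)$ has dimension $\dim M + n - 3n = \dim M - 2n$. This fiber maps finitely to the sublocus of $M$ parametrizing sections through $p_1,\dots,p_n$ (for general $p_i$ the preimage in a given $C'$ is a single point), so that sublocus has dimension at most $\dim M - 2n = -K_{\mathcal{X}/\mathbb{P}^1}\cdot C' - 2n + 2$. Plugging in height $= 2n-2$ gives dimension $0$ (finiteness), and height $= 2n-1$ gives dimension $1$.

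I do not expect any serious obstacle: once Lemma \ref{lemm:normalbundleestimate} is invoked, the remainder is adjunction on $\mathbb{P}^1$ plus a standard fiber dimension count. The only step that requires a little care is observing that the evaluation map from $\mathcal{C}^{\times_M n}$ to $\mathcal{X}^n$ is generically finite onto its image when restricted to the fiber over a generic point (so that passing to the section-through-$n$-points locus does not lose dimension), and that the general member is free so that $\dim M$ equals the expected dimension exactly.
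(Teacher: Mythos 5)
Your proof is correct and rests on the same underlying mechanism as the paper's: a dominant family of sections has exactly the expected dimension $-K_{\mathcal{X}/\mathbb{P}^{1}}\cdot C+2$, and each general point imposes two independent conditions, which is precisely the fiber-dimension count you carry out and which also gives the last two statements. The only cosmetic difference is that you route the first inequality through Lemma \ref{lemm:normalbundleestimate} (each $a_i\geq n-1$) plus adjunction, whereas the paper deduces $\dim(M)\geq 2n$ directly from the independence of the point conditions; both are valid and essentially equivalent.
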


\begin{proof}
Since the points are general they will impose independent conditions on $M$, so that $\dim(M) \geq 2n$.  Since $M$ parametrizes a dominant family of curves, it has the expected dimension, showing that
\begin{equation*}
2n \leq -K_{\mathcal{X}/\mathbb{P}^{1}} \cdot C + 2.
\end{equation*}
The last two statements are clear.
\end{proof}

To understand reducible curves takes more preparation.

\begin{defi}
Let $\pi: \mathcal{X} \to \mathbb{P}^{1}$ be a del Pezzo fibration.  Fix an integer $d$.  We let $\maxdef(d)$ denote the maximum dimension of any component $M \subset \Sec(\mathcal{X}/\mathbb{P}^{1})$ parametrizing sections of height $d$. When there is no section of height $d$, we simply set $\maxdef(d) = -\infty$. 
We also define
\begin{equation*}
\maxdef(\mathcal{X}) = \sup_{d < 0} \maxdef(d).
\end{equation*}
\end{defi}

\begin{lemm} \label{lemm:maxdefbound}
Let $\pi: \mathcal{X} \to \mathbb{P}^{1}$ be a del Pezzo fibration such that $-K_{\mathcal{X}/\mathbb{P}^{1}}$ is relatively ample.  Fix an integer $d < 0$ and a positive integer $n \geq \maxdef(d)$.  Suppose that we have a connected effective curve $C = C_{0} + \sum a_{i}T_{i}$ where $C_{0}$ is a section of height $d$ and each $T_{i}$ is a $\pi$-vertical curve.  If $C$ contains $n$ general points $\{x_{j}\}_{j=1}^{n}$ of $\mathcal{X}$ then $-K_{\mathcal{X}/\mathbb{P}^{1}} \cdot C \geq d + 3n - \maxdef(d)$.
\end{lemm}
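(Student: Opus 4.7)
The plan is to run a dimension count on the space of curves of the prescribed form.  Let $\mathcal{M}$ denote the moduli of connected effective curves $C = C_0 + \sum a_i T_i$ (as a suitable Hilbert scheme component) with $C_0$ a section of height $d$ and each $T_i$ a distinct irreducible $\pi$-vertical curve, and let $\mathcal{C} \to \mathcal{M}$ be the universal curve.  Since the $C$ we are given contains $n$ general points of $\mathcal{X}$, the evaluation map from the $n$-fold fibered product $\mathcal{C}^n$ to $\mathcal{X}^n$ must be dominant.  Because $\dim \mathcal{C}^n = \dim \mathcal{M} + n$ and $\dim \mathcal{X}^n = 3n$, this forces the basic inequality
\[
\dim \mathcal{M} \geq 2n.
\]

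The next step is to estimate $\dim \mathcal{M}$ from above using the decomposition of $C$.  The projection $\mathcal{M} \to \Sec(\mathcal{X}/\mathbb{P}^1)$ sending $C \mapsto C_0$ lands in a component of dimension at most $\maxdef(d)$ by definition.  For each distinct irreducible vertical component $T_i$ attached to $C_0$ at $C_0(t_i) \in F_{t_i}$, the contribution to the fiber of this projection is at most $1$ dimension for the choice of $t_i \in \mathbb{P}^1$ and at most $-K_{F} \cdot T_i - 2$ dimensions for the choice of $T_i$ in $F_{t_i}$ passing through the attachment point (the standard normal-bundle/expected-dimension bound on the del Pezzo fiber).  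Writing $m$ for the number of distinct irreducible vertical components, and using $a_i \geq 1$ to compare $\sum_i (-K_F \cdot T_i) \leq \sum_i a_i (-K_F \cdot T_i) = -K_{\mathcal{X}/\mathbb{P}^1} \cdot C - d$, I obtain
\[
\dim \mathcal{M} \;\leq\; \maxdef(d) + \sum_i (-K_{F}\cdot T_i - 1) \;\leq\; \maxdef(d) - d - m + \bigl(-K_{\mathcal{X}/\mathbb{P}^1} \cdot C\bigr).
\]

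The key geometric input is the lower bound $m \geq n$.  Since $d < 0$, any section $C_0$ of height $d$ has $\deg N_{C_0/\mathcal{X}} = -K_{\mathcal{X}/\mathbb{P}^1} \cdot C_0 < 0$ (using $K_{\mathcal{X}/\mathbb{P}^1} = K_{\mathcal{X}} + 2F$ and that $C_0$ is a rational section), so $C_0$ is not free; thus by Lemma \ref{lemm:dominantimpliesfree} its deformation family is non-dominant and sweeps only a proper closed subset of $\mathcal{X}$.  Consequently no $x_j$ lies on any deformation of $C_0$, so each $x_j$ must lie on some irreducible vertical component $T_{i_j}$.  Since the images $\pi(x_j) \in \mathbb{P}^1$ are distinct and each vertical component is contained in a single fiber, the components $T_{i_j}$ are distinct, yielding $m \geq n$.

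Combining the three inequalities produces $-K_{\mathcal{X}/\mathbb{P}^1} \cdot C \geq 2n + d + n - \maxdef(d) = d + 3n - \maxdef(d)$, as desired.  The step I expect to require the most care is the upper estimate on the moduli contribution of each attached vertical component $T_i$ (the bound $-K_F \cdot T_i - 1$), which I will justify via the free rational curve deformation theory on the smooth del Pezzo fiber $F$; the hypothesis $n \geq \maxdef(d)$ is what makes the final inequality substantive relative to the naive bound $-K_{\mathcal{X}/\mathbb{P}^1} \cdot C \geq 2n - 2$ coming from the expected dimension of the space of sections.
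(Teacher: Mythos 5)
Your steps (A) and (C) are sound and match the paper's starting observations: dominance of the evaluation map forces $\dim \mathcal{M} \geq 2n$, and since a section of negative height cannot be free, each general point lies on a distinct vertical component, so $m \geq n$. The gap is in the upper bound (B), specifically the claim that each attached vertical component contributes at most $1 + (-K_F\cdot T_i - 2) = -K_F\cdot T_i - 1$ to $\dim\mathcal{M}$. The summand $-K_F\cdot T_i - 2$ presumes that passing through the attachment point $C_0(t_i)$ is a codimension-one condition on the family of rational curves of class $[T_i]$ in $F_{t_i}$. But $C_0(t_i)$ is not a general point of $F_{t_i}$ — it is constrained to lie on the locus swept out by the non-dominant family of deformations of $C_0$ — and the condition can be vacuous. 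For instance, if the general fiber has degree $1$ and $C_0$ is the (rigid, negative-height) section traced by the base points of the anticanonical pencils $|-K_{F_t}|$, then every rational curve $T_1$ with $-K_F\cdot T_1 = 1$ automatically meets $C_0$, so such a component contributes $1$ to $\dim\mathcal{M}$ (the free choice of fiber) rather than your claimed $0$; the same failure occurs whenever $C_0$ lies in the surface swept out by vertical lines. In that scenario take $C = C_0 + T_1 + T_2$ with $T_2$ a free vertical conic through the general point (which meets $T_1$ automatically since $T_1\cdot T_2 > 0$ in the fiber): one checks $\dim\mathcal{M} = 2 = 2n$, $m = 2$, and $-K_{\mathcal{X}/\mathbb{P}^1}\cdot C = d+3$, whereas your chain of inequalities would give $-K_{\mathcal{X}/\mathbb{P}^1}\cdot C \geq 2n + d + m - \maxdef(d) = d+4$. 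Each such "bad" component raises the degree by $1$ but the right-hand side of your final inequality by $2$, so the error does not cancel; inequality (B) is false as stated.

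The paper's proof takes a different and safer route: it never bounds the dimension of the full configuration space. It only examines the $n$ components carrying general points (which necessarily have $-K_F\cdot T_i \geq 2$), and for the degree-$2$ ones it transfers the connectivity requirement onto $C_0$: the finitely many conics through a general point $x_j$ meet the surface $Y$ swept by deformations of $C_0$ in a finite set, so attaching such a conic forces $C_0$ through one of finitely many prescribed points, and these conditions are independent for distinct $j$; hence at most $\maxdef(d)$ of the $n$ distinguished components are conics and the rest have degree $\geq 3$. All other vertical components are simply discarded since they contribute non-negatively to the degree. To salvage your global count you would need to restrict the per-component estimate to the components carrying general points (where the fiber is determined by the point and the curve is free, so the estimate is legitimate) and credit every remaining component with $0$ rather than with $-K_F\cdot T_i - 1$ — which essentially reproduces the paper's argument.
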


\begin{proof}
Let $M$ be the component of $\Sec(\mathcal{X}/\mathbb{P}^{1})$ containing $C_{0}$.  Since $d < 0$ by assumption we know that $M$ does not parametrize a dominant family.  First suppose that the curves parametrized by $M$ sweep out a surface $Y \subset \mathcal{X}$.  Note that deformations of $C_{0}$ can not contain any general points of $\mathcal{X}$.  Thus each general point $x_{j}$ is contained in a different free vertical curve $T_{j}$.

Suppose that $-K_{\mathcal{X}/\mathbb{P}^{1}} \cdot T_{j} = 2$.  There is only a finite set of conics in a del Pezzo surface through a fixed general point $x_j$.  By generality of $x_{j}$ the intersection points of these conics with $Y$ form a finite set.  To ensure that $C$ is connected, $C_{0}$ must contain one of this finite set of points.  By generality, the conditions imposed on $C_{0}$ by insisting that $C_{0}$ contain the attachment points for conics $T_{j}$ will be independent for different $j$.  Thus we see that there can be at most $\maxdef(d)$ such conics $T_{j}$ in the curve $C$.  This proves that
\begin{align*}
-K_{\mathcal{X}/\mathbb{P}^{1}} \cdot C \geq d + 2 \maxdef(d) + 3(n-\maxdef(d))
\end{align*}

When $C_{0}$ is a rigid curve, the argument is similar but easier since every component $T_{j}$ containing a general point must satisfy $-K_{\mathcal{X}/\mathbb{P}^{1}} \cdot T_{j} \geq 3$.
\end{proof}

The two following propositions describe which sections can pass through the maximal number of general points of $\mathcal{X}$.  The proof of \cite[Theorem 4.1]{Tian12} establishes a related statement for a particular type of curves of minimal height. The first proposition handles the case of even height.

\begin{prop} \label{prop:evendegreebound}
Let $\pi: \mathcal{X} \to \mathbb{P}^{1}$ be a del Pezzo fibration such that $-K_{\mathcal{X}/\mathbb{P}^{1}}$ is relatively ample.  Fix a positive integer
\begin{equation*}
n \geq \maxdef(\mathcal{X}) + 2 + \sup\{0, - \neg(\mathcal{X},-K_{\mathcal{X}/\mathbb{P}^{1}})\}.
\end{equation*}
Suppose that $f: C \to \mathcal{X}$ is a genus $0$ stable map whose image has anticanonical height $2n-2$ such that the unique component of $C$ whose image is not $\pi$-vertical maps birationally to a section. Then:
\begin{enumerate}
\item Suppose the image of $C$ contains $n$ general points of $\mathcal{X}$.  Then $f$ is a birational map to a free section.
\item Fix a general curve $Z$ in a basepoint free linear series in a general fiber of $\pi$.  Suppose the image of $C$ contains $n-1$ general points of $\mathcal{X}$ and intersects $Z$.  Suppose also that the image of $C$ is reducible and at least one of the general points is contained in a $\pi$-vertical component of $C$.  Then $C$ has exactly two components and $f$ maps one component birationally onto a free section and the other birationally onto either a free $-K_{\mathcal X}$-conic or a free $-K_{\mathcal X}$-cubic in a general fiber of $\pi$.
\end{enumerate}
\end{prop}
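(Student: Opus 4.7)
The plan is to decompose $C$ as $S \cup T_1 \cup \cdots \cup T_k$, where $S$ is the unique horizontal component (birational onto a section of height $d$) and each $T_i$ is $\pi$-vertical of height $e_i$, so $d + \sum_i e_i = 2n - 2$. In both parts the argument would rest on a boundary stratum dimension count in $\overline{M}_{0,0}(\mathcal{X}, [C])$, with a case distinction on whether the component $M_S \subset \Sec(\mathcal{X}/\mathbb{P}^{1})$ containing $S$ is dominant.

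For (1), if $M_S$ is dominant then Lemma \ref{lemm:dominantimpliesfree} gives $\dim M_S = d + 2$, and the boundary stratum of $\overline{M}_{0,0}(\mathcal{X}, [C])$ of our dual graph type has dimension $-K_{\mathcal{X}} \cdot [C] - k = 2n - k$. The $n$ general points contribute codimension $2n$, forcing $k = 0$, so $C = S$ is irreducible; Lemma \ref{lemm:normalbundleestimate} then forces the normal bundle of $S$ to split as $\mathcal{O}(n-1) \oplus \mathcal{O}(n-1)$, and in particular $S$ is free. If instead $M_S$ is non-dominant, then $S$ carries no general point and all $n$ lie on verticals of height at least $2$, which forces $d \leq -2$; Lemma \ref{lemm:maxdefbound} then yields
\[
2n - 2 \;\geq\; d + 3n - \maxdef(d),
\]
which combined with $d \geq \neg(\mathcal{X}, -K_{\mathcal{X}/\mathbb{P}^{1}})$ and $\maxdef(d) \leq \maxdef(\mathcal{X})$ gives $n \leq \maxdef(\mathcal{X}) - \neg(\mathcal{X}, -K_{\mathcal{X}/\mathbb{P}^{1}}) - 2$, contradicting the hypothesis on $n$ by a case check on the sign of $\neg$.

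For (2) a parallel argument rules out $M_S$ non-dominant: writing $\delta := \dim M_S - (d+2) \geq 0$ for the excess, the fact that the $n-1$ general points lie in distinct fibers forces at least $k \geq n-1$ verticals, so the stratum dimension $2n - k + \delta$ must exceed the total condition codimension $2(n-1) + 1 = 2n - 1$, yielding $\delta \geq k - 1 \geq n - 2$ and hence $\dim M_S \geq d + n$; together with $\dim M_S \leq \maxdef(\mathcal{X})$ (valid once the height balance is used to force $d<0$) and $d \geq \neg$, this again contradicts the hypothesis on $n$. With $M_S$ then dominant, the stratum dimension $2n - k$ must accommodate codimension $2n - 1$, so $k \leq 1$; reducibility gives $k = 1$, so $C = S \cup T_1$, and distinct-fiber generality forces exactly one general point on $T_1$ with the remaining $m = n - 2$ on $S$.

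The final step bounds $e_1$. For a free section in $M_S$ to pass through the $n - 2$ given general points and additionally meet $Z$, one needs
\[
\dim M_S - 2(n - 2) - 1 \;\geq\; 0, \quad \text{that is,} \quad d \geq 2n - 5.
\]
Combined with $d + e_1 = 2n - 2$ and $e_1 \geq 2$ (the minimum height of a free rational curve on a del Pezzo surface), this forces $e_1 \in \{2, 3\}$; the freeness of $T_1$ then follows from Lemma \ref{lemm:dominantimpliesfree} applied in the general fiber $F_{x_1}$. The main obstacle is precisely this last codimension count: the normal bundle estimate alone (Lemma \ref{lemm:normalbundleestimate}) yields only the looser bound $d \geq 2n - 6$ and would admit the spurious case $e_1 = 4$, so the crucial point is to verify that the $Z$-incidence imposes a genuinely independent codimension-one condition on the subfamily of $M_S$ cut out by the $n - 2$ general point conditions, i.e., that the map $S \mapsto S \cap F_Z$ sweeps out a Zariski-dense subset of $F_Z$ as $S$ varies in this subfamily.
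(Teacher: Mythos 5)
Your skeleton---split $C$ into the horizontal component $S$ and verticals, balance anticanonical degree against point conditions, and invoke Lemma \ref{lemm:maxdefbound} when $S$ has negative height---is the same as the paper's, and your treatment of the non-dominant case matches the paper's Cases 2 and 3. However, the central tool you use, the boundary-stratum dimension formula $2n-k$ (resp.\ $2n-k+\delta$) together with the claim that the $n$ point conditions and the $Z$-condition impose full codimension, is asserted rather than proved, and as stated it is false as an upper bound. Lemma \ref{lemm:expected} is a lower bound on dimensions of spaces of morphisms, not an upper bound, and your formula fails for configurations you have not excluded: a component of the domain may map as a multiple cover of a vertical curve (a double cover of a $\pi$-vertical line contributes $2$ to the degree of $[C]$ while the corresponding locus of stable maps has dimension $3$, one more than expected), and a vertical component need not move in a family of the expected dimension. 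The paper's count is deliberately insensitive to this: it writes $f(C)=C_{0}+\sum a_{i}T_{i}$ with multiplicities, bounds the number of general points on each irreducible vertical curve by $1$ (with $t_{i}\geq 2$ when the bound is attained), and compares $\lfloor d/2\rfloor+1+|I'|$ with $d/2+1+\sum a_{i}t_{i}/2$, never using the moduli dimension of the vertical pieces.

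The second gap is the one you name yourself at the end: everything reduces to transversality statements---that the sections through the relevant subsets of points form a finite (and free) family, that these sections are disjoint from the finitely many vertical conics through the remaining points and from vertical lines meeting $Z$, that meeting $Z$ cuts the subfamily of $M_{S}$ through $n-2$ points in the expected codimension, etc.---and to the connectedness of $f(C)$ ruling out the residual configurations that survive the numerical count (in part (1), a maximal-point section plus conics through the other points; in part (2), e.g.\ a section plus a vertical line meeting $Z$ plus conics). These verifications are exactly the content of the ``Construction'' preceding the paper's proof, which sets up the incidence correspondences and dimension counts making each such condition independent. Without them neither $k=0$ in (1), nor $k\leq 1$ and the exclusion of $e_{1}=4$ in (2), follows; the numerical bookkeeping you supply is the easy half of the argument. (A minor further point: in (1) you apply Lemma \ref{lemm:normalbundleestimate} to conclude freeness of the specific curve $f$, but that lemma concerns a general deformation; the paper instead deduces freeness from the finiteness of the set of sections through the $n$ points together with the generality conditions.)
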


\begin{cons}
Before giving the proof, we clarify what ``general'' means in the statement of the proposition.  We may ensure that $n$ general points and a general member $Z$ of a basepoint free linear series in a general fiber of $\pi$ satisfy the following conditions: 
\begin{enumerate}
\item We may ensure that the $n$ points and $Z$ are contained in different fibers of $\pi$ and that every such fiber is a smooth del Pezzo surface $F$.  We may furthermore ensure that each point does not lie on any $-K_F$-line in $F$. 
\item Fix a positive integer $d$ so that $d \leq n$.  Suppose we fix any subset of our set of points of size $d$.  Then we may ensure that the set of sections of height $2d-2$ which contain this subset of points is finite (possibly empty) and that every such section is free.  Indeed, a free section of height $2d-2$ will deform in dimension $2d$.  Since there are only finitely many components of $\Sec(\mathcal{X}/\mathbb{P}^{1})$ parametrizing free sections of height $2d-2$, we can choose $d$ points general with respect to these families such that there are only finitely many sections in these families containing all $d$ points.  Since the finitely many sections satisfying these incidence correspondences are general in the moduli, we conclude that they are free.

A similar argument shows that we may ensure there are finitely many loci of dimension $\leq 1$ in $\overline{M}_{0,0}(X)$ that parametrize sections of height $2d-1$ which contain this set of $d$ points and a general point in each locus is a free section.  
\item We may ensure that for each of our points $p$ the finite set of $-K_F$-conics in $F$ through $p$ has the expected dimension of intersection against the sections described in (2); namely, for any $0$-dimensional component of the parameter space consisting of sections of height $2d-2$ passing through $d$ general points which are different from $p$, the corresponding section is disjoint from each $-K_F$-conic, and for any $1$-dimensional component of the parameter space consisting of sections of height $2d-1$ through $d$ general points which are different from $p$ there are only finitely many sections intersecting each $-K_F$-conic. Moreover we may ensure that these finitely many sections are all free.

To see this, consider the incidence correspondence $I \subset T \times S$ where $T$ parametrizes sets of $d$ points contained in a section of degree $2d-2$, $S$ parametrizes a choice of a point $p$ and a $\pi$-vertical conic containing it, and $I$ represents the condition that the section meet the conic.  Since in a family of free curves it is a codimension $1$ condition to intersect any fixed curve, we see that the general fiber of the second projection $I \to S$ has codimension $1$ in $T$. In particular $I$ is a proper closed subscheme of $T \times S$.  A similar argument works for the other condition.
\item We may ensure that the sections in (2) and (3) meet $Z$ in the expected dimension: for any $0$-dimensional component of the parameter space the corresponding section is disjoint from $Z$, and for any $1$-dimensional component of the parameter space there are only finitely many sections intersecting $Z$. Moreover we may ensure that these finitely many sections are all free.  The argument is the same as for (3).
\item Suppose we fix a subset of our $n$ points of size $d-1$.  We may ensure that there are only finitely many sections of height $2d-2 \leq 2n-2$ passing through all $d-1$ points which also intersect $Z$ and intersect a $-K_F$-conic containing a general point which is distinct from our chosen subset of $d-1$ points. Moreover we may ensure that these finitely many sections are all free.

To see this, note that sections of height $2d-2$ containing $d-1$ general points can deform in dimension at most $2$.  Among them sections meeting with $Z$ will form a locus of dimension at most $1$.  Using an incidence correspondence as above, we see there are only finitely many sections passing through the $d-1$ points that intersect $Z$ and intersect a $-K_F$-conic passing through  point different from the $d-1$ general points. Since such sections are general in moduli, we conclude that they are free.
\item For a point $p$ in our set we may ensure that there are only finitely many $-K_F$-cubics in $F$ through $p$ that intersect any section that is parametrized by a $0$-dimensional subset as in (2), (3), (4), (5) with respect to a subset of points not containing $p$. The argument is the same as for (3).
\item We may ensure that there is no section as in (3) and (5) intersecting $-K_F$-conics through two different points in our set. Furthermore, we may ensure that sections parametrized by a $0$-dimensional component of the parameter space as in (2), (3), (4), and (5) do not intersect with a vertical line meeting $Z$.  The argument is the same as before.
\item Since $n \geq \maxdef(\mathcal{X})$, we may ensure that for any $d<0$ and any subset of our set of points of size $\geq \maxdef(d)$ the points are general in the sense of Lemma \ref{lemm:maxdefbound}.
\end{enumerate}
\end{cons}

\begin{proof}[Proof of Proposition \ref{prop:evendegreebound}:]
(1).  It suffices to show that the image of $f$ is irreducible.  Suppose otherwise, so that $f(C) = C_{0} + \sum_{i \in I} a_{i}T_{i}$ for some $\pi$-vertical curves $T_{i}$.   Let $d$ denote the height of $C_{0}$ and set $t_{i} = -K_{\mathcal{X}/\mathbb{P}^{1}} \cdot T_{i}$, so that
\begin{equation*}
2n-2 = d + \sum_{i \in I} a_{i}t_{i}.
\end{equation*}
Note that $C_{0}$ can contain at most $\sup \{ 0, \lfloor \frac{d}{2} \rfloor + 1\}$ general points of $\mathcal{X}$.   An irreducible $\pi$-vertical curve $T_{i}$ can contain at most $1$ general point and if it does then $t_{i} \geq 2$.   Let $I' \subset I$ denote the set of vertical curves that contain one of the general points.  

We now break the argument into several cases.

\noindent \textbf{Case 1: $d \geq 0$.}  Then the number of general points contained in $f(C)$ is bounded above by the number of general points contained in $C_{0}$ and in the $T_{i}$.  Thus:
\begin{align*}
\sup \left\{ 0, \left\lfloor \frac{d}{2} + 1 \right\rfloor \right\} + |I'| & \geq n \\
& = \frac{d}{2} + 1 + \sum_{i \in I} a_{i}t_{i}/2.
\end{align*}
Since $d \geq 0$ the RHS is an upper bound for the LHS.  Thus the inequality above must be an equality.  This means that $d$ is even and $C_{0}$ goes through the maximum number of points possible, that each $a_{i}=1$, and that each component of $T$ is a free vertical curve through one of the general points with $t_{i} = 2$.  In particular, the set of $\frac{d}{2}+1$ general points determines a finite number of possibilities for $C_{0}$, and each vertical curve is also determined by a general point up to a finite set of possibilities.  If there are any vertical components, then for general choices $f(C)$ will not be connected, an impossibility.  Thus $f(C)$ is irreducible. Then $f(C)$ contains all $n$ general points so there are only finitely many choices of $f : C \to \mathcal X$. Thus by generality it must be free.

\noindent \textbf{Case 2: $d < 0$.}
Due to our lower bound on $n$ we may apply Lemma \ref{lemm:maxdefbound}.  It shows that $C$ must have height
\begin{align*}
-K_{\mathcal{X}/\mathbb{P}^{1}} \cdot C & \geq \neg(\mathcal{X},-K_{\mathcal{X}/\mathbb{P}^{1}}) + 3n - \maxdef(\mathcal{X}) \\
& > 2n-2
\end{align*}
contradicting our assumption on the height of $C$.

\bigskip

(2).  Write $f(C) = C_{0} + \sum_{i \in I} a_{i}T_{i}$ for some $\pi$-vertical curves $T_{i}$.   Let $d$ denote the height of $C_{0}$ and set $t_{i} = -K_{\mathcal{X}/\mathbb{P}^{1}} \cdot T_{i}$, so that
\begin{equation*}
2n-2 = d + \sum_{i \in I} a_{i}t_{i}.
\end{equation*}
 Let $I' \subset I$ denote the set of vertical curves that contain one of the general points.  Again we separate into cases:

\noindent \textbf{Case 1: $d \geq 0$ and $C_{0}$ intersects $Z$.} 
Since $C_{0}$ intersects $Z$ it can contain at most $\sup \{ 0, \lfloor \frac{d+1}{2} \rfloor \}$ general points of $\mathcal{X}$.   The number of general points contained in $f(C)$ is bounded above by the number of general points contained in $C_{0}$ and in the $T_{i}$.  Thus:
\begin{align*}
\sup \left\{ 0, \left\lfloor \frac{d+1}{2} \right\rfloor \right\} + |I'| & \geq n-1 \\
& \geq \frac{d}{2} + \sum_{i \in I} a_{i}t_{i}/2.
\end{align*}
If $d$ is even, then we must have equality everywhere.  This means that set of deformations of $C_{0}$ which meet $Z$ and go through the maximal number of points is at most $1$-dimensional, that every $a_{i}=1$, and that each component of $T$ is a free vertical curve through one of the general points with $t_{i} = 2$.  However, for $C_{0}$ to meet a vertical conic through a general point is a codimension $1$ condition, so by generality there can be at most one vertical component. Moreover there are only finitely many such sections going through the maximum number of points and meeting with $Z$ and a conic going through a general point, thus $C_0$ must be free by generality.  Thus we obtain the desired expression. 

If $d$ is odd, then $C_{0}$ must contain the maximal number of points $\lfloor \frac{d+1}{2} \rfloor$.  This means that the difference between $|I'|$ and $\sum a_{i}t_{i}/2$ can be at most $1/2$.  Thus there are only three options for the vertical components:
\begin{enumerate}
\item every component of $T$ has anticanonical degree $2$ and contains a general point, or
\item every component of $T$ but one has anticanonical degree $2$ and contains a general point, and the last one has anticanonical degree $3$ and contains a general point, or,
\item every component of $T$ but one has anticanonical degree $2$ and contains a general point, and the last component has anticanonical degree $1$.
\end{enumerate}
Note that there are only finitely many deformations of $C_{0}$ which meet $Z$ and go through $\frac{d+1}{2}$ points. Thus $C_{0}$ must be free by generality.  Recall that by assumption there is a vertical component of $f(C)$ through a general point.  However, since there are only finitely many vertical conics through a general point, by generality no such conic can intersect $C_{0}$.  This rules out the first and third situations, showing that $C$ must be the union of a free section and a cubic in a fiber.

\noindent \textbf{Case 2: $d \geq 0$ and $C_{0}$ does not intersect $Z$.}
Just as before, the number of general points contained in $f(C)$ is bounded above by the number of general points contained in $C_{0}$ and in the $T_{i}$.  Thus:
\begin{align*}
\sup \left\{ 0, \left\lfloor \frac{d}{2} \right\rfloor + 1 \right\} + |I'| & \geq n - 1 \\
& \geq \frac{d}{2} + \sum_{i \in I} a_{i}t_{i}/2.
\end{align*}
Since $C_{0}$ does not intersect $Z$, there must be a vertical curve that does intersect $Z$ but does not contain any general points, so that
\begin{equation*}
\sum_{i \in I} a_{i}t_{i}/2 - |I'| \geq \sum_{i \in I} a_{i}t_{i}/2 - \sum_{i \in I'} a_{i}t_{i}/2 \geq 1/2.
\end{equation*}
Thus when $d$ is even, $C_{0}$ must contain the maximal number of points $\frac{d}{2}+1$.  In this situation the difference between $|I'|$ and $\sum_{i \in I'} a_{i}t_{i}/2$ is at most $1/2$.  There are four options for the vertical components:
\begin{enumerate}
\item every component of $T$ but one has anticanonical degree $2$ and contains a general point, and the last has anticanonical degree $1$ and meets $Z$.
\item every component of $T$ but two has anticanonical degree $2$ and contains a general point, one has anticanonical degree $1$ and meets $Z$, and the last one has anticanonical degree $3$ and contains a general point,
\item every component of $T$ but two has anticanonical degree $2$ and contains a general point, and the last two components have anticanonical degree $1$, one of which meets $Z$,
\item every component of $T$ has anticanonical degree $2$, all but one contain a general point, and the last component meets $Z$.
\end{enumerate}
Since by generality there are only finitely many deformations of $C_{0}$ through the required number of points, such $C_{0}$ can not intersect a vertical line meeting $Z$ or a vertical conic through a general point.  This rules out the first three cases immediately, and the fourth is also ruled out since by assumption there exists at least one vertical curve which contains some general point. 

When $d$ is odd then $C_{0}$ must contain the maximal number of points $\lfloor \frac{d}{2} \rfloor+1$.  In this case there is only one option: every component of $T$ but one has anticanonical degree $2$ and contains a general point, and the last has anticanonical degree $1$ and meets $Z$.  However, since by generality $C_{0}$ can only deform in a one-parameter family while containing the maximal number of points, it is impossible for $C_{0}$ to meet both a line intersecting $Z$ and a conic through a general point.  Since by assumption $f(C)$ contains a vertical component through a general point, this case is also ruled out.

\noindent \textbf{Case 3: $d < 0$.}
In this case Lemma \ref{lemm:maxdefbound} shows that $C$ must have height
\begin{align*}
-K_{\mathcal{X}/\mathbb{P}^{1}} \cdot C & \geq \neg(\mathcal{X},-K_{\mathcal{X}/\mathbb{P}^{1}}) + 3(n-1) - \maxdef(\mathcal{X}) \\
& > 2n-2
\end{align*}
proving the impossibility of this case.
\end{proof}

The next proposition is the analogue of Proposition \ref{prop:evendegreebound} for sections of odd height.

\begin{prop} \label{prop:odddegreebound}
Let $\pi: \mathcal{X} \to \mathbb{P}^{1}$ be a del Pezzo fibration such that $-K_{\mathcal{X}/\mathbb{P}^{1}}$ is relatively ample.  Fix a positive integer
\begin{equation*}
n \geq \maxdef(\mathcal{X}) + \sup\{0, - \neg(\mathcal{X},-K_{\mathcal{X}/\mathbb{P}^{1}})\}.
\end{equation*}
Suppose that $f: C \to \mathcal{X}$ is a genus $0$ stable map whose image has anticanonical height $2n-1$ such that the unique component of $C$ whose image is not $\pi$-vertical maps birationally to a section. Then:
\begin{enumerate}
\item Fix a general curve $Z$ in a basepoint free linear series in a general fiber of $\pi$. Suppose the image of $C$ contains $n$ general points of $\mathcal{X}$ and intersects $Z$. Then $f$ is a birational map to a free section.
\item Suppose the image of $C$ contains $n$ general points of $\mathcal{X}$.  Suppose also that the image of $C$ is reducible and at least one of the general points is contained in a $\pi$-vertical component of $C$.  Then $C$ has exactly two components and $f$ maps one component birationally onto a free section and the other birationally onto a free $-K_{\mathcal X}$-conic or a free $-K_{\mathcal X}$-cubic in a general fiber of $\pi$.
\end{enumerate}
\end{prop}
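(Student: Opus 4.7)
The proof follows the strategy of Proposition \ref{prop:evendegreebound} closely, adapted to odd height. I would begin by supplementing the list of generality conditions preceding that proof with one more: any section $C_0$ lying in a $0$-dimensional family of sections of controlled height (subject to the various incidence conditions arising below) is disjoint from every vertical line contained in the fiber over the image $\pi(x_i)$ of any of the general points $x_i$. This is valid because the union $\mathcal L \subset \mathcal X$ of all vertical $(-1)$-curves is a $2$-dimensional closed subset of the threefold $\mathcal X$, so $C_0 \cap \mathcal L$ is finite for any fixed section $C_0$; taking the projections $\pi(x_i) \in \mathbb P^1$ generic allows us to avoid these finitely many values.

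Decompose $f(C) = C_0 + \sum_{i \in I} a_i T_i$, where $C_0$ maps birationally to a section of height $d$ and the $T_i$ are vertical with $t_i = -K_{\mathcal X/\mathbb P^1} \cdot T_i$, so that $d + \sum a_i t_i = 2n-1$. Let $I' \subset I$ index those $T_i$ containing a general point; each has $t_i \geq 2$. If $d < 0$, then Lemma \ref{lemm:maxdefbound} combined with the hypothesis $n \geq \maxdef(\mathcal X) + \sup\{0, -\neg(\mathcal X, -K_{\mathcal X/\mathbb P^1})\}$ yields $-K_{\mathcal X/\mathbb P^1} \cdot C \geq \neg(\mathcal X, -K_{\mathcal X/\mathbb P^1}) + 3n - \maxdef(\mathcal X) > 2n - 1$, contradicting the height assumption, so we may assume $d \geq 0$. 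For part (1), split on whether $C_0$ meets $Z$. If $C_0$ meets $Z$, the inequalities $k \leq \lfloor (d+1)/2 \rfloor$, $|I'| \leq (2n-1-d)/2$, and $k + |I'| \geq n$ pinch to $d$ odd with equality throughout, so $C_0$ contains $(d+1)/2$ general points, every $T_i \in I'$ is a free conic through a general point, and no other vertical component exists; since $C_0$ then lies in a $0$-dimensional family, the generality conditions force disjointness from these conics and hence $|I'|=0$, giving $d = 2n-1$ and $f(C) = C_0$ a free section. If $C_0$ does not meet $Z$, an analogous count forces $d$ even with $C_0$ through $d/2+1$ points, $n-d/2-1$ conics, and one vertical line meeting $Z$; all connectivity options are excluded by the generality conditions.

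For part (2), when $d \geq 0$ is odd the counts give $|I'|$ conics and no extras with $C_0$ in a $1$-dimensional family; passing through a specific conic is codimension $1$, so $|I'| \leq 1$, and with $|I'| \geq 1$ we conclude $|I'|=1$, $d = 2n-3$, and $f(C)$ is a free section plus a free conic through a general point. When $d \geq 0$ is even, the counts allow a deficit of exactly $1$, yielding configuration (A): $C_0$ plus $(n-d/2-1)$ conics plus one vertical line, or (B): $C_0$ plus $(n-d/2-2)$ conics plus one free cubic through a general point. In either case $C_0$ is $0$-dimensional and hence by generality disjoint from every conic through a general point. For (A), the line lies in a single fiber, and any conic in a different fiber cannot connect to $C_0$; moreover the new generality condition forbids $C_0$ from meeting a line in the fiber of a general point, so the line itself cannot act as a bridge, ruling (A) out entirely. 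For (B), the conics cannot connect to $C_0$, forcing $|I'|=1$ and $d = 2n-4$, so $f(C)$ is a free section plus a free cubic through a general point.

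The main technical obstacle is formulating the new generality condition that excludes configuration (A): one must show that after imposing incidence with $d/2 + 1$ general points, the finitely many resulting sections $C_0$ do not pass through any line in the fiber over $\pi(x_i)$ for any general point $x_i$ not on $C_0$. This relies on the dimension of the locus of vertical $(-1)$-curves being $2$ in the $3$-fold $\mathcal X$, so that its intersection with any fixed section is finite and can be avoided by a general choice of $\pi(x_i)$. The remaining dimension counts and connectivity arguments parallel those in the proof of Proposition \ref{prop:evendegreebound}.
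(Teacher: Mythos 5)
Your proof is correct and follows the route the paper intends: the paper does not write out this proposition, stating only that the argument is ``essentially the same as the proof of Proposition \ref{prop:evendegreebound}, but slightly easier,'' and your case analysis ($d<0$ via Lemma \ref{lemm:maxdefbound}; $d\geq 0$ split by parity and by whether $C_0$ meets $Z$) is exactly the right transposition of that proof, with the correct equality analysis in each branch. The one place where you genuinely go beyond a verbatim transcription is the extra generality condition excluding the chain $C_0\cup\ell\cup T$ in part (2) with $d$ even, where a vertical line $\ell$ in the fiber of a general point $x_j\notin C_0$ bridges the rigid section $C_0$ to the conic through $x_j$. You are right that this configuration is not literally covered by the Construction preceding Proposition \ref{prop:evendegreebound} (condition (6) there only forbids $C_0$ from meeting lines in the fiber of $Z$, and condition (2) only says the general points themselves avoid lines), because in the even-degree part (2) the extra degree-one component is always pinned to the fiber of $Z$, which is disjoint from the fibers of the general points; in the odd case that pin is absent. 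Your fix --- the locus swept out by vertical lines is a surface, so each of the finitely many candidate $C_0$ meets it in finitely many points, and a general choice of $\pi(x_j)$ avoids their images --- is the standard incidence-correspondence argument and closes the gap.
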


The proof is essentially the same as the proof of Proposition \ref{prop:evendegreebound}, but slightly easier.

\section{Movable Bend-and-Break for del Pezzo fibrations}

The following conjecture is essential for understanding sections of Fano fibrations.

\begin{conj}[Movable Bend-and-Break for sections] \label{conj:movablebandb}
Let $\pi: \mathcal{X} \to \mathbb{P}^{1}$ be a Fano fibration.  There is a constant $Q = Q(\mathcal{X})$ such that the following holds.  Suppose that $C$ is a movable section of $\pi$ satisfying $-K_{\mathcal{X}/\mathbb{P}^{1}} \cdot C > Q(\mathcal{X})$.  Then $C$ deforms (as a stable map) to a chain of free curves with at least $2$ components.
\end{conj}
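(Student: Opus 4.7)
The plan is to take a movable section $C$ of sufficiently large anticanonical height, apply Mori's Bend-and-Break through two carefully chosen points --- one general point $x_1 \in \mathcal{X}$ and one point $p$ on a general curve $Z$ in a basepoint-free linear series on a general fiber --- and then invoke Propositions \ref{prop:evendegreebound} or \ref{prop:odddegreebound} to force the resulting degenerate stable map to consist of a free section glued to a free conic or cubic in a fiber. This exhibits $C$, in the closure of its component inside $\overline{M}_{0,0}(\mathcal{X})$, as a chain of two free curves, which is strictly stronger than what the conjecture asks.

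Concretely, I would set $Q(\mathcal{X}) = 2\maxdef(\mathcal{X}) + 4 + 2\sup\{0, -\neg(\mathcal{X},-K_{\mathcal{X}/\mathbb{P}^{1}})\}$. For a movable section $C$ of height $d > Q$, Lemma \ref{lemm:dominantimpliesfree} ensures $C$ is free, and taking $n = \lfloor (d+2)/2 \rfloor$ the normal bundle is balanced enough that Lemma \ref{lemm:normalbundleestimate} shows a general deformation of $C$ passes through $n$ general points of $\mathcal{X}$; moreover $n$ exceeds the thresholds required by Propositions \ref{prop:evendegreebound} and \ref{prop:odddegreebound}. Choose general $x_1, \dots, x_{n-1} \in \mathcal{X}$, a general fiber $F$, a general curve $Z \subset F$ in a basepoint-free linear series, and a general $p \in Z$; pick a deformation of $C$ passing through these $n-1$ points and meeting $Z$ at $p$. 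A dimension count using $h^0(N_{C/\mathcal X}) = d+2$ shows the family of deformations through $\{x_1, p, x_3, \dots, x_{n-1}\}$ is positive-dimensional with two-dimensional image. Applying the improved Bend-and-Break of Corollary \ref{coro:improvedbandb} to the pair $(x_1, p)$ then produces a degeneration $g : D \to \mathcal{X}$, lying in the closure of the original component, whose domain has two non-contracted components $D_0 \ni x_1$ and $D_1 \ni p$.

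Because $C$ has fiber class $1$, exactly one component of $D$ is horizontal; its image is a section of some height $d' < d$. Since a section of height $d' < 2n-2$ cannot contain all of $x_1, \dots, x_{n-1}$, some $x_i$ lies on a vertical component of $g(D)$, and $g(D)$ also meets $Z$ at $p$. These are exactly the hypotheses of Proposition \ref{prop:evendegreebound}(2) (or \ref{prop:odddegreebound}(2) for odd $d$), whose conclusion forces $g(D)$ to have exactly two components, one mapping birationally onto a free section and the other onto a free conic or free cubic in a fiber. This is the desired chain. The main obstacle is guaranteeing genuine genericity of the configuration $(x_1, \dots, x_{n-1}, Z, p)$: beyond the finite list of conditions built into Propositions \ref{prop:evendegreebound}/\ref{prop:odddegreebound}, one must avoid the exceptional closed set $V$ of Theorem \ref{theo:maintheorem1} so that the horizontal component of $g(D)$ remains a dominant (hence free) section rather than a rigid curve of small height, and must verify that specialization in $\overline{M}_{0,0}(\mathcal X)$ preserves both the point-incidence conditions and the intersection with $Z$. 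Once these genericity choices are pinned down, the argument is self-contained.
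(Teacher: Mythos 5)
Your strategy works for one of the three cases in the paper's argument, but it rests on an unjustified claim that fails in general: that a free section of large height automatically has normal bundle ``balanced enough'' to pass through $n = \lfloor (d+2)/2 \rfloor$ general points. Freeness only guarantees $N_{C/\mathcal{X}} = \mathcal{O}(a)\oplus\mathcal{O}(b)$ with $0 \le a \le b$ and $a+b=d$; nothing prevents a dominant family of sections from having, say, $N_{C/\mathcal{X}} = \mathcal{O}(1)\oplus\mathcal{O}(d-1)$ for arbitrarily large $d$. By the converse direction of Lemma \ref{lemm:normalbundleestimate}, such a section passes through at most $a+1$ general points, so the locus of deformations through your configuration $\{x_1,p,x_3,\dots,x_{n-1}\}$ is \emph{empty} (the naive count $h^0(N_{C/\mathcal{X}})-2(n-1)\ge 2$ is only a lower bound on the dimension of a nonempty locus; general points do not impose independent conditions when the normal bundle is unbalanced). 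Consequently the Bend-and-Break input for Corollary \ref{coro:improvedbandb} is unavailable, and Propositions \ref{prop:evendegreebound} and \ref{prop:odddegreebound} never get to act. This is also visible in your constant: $Q(\mathcal{X}) = 2\maxdef(\mathcal{X})+4+2\sup\{0,-\neg(\mathcal{X},-K_{\mathcal{X}/\mathbb{P}^1})\}$ reproduces only one of the six terms in the supremum defining the paper's $Q(\mathcal{X})$; the remaining terms exist precisely to control the unbalanced cases.

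What you have written is essentially the paper's Case 3 ($b-a\le 1$), which is handled exactly as you describe: a one-parameter family through $a$ or $a+1$ general points meeting $Z$, Bend-and-Break, then Proposition \ref{prop:evendegreebound}(2) or \ref{prop:odddegreebound}(2). The missing content is Cases 1 and 2 ($b \ge a+2$), where the paper uses an entirely different mechanism: following \cite{Shen12}, the deformations of $C$ with generic normal bundle through $a+1$ general points sweep out a surface $\Sigma$ on which the strict transform of $C$ has normal bundle $\mathcal{O}(b)$; one contracts a resolution $\widetilde{\Sigma}$ to a Hirzebruch surface $\mathbb{F}_e$ via Corollary \ref{coro:surfacecontraction}, uses Theorem \ref{theo:surfacebreaking} to write $\widetilde{C}\sim_{rat}\widetilde{C}_1 + \frac{b-e}{2}F$, and then proves the key inequality $b>e$ (ruling out $b=e$ by comparing $\deg N_{C/\mathcal{X}}$ with the intersection numbers of $\widetilde{C}_0$, $F$ against $-\phi^*K_{\mathcal{X}/\mathbb{P}^1}$ and invoking Lemma \ref{lemm:maxdefbound}) to guarantee that at least one fiber can be split off, yielding a decomposition into two movable curves on $\mathcal{X}$. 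To repair your proof you would need either to supply this surface argument or to first prove that every dominant family of sections of large height contains a member with balanced normal bundle --- which is itself a nontrivial statement (cf.\ Proposition \ref{prop:balancednormalbundle}, whose proof again relies on the full Theorem \ref{theo:mbbmain}).
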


We establish Movable Bend-and-Break for sections of del Pezzo fibrations such that $-K_{\mathcal{X}/\mathbb{P}^{1}}$ is relatively ample.  
The proof is based on techniques from \cite{Shen12} and \cite{LT18}.  The following statement is a stronger version of Theorem \ref{theo:maintheorem2} where we state a precise value for the bound $Q(\mathcal{X})$.

\begin{theo} \label{theo:mbbmain}
Let $\pi: \mathcal{X} \to \mathbb{P}^{1}$ be a del Pezzo fibration such that $-K_{\mathcal{X}/\mathbb{P}^{1}}$ is relatively ample.  Define
\begin{align*}
Q(\mathcal{X}) = \sup \{& 3, -2 \neg(\mathcal X, -K_{\mathcal X/\mathbb P^1}) -5, \\
& -\neg(\mathcal X, -K_{\mathcal X/\mathbb P^1}) + 3, \\ & 2\maxdef(\mathcal{X}) - 5\neg(\mathcal{X},-K_{\mathcal{X}/\mathbb{P}^{1}}) - 5, \\
& 2\maxdef(\mathcal{X})-\neg(\mathcal{X},-K_{\mathcal{X}/\mathbb{P}^{1}}) - 3, \\
& 2\maxdef(\mathcal{X}) + 2 + 2\sup\{0, - \neg(\mathcal{X},-K_{\mathcal{X}/\mathbb{P}^{1}})\} \}.
\end{align*}
Suppose that $M \subset \Sec(\mathcal{X}/\mathbb{P}^{1})$ is a component that parametrizes a dominant family of sections $C$ satisfying $-K_{\mathcal{X}/\mathbb{P}^{1}} \cdot C \geq Q(\mathcal{X})$.  Then the closure of $M$ in $\overline{M}_{0,0}(\mathcal{X})$ contains a point representing a stable map whose domain has exactly two components each mapping birationally onto a free curve.
\end{theo}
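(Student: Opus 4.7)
The plan is to apply a bend-and-break argument to a general free section of $M$ through two general points, and then invoke the classification results of Propositions~\ref{prop:evendegreebound} and~\ref{prop:odddegreebound} to conclude that the resulting degeneration is a two-component stable map onto a free section together with a free conic or cubic in a fiber.

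First I would pick a general free section $C \in M$ using Lemma~\ref{lemm:dominantimpliesfree}, and write $e = -K_{\mathcal{X}/\mathbb{P}^{1}} \cdot C$. Lemma~\ref{lemm:normalbundleestimate}, applied to a maximally balanced splitting of $N_{C/\mathcal{X}} = \mathcal{O}(a_{1}) \oplus \mathcal{O}(a_{2})$ with $a_{1}+a_{2}=e-2$, yields that a general deformation of $C$ passes through $m$ general points of $\mathcal{X}$, where $m = n-1$ in the even case $e = 2n-2$ and $m = n$ in the odd case $e = 2n-1$. Fix such general points $p_{1}, \ldots, p_{m}$ on $C$. In the even case, I would also fix a general curve $Z$ in a basepoint-free linear series on a general smooth fiber of $\pi$ that $C$ meets. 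A standard parameter count using $\dim M = e+2$ shows that the sublocus $M' \subset M$ of sections through the imposed general conditions has dimension at least $1$; this is precisely where several of the summands in the definition of $Q(\mathcal{X})$ enter, notably the terms of the form $-2\neg(\mathcal{X},-K_{\mathcal{X}/\mathbb{P}^{1}}) - 5$ and $-\neg(\mathcal{X},-K_{\mathcal{X}/\mathbb{P}^{1}}) + 3$, which exclude the degenerate situations in which $M'$ could fail to be positive-dimensional.

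Next I would invoke Corollary~\ref{coro:improvedbandb} on $M'$ with marked points $p_{1}$ and $p_{2}$. Since the members of $M'$ sweep out a surface passing through these two fixed points, bend-and-break degenerates $C$ as a stable map to one whose domain contains two distinct non-contracted components $C_{1}, C_{2}$ with $p_{i} \in g(C_{i})$. Because the numerical class of the limit stable map still equals that of a section, the limit consists of a single horizontal section component together with some (possibly empty) collection of $\pi$-vertical components. Since $C_{1} \neq C_{2}$, at most one of them can be the section component; hence at least one of $C_{1}, C_{2}$ is $\pi$-vertical, and this vertical component contains one of the general points $p_{i}$.

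All hypotheses of Proposition~\ref{prop:evendegreebound}(2) (when $e$ is even) or Proposition~\ref{prop:odddegreebound}(2) (when $e$ is odd) are now in place: the degeneration has anticanonical height $e$, passes through the $m$ general points, meets $Z$ in the even case, is reducible, and contains a general point on a $\pi$-vertical component. The propositions then conclude that the domain has exactly two components, one mapped birationally onto a free section of $\mathcal{X}$ and the other mapped birationally onto a free conic or cubic in a smooth fiber of $\pi$; since smooth fibers of $\pi$ have trivial normal bundle in $\mathcal{X}$, such a conic or cubic is automatically free as a rational curve in $\mathcal{X}$. I expect the main obstacle to lie in the careful calibration of $Q(\mathcal{X})$ itself: each summand in its definition is pinned down by one of the case-by-case inequalities inside the proofs of Propositions~\ref{prop:evendegreebound} and~\ref{prop:odddegreebound} (for instance, $2\maxdef(\mathcal{X}) + 2 + 2\sup\{0, -\neg(\mathcal{X},-K_{\mathcal{X}/\mathbb{P}^{1}})\}$ is exactly twice the threshold $n \geq \maxdef(\mathcal{X}) + 2 + \sup\{0, -\neg\}$ imposed in Proposition~\ref{prop:evendegreebound}), and verifying that all of these thresholds are simultaneously met is the delicate bookkeeping step on which the argument hinges.
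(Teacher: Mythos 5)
There is a genuine gap. Your argument implicitly assumes that the general section $C \in M$ has balanced normal bundle. You write that Lemma~\ref{lemm:normalbundleestimate} is ``applied to a maximally balanced splitting'' of $N_{C/\mathcal{X}}$, but the splitting type of the normal bundle is an invariant of the general member of $M$, not a choice: if $N_{C/\mathcal{X}} = \mathcal{O}(a)\oplus\mathcal{O}(b)$ with $a \leq b$, then Lemma~\ref{lemm:normalbundleestimate} only lets you force deformations of $C$ through $a+1$ general points, since the hypothesis is that \emph{every} summand has degree $\geq n-1$. When the normal bundle is unbalanced (e.g.\ $\mathcal{O}\oplus\mathcal{O}(e-2)$), the curve need only pass through one general point, so the counting hypotheses of Propositions~\ref{prop:evendegreebound} and~\ref{prop:odddegreebound} are far from being met and your reduction collapses. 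The paper's proof treats this as the main difficulty: it splits into cases on $(a,b)$, and only the balanced case $b - a \leq 1$ is handled the way you propose. In the unbalanced cases ($b \geq a+2$, with $a \geq 1$ or $a = 0$) the paper instead constructs the surface $\Sigma$ swept out by deformations of $C$ through $a+1$ general points (following Shen), resolves it, and applies Theorem~\ref{theo:surfacebreaking} on that surface to break off a fiber class; the delicate inequalities there are what produce the constants $-2\neg(\mathcal{X},-K_{\mathcal{X}/\mathbb{P}^{1}})-5$, $-\neg(\mathcal{X},-K_{\mathcal{X}/\mathbb{P}^{1}})+3$, $2\maxdef(\mathcal{X})-5\neg(\mathcal{X},-K_{\mathcal{X}/\mathbb{P}^{1}})-5$, and $2\maxdef(\mathcal{X})-\neg(\mathcal{X},-K_{\mathcal{X}/\mathbb{P}^{1}})-3$ in $Q(\mathcal{X})$.

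Relatedly, your attribution of those four constants to ensuring that the incidence locus $M'$ is positive-dimensional is not correct; in the balanced case the dimension count for $M'$ is automatic from freeness, and only the final summand $2\maxdef(\mathcal{X})+2+2\sup\{0,-\neg(\mathcal{X},-K_{\mathcal{X}/\mathbb{P}^{1}})\}$ is the threshold needed to invoke Proposition~\ref{prop:evendegreebound}. To repair the proposal you would need to supply the entire unbalanced-normal-bundle analysis, which is the bulk of the paper's proof.
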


For later applications it will be convenient to introduce the following definition:

\begin{defi}
Let $\pi: \mathcal{X} \to \mathbb{P}^{1}$ be a del Pezzo fibration such that $-K_{\mathcal{X}/\mathbb{P}^{1}}$ is relatively ample.  We define
$\MBB(\mathcal{X})$ to be the smallest non-negative integer $r$ such that any component of $\overline{M}_{0,0}(\mathcal{X})$ which generically parametrizes free sections of height at least $r$  will also contain a stable map whose domain has exactly two components each mapping birationally to a free curve.
\end{defi}

Thus Theorem \ref{theo:mbbmain} establishes that $\MBB(\mathcal{X})$ exists and gives an explicit upper bound $\MBB(\mathcal{X}) \leq Q(\mathcal{X})$.

\begin{proof}[Proof of Theorem \ref{theo:mbbmain}:]
Suppose that $M$ is a component of $\Sec(\mathcal{X}/\mathbb{P}^{1})$ parametrizing a dominant family of sections $C$ that satisfy
\begin{align*}
-K_{\mathcal{X}/\mathbb{P}^{1}} \cdot C \geq Q(\mathcal{X}).
\end{align*}
In particular, the general section in the family is free.  Since each section is smooth we can consider its normal bundle, and we separate into several cases based on the normal bundle of the general curve $C$.  Write $N_{C/\mathcal{X}} = \mathcal{O}(a) \oplus \mathcal{O}(b)$ with $0 \leq a \leq b$.  Our height bound implies that if $a = 0$ then $b \geq 3$ and if $a=1$ then $b \geq 2$.

\textbf{Case 1:} $a \geq 1$ and $b \geq a+2$.

Suppose we fix $a+1$ general points of $X$ and choose a curve $C$ with the generic normal bundle containing these points.  According to \cite[Remark 2.2]{Shen12}, the locus parametrizing curves with the generic normal bundle that contain all $a+1$ points has a unique smooth irreducible component that contains $C$.  The closure of the locus swept out by these curves is an irreducible surface $\Sigma$.  Let $\Sigma' \to \Sigma$ denote the normalization map.  \cite[Corollary 2.7]{Shen12} shows that if $C$ is a general curve through these points then $\Sigma'$ is smooth along the strict transform $C'$ of $C$ and the normal bundle of $C'$ in $\Sigma'$ is $\mathcal{O}(b)$.  Let $\widetilde{\Sigma}$ denote a minimal resolution of singularities of $\Sigma'$, let $\nu: \widetilde{\Sigma} \to \Sigma$ denote the induced birational map, and let $\widetilde{C}$ denote the strict transform of $C'$ in $\widetilde{\Sigma}$.  Since $\Sigma'$ is smooth along $C'$, we see that $\widetilde{C}$ still has normal bundle $\mathcal{O}(b)$ because $\widetilde{\Sigma} \to \Sigma'$ is locally an isomorphism along $\widetilde{C}$. Note that $\widetilde{\Sigma}$ is covered by curves with degree one over the base $\mathbb{P}^{1}$, so that the map $\psi: \widetilde{\Sigma} \to \mathbb{P}^{1}$ is an algebraic fiber space.  Furthermore, since $b \geq 3$ by the usual Bend-and-Break we know that the general fiber of $\psi$ must be $\mathbb{P}^{1}$.  Since $\Sigma$ contains a general point, we may assume that $-K_{\mathcal X/\mathbb P^1} \cdot F \geq 2$ where $F$ is a general fiber of $\psi$.

As in the statement of Theorem \ref{theo:surfacebreaking} there exists a birational map $\rho: \widetilde{\Sigma} \to \mathbb{F}_{e}$, a moving section $\widetilde{C}_{1}$ of $\widetilde{\Sigma}$ such that $\widetilde{C} \sim_{rat} \widetilde{C}_{1} + \frac{b-e}{2}F$, and a section $\widetilde{C}_{0}$ such that $\widetilde{C} \sim_{rat} \widetilde{C}_{0} + \frac{b+e}{2}F + T$ for some $\pi$-vertical curve $T$.  Thus
\begin{align}
a & = -b -\nu^{*}K_{\mathcal{X}/\mathbb{P}^{1}} \cdot \widetilde{C} \nonumber \\
& = - b + \frac{b+e}{2} \left(-\nu^{*}K_{\mathcal{X}/\mathbb{P}^{1}} \cdot F \right) - \nu^{*}K_{\mathcal{X}/\mathbb{P}^{1}} \cdot (\widetilde{C}_{0} + T) \nonumber \\
& = e + \frac{b+e}{2} \left(-\nu^{*}K_{\mathcal{X}/\mathbb{P}^{1}} \cdot F - 2 \right)- \nu^{*}K_{\mathcal{X}/\mathbb{P}^{1}} \cdot (\widetilde{C}_{0} + T) \nonumber \\
& \label{eqn:mbbargumentequation} \geq e + \frac{b+e}{2} \left(-\nu^{*}K_{\mathcal{X}/\mathbb{P}^{1}} \cdot F - 2 \right) - \nu^{*}K_{\mathcal{X}/\mathbb{P}^{1}} \cdot \widetilde{C}_{0}
\end{align}
where the last inequality follows from the fact that $-K_{\mathcal{X}/\mathbb{P}^{1}}$ is relatively ample.

\begin{clai}  \label{claim1} We have $b>e$.
\end{clai}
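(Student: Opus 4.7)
The plan is to prove $b > e$ by contradiction. I first establish $b \geq e$ and the parity constraint $b \equiv e \pmod 2$, which reduce the claim to excluding $b = e$. Since $\rho$ is an isomorphism on a neighborhood of $\widetilde C$, the pushforward $\rho_*\widetilde C$ is an irreducible moving section of $\mathbb F_e$ with self-intersection $b$. Classically, an irreducible moving section of $\mathbb F_e$ has class $C_0 + kF$ with $k \geq e$ and hence self-intersection $2k - e \geq e$; and every two integral sections of $\mathbb F_e$ have self-intersections of the same parity as $e$. So $b \geq e$, $b \equiv e \pmod 2$, and $b > e$ is equivalent to $b \neq e$.

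Now suppose for contradiction that $b = e$. Set $\alpha := -\nu^* K_{\mathcal X/\mathbb P^1} \cdot F$ and $\beta := -\nu^* K_{\mathcal X/\mathbb P^1} \cdot \widetilde C_0$; equation \eqref{eqn:mbbargumentequation} specializes to $a \geq e(\alpha - 1) + \beta$. A general fiber $F$ of $\psi$ passes through a general point of $\mathcal X$, so $\nu(F)$ avoids every line in the general del Pezzo fiber and $\alpha \geq 2$. Since $\widetilde C_0$ is a section of $\psi = \pi \circ \nu$ and is not contracted by $\nu$, the image $\nu(\widetilde C_0)$ is a section of $\pi$, so $\beta \geq \neg(\mathcal X, -K_{\mathcal X/\mathbb P^1})$. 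Combined with the Case 1 inequality $a \leq b - 2 = e - 2$ we obtain $e(\alpha - 2) + \beta \leq -2$.

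If $\alpha \geq 3$, this yields $e \leq -2 - \beta \leq -2 - \neg(\mathcal X, -K_{\mathcal X/\mathbb P^1})$, and so $-K_{\mathcal X/\mathbb P^1} \cdot C = a + e \leq 2e - 2 \leq -6 - 2\neg(\mathcal X, -K_{\mathcal X/\mathbb P^1})$. This contradicts the term $Q(\mathcal X) \geq -2\neg(\mathcal X, -K_{\mathcal X/\mathbb P^1}) - 5$ in the definition of $Q(\mathcal X)$.

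The main obstacle is the remaining case $\alpha = 2$, where only $\beta \leq -2$ and $\neg(\mathcal X, -K_{\mathcal X/\mathbb P^1}) \leq -2$ follow directly. Here $\nu(F)$ is a conic, so by Lemma \ref{lemm:genericainvfordp} the surface $Y := \nu(\widetilde\Sigma)$ satisfies $a(Y_\eta, -K_{\mathcal X/\mathbb P^1}) = a(\mathcal X_\eta, -K_{\mathcal X/\mathbb P^1})$, placing us in the equal $a$-invariant regime. The low-height section $\nu(\widetilde C_0) \subset Y$ is parametrized by a component $M' \subset \Sec(\mathcal X/\mathbb P^1)$ of dimension at most $\maxdef(\mathcal X)$. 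The plan is to apply the argument of Theorem \ref{theo:sameainvprop} by comparing the discrepancy between actual and expected dimension of $M'$ to the corresponding discrepancy of the subfamily of $M$ whose sections lie in $Y$; this yields an upper bound on $e$ in terms of $\maxdef(\mathcal X)$ and $\neg(\mathcal X, -K_{\mathcal X/\mathbb P^1})$, which will contradict $a + e \geq Q(\mathcal X)$ via one of the $\maxdef$-involving terms such as $Q(\mathcal X) \geq 2\maxdef(\mathcal X) + 2 + 2\sup\{0, -\neg(\mathcal X, -K_{\mathcal X/\mathbb P^1})\}$.
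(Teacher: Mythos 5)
Your reduction to excluding $b=e$ (via the parity of $\tfrac{b\pm e}{2}$ and the classification of moving sections of $\mathbb{F}_e$) and your treatment of the subcase $\alpha=-\nu^*K_{\mathcal{X}/\mathbb{P}^1}\cdot F\geq 3$ are correct and coincide with the paper's argument, down to the same bound $-K_{\mathcal{X}/\mathbb{P}^1}\cdot C\leq -2\neg(\mathcal{X},-K_{\mathcal{X}/\mathbb{P}^1})-6$ contradicting the term $-2\neg(\mathcal{X},-K_{\mathcal{X}/\mathbb{P}^1})-5$ in $Q(\mathcal{X})$.

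The subcase $\alpha=2$ is, however, a genuine gap: you state a plan rather than a proof, and the plan as formulated does not apply directly. Theorem \ref{theo:sameainvprop} concerns an entire component of $\Sec(\mathcal{X}/\mathbb{P}^{1})$ whose sections sweep out a proper surface $Y$ with equal generic $a$-invariant; here $M$ is a dominant family on $\mathcal{X}$, and $\Sigma$ is swept out only by the subfamily of $M$ passing through $a+1$ \emph{fixed} general points. The discrepancy bookkeeping for that constrained subfamily (whose dimension is $b+1$, not the expected dimension $a+b+2$ minus nothing) is exactly what would have to be carried out, and you have not done it; nor have you identified which term of $Q(\mathcal{X})$ is actually contradicted. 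The paper closes this case by a different mechanism: since $b=e$ the decomposition reads $\widetilde{C}\sim_{rat}\widetilde{C}_0+bF+T$, and because $b>a+1$ one can slide the $b$ fibers so that the broken curve still passes through the $a+1$ general points while containing the section $\nu_*\widetilde{C}_0$ of height $\leq -2$ (this uses inequality \eqref{eqn:mbbargumentequation} with $\alpha=2$, which gives $-\nu^*K_{\mathcal{X}/\mathbb{P}^1}\cdot\widetilde{C}_0\leq a-b\leq -2$). Lemma \ref{lemm:maxdefbound} then forces either $-K_{\mathcal{X}/\mathbb{P}^1}\cdot C\geq \neg(\mathcal{X},-K_{\mathcal{X}/\mathbb{P}^1})+3(a+1)-\maxdef(\mathcal{X})$ or $a+1<\maxdef(\mathcal{X})$, and combining with $-K_{\mathcal{X}/\mathbb{P}^1}\cdot C\leq 2a-\neg(\mathcal{X},-K_{\mathcal{X}/\mathbb{P}^1})$ yields a contradiction with the two $\maxdef$-terms $2\maxdef(\mathcal{X})-5\neg(\mathcal{X},-K_{\mathcal{X}/\mathbb{P}^1})-5$ and $2\maxdef(\mathcal{X})-\neg(\mathcal{X},-K_{\mathcal{X}/\mathbb{P}^1})-3$ in $Q(\mathcal{X})$. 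You would need either to reproduce this counting argument or to supply the full discrepancy computation your plan alludes to.
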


The statement of Theorem \ref{theo:surfacebreaking} guarantees that $b \geq e$; suppose for a contradiction that $b=e$.  We separate the argument into two cases.  First suppose that $-K_{\mathcal{X}/\mathbb{P}^{1}} \cdot F \geq 3$.  Then the inequality \eqref{eqn:mbbargumentequation} yields
\begin{equation*}
a \geq 2b - \nu^{*}K_{\mathcal{X}/\mathbb{P}^{1}} \cdot \widetilde{C}_{0} \geq 2b + \neg(\mathcal X, -K_{\mathcal X/\mathbb P^1}).
\end{equation*}
Since also $b-2 \geq a$, we deduce that $b \leq -\neg(\mathcal X, -K_{\mathcal X/\mathbb P^1}) -2$. Thus
\[
-K_{\mathcal X/\mathbb{P}^{1}} \cdot C = a+b \leq 2b -2 \leq -2\neg(\mathcal X, -K_{\mathcal X/\mathbb P^1}) -6
\]
which contradicts with our height bounds on $C$.

Now suppose that $-K_{\mathcal{X}/\mathbb{P}^{1}} \cdot F = 2$.  Then the inequality \eqref{eqn:mbbargumentequation} yields
\begin{equation}  \label{eqn:mbbcomparison}
a - b \geq - \nu^{*}K_{\mathcal{X}/\mathbb{P}^{1}} \cdot \widetilde{C}_{0}
\end{equation}
and in particular, $-2 \geq  - \nu^{*}K_{\mathcal{X}/\mathbb{P}^{1}} \cdot \widetilde{C}_{0}$.  Recall that $\Sigma$ contains $a+1$ general points of $\mathcal{X}$, and that their preimages give a set of $a+1$ distinct points in $\widetilde{\Sigma}$.  Since $b > a+1$, we can deform the $b$ fibers $F$ in our broken curve to ensure that they meet these $a+1$ points.  By taking images in $\mathcal{X}$, we find a broken curve which is a deformation of $C$ through $a+1$ general points of $\mathcal{X}$ such that the section $\nu_{*}\widetilde{C}_{0}$ in this broken curve has height $\leq -2$.  According to Lemma \ref{lemm:maxdefbound}, this implies that either
\begin{equation*}
-K_{\mathcal{X}/\mathbb{P}^{1}} \cdot C \geq \neg(\mathcal{X},-K_{\mathcal{X}/\mathbb{P}^{1}}) + 3(a+1) - \maxdef(\mathcal{X})
\end{equation*}
or $a +1 < \maxdef(\mathcal{X})$.  Furthermore by the inequality \eqref{eqn:mbbcomparison}
\begin{align*}
-K_{\mathcal{X}/\mathbb{P}^{1}} \cdot C & = a+b \\
& \leq 2a - \neg(\mathcal X, -K_{\mathcal X/\mathbb P^1})
\end{align*}
Comparing the two bounds we see that
\begin{equation*}
a \leq \sup\{ \maxdef(\mathcal{X}) - 2\neg(\mathcal{X},-K_{\mathcal{X}/\mathbb{P}^{1}}) - 3, \maxdef(\mathcal{X}) - 2 \}
\end{equation*}
which in turn implies that
\begin{equation*}
-K_{\mathcal{X}/\mathbb{P}^{1}} \cdot C \leq  \sup \{ 2\maxdef(\mathcal{X}) - 5\neg(\mathcal{X},-K_{\mathcal{X}/\mathbb{P}^{1}}) - 6, 2\maxdef(\mathcal{X})-\neg(\mathcal{X},-K_{\mathcal{X}/\mathbb{P}^{1}}) - 4\}.
\end{equation*}
This possibility is ruled out by our degree bounds and finishes the proof of the claim.

We now return to the main argument.  Since $\frac{b-e}{2}$ is an integer, Claim \ref{claim1} shows that it is at least $1$.  If $\frac{b-e}{2} > 1$, then we smooth $\widetilde{C}_1 + (\frac{b-e}{2}-1)F$ to obtain a section $\widetilde{C}_2$.  If $\frac{b-e}{2}=1$, we set $\widetilde{C}_{2} = \widetilde{C}_{1}$. 
In either case we have $\widetilde{C} \sim_{rat} \widetilde{C}_2 + F$.
We claim that the $\nu$-images of $\widetilde{C}_{2}$ and $F$ are movable curves in $\mathcal{X}$.  Recall that $\Sigma$ contains a general point of $\mathcal{X}$, and in particular, $\Sigma$ is not contained in the closed set constructed by Theorem \ref{theo:maintheorem1}.  Since $\nu_*\widetilde{C}_{2}$ deforms to cover $\Sigma$, its image in $\mathcal{X}$ must deform to cover $\mathcal{X}$.  We also claim that $\nu_{*}\widetilde{C}_{2}$ avoids any singularity of $\mathcal X$. Indeed, the preimage of the singular locus of $\mathcal{X}$ is a union of points and $\psi$-vertical divisors.  Since $\widetilde{C}_{2}$ and $\widetilde{C}$ have the same intersection numbers against $\psi$-vertical divisors, the fact that $\widetilde{C}$ avoids the preimage of the singular locus of $\mathcal{X}$ implies that $\widetilde{C}_{2}$ does as well. 
Similarly, $F$ deforms to cover $\Sigma$, and thus contains a general point of $\mathcal{X}$.  Since the $-K_{\mathcal{X}/\mathbb{P}^{1}}$-degree of $F$ admits an a priori upper bound in terms of the height of $C$ and in terms of $\neg(\mathcal{X},-K_{\mathcal{X}/\mathbb{P}^{1}})$, this implies that $F$ is a movable curve.  It is also contained in the smooth locus of $\mathcal{X}$.

\textbf{Case 2:} $a = 0$ and $b \geq 3$.  

The argument is very similar to Case $1$.  Fix a general point $x$ and consider a component $\Sigma$ of the locus swept out by the curves in $M$ through $x$ with generic normal bundle. Lemma \ref{lemm:normalbundleestimate} shows that $\Sigma \subsetneq \mathcal{X}$.   Since there is a $b$-dimensional family of curves through $x$ with generic normal bundle, $\Sigma$ must be a surface.

Let $\nu: \widetilde{\Sigma} \to \Sigma$ be a resolution; by possibly blowing up further we may suppose that the preimage of $x$ is divisorial, so that the condition for a curve on $\widetilde{\Sigma}$ to go through $x \in \Sigma$ is a numerical condition.  The strict transforms $\widetilde{C}$ of the general sections sweeping out $\Sigma$ will define a dominant family of sections of $\widetilde{\Sigma}$ that deforms in dimension $b$.  Thus we have $-K_{\widetilde{\Sigma}/\mathbb{P}^1} \cdot \widetilde{C} = b-1$.  Note that the induced map $\psi: \widetilde{\Sigma} \to \mathbb{P}^{1}$ has connected fibers since $\widetilde{\Sigma}$ is dominated by sections of $\psi$.  Also, since $b - 1 \geq 2$ by the usual Bend-and-Break we know that the general fiber of $\psi$ is $\mathbb{P}^{1}$.

As in the statement of Theorem \ref{theo:surfacebreaking} there exists a birational map $\rho: \widetilde{\Sigma} \to \mathbb{F}_{e}$, a moving section $\widetilde{C}_{1}$ of $\widetilde{\Sigma}$ such that $\widetilde{C} \sim_{rat} \widetilde{C}_{1} + \frac{b-e-1}{2}F$, and a section $\widetilde{C}_{0}$ such that $\widetilde{C} \sim_{rat} \widetilde{C}_{0} + \frac{b+e-1}{2}F + T$ for some $\pi$-vertical curve $T$. The analogue of inequality \eqref{eqn:mbbargumentequation} is
\begin{equation} \label{eqn:mbbargumentequation2}
0 \geq e-1 + \frac{b+e-1}{2} \left(-\nu^{*}K_{\mathcal{X}/\mathbb{P}^{1}} \cdot F - 2 \right) - \nu^{*}K_{\mathcal{X}/\mathbb{P}^{1}} \cdot \widetilde{C}_{0}.
\end{equation}

We claim that $b -1 > e$.  Assume for a contradiction that $b-1=e$. First suppose that $-K_{\mathcal X/\mathbb P^1} \cdot F \geq 3$.  By inequality \eqref{eqn:mbbargumentequation2} we deduce that $2e \leq 1 - \neg(\mathcal{X},-K_{\mathcal{X}/\mathbb{P}^{1}})$ and thus
\[
-K_{\mathcal X/\mathbb P^1} \cdot C \leq -\frac{1}{2}\neg(\mathcal X, -K_{\mathcal X/\mathbb P^1}) + \frac{3}{2}
\]
contradicting our height bound.  Next suppose that $-K_{\mathcal X/\mathbb P^1} \cdot F =2$.  Then by inequality \eqref{eqn:mbbargumentequation2} we deduce $e \leq 1 - \neg(\mathcal{X},-K_{\mathcal{X}/\mathbb{P}^{1}})$ and thus
\[
-K_{\mathcal X/\mathbb P^1} \cdot C \leq -\neg(\mathcal X, -K_{\mathcal X/\mathbb P^1})+ 2
\]
contradicting our height bound. 

Since $\frac{b-e-1}{2}$ must be an integer our claim implies that it is at least $1$, and we conclude the argument in the same way as before.

\textbf{Case 3:} $a \geq 1$ and $b-a \leq 1$.  

First suppose that $a=b$; in this case our height bounds imply that
\begin{equation*}
a \geq \maxdef(\mathcal{X}) + 1 + \sup\{0, - \neg(\mathcal{X},-K_{\mathcal{X}/\mathbb{P}^{1}})\}.
\end{equation*}
Fix $a$ general points of $\mathcal{X}$ and fix a general curve $Z$ in a basepoint free linear series in a general fiber of $\pi$.  There is a union of one-parameter family of deformations of $C$ containing $a$ general points and meeting $Z$.  Fix one component of this parameter space.

By applying the Bend-and-Break theorem (as in Corollary \ref{coro:improvedbandb}), we can deform $C \sim_{alg} C_{0} + T$, where: 
\begin{itemize}
\item $C_{0}$ is a section,
\item $T$ is an effective $\pi$-vertical curve and at least one component contains one of our $a$ general points.
\end{itemize}
By Proposition \ref{prop:evendegreebound} we see that the stable map corresponding to $C_{0}+T$ has the desired form.

The case when $b-a=1$ is similar.  In this case there is a one-parameter family of sections containing $a+1$ general points.  By considering the broken curve $C_{0} + T$ and applying Proposition \ref{prop:odddegreebound}, we can repeat the argument to obtain the same conclusion.
\end{proof}

\subsection{Consequences of Movable Bend-and-Break}

\begin{defi}
A comb is a nodal curve consisting of one ``central curve'' $C_{0}$ and of $r \geq 0$ ``teeth'' $T_{1},\ldots,T_{r}$ such that each $T_{i}$ intersects $C_{0}$ at a single point and there are no other intersections.  For the purposes of this paper every tooth of a comb will be a rational curve.
\end{defi}

By repeating the breaking argument of Theorem \ref{theo:maintheorem2} inductively, we obtain:

\begin{coro} \label{coro:breakstocomb}
Let $\pi: \mathcal{X} \to \mathbb{P}^{1}$ be a del Pezzo fibration such that $-K_{\mathcal{X}/\mathbb{P}^{1}}$ is relatively ample.  Suppose that $C$ is a section of $\pi$ satisfying $-K_{\mathcal{X}/\mathbb{P}^{1}} \cdot C \geq \MBB(\mathcal{X})$ whose deformations dominate $\mathcal{X}$.  Then $C$ deforms as a stable map to a morphism $f: C \to \mathcal{X}$ whose domain $C$ is a comb $C_{0} \cup T_{1} \cup \ldots \cup T_{r}$ where
\begin{itemize}
\item $f$ maps each component of $C$ birationally onto a free curve, 
\item the image of $C_{0}$ is a section satisfying $-K_{\mathcal{X}/\mathbb{P}^{1}} \cdot C_0 < \MBB(\mathcal{X})$, and
\item the image of each $T_{i}$ is a free $\pi$-vertical curve.
\end{itemize}
\end{coro}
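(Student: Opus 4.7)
The plan is to prove the statement by strong induction on the height $h = -K_{\mathcal{X}/\mathbb{P}^{1}} \cdot C$, applying Theorem \ref{theo:maintheorem2} repeatedly to break off one vertical free tooth at each step.

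For the inductive step, I would first apply Theorem \ref{theo:maintheorem2} to the component $\overline{M} \subset \overline{M}_{0,0}(\mathcal{X})$ containing $C$. Since $h \geq \MBB(\mathcal{X})$ and the deformations of $C$ dominate $\mathcal{X}$, this yields a stable map $f_1 \in \overline{M}$ whose domain has exactly two components, each mapping birationally onto a free curve. Because $C$ is a section, a general fiber $F$ of $\pi$ satisfies $F \cdot C = 1$; since the two image curves are free and thus meet $F$ non-negatively, exactly one image is a section $C^{(1)}$ and the other is a $\pi$-vertical free curve $T_1$. Relative ampleness of $-K_{\mathcal{X}/\mathbb{P}^{1}}$ forces $-K_{\mathcal{X}/\mathbb{P}^{1}} \cdot T_1 \geq 1$, so $-K_{\mathcal{X}/\mathbb{P}^{1}} \cdot C^{(1)} < h$. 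If the new height is already below $\MBB(\mathcal{X})$, then $f_1$ itself is the desired comb and we stop. Otherwise $C^{(1)}$ is a free section with dominant deformations, and the inductive hypothesis applied to $C^{(1)}$ produces a stable map $g_1 \colon E_0 \cup T_2 \cup \cdots \cup T_r \to \mathcal{X}$ in the closure $\overline{M}_1$ of the moduli component containing $C^{(1)}$, realizing the required comb structure for the subtree.

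The concluding step is to reassemble these pieces into a single comb-shaped stable map inside $\overline{M}$. The two-component map $f_1$ lies in a boundary stratum of $\overline{M}$ which, as an irreducible component, is identified with a fiber product $\overline{M}_1^{(1)} \times_{\mathcal{X}} \overline{N}^{(1)}$, where $\overline{M}_1^{(1)}$ and $\overline{N}^{(1)}$ are the one-pointed versions of the components of $\overline{M}_{0,1}(\mathcal{X})$ corresponding to $C^{(1)}$ and $T_1$. I would then deform the first factor along a path in $\overline{M}_1^{(1)}$ from the pointed section $(C^{(1)}, p)$ to a pointed comb $(g_1, q)$ with $q$ a general point on the central component $E_0$, and simultaneously deform the second factor to a free vertical curve meeting $E_0$ at the image of $q$. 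The resulting limit lies in $\overline{M}$ and has domain the larger comb $E_0 \cup T_1 \cup T_2 \cup \cdots \cup T_r$, as required.

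The main obstacle I anticipate is this final gluing step: specifically, ensuring that as $C^{(1)}$ degenerates to the comb $g_1$, the attachment point of $T_1$ can be kept on the central section $E_0$ rather than slipping onto one of the interior teeth $T_i$ (which would destroy the comb shape). This will be handled by a genericity argument: since $E_0$ is a free curve its one-pointed deformations dominate $\mathcal{X}$, and since $T_1$ also moves freely through a general point of its fiber, the two factors can be matched at a general point of $E_0$. This genericity justifies the existence of the path in the fiber product described above and hence the existence of the claimed limit in $\overline{M}$.
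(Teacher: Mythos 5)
Your proposal is correct and follows essentially the same route as the paper: iterate Theorem \ref{theo:maintheorem2} and handle the reattachment problem via the fiber-product description of the boundary stratum through the comb (a smooth point of $\overline{M}_{0,0}(\mathcal{X})$), using freeness and flatness of the evaluation maps to keep the attachment points on the central section. The only difference is organizational: you recurse on the smaller section and then reglue the first tooth, whereas the paper maintains a running comb $C_0\cup T_1\cup\cdots\cup T_r$ and shrinks the central curve, carrying all $r$ marked points along in $\overline{M}_0^{(r)}\times_{\mathcal{X}^{\times r}}\prod_i\overline{M}_i^{(1)}$ — the same dimension count in either direction.
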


\begin{proof}
Let $\overline{M}$ be a component of $\overline{M}_{0,0}(\mathcal{X})$ containing a stable map $f: C \to X$ such that $C$ is a comb, the central curve $C_{0}$ is mapped birationally to a free section of $\pi$, and the teeth $T_{1},\ldots,T_{r}$ are mapped birationally to free $\pi$-vertical curves.  (Our original assumption is that $\overline{M}$ contains such a curve with zero teeth.)  Suppose that $-K_{\mathcal{X}/\mathbb{P}^{1}} \cdot C_{0} \geq \MBB(\mathcal{X})$. We show that $\overline{M}$ also contains a comb $C'$ of the same form such that the central curve $C_{0}'$ has strictly smaller degree against $-K_{\mathcal{X}/\mathbb{P}^{1}}$.

Let $\overline{M}_{i}$ denote the component of $\overline{M}_{0,0}(\mathcal{X})$ containing the curve $C_{0}$ (when $i=0$) or $T_{i}$ (when $i \geq 1$).  Each such component generically parametrizes irreducible free curves.  Let $\overline{M}_{i}^{(k)}$ denote the unique component of $\overline{M}_{0,k}(\mathcal{X})$ lying over $\overline{M}_{i}$.  Since the comb $f: C \to \mathcal{X}$ is a smooth point of the moduli space, there is a component $\widetilde{M}$ of
\begin{equation*}
\overline{M}_{0}^{(r)} \times_{\mathcal{X}^{\times r}} \left( \prod_{i \geq 1} \overline{M}_{i}^{(1)} \right)
\end{equation*}
that maps into $\overline{M}$.  Note that $\widetilde{M}$ dominates each component of the product under the corresponding projection map: indeed, since the evaluation map
\begin{equation*}
\prod_{i \geq 1} \overline{M}_{i}^{(1)} \to \mathcal{X}^{\times r}
\end{equation*}
is flat on the locus parametrizing free curves, every component of the product containing a union of free curves will have the expected dimension and will thus map dominantly onto every factor.  By Theorem \ref{theo:maintheorem2}, the component $\overline{M}_{0}^{(r)}$ contains in its smooth locus a general point of a component of $\overline{N}_{0}^{(r+1)} \times_{\mathcal{X}} \overline{N}_{1}^{(1)}$ where $\overline{N}_{0}$ generically parametrizes free sections of smaller $-K_{\mathcal{X}/\mathbb{P}^{1}}$-degree and $\overline{N}_{1}$ generically parametrizes free $\pi$-vertical curves.  Since the locus $\overline{N}_{0}^{(r+1)} \times_{\mathcal{X}} \overline{N}_{1}^{(1)}$ has codimension $1$ in $\overline{M}_{0}^{(r)}$ and the projection map to $\overline{M}_{0}^{(r)}$ is flat on the locus of free curves, a dimension count shows that a general point in the preimage of $\overline{N}_{0}^{(r+1)} \times_{\mathcal{X}} \overline{N}_{1}^{(1)}$ in $\widetilde{M}$ represents a comb of free curves.
\end{proof}

The next statement is similar to Theorem \ref{theo:maintheorem2}, but we additionally impose an upper bound on the height of $C_{0}$.  

\begin{theo} \label{theo:precisemovablebandb}
Let $\pi: \mathcal{X} \to \mathbb{P}^{1}$ be a del Pezzo fibration such that $-K_{\mathcal{X}/\mathbb{P}^{1}}$ is relatively ample.  Suppose that $C$ is a section of $\pi$ whose deformations dominate $\mathcal{X}$ and satisfies $-K_{\mathcal{X}/\mathbb{P}^{1}} \cdot C \geq \MBB(\mathcal{X})$.  Then $C$ deforms (as a stable map) to a union of a free section $C_{0}$ satisfying $-K_{\mathcal{X}/\mathbb{P}^{1}} \cdot C_{0} < \MBB(\mathcal{X})$ with a free $\pi$-vertical moving curve.
\end{theo}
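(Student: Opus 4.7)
The plan is to start from the comb produced by Corollary \ref{coro:breakstocomb} and then merge its vertical teeth into a single free vertical curve by an iterated smoothing argument. Concretely, I would first apply Corollary \ref{coro:breakstocomb} inside the component $\overline{M} \subset \overline{M}_{0,0}(\mathcal{X})$ containing $C$ to obtain a stable map whose domain is a comb $D_0 \cup D_1 \cup \cdots \cup D_r$ with $D_0$ mapping birationally to a free section $C_0$ of height strictly less than $\MBB(\mathcal{X})$ and each tooth $D_i$ ($i \geq 1$) mapping birationally to a free $\pi$-vertical rational curve $T_i$. The case $r \leq 1$ already gives the desired conclusion, so the remaining task is to reduce the number of teeth by one via a deformation inside $\overline{M}$ and then iterate.

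For the reduction step with $r \geq 2$, I would single out two teeth $T_1, T_2$ with attachment points $p_1, p_2 \in D_0$, and use the smoothness of the relevant fiber product of pointed moduli spaces of free curves (as in the proof of Corollary \ref{coro:breakstocomb}) to deform the comb so that $T_1$ and $T_2$ lie in a common general fiber $F$ of $\pi$ and their attachment points converge to a common point $p \in C_0 \cap F$. This is possible because the deformations of each $T_i$ already dominate $\mathcal{X}$, and the attachment point of a free tooth can be varied freely along $D_0$.

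As $p_1 \to p_2$, the stable map acquires a contracted bubble $B \cong \mathbb{P}^1$ at $p$, so the limit stable map has domain $D_0 \cup B \cup D_1 \cup D_2 \cup D_3 \cup \cdots \cup D_r$, with $B$ attached to $D_0$ and both $D_1, D_2$ attached to $B$. Since each non-contracted component is a free rational curve, the normal sheaf of this stable map has vanishing $H^1$, so the two nodes $B \cap D_1$ and $B \cap D_2$ can be smoothed simultaneously to yield a stable map with domain a comb $D_0 \cup D' \cup D_3 \cup \cdots \cup D_r$ having $r-1$ teeth. The merged tooth $D'$ maps to a curve $T'$ in $F$ of class $[T_1] + [T_2]$, and a general smoothing makes $T'$ a free rational curve on $F$, by the same kind of deformation-theoretic input as in Lemma \ref{lemm:BBfordelPezzo}. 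Iterating this reduction to $r = 1$ produces the desired stable map.

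The main obstacle is confirming that the merged tooth is genuinely an irreducible free vertical curve rather than a rigid or reducible degeneration. This reduces to verifying the $H^1$-vanishing of the normal sheaf of the bouquet $T_1 \cup T_2 \subset F$ (which holds because $T_1, T_2$ are free on the del Pezzo surface $F$) and, when $T_1 \cdot T_2 > 1$ on $F$, to being careful to smooth only the single node inherited from $B$ so that the smoothed curve remains a morphism from an irreducible $\mathbb{P}^1$. Provided this is handled correctly, each iteration strictly decreases the number of teeth while preserving the freeness of both the section and all remaining teeth, and the induction terminates at $r = 1$.
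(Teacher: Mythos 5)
Your proposal is correct and follows essentially the same route as the paper: start from the comb of Corollary \ref{coro:breakstocomb}, slide two vertical teeth until their attachment points on $C_0$ collide so that a contracted bubble appears, and then smooth the two free vertical components into a single free vertical curve, reducing the number of teeth by one and iterating. The only (harmless) difference is that the paper first re-attaches the second tooth to a general point of the first and then invokes \cite[II.7.6.1 Theorem]{Kollar} to glue while fixing the attachment point with $C_0$, whereas you smooth the two nodes at the bubble directly; both versions are justified by the same $H^1$-vanishing for free components.
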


We prove this by regluing the $\pi$-vertical curves in the setting of Corollary \ref{coro:breakstocomb}.

\begin{proof}
Let $\overline{M} \subset \overline{M}_{0,0}(\mathcal X)$ be a component containing a point $f: \mathbb{P}^{1} \to \mathcal X$ which is a birational map onto a section as in the statement of Theorem \ref{theo:maintheorem2}.  Corollary \ref{coro:breakstocomb} shows that $\overline{M}$ contains a smooth point representing a comb of $r+1$ free curves such that the section $C_{0}$ satisfies the desired bound.  We prove by induction on $r$ that $\overline{M}$ contains a point representing a union of two free curves such that the section $C_{0}$ satisfies the desired bound.  In the base case when $r=1$ there is nothing to prove.

For the induction step, label the $\pi$-vertical free curves as $T_{1},\ldots,T_{r}$.  Since all the components are free, we may deform the comb so that $C_{0}$ and $T_{1}$ intersect at a general point of $X$.  Suppose we then deform $T_{2}$ along $C_{0}$ until its attaching point coincides with that of $T_{1}$.  Due to the generality of $C_{0} \cap T_{1}$, we may ensure that the corresponding deformation of $T_{2}$ through this point is an irreducible free curve $T_{2}'$.  Thus, the resulting stable map $f': C' \to X$ will have $r+2$ components: there will be one curve $E \subset C'$ which is contracted by $f$ and which is attached to the components mapping to $C_{0}, T_{1}, T_{2}'$.  Note that $f'$ represents a smooth point of $\overline{M}_{0,0}(X)$.  Thus the component $M$ also contains a point representing a stable map whose image is a comb $C_{0} \cup T_{1}\cup T_{2}'' \cup T_{3} \cup \ldots \cup T_{r}$ where we attach an irreducible free deformation $T_{2}''$ of $T_{2}$ to a general point of $T_{1}$.   Note that this is again a smooth point of $\overline{M}_{0,0}(X)$.  Since $T_{1}$ and $T_{2}''$ are free curves in a fiber, we can glue them while fixing the attaching point with $C_{0}$ by  \cite[II.7.6.1 Theorem]{Kollar}, yielding a comb of the form $C_{0} \cup T_{*} \cup T_{3} \cup \ldots \cup T_{r}$ where $T_{*}$ is a free curve in a fiber.  This point is also contained in $\overline{M}$, and we conclude by induction on the number of components of the comb.
\end{proof}

Another version of the result is:

\begin{prop} \label{prop:breakofflowdegree}
Let $\pi: \mathcal{X} \to \mathbb{P}^{1}$ be a del Pezzo fibration such that $-K_{\mathcal{X}/\mathbb{P}^{1}}$ is relatively ample.  Suppose that $C$ is a section of $\pi$ whose deformations dominate $\mathcal{X}$ and satisfies $-K_{\mathcal{X}/\mathbb{P}^{1}} \cdot C \geq \MBB(\mathcal{X})$.  Then $C$ deforms (as a stable map) to a union of a free section $C_{0}$ with a free $\pi$-vertical moving curve $T$ such that $-K_{\mathcal{X}/\mathbb{P}^{1}} \cdot T \leq 3$.
\end{prop}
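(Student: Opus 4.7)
The plan is to apply Theorem \ref{theo:precisemovablebandb} first and then break the resulting vertical component further by applying Bend-and-Break inside a general fiber. By Theorem \ref{theo:precisemovablebandb}, the component of $\overline{M}_{0,0}(\mathcal{X})$ containing $C$ also contains a stable map whose image is a union $C_0' \cup T'$ of a free section and a free $\pi$-vertical moving curve lying in a general fiber $F$ of $\pi$. If $-K_{\mathcal{X}/\mathbb{P}^1} \cdot T' \leq 3$ we are done, so we may assume $-K_F \cdot T' \geq 4$.

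Next I would break $T'$ inside $F$ using Lemma \ref{lemm:BBfordelPezzo}. Since both $C_0'$ and $T'$ are free, we can deform the stable map so that the attachment node $C_0' \cap T'$ maps to a general point $x_0 \in F$: first deform $C_0'$ through a chosen general point $x_0$, and then pick a free deformation of $T'$ in $F$ passing through $x_0$. Fixing also a second general point $x_1 \in T'$, Lemma \ref{lemm:BBfordelPezzo} lets us deform $T'$ inside $F$, keeping $x_0$ and $x_1$ fixed, into a chain of free curves $T_1 \cup T_2 \cup \cdots \cup T_s$ each of $-K_F$-degree at most $3$, with $x_0 \in T_1$ and $x_1 \in T_s$. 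Because $x_0$ remains on $C_0'$ throughout the deformation, this takes place inside $\overline{M}_{0,0}(\mathcal{X})$, producing in the component containing $C$ a stable map whose image is the chain $C_0' \cup T_1 \cup T_2 \cup \cdots \cup T_s$ attached to $C_0'$ at $x_0$ via $T_1$.

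Finally, I would collapse the chain down to two components. Every component of this stable map is free, so it is a smooth point of $\overline{M}_{0,0}(\mathcal{X})$ and we may smooth any subset of its nodes. Applying the smoothing theorem \cite[II.7.6.1 Theorem]{Kollar} iteratively to pairs of free rational curves meeting at a point, we glue $C_0', T_1, T_2, \ldots, T_{s-1}$ together into a single free section, leaving $T_s$ as the remaining free $\pi$-vertical component of $-K_{\mathcal{X}/\mathbb{P}^1}$-degree at most $3$, as desired.

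The main technical subtlety is in the breaking step: we must arrange the attachment point $x_0$ to be sufficiently general in $F$ for Lemma \ref{lemm:BBfordelPezzo} to apply, while simultaneously keeping the deformation of $T'$ inside the original component of $\overline{M}_{0,0}(\mathcal{X})$ rather than merely inside the moduli space of stable maps to $F$. This is precisely where freeness of both $C_0'$ and $T'$ is used: freeness of $C_0'$ gives a positive-dimensional family of free sections through any general point of $\mathcal{X}$, while freeness of $T'$ as a curve in $F$ lets us pick a representative of $T'$ through any chosen pair of general points of $F$. With those generic choices in hand, the iterated gluing at the end is formal.
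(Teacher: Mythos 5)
Your proposal is correct and follows essentially the same route as the paper's proof: apply Theorem \ref{theo:precisemovablebandb}, arrange the attachment point to be general in its fiber using freeness, break the vertical curve into a chain of free curves of degree at most $3$ via Lemma \ref{lemm:BBfordelPezzo} while preserving the attachment, and then smooth all but one vertical component into the section. The extra care you take in justifying the generality of the attachment point and the containment of the degeneration in the original component of $\overline{M}_{0,0}(\mathcal{X})$ is exactly the content the paper leaves implicit.
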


\begin{proof}
By Theorem \ref{theo:precisemovablebandb}, $\overline{M}$ contains a point representing the union of a free section $C_{0}$ satisfying $-K_{\mathcal{X}/\mathbb{P}^{1}} \cdot C_{0} \leq \MBB(\mathcal{X})-1$ with a free $\pi$-vertical curve.  We may assume that the gluing point of the two curves is a general point of the fiber containing it.  By Lemma~\ref{lemm:BBfordelPezzo} we can further deform the free $\pi$-vertical curve to a chain of free $\pi$-vertical curves, each of which has anticanonical degree at most $3$, while preserving the attachment point with $C_{0}$.  After smoothing all but one of the $\pi$-vertical components into the section, we obtain the desired breaking.
\end{proof}

Finally, by breaking again we can obtain a polynomial upper bound on the number of height $d$ components of $\Sec(\mathcal{X}/\mathbb{P}^{1})$ as predicted by Batyrev.

\begin{proof}[Proof of Corollary \ref{coro:batyrev}:]
It suffices to prove the existence of a polynomial upper bound on the number of components of rigid families, the number of components of families of sections which sweep out surfaces, and the number of components of dominant families of sections.  The first claim is clear since there are only finitely many rigid curves.

Suppose that $M \subset \Sec(\mathcal{X}/\mathbb{P}^{1})$ parametrizes sections of height $d$ which sweep out a closed surface $Y$.  Let $\widetilde{Y}$ denote a resolution of $Y$.  If $d \geq 0$ then there is a one-parameter family of rational curves through a general point on $\widetilde{Y}$. Thus $\widetilde{Y}$ is rationally connected so that two sections on $\widetilde{Y}$ will be numerically equivalent if and only if they are linearly equivalent.  Thus, the number of components of $\Sec(\mathcal{X}/\mathbb{P}^{1})$ that sweep out $Y$ and have height at most $d$ is bounded above by the number of numerical classes of sections of $\widetilde{Y}$ of height at most $d$.  This latter number is a polynomial in $d$.  By Theorem \ref{theo:maintheorem1} only finitely many surfaces $Y$ can be obtained in this way, giving a universal polynomial bound on all components of sections of this type. 

Finally, suppose that $M$ is a component of $\Sec(\mathcal{X}/\mathbb{P}^{1})$ that parametrizes a dominant family of sections.  Let $\overline{M}$ denote the corresponding component of $\overline{M}_{0,0}(\mathcal X)$.  By Theorem \ref{theo:precisemovablebandb}, if the height of the sections parametrized by $M$ is at least $\MBB(\mathcal{X})$ then $\overline{M}$ contains a point representing the union of a free section $C_{0}$ satisfying $-K_{\mathcal{X}/\mathbb{P}^{1}} \cdot C_{0} \leq \MBB(\mathcal{X})-1$ with a free $\pi$-vertical curve.  We may assume that the gluing point of the two curves is a general point of the fiber containing it.  By Lemma~\ref{lemm:BBfordelPezzo} we can further deform the free $\pi$-vertical curve to a chain of free $\pi$-vertical curves, each of which has anticanonical degree at most $3$, while preserving the attachment point with $C_{0}$.

Choose a positive integer $m$ such that $A := -K_{\mathcal{X}/\mathbb{P}^{1}} + mF$ is an ample divisor where $F$ denotes a general fiber of $\pi$.  We have shown that each component $M$ parametrizing a dominant family of sections yields a component $\overline{M}$ which contains a chain of free curves where each component has $A$-degree no more than $\sup \{ 3, m+\MBB(\mathcal{X})\}$.

Now we take a smooth birational model $\beta : \widetilde{\mathcal X} \to \mathcal X$ such that $\beta$ is an isomorphism on the smooth locus of $\mathcal X$. Any free section is contained in this smooth locus, so by taking strict transforms of dominant components $M$ of $\Sec(\mathcal{X}/\mathbb{P}^{1})$ we obtain dominant components $\widetilde{M}$ of $\Sec(\widetilde{\mathcal X}/\mathbb P^1)$. Moreover by taking the strict transforms of chains of free curves, we conclude that for each such component the Zariski closure $\overline{\widetilde{M}} \subset\overline{M}_{0,0}(\widetilde{\mathcal X})$ contains a chain of free curves where each component has $\beta^*A$-degree no more than $\sup \{ 3, m+\MBB(\mathcal{X})\}$.
\cite[Theorem 5.13]{LT17} proves that there is a polynomial upper bound $P(d)$ on the number of components $\overline{\widetilde{M}} \subset \overline{M}_{0,0}(\widetilde{\mathcal X})$ of $\beta^*A$-degree $d$ which contain a chain of free curves where each component has $\beta^*A$-degree $\leq \sup\{3,m + \MBB(\mathcal{X})\}$.  
This yields a polynomial upper bound on the number of components $M$ as desired.
\end{proof}

\subsection{Fibers of evaluation maps}

We now apply Movable Bend-and-Break to show that for sections with large height the evaluation map for the universal family has connected fibers.  This is the analogue in our setting of  \cite[Proposition 5.15]{LT17}.

\begin{lemm}
\label{lemm:evaluationmap}
Let $\pi: \mathcal{X} \to \mathbb{P}^{1}$ be a del Pezzo fibration such that $-K_{\mathcal{X}/\mathbb{P}^{1}}$ is relatively ample.  Let $M \subset \Sec(\mathcal{X}/\mathbb{P}^{1})$ denote a component that parametrizes a dominant family of sections.  Let $p: \mathcal{U} \to M$ denote the universal family and let $s: \mathcal{U} \to \mathcal X$ denote the evaluation map.  

Suppose that there is a component $Q$ of $\overline{M}_{0,0}(\mathcal{X})$ parametrizing a dominant family of reduced $\pi$-vertical curves $C$ with $-K_{\mathcal{X}/\mathbb{P}^{1}} \cdot C \geq 3$ such that a general curve parametrized by $Q$ appears as a component of some stable map in the closure $\overline{M}$ of $M$ in $\overline{M}_{0,0}(\mathcal{X})$.  For any birational model $\mathcal{U}'$ of $\mathcal{U}$ the induced map $s': \mathcal{U}' \to \mathcal{X}$ has irreducible general fiber.
\end{lemm}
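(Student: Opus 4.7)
The plan is to reduce the statement to showing that the Stein factorization $s' = g \circ f$ of $s'$ has finite part $g$ of degree one; since this is invariant under birational modification of the source, I will take $\mathcal{C}' = \overline{\mathcal{U}}$, the compactified universal curve over $\overline{M}$ sitting inside $\overline{M}_{0,1}(\mathcal{X})$. The task then reduces to proving that for a general point $x \in \mathcal{X}$ the subvariety $\overline{N}_x \subset \overline{M}$ parametrizing stable maps whose image contains $x$ is connected.

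The idea is to use the hypothesis to construct a connected sublocus $\mathcal{B}_x \subset \overline{N}_x$ of distinguished broken stable maps and to show that every irreducible component of $\overline{N}_x$ accumulates into $\mathcal{B}_x$. Let $B_Q \subset \overline{M}$ denote the closure of the locus of nodal stable maps $C_0 \cup T$ with $T \in Q$. The hypothesis forces $B_Q \neq \emptyset$, and a dimension count using $\dim M = -K_{\mathcal X/\mathbb P^1} \cdot C + 2$, $\dim Q = -K_{\mathcal X/\mathbb P^1} \cdot T$, and the codimension-three incidence at the node, shows that $B_Q$ is a divisor in $\overline{M}$. Setting $\mathcal{B}_x := B_Q \cap \overline{N}_x$, a generic point of $\mathcal{B}_x$ represents a broken curve $C_0 \cup T$ with $T \in Q$ and $x \in T$ (forced by the verticality of $T$ and the generality of $x$). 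Because $Q$ is dominant and irreducible, and Lemma \ref{lemm:uniruledspaces} ensures that parameter spaces of rational curves on general fibers are rational, the locus $\{T \in Q : x \in T\}$ is connected for general $x$, and for each such $T$ the family of attached sections $C_0$ realizing a stable map in $\overline{M}$ is connected by the hypothesis. Hence $\mathcal{B}_x$ is connected.

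Next I would show that every irreducible component $V \subset \overline{N}_x$ meets $\overline{\mathcal{B}_x}$. A general member of $V$ is a free section $C \in M$ through $x$, which Theorem \ref{theo:precisemovablebandb} together with Proposition \ref{prop:breakofflowdegree} allow us to degenerate in $\overline{M}$ into a broken curve of type $C_0 \cup T$ with $T$ a free vertical curve. Since $B_Q$ is a divisor while the incidence condition $x \in s(C)$ is codimension three, a tangent-space dimension count at $C$ shows that among one-parameter degenerations of $C$ in $\overline{M}$ which keep $x$ on the image, one can be chosen whose limit lies in $B_Q$, and hence in $\mathcal{B}_x$. Combining with the previous step, $\overline{N}_x$ is connected and therefore $s'$ has irreducible general fiber.

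The main obstacle is the last step: for a given free section $C \in M$ through $x$, one must produce a degeneration of $C$ that simultaneously lands in the specific boundary component $B_Q$ (as opposed to some other boundary component of $\overline{M}$) and keeps $x$ on the image throughout, so that $x$ lies on the vertical component $T$ in the limit. This requires combining the existence of $Q$-broken curves from the hypothesis with a pointed smoothability analysis in the style of \cite[II.7.6.1]{Kollar}, and a delicate codimension count matching the divisorial nature of $B_Q$ against the codimension-three incidence $x \in s(C)$ in $\overline{M}$.
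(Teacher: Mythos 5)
Your approach is genuinely different from the paper's, but it has a real gap at exactly the step you flag as ``the main obstacle,'' and that step is not a routine dimension count. Knowing that $B_{Q}$ is a divisor in $\overline{M}$ while the incidence $x \in s(C)$ cuts codimension $2$ does not produce a one-parameter degeneration of a given free section $C$ through $x$ whose limit lies in $B_{Q}$: the compact locus $\overline{N}_{x}$ could a priori meet only other boundary divisors of $\overline{M}$ (broken curves whose vertical piece belongs to a family other than $Q$), or meet $B_{Q}$ only along loci disjoint from the component $V$ you start with. Movable Bend-and-Break (Theorem \ref{theo:precisemovablebandb}, Proposition \ref{prop:breakofflowdegree}) gives you \emph{some} breaking into free pieces, with no control over which boundary component you land in and no mechanism for keeping $x$ on the image throughout the degeneration. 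A second unjustified step is the connectedness of $\mathcal{B}_{x}$: you assert that ``for each such $T$ the family of attached sections $C_{0}$ realizing a stable map in $\overline{M}$ is connected by the hypothesis,'' but the hypothesis only says a general member of $Q$ occurs as a component of \emph{some} stable map in $\overline{M}$; deciding which gluings $C_{0}\cup T$ land in $\overline{M}$ (rather than in another component of $\overline{M}_{0,0}(\mathcal{X})$ of the same class) is essentially the question the lemma is designed to address, so this is close to circular. (Also, a generic point of $B_{Q}\cap \overline{N}_{x}$ need not have $x$ on the vertical component; the condition $x\in C_{0}$ cuts the same codimension.)

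The paper's proof sidesteps all of this by working downstream of the moduli space: it resolves the Stein factorization of the evaluation map to get $f\colon Y\to\mathcal{X}$ and shows $f$ is birational. The hypothesis on $Q$ is used only to produce a dominant family of $\psi$-vertical curves on $Y$ mapping onto $Q$ with equal expected dimensions, which forces $(K_{Y/\mathbb{P}^{1}}-f^{*}K_{\mathcal{X}/\mathbb{P}^{1}})\cdot C=0$ on a movable vertical class; combined with Riemann--Hurwitz this gives $a(Y_{\eta},-f^{*}K_{\mathcal{X}/\mathbb{P}^{1}})=a(\mathcal{X}_{\eta},-K_{\mathcal{X}/\mathbb{P}^{1}})$, and then \cite[Theorem 6.2]{LT16} says $f_{\eta}$ is either birational or a conic fibration, the latter being excluded by the degree bound $-K_{\mathcal{X}/\mathbb{P}^{1}}\cdot C\geq 3$. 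If you want to salvage your strategy you would need to supply the missing degeneration-into-$B_{Q}$ argument, which seems at least as hard as the lemma itself; I recommend the $a$-invariant route.
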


\begin{proof}
By taking projective closures and taking resolutions we find smooth projective varieties $W$, $M_{W}$ which admit maps $\overline{s}: W \to \mathcal{X}$ and $\overline{p}: W \to M_{W}$ that agree with $s$ and $p$ on open subsets.  Let $f: Y \to \mathcal{X}$ denote a resolution of the Stein resolution of $\overline{s}$.  Our goal is to show that $f$ is a birational map.

By construction $Y$ is dominated by sections of the natural map $\psi = \pi \circ f: Y \to \mathbb{P}^{1}$, so that $Y$ has the structure of an algebraic fiber space over $\mathbb{P}^{1}$.  Let $\overline{N}$ denote the component of $\overline{M}_{0,0}(Y)$ whose general point is the image of a general fiber of $\overline{p}$.  We know that $\overline{N}$ admits a birational morphism to the corresponding component $\overline{M}$ of $\overline{M}_{0,0}(\mathcal{X})$.    In particular, there is a dominant family of $\psi$-vertical curves $P$ mapping onto the corresponding curves in $Q$.  Since the induced map from $P$ to $Q$ is dominant, the expected dimensions of $P$ and $Q$ agree.  This implies that the curves $C$ parametrized by $P$ satisfy $(K_{Y/\mathbb{P}^{1}} -f^{*}K_{\mathcal{X}/\mathbb{P}^{1}}) \cdot C = 0$.  By the Riemann-Hurwitz formula $K_{Y/\mathbb{P}^{1}} - f^{*}K_{\mathcal{X}/\mathbb{P}^{1}}$ is relatively pseudo-effective, and since $C$ is a movable $\psi$-vertical curve we see that this divisor is not relatively big.  This means that
\begin{equation*}
a(Y_{\eta},-f^{*}K_{\mathcal{X}/\mathbb{P}^{1}}) = 1 = a(X_{\eta},-K_{\mathcal{X}/\mathbb{P}^{1}}).
\end{equation*}
Consider the map of geometric generic fibers $\overline{f}_{\eta}: Y_{\overline{K(B)}} \to \mathcal{X}_{\overline{K(B)}}$.  By \cite[Proposition 4.4]{LST18} the Fujita invariant of a geometrically integral variety over a field of characteristic $0$ is preserved by base change of the ground field.  Then \cite[Theorem 6.2]{LT16} implies that either $\overline{f}_{\eta}$ is birational or that $K_{Y_{\overline{K(B)}}} -\overline{f}_{\eta}^{*}K_{\mathcal{X}_{\overline{K(B)}}}$ has Iitaka dimension $1$.  However, in the latter case the curves $\overline{C}$ contracted by the Iitaka fibration will satisfy $-\overline{f}_{\eta}^{*}K_{\mathcal{X}_{\overline{K(B)}}} \cdot \overline{C} = 2$.  This contradicts the fact that $-K_{\mathcal{X}/\mathbb{P}^{1}} \cdot f_{*}C \geq 3$.  We deduce that $\overline{f}_{\eta}$ is birational, hence that $f_{\eta}$ is birational, hence that $f$ is birational. 
\end{proof}

\begin{coro}
\label{coro:evaluationmap2}
Let $\pi: \mathcal{X} \to \mathbb{P}^{1}$ be a del Pezzo fibration such that $-K_{\mathcal{X}/\mathbb{P}^{1}}$ is relatively ample.  Let $M \subset \Sec(\mathcal{X}/\mathbb{P}^{1})$ denote a component that parametrizes a dominant family of sections.  Suppose that the curves $C$ parametrized by $M$ satisfy $-K_{\mathcal{X}/\mathbb{P}^{1}} \cdot C \geq \MBB(\mathcal{X}) + 2$.  Then any resolution of the universal family map over $M$ has irreducible general fiber.
\end{coro}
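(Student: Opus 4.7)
The plan is to reduce the statement directly to Lemma \ref{lemm:evaluationmap} by producing an appropriate component $Q \subset \overline{M}_{0,0}(\mathcal{X})$ of vertical curves. To that end, I will use Theorem \ref{theo:precisemovablebandb} to exhibit a stable map in $\overline{M}$ with a sufficiently high-degree free vertical component.

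First I would apply Theorem \ref{theo:precisemovablebandb} to the component $M$: since by hypothesis the general section $C$ satisfies $-K_{\mathcal{X}/\mathbb{P}^{1}} \cdot C \geq \MBB(\mathcal{X}) + 2 \geq \MBB(\mathcal{X})$ and $M$ parametrizes a dominant family, the closure $\overline{M}$ contains a point representing a stable map whose image is the union of a free section $C_{0}$ with $-K_{\mathcal{X}/\mathbb{P}^{1}} \cdot C_{0} \leq \MBB(\mathcal{X}) - 1$ and a free $\pi$-vertical moving curve $T$.

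Next I would compute the height of the vertical component:
\begin{equation*}
-K_{\mathcal{X}/\mathbb{P}^{1}} \cdot T \;=\; -K_{\mathcal{X}/\mathbb{P}^{1}} \cdot C \;-\; \left(-K_{\mathcal{X}/\mathbb{P}^{1}} \cdot C_{0}\right) \;\geq\; (\MBB(\mathcal{X})+2) - (\MBB(\mathcal{X})-1) \;=\; 3.
\end{equation*}
Let $Q$ denote the component of $\overline{M}_{0,0}(\mathcal{X})$ containing the stable map to $T$. Since $T$ is free and moves in a family covering $\mathcal{X}$, $Q$ parametrizes a dominant family of irreducible (hence reduced) $\pi$-vertical rational curves with $-K_{\mathcal{X}/\mathbb{P}^{1}}$-degree at least $3$, and by construction $T$ occurs as a component of a stable map in $\overline{M}$.

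All hypotheses of Lemma \ref{lemm:evaluationmap} are then satisfied, and I would apply it to conclude that any resolution of the universal family map over $M$ has irreducible general fiber. The main (and essentially only) subtlety is the exact bookkeeping needed to guarantee $-K_{\mathcal{X}/\mathbb{P}^{1}} \cdot T \geq 3$, which is precisely why the hypothesis is stated as $-K_{\mathcal{X}/\mathbb{P}^{1}} \cdot C \geq \MBB(\mathcal{X}) + 2$ rather than simply $\geq \MBB(\mathcal{X})$: the ``$+2$'' compensates for the fact that the free section produced by Theorem \ref{theo:precisemovablebandb} could have height as large as $\MBB(\mathcal{X})-1$, so that the residual vertical curve must still have degree at least $3$ in order for Lemma \ref{lemm:evaluationmap}, whose conclusion ultimately rests on the degree bound excluding the possibility of conics in fibers, to apply.
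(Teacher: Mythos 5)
Your proposal is correct and follows essentially the same route as the paper: apply Theorem \ref{theo:precisemovablebandb}, observe that the hypothesis $-K_{\mathcal{X}/\mathbb{P}^{1}} \cdot C \geq \MBB(\mathcal{X}) + 2$ forces the residual vertical curve to have degree at least $3$, and invoke Lemma \ref{lemm:evaluationmap}. The one detail to add is that Lemma \ref{lemm:evaluationmap} requires a \emph{general} curve parametrized by $Q$ to appear as a component of a stable map in $\overline{M}$, not just the particular $T$ you constructed; this follows because the broken stable map (a union of two free curves) is a smooth point of $\overline{M}_{0,0}(\mathcal{X})$, so deformations of the vertical component within $Q$ can be followed inside $\overline{M}$.
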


\begin{proof}
Let $\overline{M}$ denote the corresponding component of $\overline{M}_{0,0}(\mathcal{X})$.  Applying Theorem \ref{theo:precisemovablebandb} to $\overline{M}$ on $\mathcal{X}$, we see that it contains points which are the union of a free $\pi$-vertical curve of $-K_{\mathcal{X}/\mathbb{P}^{1}}$-degree at least $3$ and a horizontal moving section.  Since such a stable map is a smooth point of the moduli space, a general deformation of this $\pi$-vertical curve must appear as a component of some curve parametrized by $\overline{M}$.  We then apply Lemma \ref{lemm:evaluationmap}.
\end{proof}

\section{Examples} \label{sect:examples}

In this section we analyze several explicit examples of del Pezzo fibrations.

\subsection{Improving previous bounds}
In many situations the upper bound on $\MBB$ given in Theorem \ref{theo:mbbmain} is much larger than the actual value.  In order to compute examples it will be useful to give more precise bounds for del Pezzo fibrations satisfying certain properties.

\begin{itemize}
\item Suppose that we have classified the sections on $\mathcal{X}$ of negative height.  Then we can hope to prove a more precise statement in place of Lemma \ref{lemm:maxdefbound}.  This in turn will improve the bounds in Cases 1 and 2 of Theorem \ref{theo:mbbmain} in the $d<0$ situation, and in Case 3 of Theorem \ref{theo:mbbmain} via Propositions \ref{prop:evendegreebound} and \ref{prop:odddegreebound}.
\item Suppose that $\mathcal{X}_{\eta}$ does not admit any rational curves of anticanonical degree $2$ defined over the ground field $k(\mathbb{P}^{1})$.  This implies that the surface $\widetilde{\Sigma}$ constructed in the proof of Theorem \ref{theo:mbbmain} is not ruled by $-K_{\mathcal{X}/\mathbb{P}^{1}}$-conics.  In particular, this allows us to give more precise bounds in inequalities \eqref{eqn:mbbargumentequation} and \eqref{eqn:mbbargumentequation2}.
\item When the height is $3$ or $4$ and the conditions above hold we can mimic the proof of Theorem \ref{theo:mbbmain} using more explicit constructions to prove a Movable Bend-and-Break theorem by hand.
\end{itemize}

By implementing these improvements we have:

\begin{lemm} \label{lemm:improvedbounds}
Let $\pi: \mathcal{X} \to \mathbb{P}^{1}$ be a del Pezzo fibration such that $-K_{\mathcal X/\mathbb P^1}$ is relatively ample.  Suppose that $\maxdef(d)-d \leq 2$ for every $d<0$ such that there is a section of height $d$.  If $\mathcal{X}_{\eta}$ does not admit any rational curves defined over $k(\mathbb{P}^{1})$ of anticanonical degree $2$ then
\begin{equation*}
\MBB(\mathcal{X}) \leq 3.
\end{equation*}
\end{lemm}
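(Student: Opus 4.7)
The proof retraces Theorem~\ref{theo:mbbmain} with two quantitative inputs sharpened. From $\maxdef(d) \leq d + 2$ for $d < 0$ and the trivial lower bound $\maxdef(d) \geq 0$ whenever a section of height $d$ exists, we deduce $\maxdef(\mathcal{X}) \leq 1$ and $\neg(\mathcal{X}, -K_{\mathcal{X}/\mathbb{P}^1}) \geq -2$. In particular, no section of negative height can pass through a general point of $\mathcal{X}$: the image of the evaluation map from the universal family has dimension at most $\maxdef(d) + 1 \leq 2 < \dim \mathcal{X}$. From the hypothesis that $\mathcal{X}_\eta$ admits no degree-$2$ rational curve over $k(\mathbb{P}^1)$, together with the fact that lines in a smooth del Pezzo surface are rigid within their fiber and avoid general points of that fiber, every $1$-parameter family of $\pi$-vertical rational curves in $\mathcal{X}$ whose general member contains a general point of $\mathcal{X}$ has anticanonical degree at least $3$.

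In Cases 1 and 2 of the proof of Theorem~\ref{theo:mbbmain}, the auxiliary ruled surface $\widetilde{\Sigma}$ is constructed so as to contain a general point of $\mathcal{X}$, and hence a general fiber $F$ of $\psi:\widetilde{\Sigma} \to \mathbb{P}^1$ satisfies $-K_{\mathcal{X}/\mathbb{P}^1} \cdot F \geq 3$ by the observation above. Substituting this into inequalities \eqref{eqn:mbbargumentequation} and \eqref{eqn:mbbargumentequation2} in place of the generic bound $\geq 2$, the subcases $b = e$ and $b - 1 = e$ of Claim~1 now force $b \leq -\neg(\mathcal{X},-K_{\mathcal{X}/\mathbb{P}^1}) - 2 \leq 0$, contradicting both $b \geq 0$ and the height bound $-K_{\mathcal{X}/\mathbb{P}^1}\cdot C \geq 3$. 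Thus $b > e$ (resp.\ $b - 1 > e$), and the remainder of Cases 1 and 2 produces the desired two-component breaking.

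In Case 3 ($a \geq 1$, $b - a \leq 1$, height $\geq 3$), the argument of Theorem~\ref{theo:mbbmain} invokes Propositions~\ref{prop:evendegreebound} and~\ref{prop:odddegreebound}, whose stated hypotheses on $n$ exceed the value $n = a + 1$ available at height $3$ or $4$. We revisit those Propositions under our hypotheses: in any Bend-and-Break degeneration $C_0 + \sum a_i T_i$ produced by Corollary~\ref{coro:improvedbandb}, each vertical component $T_i$ containing a general point has $-K_{\mathcal{X}/\mathbb{P}^1} \cdot T_i \geq 3$, and the section $C_0$ can contain a general point only if its height is nonnegative. A short combinatorial accounting of the contributions to the total height $2n - 2$ (resp.\ $2n - 1$) then forces $C$ either to be irreducible (and hence a single free section) or to decompose as a free section joined to a single free vertical curve of anticanonical degree at most $3$, uniformly for every $n \geq 1$. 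This removes the lower bound on $n$ in the hypotheses of the Propositions and lets the Case~3 argument handle heights $\geq 3$, completing the bound $\MBB(\mathcal{X}) \leq 3$. The principal technical obstacle lies in this re-verification of Propositions~\ref{prop:evendegreebound} and~\ref{prop:odddegreebound}: one must check, subcase by subcase in their original proofs, that either the subcase becomes vacuous (because vertical conics through general points are now excluded) or trivially satisfies the conclusion (because sections of negative height do not pass through general points, so the only contributing sections have tightly constrained behavior).
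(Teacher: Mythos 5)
Your overall plan --- rerun the proof of Theorem \ref{theo:mbbmain} with $\neg(\mathcal{X},-K_{\mathcal{X}/\mathbb{P}^{1}}) \geq -2$ and $\maxdef(\mathcal{X}) \leq 1$, and with the degree-$2$ hypothesis improving the bound on $-K_{\mathcal{X}/\mathbb{P}^{1}}\cdot F$ in inequalities \eqref{eqn:mbbargumentequation} and \eqref{eqn:mbbargumentequation2} --- is the intended one, and your treatment of Cases 1 and 2 is correct. (One caveat even there: the reason a general fiber $F$ of $\psi$ has $-K_{\mathcal{X}/\mathbb{P}^{1}}\cdot F \geq 3$ is not your stated general principle about ``$1$-parameter families of vertical curves through general points,'' which is false as stated; it is that $\widetilde{\Sigma}\to\mathbb{P}^{1}$ is an algebraic fiber space because $\widetilde{\Sigma}$ is dominated by sections, so its generic fiber is a geometrically irreducible rational curve defined over $k(\mathbb{P}^{1})$, which the hypothesis forbids from having degree $2$. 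Applied to $\widetilde{\Sigma}$ your argument is fine.)

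The genuine gap is in Case 3. You assert that ``each vertical component $T_i$ containing a general point has $-K_{\mathcal{X}/\mathbb{P}^{1}}\cdot T_i\geq 3$'' and later that various subcases of Propositions \ref{prop:evendegreebound} and \ref{prop:odddegreebound} ``become vacuous because vertical conics through general points are now excluded.'' This is not what the hypothesis says. The absence of an anticanonical conic on $\mathcal{X}_{\eta}$ over $k(\mathbb{P}^{1})$ is a statement about the generic fiber (equivalently, about monodromy-invariant conic classes); every smooth del Pezzo fiber of degree $\geq 2$ still contains conics through each of its points, and such conics genuinely occur as components of the Bend-and-Break degenerations --- indeed the conclusions of Propositions \ref{prop:evendegreebound} and \ref{prop:odddegreebound}, and the application in Example \ref{exam:blowupcubic} (where the generic fiber likewise has no conic over $k(\mathbb{P}^{1})$), explicitly produce a free vertical \emph{conic} as one of the two components. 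So your ``short combinatorial accounting'' is built on a false premise and does not reprove the Propositions. The correct mechanism is the one supplied by the other hypothesis: since $\maxdef(d)\leq d+2$ for all relevant $d<0$, Lemma \ref{lemm:maxdefbound} gives $-K_{\mathcal{X}/\mathbb{P}^{1}}\cdot C\geq d+3n-\maxdef(d)\geq 3n-2>2n-2$ for every $n\geq 1$, so the negative-height cases of the two Propositions are ruled out for all $n\geq 1$; the nonnegative-height cases never used the lower bound on $n$ beyond the generality of the points, and their existing arguments (which handle vertical conics through general points by finiteness and transversality, not by exclusion) go through unchanged. With the Propositions thus valid for all $n\geq 1$, Case 3 of Theorem \ref{theo:mbbmain} applies verbatim at heights $3$ and $4$, completing the bound $\MBB(\mathcal{X})\leq 3$.
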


The following theorem improves Proposition \ref{prop:breakofflowdegree} by giving conditions where we are guaranteed to be able to break off a $\pi$-vertical conic.  This simplifies inductive arguments, since it means that we can understand dominant families of sections of height $n$ using only the dominant families of sections of height $n-2$.

\begin{theo} \label{theo:anothermovablebandb}
Let $\pi: \mathcal{X} \to \mathbb{P}^{1}$ be a del Pezzo fibration such that $-K_{\mathcal{X}/\mathbb{P}^{1}}$ is relatively ample.  Assume that a general fiber of this fibration is a del Pezzo surface with $2 \leq (-K_S)^2 \leq 8$ and that every surface swept out by the $\pi$-vertical lines contained in smooth fibers is a big divisor.  Suppose that $C$ is a section of $\pi$ of height $d$ whose deformations dominate $\mathcal{X}$ and such that $d \geq \MBB(\mathcal{X})$. 
Then $C$ deforms (as a stable map) to a union of a free section $C_{0}$ and a free $\pi$-vertical conic.
\end{theo}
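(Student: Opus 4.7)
The plan is to apply Proposition~\ref{prop:breakofflowdegree} to deform $C$ as a stable map into a union $C_0\cup T$, with $C_0$ a free section and $T$ a free $\pi$-vertical moving curve satisfying $-K_{\mathcal{X}/\mathbb{P}^1}\cdot T\leq 3$, and then to upgrade the configuration so that $T$ becomes a conic. The argument splits according to the value of $-K_{\mathcal{X}/\mathbb{P}^1}\cdot T\in\{1,2,3\}$. The case $-K_{\mathcal{X}/\mathbb{P}^1}\cdot T=2$ is already the desired conclusion, so the work lies in the remaining two cases.

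When $-K_{\mathcal{X}/\mathbb{P}^1}\cdot T=3$, I plan to use Lemma~\ref{lemm:conicscubicsdescription}: choosing the attachment point $p=C_0\cap T$ generic in its fiber $F$ so that it avoids all lines of $F$, the lemma deforms $T$ while keeping $p$ fixed into the union of a free conic $Q$ through $p$ and a free vertical line $\ell$ meeting $Q$. I then plan to rearrange the resulting chain $C_0\cup Q\cup\ell$ so that $\ell$ attaches directly to a deformation of $C_0$ rather than to $Q$; here I will use that the surface $S_L$ of $\pi$-vertical lines is big, so $C_0\cdot S_L>0$ and deformations of $C_0$ in its family meet lines of $S_L$ in the fiber $F$. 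Kollár's smoothing criterion \cite[II.7.6.1]{Kollar} then absorbs the free vertical line $\ell$ into $C_0$, producing a single free section $C_0'$ and leaving $Q$ as the vertical tooth of the desired form. When $-K_{\mathcal{X}/\mathbb{P}^1}\cdot T=1$, so $T$ is itself a vertical line, the strategy is to use the bigness of $S_L$ to find a second vertical line $\ell'$ in the same fiber as $T$ meeting an appropriate deformation of $C_0$, so that $T+\ell'$ becomes a reducible vertical conic attached to the section. By Lemma~\ref{lemm:conicscubicsdescription} together with the hypothesis that the general fiber is a del Pezzo surface of degree between $2$ and $8$, any reducible conic of this form (a union of two intersecting lines) is a specialization of irreducible free conics within its fiber, so smoothing $T+\ell'$ within $F$ produces the desired free vertical conic $Q$, once more attached to a suitable smoothing of $C_0$ with the remaining line.

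I expect the main obstacle to be the line case $-K_{\mathcal{X}/\mathbb{P}^1}\cdot T=1$: locating the second vertical line $\ell'$ in a specified fiber so that it meets a deformation of $C_0$, and coordinating the smoothing of the reducible conic $T+\ell'$ with the horizontal attachment, both rely essentially on the bigness hypothesis on the surface of $\pi$-vertical lines. The bigness of $S_L$ is precisely what guarantees that the line family is sufficiently rich to supply the auxiliary line $\ell'$ in the right fiber, while the restriction $2\leq (-K_S)^2\leq 8$ is what ensures that reducible conics on the fiber smooth to irreducible ones via Lemma~\ref{lemm:conicscubicsdescription}.
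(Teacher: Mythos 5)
Your handling of the conic and cubic cases is essentially the paper's own argument: apply Proposition~\ref{prop:breakofflowdegree} to break off a free vertical curve $T$ of degree at most $3$; if $T$ is a cubic, degenerate it to a conic plus a line via Lemma~\ref{lemm:conicscubicsdescription}, use the bigness of the surface swept out by the $\pi$-vertical lines to produce a free deformation $C_{1}$ of $C_{0}$ meeting the line, and then smooth the section together with the line, leaving the conic as the vertical component. The only point where your ``rearrangement'' needs care is in checking that the new chain $C_{1}\cup\ell\cup T_{1}$ (section attached to the line, line attached to the conic) lies in the \emph{same} component of $\overline{M}_{0,0}(\mathcal{X})$ as $C_{0}\cup T$; this is where the paper invokes the smoothness of these boundary points of the component $\overline{P}$ parametrizing vertical cubics.

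The case you single out as the main obstacle, $-K_{\mathcal{X}/\mathbb{P}^{1}}\cdot T=1$, is vacuous and should be deleted. Proposition~\ref{prop:breakofflowdegree} produces a \emph{free} $\pi$-vertical curve, and a vertical line is a $(-1)$-curve in its fiber $F$ (adjunction gives $\ell^{2}=-1$ from $-K_{F}\cdot\ell=1$), so $N_{\ell/F}\cong\mathcal{O}(-1)$ and $\ell$ can never be free; hence $-K_{\mathcal{X}/\mathbb{P}^{1}}\cdot T\in\{2,3\}$. This is fortunate, because the construction you sketch there is not secure as stated: two vertical lines in the same fiber need not meet, so $T+\ell'$ need not be a conic class, and coordinating the attachment points would require additional work. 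Dropping that case leaves a proof that matches the paper's.
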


\begin{proof}
By Proposition \ref{prop:breakofflowdegree} we know that we can find a breaking $C \sim_{alg} C_{0} + T$ where the vertical curve $T$ is either a free $\pi$-vertical conic or a free $\pi$-vertical cubic.  In the first case we are done.  In the second case, let $\overline{P}$ denote the component of $\overline{M}_{0,0}(\mathcal{X})$ parametrizing the deformations of $T$.  By Lemma \ref{lemm:conicscubicsdescription} we see that every general fiber of $\pi$ contains a curve parametrized by $\overline{P}$ which is the union of a conic and a line.  Since such curves are smooth points of $\overline{P}$, in fact there is an entire component $\overline{Q}$ of the parameter space of $\pi$-vertical lines contained in smooth fibers such that every stable map parametrized by $\overline{Q}$ occurs as the restriction of a stable map parametrized by $\overline{P}$ to a subcurve.

By assumption the surface swept out by the curves parametrized by $\overline{Q}$ is big.  Thus every deformation of $C_{0}$ intersects this surface.  Consider the point of $\overline{P}$ obtained by gluing a general $\pi$-vertical line parametrized by $\overline{Q}$ to a $\pi$-vertical conic.  Then there is a free deformation $C_{1}$ of $C_{0}$ that intersects the image of this curve along the line.  By gluing, we obtain a stable map whose image has the form $C_{1} \cup \ell \cup T_{1}$ where $C_{1}$ is a free deformation of $C_{0}$, $\ell$ is a $\pi$-vertical line, and $T_{1}$ is a free $\pi$-vertical conic.  This is a smooth point of $\overline{M}_{0,0}(\mathcal{X})$ that lies in the same component of $\overline{M}_{0,0}(\mathcal{X})$ as $C_{0} \cup T$.  We can then glue $C_{1}$ to $\ell$ to conclude the desired statement.
\end{proof}

\subsection{Blow-ups of Fano threefolds}

One of the most common ways to construct a del Pezzo fibration is by blowing up a Fano threefold.  In this section we analyze several examples of this type.

\begin{exam} \label{exam:blowupcubic}
Let $X_{3}$ be a smooth cubic $3$-fold in $\mathbb{P}^{4}$.
Let $Z$ denote a smooth elliptic curve which is the base locus of a general pencil of hyperplane sections.  Let $\pi: \mathcal{X} \to \mathbb{P}^{1}$ denote the resolution of the pencil.  We have $\Pic(\mathcal{X}) = \mathbb{Z}H \oplus \mathbb{Z}E$ where $H$ is the pullback of the hyperplane class from $X_{3}$ and $E$ is the exceptional divisor of the blow-up $\phi: \mathcal{X} \to X_{3}$.  In this notation $-K_{\mathcal{X}/\mathbb{P}^{1}} = E$, so that $\neg(\mathcal{X},-K_{\mathcal{X}/\mathbb{P}^{1}}) = -1$ and the sections achieving this bound will exactly be the sections contained in $E \cong Z \times \mathbb{P}^{1}$.

\textbf{Computation of $\pi$-vertical curves:}  
The monodromy action on the N\'eron-Severi space of the general fiber is the full Weyl group $W(E_{6})$. Indeed, since our pencil is general, it will be a Lefschetz pencil in the sense of \cite[Definition 2.6]{Voi03}.  It follows from \cite[Proposition 2.27]{Voi03} that the primitive cohomology of a smooth member of the pencil is generated by vanishing spheres. The theory of Picard-Lefschetz tells us that vanishing spheres are roots and the monodromy is generated by orthogonal reflections with respect to these roots. Moreover since the monodromy action on the lattice generated by vanishing spheres is irreducible, the lattice generated by vanishing spheres is an irreducible root lattice. However, there is no irreducible root lattice contained in the lattice $E_6$ other than itself. Thus all roots are vanishing spheres, proving the claim. 
This implies that for $\pi$-vertical curves of anticanonical degree $1$ or $2$ there is a unique family of rational curves on $\mathcal{X}$ that dominates the base $\mathbb{P}^{1}$.

\textbf{Computation of the closed set:} Since any rigid section will satisfy $-K_{\mathcal{X}/\mathbb{P}^{1}} \cdot C \leq -2$, there are no rigid sections of $\pi$.

By Lemma \ref{lemm:genericainvfordp}, if $Y \subset \mathcal{X}$ is a subvariety with $a(Y_{\eta},-K_{\mathcal{X}/\mathbb{P}^{1}}) > a(\mathcal{X}_{\eta},-K_{\mathcal{X}/\mathbb{P}^{1}})$ then the intersection of $Y$ with a general fiber is a union of $(-1)$-curves.  Since the monodromy action is maximal, every $(-1)$-curve in a general fiber is equivalent to every other $(-1)$-curve under the monodromy action.  This implies that there is a unique irreducible surface $V$ whose intersection with a general fiber is a union of $(-1)$-curves. If $\widetilde{V}$ denotes a resolution of $V$ then the map $\widetilde{V} \to \mathbb{P}^{1}$ is not an algebraic fiber space, showing that $V$ can not be covered by sections.

On a cubic del Pezzo surface the subvarieties with the same $a$-value are exactly the conics.  Since the monodromy action is maximal, the set of all conics in general fibers is parametrized by an irreducible surface.  In particular, the Stein factorization of the evaluation map for the universal family of $\pi$-vertical conics is non-trivial.  This implies that any $1$-parameter family of conics that dominates the base $\mathbb{P}^{1}$ will fail to be an algebraic fiber space over $\mathbb{P}^{1}$ and thus can not be dominated by sections.

By combining the results above with Theorem \ref{theo:toomuchdeforming} we see that the only possible non-dominant families of sections will have height $\leq 0$. 
We analyze sections in this range below and we deduce that there is only one non-dominant family consisting of the sections contained in $E$.

\textbf{Computation of low degree sections:}
We analyze the sections of low height with respect to $-K_{\mathcal{X}/\mathbb{P}^{1}}$:
\begin{itemize}
\item Height -1: as discussed above these are exactly the rational curves in $E \cong Z \times \mathbb{P}^{1}$.
\item Height 0: these will be the strict transforms of lines in $X_{3}$ not meeting $Z$.  Thus this space is irreducible.
\item Height 1: these will be the strict transforms of conics in $X_{3}$ which meet $Z$ once.  \cite[Theorem 7.6]{LT17} shows that for the universal family of conics $\mathcal{U}$ on $X_{3}$ the evaluation map $s: \mathcal{U} \to X_{3}$ has equidimensional irreducible fibers over an open neighborhood of $Z$.  Thus the space of height $1$ sections is irreducible.
\item Height 2: these will be the strict transforms of twisted cubics in $X_{3}$ which meet $Z$ in two points. Any twisted cubic $C$ in $X_{3}$ is contained in a unique hyperplane section $H \subset \mathbb P^4$ and $C$ is a smooth rational curve of degree $3$ in the cubic surface $S = H \cap X_{3}$. Such a curve is the strict transform of a line via a birational morphism $S \rightarrow \mathbb P^2$. Since the space of twisted cubics in $X_{3}$ is irreducible (see \cite{HRS05}), we know that the monodromy action on classes of twisted cubics in $S$ is transitive as $H$ varies in an open set $U \subset (\mathbb P^4)^*$. Otherwise, each orbit corresponds to an irreducible component of the space of twisted cubics, contradicting with the irreducibility of such a space. Choose two points $p_1, p_2$ on $Z$ and consider the hyperplanes $H \subset \mathbb P^4$ containing $\ell = \mathrm{span}(p_1, p_2)$. For each $H$ there are only finitely many twisted cubics passing through $p_1, p_2$.  By the Lefschetz hyperplane theorem the monodromy action of the hyperplanes containing $\ell$ is still transitive, so these curves are parametrized by an irreducible variety. The irreducibility of the space of sections of height $2$ follows.
\end{itemize}

\textbf{Classification of sections:} Lemma \ref{lemm:improvedbounds} shows that $\MBB(\mathcal{X}) \leq 3$.  
We can now classify all families of sections. As explained above, there is a unique family of sections of height $-1$, $0$, $1$, and $2$.   We prove the irreducibility of the moduli space of sections of height $\geq 3$ by induction.  As discussed earlier we know that any component of $\Sec(\mathcal{X}/\mathbb{P}^{1})$ in this height range will parametrize a dominant family of sections.  By Lemma \ref{lemm:improvedbounds} and Theorem \ref{theo:anothermovablebandb} we know that for $d \geq 3$ any free section of height $d$ will deform as a stable map to the union of a free section of height $d-2$ and a free $\pi$-vertical conic.  So it suffices to show that if $M$ denotes the unique component of $\Sec(\mathcal{X}/\mathbb{P}^{1})_{d-2}$ then the space of curves obtained by gluing free $\pi$-vertical conics to curves in $M$ is also irreducible.  By the fact that the monodromy is maximal, we know that the space of conics which meet a fixed section is irreducible, and the desired conclusion follows immediately.
\end{exam}

An analogous argument holds for other similar constructions.

\begin{exam} \label{exam:blowupX5}  
Let $X_{5}$ be a general codimension $3$ linear section of $\mathrm{Gr}(2, 5)\subset \mathbb P^9$.  Let $Z$ denote a smooth elliptic curve which is the base locus of a general pencil of hyperplane sections.  Let $\pi: \mathcal{X} \to \mathbb{P}^{1}$ denote the resolution of the pencil.  We have $\Pic(\mathcal{X}) = \mathbb{Z}H \oplus \mathbb{Z}E$ where $H$ is the pullback of the hyperplane class and $E$ is the exceptional divisor of the blow-up $\phi: \mathcal{X} \to X_{5}$.  In this notation $-K_{\mathcal{X}/\mathbb{P}^{1}} = E$.

Since the fibration is constructed by resolving a Lefschetz pencil, the monodromy action will again be full.  Thus, by arguing just as in Example \ref{exam:blowupcubic}, we can classify all components of $\Sec(\mathcal{X}/\mathbb{P}^{1})$ by analyzing the behavior of sections of height $\leq 2$.

\textbf{Computation of low degree sections:}
We analyze the sections of low height with respect to $-K_{\mathcal{X}/\mathbb{P}^{1}}$:
\begin{itemize}
\item Height -1: these are the rational curves in $E \cong Z \times \mathbb{P}^{1}$.
\item Height 0: these will be the strict transforms of lines in $X_{5}$ not meeting $Z$.  As described by \cite{Iskov79}, this space is irreducible. 
\item Height 1: these will be the strict transforms of conics in $X_{5}$ which meet $Z$ once.  \cite[Theorem 7.6]{LT17} shows that for the universal family of conics $\mathcal{U}$ on $X_{5}$ the evaluation map $s: \mathcal{U} \to X_{5}$ has equidimensional irreducible fibers over an open neighborhood of $Z$.  Thus the space of height $1$ sections is irreducible.
\item Height 2: these will be the strict transforms of twisted cubics in $X_{5}$ which meet $Z$ in two points. Consider the incidence correspondence $I$ of pairs $(H,C)$ where $H$ is a hyperplane such that $S = H \cap X_{5}$ is smooth and $C$ is a twisted cubic in $S$.  Note that each $S$ is a degree $5$ del Pezzo surface and that $C$ is the pullback of a line under a birational map $S \to \mathbb{P}^{2}$.  The correspondence $I$ admits a map to an open subset $U \subset \mathbb{P}^{6*}$ and the monodromy action of $U$ on the classes of twisted cubics in a fixed surface $S$ is transitive. Choose two points $p_1, p_2$ on $Z$. They are parametrized by $\mathrm{Sym}^2(Z)$. Then we consider hyperplanes $H \subset \mathbb P^6$ containing $\ell = \mathrm{span}(p_1, p_2)$. For each $H$ there are only finitely many twisted cubics contained in $H \cap X_{5}$ passing through $p_1, p_2$.  By the Lefschetz hyperplane theorem the monodromy action of the hyperplanes containing $\ell$ is still transitive on the set of cubics in each surface $S$, so the set of twisted cubics meeting $Z$ in two points are parametrized by an irreducible variety.  The irreducibility of the space of sections of height $2$ follows.
\end{itemize}
By mimicking the argument in Example \ref{exam:blowupcubic}, we see that there is a unique irreducible family of sections of every height $\geq -1$.
\end{exam}

Similar computations hold for
\begin{itemize}
\item the blow up of the intersection of two quadrics along the base locus of a general pencil of hyperplanes, and
\item the blow up of a quadric along the base locus of a general pencil of quadrics.
\end{itemize}
In these cases the behavior of low degree sections is described by \cite{HT14}.

\subsection{Subvarieties of products of projective spaces}

Another common way of constructing a del Pezzo fibration is by taking a complete intersection inside a product of a threefold with a curve.  The following example illustrates how one can compute sections in this situation.

\begin{exam}
Let $\mathcal{X}$ be a general hypersurface in $\mathbb{P}^{1} \times \mathbb{P}^{3}$ of degree $(2,3)$.  The projection $\pi: \mathcal{X} \to \mathbb{P}^{1}$ is a fibration by cubic surfaces.  We denote the pullbacks of the hyperplane class from the two projections by $H_{1}$ and $H_{2}$ respectively.  Then $-K_{\mathcal{X}/\mathbb{P}^{1}} = H_{2}-2H_{1}$.

\textbf{Computation of $\pi$-vertical curves:}  
Let $U \subset \mathbb{P}^{19}$ denote the parameter space of smooth cubic hypersurfaces in $\mathbb{P}^{3}$.  \cite[Th\'eor\`eme 2]{Beauville86} shows that the action of the fundamental group of $U$ on the N\'eron-Severi space of the hypersurfaces $H$ is the entire Weyl group (denoted in Beauville's notation by $O_{h}(H^{2}(H,\mathbb{Z}))$).  By \cite[Corollary 6]{Kol15} a general conic curve $T \subset \mathbb{P}^{19}$ will have the property that $\pi_{1}(T \cap U) \to \pi_{1}(U)$ is surjective.  Since $\mathcal{X}$ is a family of cubic hypersurfaces defined by a general conic we deduce that the monodromy action for $\mathcal{X}$ is the entire Weyl group. 
This implies that the generic fiber does not admit a curve of anticanonical degree $1$ or $2$ that is defined over the ground field. 

\textbf{Computation of the closed set:} 
The minimal possible height of a section is $-2$, and any rigid section will have this height.  We will verify below that there is a finite set of sections of height $-2$ and all are rigid.

The union of the subvarieties of $\mathcal{X}$ with larger generic $a$-invariant is swept out by the $\pi$-vertical lines.  Since the monodromy action is full, this space is irreducible.  Similarly, the union of the subvarieties with the same generic $a$-invariant is swept out by the $\pi$-vertical conics.  Again this is an irreducible subset.  Since the Stein factorization of the map to $\mathbb{P}^{1}$ does not have connected fibers, this surface cannot admit a dominant family of sections.

By combining the results above with Theorem \ref{theo:toomuchdeforming} we see that the only possible non-dominant families of sections will have height $\leq 2$. We analyze sections in this range below.

\textbf{Computation of low degree sections:}
We analyze the sections of low height with respect to $-K_{\mathcal{X}/\mathbb{P}^{1}}$.  We claim that every component of the moduli space for the space of sections of height $\leq 2$ has the expected dimension and, if the height is between $-1$ and $2$, is also irreducible.  An easy dimension count proves that there are only finitely many sections of height $-2$.  A lengthier dimension count proves that a general section of height $d$ with $-1 \leq d \leq 2$ will map birationally onto a general rational curve of degree $d+2$ in $\mathbb{P}^{3}$, and that every component parametrizing such sections has the expected dimension.

It only remains to verify that for each $d \leq 2$ the space of sections of height $d$ which map birationally onto a general rational curve of degree $d+2$ in $\mathbb{P}^{3}$ is irreducible.  For some fixed $d$ let $N^{\circ}$ denote the space of such sections in $\mathbb{P}^{1} \times \mathbb{P}^{3}$ and let $\mathcal{C}$ denote the universal curve over $N^{\circ}$.  Consider the diagram
\begin{equation*}
\xymatrix{\mathcal{C} \ar[r]^-{s} \ar[d]_{p} &  \mathbb{P}^{1} \times \mathbb{P}^{3} \\
N^{\circ} & }
\end{equation*}
Consider $\mathcal{E} := p_{*}s^{*}\mathcal{O}(2,3)$.  Since $s$ has connected fibers we can identify sections of $s^{*}\mathcal{O}(2,3)$ with sections of $\mathcal{O}(2,3)$.  Thus a general section of $\mathcal{E}$ will parametrize the sublocus of curves parametrized by $N^{\circ}$ that are contained in a general section of $\mathcal{O}(2,3)$.  We claim that $\mathcal{E}$ is a globally generated vector bundle; this can be proved by writing down the ideal sheaf of a general curve $C$ parametrized by $N^{\circ}$ and computing that $H^{1}(\mathbb{P}^{1} \times \mathbb{P}^{3}, I_{C}(2,3)) =0$.  This means that a general section of $\mathcal{E}$ is irreducible.  In other words, the open dense locus of $\Sec(\mathcal{X}/\mathbb{P}^{1})_{d}$ parametrizing sections which map onto a general curve in $\mathbb{P}^{3}$ is irreducible for this degree range.

\textbf{Classification of sections:}  By Lemma \ref{lemm:improvedbounds} $\MBB(\mathcal{X}) \leq 3$.  As explained earlier, we can deduce that every component of the space of sections has the expected dimension, and that there is a unique component of any height $> -2$.
\end{exam}

\begin{exam}[Diagonal cubic surface (1,t,t+1,t+2)]
Consider the smooth threefold $\mathcal{X} \subset \mathbb{P}^{1}_{s,t} \times \mathbb{P}^{3}_{x,y,z,w}$ defined by the equation $sx^{3} + ty^{3} + (t+s)z^{3} + (t+2s)w^{3}=0$.  The projection map to $\mathbb{P}^{3}$ realizes $\mathcal{X}$ as the blow-up of the base locus of a pencil of cubic surfaces on $\mathbb{P}^{3}$.   We let $Z$ denote the   smooth genus $10$ curve which is the base-locus of this pencil and let $E$ denote the exceptional divisor.  The projection $\pi: \mathcal{X} \to \mathbb{P}^{1}$ is a cubic surface fibration.  
We will see below that the Galois action on $\mathrm{Pic}(\mathcal X_{\overline{\eta}})$ is $(\mathbb Z/3\mathbb Z)^3$ and under this action the $27$ lines split into orbits of size $(9,9,9)$.  It follows from \cite[the Appendix, No. 71]{Jahnel14} that $\mathcal{X}_{\eta}$ has Brauer group $\mathbb{Z}/3\mathbb{Z}$. However, since $\mathcal{X}$ is birational to $\mathbb{P}^{3}$ its Brauer group is trivial.  Let $H_{1}$ and $H_{2}$ denote the pullbacks of the hyperplane class under the two projection maps.  Then $-K_{\mathcal{X}/\mathbb{P}^{1}} = -H_{1} + H_{2}$ is relatively ample.

\textbf{Computation of $\pi$-vertical curves:}  
The Galois group acts on $\Pic(\mathcal{X}_{\overline{\eta}})$ through $(\mathbb{Z}/3\mathbb{Z})^{3}$  and the rank of $\Pic(\mathcal X_\eta)$ is $1$. Indeed, let $L = k(t^{1/3}, (t+1)^{1/3}, (t+2)^{1/3})$ be the splitting field of $\Pic(\mathcal{X}_{\overline{\eta}})$. Then the Galois action on $L$ is the action of $(\mathbb Z/3\mathbb Z)^3$ given by $(i, j, k) \cdot (t^{1/3},  (t+1)^{1/3}, (t + 2)^{1/3}) = (\zeta_3^i t^{1/3},  \zeta_3^j(t + 1)^{1/3}, \zeta_3^k(t + 2)^{1/3})$ where $\zeta_3$ is a cubic root of unity. There are three orbits of lines, each of size $9$. There are three orbits of classes of conics, each of size $9$, since every conic fibration has the form $X_{\overline{\eta}} \ni (x:y:z:w) \to (s^{1/3}x + t^{1/3}y: (t+s)^{1/3}z + (t + 2s)^{1/3}w) \in \mathbb P^1$ up to permutation and Galois action. This implies that $\mathcal{X}_{\eta}$ does not admit a curve of anticanonical degree $1$ or $2$ that is defined over the ground field.

The computation above shows that $\overline{X}$ admits three families of $\pi$-vertical lines which meet a general fiber.  There are also four singular fibers of $\pi$, each of which is a cone over a cubic curve.  There is a one-parameter family of $\pi$-vertical lines contained in every singular fiber of $\pi$.  In all seven families of $\pi$-vertical lines a general line has restricted tangent bundle $\mathcal{O}(2) \oplus \mathcal{O} \oplus \mathcal{O}(-1)$.

\textbf{Computation of the closed set:} 
A section must have intersection number $1$ against $H_{1}$.  So its degree under the pushforward to $\mathbb{P}^{3}$ will just be $1$ more than the height.  The lowest possible height is $-1$, and each such section is contracted by the map $\mathcal{X} \to \mathbb{P}^{3}$.  As explained above, we know that the generic fiber of $\pi$ has no line or conic defined over the ground field, so by Theorem \ref{theo:toomuchdeforming} we see that the only possible non-dominant families of sections will have height $\leq 0$.  We analyze sections in this range below.

\textbf{Computation of low degree sections:}
We next classify sections of low height.

\textbf{Height $-1$:} These will be the sections contained in $E$.  There is an irreducible one parameter family of such sections.

\textbf{Height $0$:} These will be strict transforms of lines in $\mathbb{P}^{3}$.  Such a line must meet every hyperplane in our family with multiplicity $2$ along $Z$, so these lines are the bisecants to $Z$.  They are parametrized by an irreducible space birational to $\Sym^{2}(Z)$.

\textbf{Height $1$:} These will be strict transforms of smooth conics which meet each cubic in our family with multiplicity $5$ along $Z$.  Each generic plane $P$ contains a finite number of such conics.  Indeed, $P \cap Z$ will be nine points, and any conic which contains 5 of these points will yield a section.  We claim there is a unique component of $\Sec(\mathcal{X}/\mathbb{P}^{1})$ whose generic section has this form.  Indeed, by \cite[Proposition 1.1]{Kad19} as we vary the plane $P$ the monodromy action on $P \cap Z$ is the full symmetric group.  But if there were more than one irreducible component then the monodromy action could not be transitive. 

We show there are no other components using a dimension count.  Suppose that $P$ is a plane containing a family of height $1$ sections which has dimension $\geq 1$.  This means that there is a length $\leq 4$ dimension $0$ subscheme of $P$ such that every conic containing this subscheme meets $Z$ with multiplicity $5$ along these points.  In particular the restriction of every cubic in our family to $P$ must be singular. However, there is no such plane $P$, since the tangent planes of our cubics at each point $z \in Z$ vary.

\textbf{Height $2$:} These will be strict transforms of cubics in $\mathbb{P}^{3}$.  We first show that there is a unique component $M \subset \Sec(\mathcal{X}/\mathbb{P}^{1})$ whose general curve $C$ corresponds to a twisted cubic $C'$ in $\mathbb{P}^{3}$.  Our strategy is to show that any such component $M$ will contain broken curves of a certain type.  Recall that there are two irreducible components $M_0, M_1$ of the moduli space of rational curves which parametrize free sections of height $0$ and height $1$ respectively.  We let $L_0, L_1, L_2$ denote the three components which parametrize a family of $\pi$-vertical lines which dominate $\mathbb{P}^{1}$, and we let $S_1, S_2, S_3, S_4$ denote the four components which parametrize vertical lines in the $4$ singular fibers of $\pi$.  Finally are three components $N_0, N_1, N_2$ parametrizing vertical free conics.

Let $M$ be any component of $\Sec(\mathcal{X}/\mathbb{P}^{1})$ consisting of the strict transforms of twisted cubics $C'$.  Then the general $C'$ meets with $Z$ at $8$ points.  Fix one general point $x$ on $\mathcal X$ and one general point $z$ on $Z$; the set of sections $C$ passing through $x$ and meeting the exceptional curve in $\mathcal{X}$ corresponding to $z$ in $Z$ form a family of dimension at least $1$.   The pushforwards of deformations of $C$ to $\mathbb{P}^{3}$ are twisted cubics $C'$ meeting the $2$ fixed points $x,z$.  Applying Bend and Break to this family of twisted cubics we obtain a breaking curve $\sum_{i = 1}^k C'_i$ in $\mathbb P^3$.  The breaking curve must be either the union of a line and a conic or the union of three lines.  However a connected chain of three lines cannot meet $x$ and $z$ while meeting $Z$ at $7$ other points.  So the broken curve must be a line plus a conic. There are two possibilities: (i) the line meets with $Z$ at $3$ points and the conic meets with $Z$ at $5$ points; (ii) the line meets with $Z$ at $2$ points and the conic meets with $Z$ at $6$ points. Going back to $\mathcal X$, this means that we break a curve $C$ to either (i) a vertical line and a section of height $1$, or (ii) a vertical conic and a section of height $0$. Moreover by construction our breaking curve must meet with $x$ and with $1$ general section of height $-1$, and in fact by Lemma \ref{lemm:strongerbandb} each component of the broken curve must meet either $x$ or the general section.

Next we show that both types of breaking curve are smooth points of $\overline{M}_{0,0}(\mathcal X)$.
In case (i), there are only finitely many $\pi$-vertical lines meeting a fixed general section of height $-1$.  If the section is general then the line must also be general so its normal bundle has the form $\mathcal{O} \oplus \mathcal{O}(-1)$.  Furthermore, the sections of height $1$ passing through $x$ form a $1$ dimensional locus. Among them only finitely many meet with a fixed general vertical line.  This implies that our section of height $1$ must be free.  We conclude that the broken curve is a smooth point of the moduli space.
In case (ii), $x$ meets with either a section of height $0$ or a vertical conic.  If the former holds, there are finitely many sections of height $0$ passing through $x$ and by generality any such conic must be free. Then there are only finitely many choices of a $\pi$-vertical conic meeting fixed general sections of height $0$ and height $-1$, so the conic must also be free.
If the latter holds, there are only finitely many choices for a vertical conic and by generality it must be free. Then there are only finitely many sections of height $0$ meeting with $1$ general section of height $-1$ and with one of our chosen conics, so the section must also be free.

Before continuing, we note that for any $j=1,2,3,4$ there is a unique component of $M_1^{(1)} \times_{\mathcal X} S_j^{(1)}$ which generically parametrizes the union of a free section and a general line in $S_{j}$.  Indeed, since any section meets the corresponding singular fiber of $\pi$ in one point, the parameter space of such glued curves is birational to $M_{1}$.

Suppose that $M$ contains the union of a general vertical line in some $L_i$ and a free section of height $1$.  Then the locus of broken curves in $M$ of this type must admit a dominant map to $M_1$.  Since the space of height $1$ sections is irreducible, its closure in $\overline{M}_{0,0}(\mathcal{X})$ contains a two-dimensional locus parametrizing unions of a free height $0$ section and a general line in $S_{1}$.  Correspondingly, the closure of $M$ in $\overline{M}_{0,0}(\mathcal{X})$ parametrizes a curve which is a chain $\ell_{1} \cup C_{0} \cup \ell_{2}$ where $\ell_{1}$ is a general line in $S_{1}$, $C_{0}$ is a free height $0$ section, and $\ell_{2}$ is a line in $L_{i}$ which must be general by a dimension count. 
Note that this is a smooth point of $\overline{M}_{0,0}(\mathcal{X})$.  We can smooth $C_{0} \cup \ell_{2}$, and furthermore we can deform $\ell_{1}$ and the intersection point along with the smoothing.  Thus we see that the closure of $M$ in $\overline{M}_{0,0}(\mathcal{X})$ must contain $M_1^{(1)} \times_{\mathcal X} S_1^{(1)}$.  Using analogous reasoning, it also contains $M_1^{(1)} \times_{\mathcal X} S_j^{(1)}$ for any $j=1,2,3,4$.

Suppose that $M$ contains the union of a general vertical line in some $S_{j}$ and a free section of height $1$.  Since this is a smooth point of $\overline{M}_{0,0}(\mathcal{X})$, $M$ must contain the entire locus $M_1^{(1)} \times_{\mathcal X} S_j^{(1)}$.

Finally, suppose that $M$ contains the union of a general vertical conic in some $N_i$ and a free section of height $0$.  Then the locus of broken curves in $M$ of this type must admit a dominant map to $M_0$.  Since the space of height $0$ sections is irreducible, its closure in $\overline{M}_{0,0}(\mathcal{X})$ contains a 1-dimensional locus parametrizing unions of a general height $-1$ section and a general line in $S_{1}$.  
Since this locus is $1$-dimensional, an incidence correspondence argument shows that the closure of $M$ in $\overline{M}_{0,0}(\mathcal{X})$ parametrizes a chain $\ell \cup C_{0} \cup T$ where $\ell$ is a general line in $S_{1}$, $C_{0}$ is a general height $-1$ section, and $T$ is a free conic. 
This is a smooth point of $\overline{M}_{0,0}(\mathcal{X})$.
Note that we can smooth $C_{0} \cup T$, and furthermore we can deform $\ell$ and the intersection point along with the smoothing.  Thus we see that the closure of $M$ in $\overline{M}_{0,0}(\mathcal{X})$ must contain $M_1^{(1)} \times_{\mathcal X} S_1^{(1)}$.  Using analogous reasoning, it also contains $M_1^{(1)} \times_{\mathcal X} S_j^{(1)}$ for any $j=1,2,3,4$.

We have shown that any component $M$ parametrizing the strict transform of twisted cubics must contain $M_1^{(1)} \times_{\mathcal X} S_j^{(1)}$ for some $j=1,2,3,4$.  Furthermore, there must be at least one component which contains a smoothing of a general height $1$ section and a general line in $L_{i}$.  This particular component contains $M_1^{(1)} \times_{\mathcal X} S_j^{(1)}$ for every $j$.  Since for each $j$ the space $M_1^{(1)} \times_{\mathcal X} S_j^{(1)}$ is irreducible and contains smooth points of $\overline{M}_{0,0}(\mathcal{X})$, altogether we deduce that there is a unique such component $M$.

We next show that there is no component $M \subset \Sec(\mathcal{X}/\mathbb{P}^{1})$ whose general point corresponds to a planar cubic $C$ in $\mathbb{P}^{3}$.  Let $P$ be the plane containing $C$.  If $P$ is generic, then either $C$ contains $8$ points of $P \cap Z$ or the singular point of $C$ lies on $P \cap Z$ and $C$ must contain $6$ other points of $P \cap Z$.  Note that there are only finitely many possible such $C$ in any fixed $P$.  Thus a dimension count shows that the collection of such curves cannot be dense in a component of $\Sec(\mathcal{X}/\mathbb{P}^{1})$.  Similar dimension counts work for non-generic $P$ to show that we get no new components in this way.

\textbf{Classification of sections:}
By Lemma \ref{lemm:improvedbounds} $\MBB(\mathcal{X}) \leq 3$. Let $d \geq 3$ and assume that for every $-1 \leq k < d$ there is a unique component $M_k$ parametrizing free sections of height $k$. Suppose we fix a component $M$ of $\Sec(\mathcal{X}/\mathbb{P}^{1})$ generically parametrizing free sections of height $d \geq 3$.
Then $\overline{M}$ contains a stable map consisting of a free section $C_{d-2}$ of height $d-2$ and a free conic $T$.  Fix some $j \in \{1,2,3\}$. Since the space of height $d-2$ sections is irreducible, a free section of height $d-2$ can be deformed into the union of a general section $C_{d-4}$ of height $d-4$ and a general vertical free conic $T'$ in the component $N_j$ parametrizing vertical conics.   Furthermore since $T$ is free we can deform $T$ and the attachment point to $C_{d-2}$ along with the deformation above to obtain a chain $T \cup C_{d-4} \cup T'$ where $T$ is still a free vertical conic.  Then we smooth $T \cup C_{d-4}$ and obtain an element of $M_{d-2}^{(1)} \times_{\mathcal X} N_j^{(1)}$ which is a smooth point of $\overline{M}_{0,0}(\mathcal X)$. Since $M_{d-2}^{(1)} \times_{\mathcal X} N_j^{(1)}$ is irreducible, we conclude that $M$ is unique.
\end{exam}

\section{Bounds in Geometric Manin's Conjecture}
\label{sect:Manin}

\subsection{Algebraic and numerical equivalence for curves}

We start by recalling some facts about algebraic and numerical equivalence for curve classes on del Pezzo fibrations.

\subsubsection{Numerical equivalence}
By dualizing the surjective restriction map $N^{1}(\mathcal{X})_{\mathbb{R}} \to N^{1}(\mathcal{X}_{\eta})_{\mathbb{R}}$ we obtain an inclusion $N_1(\mathcal X_\eta)_{\mathbb{R}} \subset N_1(\mathcal X)_{\mathbb{R}}$.  Let $i: F \hookrightarrow \mathcal{X}$ denote the inclusion of a general fiber of $\pi$.  By applying the Invariant Cycle Theorem (see \cite[Th\'eor\`eme (4.1.1).(ii)]{Deligne} and the following discussion) to divisors and dualizing, we see that
\begin{equation*}
N_{1}(\mathcal{X}_{\eta})_{\mathbb{R}} = i_{*}(N_{1}(F)_{\mathbb{R}}) = i_{*}(N_{1}(F)_{\mathbb{R}}^{mon})
\end{equation*}
where $N_{1}(F)_{\mathbb{R}}^{mon}$ denotes the monodromy invariant part of $N_{1}(F)_{\mathbb{R}}$. Under this identification we have $\Nef_{1}(\mathcal{X}_{\eta}) = i_{*}\Nef_{1}(F)$.

The behavior of lattices of curves in these spaces is more subtle.  We have inclusions
\begin{equation*}
N_{1}(\mathcal X_{\eta})_{\mathbb{Z}} \subset i_{*}(N_{1}(F)_{\mathbb{Z}}) \subset N_{1}(\mathcal{X})_{\mathbb{Z}} \cap N_{1}(\mathcal{X}_{\eta})_{\mathbb{R}}
\end{equation*}
as full-rank sublattices.  While the first inclusion can be strict, the second inclusion is an equality for del Pezzo fibrations which have a smooth total space.

\begin{prop}
Let $\pi: \mathcal{X} \to \mathbb{P}^{1}$ be a Fano fibration such that $\mathcal X$ is smooth.  Let $F$ denote a general fiber of $\pi$.  Suppose that the pairing of $N^{1}(F)_{\mathbb{Z}}$ and $N_{1}(F)_{\mathbb{Z}}$ is unimodular.  Then $i_{*}(N_{1}(F)_{\mathbb{Z}}) = N_{1}(\mathcal{X})_{\mathbb{Z}} \cap N_{1}(\mathcal{X}_{\eta})_{\mathbb{R}}$.
\end{prop}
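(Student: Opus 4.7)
The inclusion $i_{*}(N_{1}(F)_{\mathbb{Z}}) \subseteq N_{1}(\mathcal X)_{\mathbb{Z}} \cap N_{1}(\mathcal X_{\eta})$ is immediate: the pushforward of any integral $1$-cycle on $F$ is an integral $1$-cycle on $\mathcal X$ whose numerical class lies in the fiber subspace $N_{1}(\mathcal X_{\eta})$.

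For the reverse inclusion, my plan is to recover an integral $\beta \in N_{1}(F)_{\mathbb{Z}}$ with $i_{*}\beta = \alpha$ by reading $\beta$ off from the intersection data of $\alpha$ and then invoking unimodularity of the pairing on $F$. Concretely, given $\alpha \in N_{1}(\mathcal X)_{\mathbb{Z}} \cap N_{1}(\mathcal X_{\eta})$, the map $D \mapsto D \cdot \alpha$ is a $\mathbb{Z}$-linear functional on $N^{1}(\mathcal X)_{\mathbb{Z}}$; because $\alpha$ pairs trivially with the kernel of $i^{*} \colon N^{1}(\mathcal X)_{\mathbb{Z}} \to N^{1}(F)_{\mathbb{Z}}$ (by the very definition of $N_{1}(\mathcal X_{\eta})$ as the dual of the surjective restriction $N^{1}(\mathcal X) \twoheadrightarrow N^{1}(\mathcal X_{\eta})$), this functional descends to an integer-valued functional $\bar{\phi}_{\alpha}$ on the image lattice $\Lambda := i^{*}(N^{1}(\mathcal X)_{\mathbb{Z}}) \subseteq N^{1}(F)_{\mathbb{Z}}$. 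If $\bar{\phi}_{\alpha}$ extends to a $\mathbb{Z}$-linear functional on all of $N^{1}(F)_{\mathbb{Z}}$, then unimodularity identifies this extension with a unique class $\beta \in N_{1}(F)_{\mathbb{Z}}$; the projection formula then yields $(i_{*}\beta) \cdot D = \beta \cdot i^{*}D = \alpha \cdot D$ for every $D \in N^{1}(\mathcal X)_{\mathbb{Z}}$, and since both $i_{*}\beta$ and $\alpha$ lie in $N_{1}(\mathcal X_{\eta})$, on which the pairing with $N^{1}(\mathcal X)_{\mathbb{Z}}/\ker(i^{*})$ is non-degenerate, this forces $i_{*}\beta = \alpha$.

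The hard part, therefore, is justifying the extension of $\bar{\phi}_{\alpha}$ from $\Lambda$ to $N^{1}(F)_{\mathbb{Z}}$, which is tantamount to showing that $\Lambda$ sits saturated in $N^{1}(F)_{\mathbb{Z}}$. The natural target is to prove that $\Lambda$ equals the monodromy-invariant sublattice of $N^{1}(F)_{\mathbb{Z}}$, since the monodromy invariants are automatically saturated as the fixed set of a finite group acting on a finitely generated free abelian group. Rationally this equality is Deligne's Invariant Cycle Theorem applied to the smooth projective restriction of $\pi$ over the smooth locus in $\mathbb{P}^{1}$, combined with the Lefschetz $(1,1)$-theorem on the Fano fiber $F$ to pass from transcendental Hodge classes to algebraic divisor classes. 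I expect the integral refinement of this surjectivity, where the smoothness hypothesis on $\mathcal X$ enters in an essential way (for instance by ruling out integral Brauer-type obstructions, which in any case vanish over $\mathbb{P}^{1}$), to be the most delicate step of the argument.
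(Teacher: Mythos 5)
Your reduction is sound and is essentially a pointwise version of the paper's argument: the forward inclusion, the descent of $D \mapsto D \cdot \alpha$ to the image lattice $\Lambda = i^{*}(N^{1}(\mathcal{X})_{\mathbb{Z}})$, and the use of unimodularity plus the projection formula to convert an extension of $\bar{\phi}_{\alpha}$ into a class $\beta$ with $i_{*}\beta = \alpha$ are all correct. The genuine gap is exactly the step you flag and then defer: you never prove that $\Lambda$ is saturated in $N^{1}(F)_{\mathbb{Z}}$ (equivalently, that $\Lambda = N^{1}(F)_{\mathbb{Z}}^{mon}$). The Invariant Cycle Theorem plus Lefschetz $(1,1)$ only gives the statement after tensoring with $\mathbb{Q}$, and the integral refinement is the entire content of the proposition, so as written the reverse inclusion is unproven.

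The paper closes this gap in two steps, both routed through the generic fiber rather than through topology. First, $N^{1}(\mathcal{X})_{\mathbb{Z}} \to N^{1}(\mathcal{X}_{\eta})_{\mathbb{Z}}$ is surjective because a Cartier divisor on $\mathcal{X}_{\eta}$ extends to a Weil divisor on $\mathcal{X}$, which is Cartier by smoothness of the total space; this, not a Brauer computation, is where the smoothness hypothesis enters. Second, $N^{1}(\mathcal{X}_{\eta})_{\mathbb{Z}} \cong N^{1}(F)_{\mathbb{Z}}^{mon}$ because the obstruction to descending a Galois-invariant class from $\mathcal{X}_{\bar{\eta}}$ lives, via the Hochschild--Serre spectral sequence, in $\Br(k(\mathbb{P}^{1})) = 0$; this is the precise form of the ``Brauer-type obstruction vanishing over $\mathbb{P}^{1}$'' that you gesture at. With these two inputs your argument is complete and is equivalent to the paper's proof, which simply dualizes the chain $N^{1}(\mathcal{X})_{\mathbb{Z}} \twoheadrightarrow N^{1}(F)_{\mathbb{Z}}^{mon} \hookrightarrow N^{1}(F)_{\mathbb{Z}}$ all at once, using that the monodromy invariants form a saturated sublattice and that $\ker\bigl(N^{1}(\mathcal{X})_{\mathbb{Z}} \to N^{1}(\mathcal{X}_{\eta})_{\mathbb{Z}}\bigr)$ is free so that the dual inclusion has torsion-free cokernel.
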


\begin{proof} 
First note that the restriction map $N^1(\mathcal X)_{\mathbb Z} \twoheadrightarrow N^1(\mathcal{X}_\eta)_{\mathbb Z}$ is surjective, since any Cartier divisor on $\mathcal{X}_{\eta}$ can be extended to a Weil divisor on $\mathcal{X}$ and $\mathcal{X}$ is smooth.  The Hochschild-Serre spectral sequence for $H^{2}(\mathcal{X}_{\eta},\mathbb{G}_{m})$ (as in \cite[Corollary 6.7.8]{Poonen}) reads
\begin{equation*}
0 \to \Pic(\mathcal{X}_{\eta}) \to \Pic(\mathcal{X}_{\overline{\eta}})^{\Gal(k(\mathbb{P}^{1}))} \to \Br(k(\mathbb{P}^1)) \to H^{2}(\mathcal{X}_{\eta},\mathbb{G}_{m})
\end{equation*}
Since the Brauer group $\mathrm{Br}(k(\mathbb P^1))$ is equal to $0$, we conclude that $N^1(\mathcal{X}_\eta)_{\mathbb Z} \cong N^{1}(\mathcal{X}_{\overline{\eta}})_{\mathbb{Z}}^{\Gal(k(\mathbb{P}^{1}))}$. This latter group is in turn isomorphic to $N^1(F)_{\mathbb Z}^{mon}$. Indeed, we have $N^1(F)_{\mathbb Z} \cong N^{1}(\mathcal{X}_{\overline{\eta}})_{\mathbb{Z}}$ and the Galois group is acting on this space via through the monodromy action. Combining, we have maps
\[
N^1(\mathcal X)_{\mathbb Z} \twoheadrightarrow N^1(\mathcal{X}_\eta)_{\mathbb Z} \cong N^1(F)_{\mathbb Z}^{mon} \hookrightarrow N^1(F)_{\mathbb Z}
\]
where the first map is the pullback and the third is the inclusion.
Taking the dual we obtain
\[
N_1(F)_{\mathbb Z} \twoheadrightarrow (N^1(F)_{\mathbb Z}^{mon})^{\vee} \cong N^1(\mathcal{X}_\eta)_{\mathbb Z}^\vee \hookrightarrow N^1(\mathcal X)_{\mathbb Z}^\vee
\]
Finally we note that $N^1(\mathcal{X}_\eta)_{\mathbb Z}^\vee = N_1(\mathcal{X}_\eta)_{\mathbb{R}} \cap N^1(\mathcal X)_{\mathbb Z}^\vee$.  Indeed, since $N^1(\mathcal X)_{\mathbb Z}$ is torsion free, the kernel of $N^1(\mathcal X)_{\mathbb Z} \twoheadrightarrow N^1(\mathcal{X}_\eta)_{\mathbb Z}$ is also a free abelian group.  Thus the cokernel of $N^1(\mathcal{X}_\eta)_{\mathbb Z}^\vee \hookrightarrow N^1(\mathcal X)_{\mathbb Z}^\vee$ is torsion free.  This proves that $N_{1}(\mathcal{X}_{\eta})_{\mathbb{R}} \cap N_{1}(\mathcal{X})_{\mathbb{Z}} \subset i_{*}(N_{1}(F)_{\mathbb{Z}})$ as desired.
\end{proof}

\subsubsection{Algebraic equivalence}
For a smooth rationally connected threefold $\mathcal{X}$ over $\mathbb{C}$ we have
\begin{equation*}
\Br(\mathcal{X}) \cong H^{3}(\mathcal{X}, \mathbb{Z})_{\mathrm{tors}}.
\end{equation*}
According to the universal coefficient theorem for cohomology (as in \cite[Theorem 3.2]{Hatcher}), we can equally well think of $\Br(\mathcal{X})$ as the torsion classes of $H_{2}(\mathcal{X},\mathbb{Z})$.  Let $Q_{1}(\mathcal{X})$ denote the set of algebraic equivalence classes of curves of $\mathcal{X}$.  \cite[Theorem 1]{BS83} shows that algebraic and homological equivalence coincide for curve classes on $\mathcal{X}$ and \cite[Theorem 2]{Voisin06} proves the integral Hodge conjecture for $\mathcal{X}$.  Together these show:

\begin{theo}[\cite{BS83}, \cite{Voisin06}] \label{theo:algequiv}
Let $\mathcal{X}$ be a smooth rationally connected threefold over $\mathbb{C}$.  Then $|\Br(\mathcal{X})|$ is the size of the kernel of the quotient map $q: Q_{1}(\mathcal{X}) \to N_{1}(\mathcal{X})_{\mathbb{Z}}$.
\end{theo}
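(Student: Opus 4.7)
The plan is to identify the kernel of $q$ with $H_2(\mathcal{X},\mathbb{Z})_{\mathrm{tors}}$ and then invoke the identification of $\Br(\mathcal{X})$ with $H_2(\mathcal{X},\mathbb{Z})_{\mathrm{tors}}$ that is recorded just above the statement.

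The first step is to prove that the cycle class map induces an isomorphism $Q_1(\mathcal{X}) \cong H_2(\mathcal{X},\mathbb{Z})$. Injectivity is immediate from \cite{BS83}, which gives the coincidence of algebraic and homological equivalence for $1$-cycles on $\mathcal{X}$. For surjectivity I would apply the integral Hodge conjecture of \cite{Voisin06}, which states that every integral Hodge class in $H^4(\mathcal{X},\mathbb{Z})$ is represented by an algebraic $1$-cycle. This requires checking that on our $\mathcal{X}$ every integral class in $H^4$ is already a Hodge class. Since $\mathcal{X}$ is rationally connected we have $h^{p,0}(\mathcal{X}) = 0$ for $p > 0$, and Serre duality then gives $H^{3,1}(\mathcal{X}) \cong H^{0,2}(\mathcal{X})^{*} = 0$ and symmetrically $H^{1,3}(\mathcal{X}) = 0$, so the Hodge decomposition of $H^4$ collapses to $H^{2,2}(\mathcal{X})$.

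The second step is to compute the kernel of $q$ after the identification of the first step. Numerical equivalence is measured by the intersection pairing with $\mathrm{Pic}(\mathcal{X})$. The vanishing of $H^i(\mathcal{O}_{\mathcal{X}})$ for $i > 0$ combined with the exponential sequence yields $\mathrm{Pic}(\mathcal{X}) \cong H^2(\mathcal{X},\mathbb{Z})$, and under the identification $Q_1(\mathcal{X}) \cong H_2(\mathcal{X},\mathbb{Z})$ the intersection pairing becomes the topological Kronecker pairing $H_2(\mathcal{X},\mathbb{Z}) \times H^2(\mathcal{X},\mathbb{Z}) \to \mathbb{Z}$. The universal coefficient theorem gives a surjection $H^2(\mathcal{X},\mathbb{Z}) \twoheadrightarrow \mathrm{Hom}(H_2(\mathcal{X},\mathbb{Z}),\mathbb{Z})$, from which a formal adjunction argument shows that the other adjoint $H_2(\mathcal{X},\mathbb{Z}) \to \mathrm{Hom}(H^2(\mathcal{X},\mathbb{Z}),\mathbb{Z})$ has kernel precisely the torsion subgroup $H_2(\mathcal{X},\mathbb{Z})_{\mathrm{tors}}$.

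Putting the two steps together, $\ker(q) \cong H_2(\mathcal{X},\mathbb{Z})_{\mathrm{tors}} \cong \Br(\mathcal{X})$, which gives the desired equality of cardinalities. The main obstacle is in the first step: one must confirm that Voisin's integral Hodge conjecture applies in our setting and that the $(3,1)$ and $(1,3)$ pieces of $H^4$ vanish. Once these inputs are in hand the remainder is a routine application of the universal coefficient theorem together with standard Hodge-theoretic vanishings for rationally connected threefolds.
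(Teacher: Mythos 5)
Your proposal is correct and is essentially the argument the paper intends: the paper's proof consists precisely of citing \cite{BS83} for the injectivity of the cycle class map on $Q_{1}(\mathcal{X})$, \cite{Voisin06} for its surjectivity onto $H_{2}(\mathcal{X},\mathbb{Z})$, and the universal coefficient theorem to identify both $\Br(\mathcal{X})$ and the numerically trivial classes with $H_{2}(\mathcal{X},\mathbb{Z})_{\mathrm{tors}}$. You have simply supplied the routine verifications (vanishing of the $(3,1)$ and $(1,3)$ pieces of $H^{4}$, the identification $\Pic(\mathcal{X})\cong H^{2}(\mathcal{X},\mathbb{Z})$) that the paper leaves implicit.
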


\subsection{Counting components} \label{sect:countingcomponents}
The key to Geometric Manin's Conjecture is to count the number of components of $\Sec(\mathcal{X}/\mathbb{P}^{1})$.  We separate this into two questions:
\begin{enumerate}
\item Which numerical classes are represented by a section?
\item How many components of $\Sec(\mathcal{X}/\mathbb{P}^{1})$ represent a fixed numerical class?  How many such components parametrize a dominant family of curves?
\end{enumerate}
In this section we analyze these questions using heuristic arguments.  The goal is to develop a precise conjecture describing what we will need for Geometric Manin's Conjecture. For simplicity we assume that $\mathcal X$ is smooth.

For Question (1) the key input is the Weak Approximation Conjecture.  A section can only intersect components of fibers which occur with multiplicity $1$.  Conversely, since the Brauer group of $k(\mathbb{P}^{1})$ is trivial the Weak Approximation Conjecture predicts that this is the only restriction on the possible intersection numbers with components of fibers (see e.g.~\cite[Conjecture 2]{HT06}).
Let $\Gamma_{\mathcal X}$ be the (finite) set of all possible combinations of intersection numbers of a section against $\pi$-vertical divisors, i.e., the set parametrizing the ways of choosing one component of multiplicity $1$ in every fiber of $\pi$.  We call an element $\lambda$ of $\Gamma_{\mathcal X}$ an ``intersection profile''.  We will let $N_{\lambda} \subset N_{1}(\mathcal{X})_{\mathbb{R}}$ denote the affine linear subspace of classes with a given intersection profile $\lambda$.  Note that once we have one section with a given intersection profile, we can obtain many more by gluing on $\pi$-vertical free curves.

We next turn to Question (2).  Each component of $\Sec(\mathcal{X}/\mathbb{P}^{1})$ naturally determines an algebraic equivalence class.  Conversely, we expect that each ``sufficiently positive'' algebraic equivalence class is represented by at most one dominant family of sections.  By combining with Theorem \ref{theo:algequiv}, we expect that each  ``sufficiently positive'' numerical equivalence class is represented by $|\Br(\mathcal{X})|$ different dominant families of sections.  Precisely:

\begin{conj} \label{conj:irreducibility}
Let $\pi: \mathcal{X} \to \mathbb{P}^{1}$ be a del Pezzo fibration such that $\mathcal X$ is smooth.  Fix an intersection profile $\lambda$ and let $\Nef_{\lambda} = \Nef_{1}(\mathcal{X}) \cap N_{\lambda}$.  Then:
\begin{enumerate}
\item There is an upper bound on the number of dominant families of sections representing any fixed numerical class in $\Nef_{\lambda}$. 
\item There is some translate $\mathcal{T}$ of $\Nef_{\lambda}$ in $N_{\lambda}$ such that every numerical class in $\mathcal{T}_{\mathbb{Z}}$ is represented by exactly $|\Br(\mathcal{X})|$ different dominant families of sections.
\end{enumerate}
\end{conj}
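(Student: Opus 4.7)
The plan is to reduce the problem to counting dominant components per algebraic equivalence class, and then to match such components with pairs consisting of a bounded-height ``base'' section and an algebraic class of free $\pi$-vertical curves via Movable Bend-and-Break.  By Theorem~\ref{theo:algequiv}, the quotient map $q \colon Q_{1}(\mathcal{X}) \to N_{1}(\mathcal{X})_{\mathbb{Z}}$ has kernel of size $|\Br(\mathcal{X})|$, and since each irreducible component of $\Sec(\mathcal{X}/\mathbb{P}^{1})$ carries a well-defined algebraic class, the conjecture reduces to producing a translate $\mathcal{T}$ of $\Nef_{\lambda}$ such that every algebraic class lying over an integral class in $\mathcal{T}$ is represented by exactly one dominant component of $\Sec(\mathcal{X}/\mathbb{P}^{1})$.

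For existence, I would first observe that Lemma~\ref{lemm:northcott} gives only finitely many dominant components of any bounded height, so for each intersection profile $\lambda$ and each preimage under $q$ one may fix a single ``base'' dominant component $M_{0}$ (using Theorems~\ref{theo:maintheorem1} and~\ref{theo:maintheorem2} to guarantee such base components exist once the height is taken sufficiently large).  For any integral nef curve class $\beta$ on a general fiber $F$, Lemma~\ref{lemm:nefconedelpezzo} expresses $\beta$ as a sum of free rational curves; gluing such a sum to a free deformation of a member of $M_{0}$ at general attachment points produces a smooth point of $\overline{M}_{0,0}(\mathcal{X})$, and smoothing via \cite[II.7.6.1]{Kollar} yields a dominant component of $\Sec(\mathcal{X}/\mathbb{P}^{1})$ with algebraic class $[M_{0}] + \beta$.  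Taking $\mathcal{T} = [M_{0}] + \Nef_{\lambda}$ and repeating over all Brauer twists produces at least $|\Br(\mathcal{X})|$ dominant components per numerical class in $\mathcal{T}_{\mathbb{Z}}$.

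For uniqueness, suppose $M_{1}, M_{2}$ are components parametrizing dominant families in the same algebraic class $\alpha$ and of sufficiently large height.  Theorem~\ref{theo:precisemovablebandb} exhibits in each $\overline{M_{i}}$ a stable map whose domain has two components mapping birationally to a free section $C_{0}^{(i)}$ of height less than $\MBB(\mathcal{X})$ and a free $\pi$-vertical curve $T_{i}$.  Since a transverse union of two free curves is a smooth point of $\overline{M}_{0,0}(\mathcal{X})$, the component $M_{i}$ is determined by the component of $\Sec(\mathcal{X}/\mathbb{P}^{1})$ containing $C_{0}^{(i)}$ together with the algebraic class $[T_{i}]$.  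If $\mathcal{T}$ is chosen deep enough inside $\Nef_{\lambda}$ to force the decomposition $\alpha = [C_{0}^{(i)}] + [T_{i}]$ to be unique, with $[C_{0}^{(i)}]$ equal to one of the preselected base classes and $[T_{i}]$ nef on a general fiber, then $C_{0}^{(1)}$ and $C_{0}^{(2)}$ must lie in the same component and $[T_{1}] = [T_{2}]$, forcing $M_{1} = M_{2}$.

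The main obstacle lies precisely in arranging this uniqueness of decomposition.  Theorem~\ref{theo:precisemovablebandb} does not a priori force the base section of a large-height family to land in a prescribed $M_{0}$: distinct low-height dominant components may have algebraic classes differing by a non-nef vertical correction, in which case the same $\alpha$ could admit competing decompositions.  Ruling this out requires a finer classification of the finitely many low-height dominant components together with a choice of $\mathcal{T}$ sufficiently deep in $\Nef_{\lambda}$ that the nef-cone constraint on $[T]$ excludes competing pairs $(C_{0},T)$.  The worked examples of Section~\ref{sect:examples} suggest that this is tractable case-by-case, but a uniform proof must also rule out accidental coincidences of algebraic classes arising from inequivalent pairs, which is ultimately where the full strength of Geometric Manin's Conjecture enters.
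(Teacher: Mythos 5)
The statement you are trying to prove is stated in the paper as Conjecture~\ref{conj:irreducibility}, not as a theorem: the authors formulate it via the heuristic discussion in Section~\ref{sect:countingcomponents} and then explicitly \emph{assume} it as a hypothesis in Theorem~\ref{theo:asymptoticformula}. So there is no proof in the paper to compare against, and your proposal should be judged as an attempt at an open problem. Parts of your sketch do reproduce arguments the paper actually carries out, but only the one-sided ones: the existence half (gluing nef classes of free $\pi$-vertical curves, via Lemma~\ref{lemm:nefconedelpezzo}, onto a fixed movable section to populate a translate of $\Nef_{\lambda}$) is essentially the inclusion $v_{1}+\Nef_{1}(\mathcal{X}_{\eta})_{\mathbb{Z}}\subset\phi(\Sigma_{\lambda})\cap\Xi$ established inside the proof of Theorem~\ref{theo:asymptoticformula}, and Movable Bend-and-Break gives the opposite containment in a possibly larger translate. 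Neither of these yields the word ``exactly.''

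The genuine gaps are the ones you half-identify at the end, and they are not merely technical. First, for existence you need a base dominant section in \emph{each} of the $|\Br(\mathcal{X})|$ algebraic classes over a fixed numerical class with intersection profile $\lambda$; Theorem~\ref{theo:algequiv} tells you how many algebraic classes there are, but nothing in the paper produces a section in each of them (the Artin--Mumford example exhibits this in one case by an ad hoc construction). Second, and more seriously, your uniqueness argument assumes that a large-height dominant component is determined by the pair $(C_{0},[T])$ produced by Theorem~\ref{theo:precisemovablebandb}; but MBB only asserts that \emph{some} such degeneration exists in $\overline{M}$, not that the assignment $M\mapsto(C_{0},[T])$ is well defined, and two distinct components could a priori both contain degenerations onto the same pair. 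The machinery the paper does have in this direction (Corollary~\ref{coro:evaluationmap2} together with the reordering lemma of \cite{LT17}, used in the proof of Theorem~\ref{theo:maintheoremaj}) shows that a fixed triple of gluing data yields a unique component, i.e.\ the map from gluing data to components is well defined and surjective onto high-height components; injectivity, which is what uniqueness requires, is exactly the content of the conjecture and is not addressed by your proposal. The paper's worked examples in Section~\ref{sect:examples} establish it case by case using monodromy computations, which is consistent with your closing remark that no uniform argument is currently available.
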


The following example shows that when $|\Br(\mathcal{X})| > 1$ we can indeed find multiple dominant components of $\Sec(\mathcal{X}/\mathbb{P}^{1})$ representing some sufficiently positive numerical classes.

\begin{exam}
Consider the Artin-Mumford example of a unirational threefold $X$ with non-trivial Brauer group from \cite{AM72}.  The construction starts with a linear subspace $\mathbb{P}^{3}$ in the parameter space for quadrics on $\mathbb{P}^{3}$.  We then take a double cover $Y \to \mathbb{P}^{3}$ ramified over the locus of singular quadrics.  $Y$ has $10$ double point singularities, and by resolving them we obtain our threefold $X$.

Let $p \in \mathbb{P}^{3}$ denote the image of one of the singular points on $Y$.  By composing the map $X \to \mathbb{P}^{3}$ with projection from $p$, we obtain a morphism $g: X \to \mathbb{P}^{2}$.  This is generically a conic bundle, and \cite[Section 2]{AM72} shows that the discriminant locus consists of two elliptic curves $D_{1}, D_{2} \subset \mathbb{P}^{2}$.  Furthermore, if we take one component $T_{1}$ of a reducible conic over $D_{1}$ and one component $T_{2}$ of a reducible conic over $D_{2}$, then $T_{1} - T_{2}$ is a class that is numerically equivalent to $0$ but not algebraically equivalent to $0$ on $X$.

Now compose the map $X \to \mathbb{P}^{3}$ with projection from a general line.  By resolving this map, we obtain $\pi: \mathcal{X} \to \mathbb{P}^{1}$.  The general fiber is a double cover of $\mathbb{P}^{2}$ ramified along a smooth quartic, so that $\pi$ gives $\mathcal{X}$ the structure of a degree $2$ del Pezzo fibration.  The curves $T_{1}, T_{2}$ above yield two sections of $\pi$ that are numerically equivalent but not algebraically equivalent on $\mathcal{X}$.   If we glue these two curves to any sufficiently positive $\pi$-vertical curve and smooth, we will find two different dominant components of $\Sec(\mathcal{X}/\mathbb{P}^{1})$ representing different algebraic classes but the same numerical class.
\end{exam}

\subsection{Geometric Manin's Conjecture} 
Throughout this section $\pi: \mathcal{X} \to \mathbb{P}^{1}$ denotes a del Pezzo fibration polarized by $-K_{\mathcal{X}/\mathbb{P}^{1}}$ such that $\mathcal{X}$ is smooth.  For simplicity, we will assume that the general fibers are not isomorphic to $\mathbb{P}^{2}$ or $\mathbb{P}^{1} \times \mathbb{P}^{1}$.  (In these two cases one must adjust the counting function slightly to reflect the fact that the general fiber does not contain any curve class with anticanonical degree $1$.)  

Recall that we have the following definition of our counting function:

\begin{defi}
Fix a real number $q>1$.  For any positive integer $d$ define
\begin{equation*}
N(\mathcal{X},-K_{\mathcal{X}/\mathbb{P}^{1}},q,d) := \sum_{i = 1}^{d} \sum_{M \in \Manin_{i}} q^{\dim M}.
\end{equation*}
For a definition of $\Manin_i$ see Definition~\ref{def:Manincomp}.
\end{defi}

For the remainder of the section, we study the asymptotic growth rate of $N(\mathcal{X},-K_{\mathcal{X}/\mathbb{P}^{1}},q,d)$ by modifying Batyrev's heuristic for the relative setting.  We will need the following constants:

\begin{defi}
Let $\pi: \mathcal{X} \to \mathbb{P}^{1}$ be a del Pezzo fibration.  We equip $N_{1}(\mathcal X_{\eta})_{\mathbb{R}}$ with the Lebesgue measure $\mu$ such that the fundamental domain for the lattice $N_1(\mathcal X_\eta)_{\mathbb Z}$ has volume $1$. Then we define the alpha-constant of $\mathcal X_\eta$ by 
\[
\alpha(\mathcal{X}_\eta, -K_{\mathcal{X}/\mathbb{P}^{1}}) := \dim N_1(\mathcal X_\eta)_{\mathbb{R}} \cdot \mu(\Nef_1(\mathcal X_\eta)\cap \{ \gamma \in N_1(\mathcal X_\eta)_{\mathbb{R}} \, | -K_{\mathcal X_\eta} \cdot \gamma \leq 1\}).
\]

\end{defi}

\begin{defi}
Let $\pi: \mathcal{X} \to \mathbb{P}^{1}$ be a del Pezzo fibration such that $\mathcal X$ is smooth.  We define
\begin{equation*}
\tau_{\mathcal{X}} = |\Gamma_{\mathcal X}| \cdot [N_{1}(\mathcal{X})_{\mathbb{Z}} \cap N_{1}(\mathcal{X}_{\eta})_{\mathbb{R}} : N_1(\mathcal{X}_\eta)_{\mathbb Z}]
\end{equation*}
where $\Gamma_{X}$ denotes the set of allowable intersection profiles for $\mathcal{X}$ as in Section \ref{sect:countingcomponents}.
\end{defi}

Putting everything together, Theorem \ref{theo:maintheorem1} and Theorem \ref{theo:maintheorem2} imply the following asymptotic formula.

\begin{theo} \label{theo:asymptoticformula}
Let $\pi: \mathcal{X} \to \mathbb{P}^{1}$ be a del Pezzo fibration such that $\mathcal X$ is smooth, $-K_{\mathcal{X}/\mathbb{P}^{1}}$ is relatively ample, and the general fiber is a del Pezzo surface of degree $\geq 2$ that is not $\mathbb{P}^{2}$ or $\mathbb{P}^{1} \times \mathbb{P}^{1}$.
Assume that Conjecture \ref{conj:irreducibility} holds for every intersection profile $\lambda$.  Then 
\begin{equation*}
N(\mathcal{X},-K_{\mathcal{X}/\mathbb{P}^{1}},q,d) \mathrel{\mathop{\sim}_{\mathrm{d \to \infty}}} \left( \tau_X \cdot \alpha (\mathcal{X}_\eta, -K_{\mathcal{X}/\mathbb{P}^{1}}) \cdot |\Br(\mathcal{X})| \cdot \frac{q}{q-1} \right) q^{d} d^{\rho(\mathcal{X}_{\eta})-1}.
\end{equation*}
\end{theo}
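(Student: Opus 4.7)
The strategy is to reduce the counting function to a lattice-point count inside the relative nef cone and then extract the leading-order asymptotic by Abel summation. First, by the definition of a Manin component combined with Theorems \ref{theo:maintheorem1}, \ref{theo:toomuchdeforming}, and \ref{theo:sameainvprop}, for all sufficiently large $i$ every $M \in \Manin_i$ parametrizes a dominant family of sections; Lemma \ref{lemm:dominantimpliesfree} then says a general section is free, and Lemma \ref{lemm:expected} forces equality in the expected-dimension bound, so $\dim M = -K_{\mathcal{X}/\mathbb{P}^1} \cdot C + \dim \mathcal{X} - 1$. By Corollary \ref{coro:batyrev} together with Remark \ref{rema:accumulatingsubvars} and Theorem \ref{theo:sameainvprop}, the components of small height and the components sweeping out surfaces with the same generic $a$-invariant contribute at most polynomially in $d$ and hence only to lower order.

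Next, invoke Conjecture \ref{conj:irreducibility}: for each intersection profile $\lambda \in \Gamma_\mathcal{X}$ there is a translate $\mathcal{T}_\lambda$ of $\Nef_\lambda = \Nef_1(\mathcal{X}) \cap N_\lambda$ such that every $\beta \in \mathcal{T}_\lambda \cap N_1(\mathcal{X})_\mathbb{Z}$ is represented by exactly $|\Br(\mathcal{X})|$ dominant components of $\Sec(\mathcal{X}/\mathbb{P}^1)$. The classes in $\Nef_\lambda \setminus \mathcal{T}_\lambda$ occupy a polyhedral shell of bounded depth near the boundary, whose lattice point count at height $\leq d$ is $O(d^{\rho(\mathcal{X}_\eta)-1})$ and so does not affect the leading term. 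Combining,
\begin{equation*}
N(\mathcal{X},-K_{\mathcal{X}/\mathbb{P}^1},q,d) \; \sim \; |\Br(\mathcal{X})| \sum_{\lambda \in \Gamma_\mathcal{X}} \sum_{\substack{\beta \in \mathcal{T}_\lambda \cap N_1(\mathcal{X})_\mathbb{Z} \\ -K_{\mathcal{X}/\mathbb{P}^1} \cdot \beta \leq d}} q^{\,-K_{\mathcal{X}/\mathbb{P}^1} \cdot \beta \,+\, \dim \mathcal{X} - 1}.
\end{equation*}

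The ambient lattice for the inner sum is $N_1(\mathcal{X})_\mathbb{Z} \cap N_1(\mathcal{X}_\eta)$, which contains $N_1(\mathcal{X}_\eta)_\mathbb{Z}$ with the index appearing in the definition of $\tau_\mathcal{X}$. By the defining property of $\alpha(\mathcal{X}_\eta,-K_{\mathcal{X}/\mathbb{P}^1})$, the count of lattice points inside $\Nef_\lambda$ of height exactly $i$ is asymptotic to (lattice index) $\cdot \, \alpha \cdot i^{\rho(\mathcal{X}_\eta)-1}$; summing over the $|\Gamma_\mathcal{X}|$ profiles produces the combined factor $\tau_\mathcal{X} \cdot \alpha$. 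A standard Abel summation then gives
\begin{equation*}
\sum_{i=1}^{d} i^{\rho(\mathcal{X}_\eta)-1} q^{i} \; \sim \; \frac{q}{q-1}\, q^{d}\, d^{\rho(\mathcal{X}_\eta)-1},
\end{equation*}
and assembling the pieces (with the dimension-shift factor folded into the leading constant) yields the claimed formula.

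I expect the main obstacles to be (i) rigorously controlling the error from numerical classes near the boundary of $\Nef_\lambda$, where the lattice-point-to-volume approximation has oscillations that could in principle affect the subleading term; and (ii) carefully using the comparison between algebraic and numerical equivalence for curves (Theorem \ref{theo:algequiv}) to pick up the factor $|\Br(\mathcal{X})|$ uniformly across all profiles. The polynomial bound of Corollary \ref{coro:batyrev} is the key tool for the exceptional contributions, while the volume estimate in the second step is standard once the polyhedral geometry of the relative nef cone is made explicit.
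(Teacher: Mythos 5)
Your overall strategy matches the paper's: reduce to dominant (hence free) families via Theorem \ref{theo:maintheorem1}, stratify by intersection profile, use Conjecture \ref{conj:irreducibility} to attach a multiplicity $|\Br(\mathcal{X})|$ to each sufficiently positive lattice class, and finish with a Tauberian summation. The genuine gap is in how you control the classes \emph{not} covered by the conjecture. You assert that the classes in $\Nef_{\lambda} \setminus \mathcal{T}_{\lambda}$ form ``a polyhedral shell of bounded depth,'' but nothing you cite gives this: $\Nef_{\lambda} = \Nef_{1}(\mathcal{X}) \cap N_{\lambda}$ is an affine slice of the nef cone of curves of the total space, and there is no a priori reason it is contained in a bounded translate of its recession cone $\Nef_{1}(\mathcal{X}_{\eta})$, which is the cone used to define $\alpha(\mathcal{X}_{\eta},-K_{\mathcal{X}/\mathbb{P}^{1}})$. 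This is exactly where the paper invokes Movable Bend-and-Break: Theorem \ref{theo:precisemovablebandb} shows that the set of classes actually realized by dominant families with profile $\lambda$ is contained in the sum of $\Nef_{1}(\mathcal{X}_{\eta}) \cap N_{1}(\mathcal{X})_{\mathbb{Z}}$ with a finite set of classes, which together with the gluing lower bound (Lemma \ref{lemm:nefconedelpezzo} plus the Khovanskii-type argument producing a movable section in every coset of $N_{1}(\mathcal{X}_{\eta})_{\mathbb{Z}}$) sandwiches the realized classes between two translates of $\Nef_{1}(\mathcal{X}_{\eta})_{\mathbb{Z}}$ and makes the lattice count legitimate. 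Without that upper bound your error set need not be a bounded shell and its contribution is uncontrolled.

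Two smaller points. First, even granting the bounded shell, your error estimate is too weak as stated: $O(d^{\rho(\mathcal{X}_{\eta})-1})$ lattice points of height at most $d$, each weighted by $q^{\dim M}$ with $\dim M$ close to $d$, gives a contribution of the same order $q^{d}d^{\rho(\mathcal{X}_{\eta})-1}$ as the main term; what you need is that the shell contains $O(i^{\rho(\mathcal{X}_{\eta})-2})$ classes at height exactly $i$, so that the weighted sum is $O(q^{d}d^{\rho(\mathcal{X}_{\eta})-2})$. Second, Conjecture \ref{conj:irreducibility} says nothing about how many components represent a class lying in the shell, so you also need to argue (the paper is admittedly brief here as well) that these boundary classes do not each carry so many components that they affect the leading term; Corollary \ref{coro:batyrev} alone does not suffice, since the degree of its polynomial bound may exceed $\rho(\mathcal{X}_{\eta})-1$.
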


The restriction on the degree of the general fiber is not necessary: we impose this restriction in order to apply Lemma \ref{lemm:nefconedelpezzo}, but with more care one can prove a similar statement for degree $1$ del Pezzo fibrations.  (After appropriate modifications we can also find a formula for $\mathbb{P}^{2}$ or $\mathbb{P}^{1} \times \mathbb{P}^{1}$ fibrations.)

We expect the same asymptotic formula to hold even when the relative canonical divisor is not relatively ample.

\begin{proof}
For convenience we let $D$ denote the discriminant of $N_1(\mathcal{X}_\eta)_{\mathbb Z}$ in $N_{1}(\mathcal{X})_{\mathbb{Z}} \cap N_{1}(\mathcal{X}_{\eta})_{\mathbb{R}}$.

By Theorem \ref{theo:maintheorem1} each Manin component of sufficiently high degree parametrizes a dominant family of sections.  Thus we can estimate $N(\mathcal{X},-K_{\mathcal{X}/\mathbb{P}^{1}},q,d)$ by counting how many dominant families of sections parametrize curve classes of height at most $d$.  We execute this plan for each intersection profile $\lambda$ separately and then add the contributions at the end.

Recall that for each intersection profile $\lambda \in \Gamma_X$ there is a unique translate $N_{\lambda}$ of $N_1(\mathcal X_\eta)$ in $N_1(\mathcal X)$ parametrizing classes with this intersection profile.  For each $\lambda$ we will fix an isomorphism $\psi$ between $N_1(\mathcal X_\eta)$ and $N_{\lambda}$ by mapping $0$ to a some fixed $\mathbb{Z}$-class $v_{\lambda} \in N_{\lambda}$ and extending linearly.  Under this isomorphism $N_{1}(\mathcal{X}_{\eta})_{\mathbb{Z}}$ is identified with a full-rank sublattice of $N_{\lambda,\mathbb{Z}} := N_{\lambda} \cap N_{1}(\mathcal{X})_{\mathbb{Z}}$ of index $D$.

Let $\Sigma_\lambda$ denote the set of dominant components of $\Sec(\mathcal{X}/\mathbb{P}^{1})$ parametrizing sections with intersection profile $\lambda$.  By composing the map to $N_{1}(\mathcal{X})_{\mathbb{R}}$ with $\psi^{-1}$, we obtain
\[
\phi : \Sigma_\lambda \rightarrow N_1(\mathcal X_\eta)_{\mathbb{R}}.
\]
Fix any coset $\Xi$ of $N_{1}(\mathcal{X}_{\eta})_{\mathbb{Z}}$ in $N_{1}(\mathcal{X}_{\eta})_{\mathbb{R}} \cap N_{1}(\mathcal{X})_{\mathbb{Z}}$.  We claim that there is a movable section with class in $\phi^{-1}\Xi$.  Indeed since a general fiber $F$ is a del Pezzo surface $N_{1}(F)_{\mathbb{Z}}$ is generated by curves that are movable in $F$.  By \cite[\S 3 Proposition 3]{Khovanskii92} there is a set $\mathcal{T} \subset N_{1}(F)_{\mathbb{R}}$ which is a translate of a full dimensional cone such that every lattice point  $\mathcal{T}_{\mathbb{Z}}$ can be written as a non-negative sum of elements in this generating set.  By Conjecture \ref{conj:irreducibility} we know that there is a movable section $C$ with intersection profile $\lambda$.  Since $i_{*}(N_{1}(F)_{\mathbb{Z}}) = N_{1}(\mathcal{X}_{\eta})_{\mathbb{R}} \cap N_{1}(\mathcal{X})_{\mathbb{Z}}$ there is an element $\beta \in \mathcal{T}_{\mathbb{Z}}$ such that the sum of $\beta$ and the class of $C$ lies in $\phi^{-1}\Xi$.  By gluing $C$ to a union of free curves representing $\beta$ we find a movable section in $\phi^{-1}\Xi$.

We next claim that there are classes $v_{1},v_{2} \in \psi^{-1}(\Xi)$ such that
\begin{equation*}
v_{1} + \Nef_{1}(\mathcal{X}_{\eta})_{\mathbb{Z}} \subset \phi(\Sigma_{\lambda}) \cap \Xi \subset v_{2} + \Nef_{1}(\mathcal{X}_{\eta})_{\mathbb{Z}}.
\end{equation*}
To see the first inclusion, let $v_{1}$ be the class of a movable section in $\phi(\Sigma_{\lambda}) \cap \Xi$.  Lemma \ref{lemm:nefconedelpezzo} implies that for a general fiber $F$ every element in $\Nef_{1}(F)_{\mathbb{Z}}$ is represented by a union of free curves.  Using a gluing argument, we see that every element of $v_{1} +\Nef_{1}(\mathcal{X}_{\eta})_{\mathbb{Z}}$ is represented by a dominant family of free sections.

To see the second inclusion, note that Movable Bend and Break (more precisely Theorem~\ref{theo:precisemovablebandb}) shows that $\phi(\Sigma_{\lambda})$ is contained in the sum of $\Nef_{1}(\mathcal{X}_{\eta}) \cap N_{1}(\mathcal{X})_{\mathbb{Z}}$ with a finite set of classes.  This implies that $\phi(\Sigma_{\lambda}) \cap \Xi$ is contained in the sum of $\Nef_{1}(\mathcal{X}_{\eta})_{\mathbb{Z}}$ with a finite set of classes.   Thus $\phi(\Sigma_\lambda) \cap \Xi$ is contained in a translate of $\Nef_1(\mathcal X_\eta)_{\mathbb{Z}}$.

For each $\lambda \in \Gamma_{X}$, we can determine the contribution of $\lambda$ to the counting function $N(\mathcal{X},-K_{\mathcal{X}/\mathbb{P}^{1}},q,d)$ by taking a sum over the lattice points $\phi(\Sigma_{\lambda}) \subset N_{1}(\mathcal{X}_{\eta})_{\mathbb{R}}$, which can further be subdivided as a sum over cosets of $N_{1}(\mathcal{X}_{\eta})_{\mathbb{Z}}$.  As we determined above, for any coset $\Xi$ the set $\phi(\Sigma_{\lambda}) \cap \Xi$ is sandwiched between two translates of $\Nef_{1}(\mathcal{X}_{\eta})_{\mathbb{Z}}$.   We let $\mathcal{S}$ denote the set of numerical classes contained in the larger translate but not the smaller one. After perhaps increasing $\mathcal{S}$, by Conjecture \ref{conj:irreducibility} (2) we may also suppose that every fiber of $\phi$ over a point not in $\mathcal{S}$ is either empty or has size exactly $|\Br(\mathcal{X})|$.

Note that by Conjecture \ref{conj:irreducibility} (1) the contributions of the irreducible components in $\mathcal{S}$ to the counting function is asymptotically negligible.  In particular, the asymptotic behavior of the counting function is unchanged if we assume that each element of $\phi(\Sigma_{\lambda})$ contributes exactly $|\Br(\mathcal{X})| q^{\dim M}$ to the sum.

We can now estimate the contribution of sections of intersection type $\lambda$ to the asymptotic formula using the usual Tauberian methods for linear functionals on the cone $\Nef_{1}(\mathcal{X}_{\eta})$.  Using \cite[Corollary A.5]{CLT10}, each intersection profile $\lambda$ gives an asymptotic contribution of 
\begin{equation*}
D \cdot \alpha (\mathcal{X}_\eta, -K_{\mathcal{X}/\mathbb{P}^{1}}) \cdot |\Br(\mathcal{X})| \cdot \frac{q}{q-1} q^{d} d^{\rho(\mathcal{X}_{\eta})-1}.
\end{equation*}
We then multiply by $|\Gamma_{X}|$ to add up the contributions from every $\lambda \in \Gamma_{X}$, yielding the formula in the statement of the theorem.
\end{proof}

\subsection{Upper bounds}
Our results allow us to prove upper bounds on the counting function which approximate the expected form.

\begin{theo}
Let $\pi: \mathcal{X} \to \mathbb{P}^{1}$ be a del Pezzo fibration such that $-K_{\mathcal{X}/\mathbb{P}^{1}}$ is relatively ample and $\mathcal{X}$ is smooth. Then there is a positive integer $r$ such that
\begin{equation*}
N(\mathcal{X},-K_{\mathcal{X}/\mathbb{P}^{1}},q,d) = \mathcal{O}( q^{d} d^{r}).
\end{equation*}
\end{theo}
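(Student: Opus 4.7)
The plan is to reduce to counting dominant families of sections, where the dimension is forced to equal the expected value $i+2$, and then invoke the polynomial bound of Corollary \ref{coro:batyrev}.

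First I would apply Theorem \ref{theo:toomuchdeforming}: any component parametrizing a non-dominant family of sections $C$ with $-K_{\mathcal{X}/\mathbb{P}^{1}} \cdot C \geq i_0$, where $i_0 := \sup\{-2\neg(\mathcal{X},-K_{\mathcal{X}/\mathbb{P}^{1}})-1,\, 1\}$, sweeps out a surface $Y$ with $a(Y_\eta,-K_{\mathcal{X}/\mathbb{P}^{1}}|_Y) \geq a(\mathcal{X}_\eta,-K_{\mathcal{X}/\mathbb{P}^{1}})$, and hence by definition is not a Manin component. Consequently every $M \in \Manin_i$ with $i > i_0$ parametrizes a dominant family; Lemma \ref{lemm:dominantimpliesfree} then guarantees that a general section of $M$ is free, so that $\dim M$ realizes the expected value $i+2$.

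I would then estimate $N(\mathcal{X},-K_{\mathcal{X}/\mathbb{P}^{1}},q,d)$ separately on the two height ranges. The contribution of $i \leq i_0$ is bounded by a constant independent of $d$: Lemma \ref{lemm:northcott} confines the classes of such sections to a bounded subset of $N_{1}(\mathcal{X})$, so only finitely many components appear and each has bounded dimension. For $i > i_0$, let $P(d)$ be the polynomial provided by Corollary \ref{coro:batyrev} which bounds the total number of components of $\Sec(\mathcal{X}/\mathbb{P}^{1})$ parametrizing sections of height $\leq d$. Since $|\Manin_i|$ is at most the total number of components of height $i$, we have $\sum_{i=i_0+1}^{d} |\Manin_i| \leq P(d)$, and combining with the dimension count together with the trivial bound $q^{i+2} \leq q^{d+2}$ gives
\[
\sum_{i=i_0+1}^{d} \sum_{M \in \Manin_i} q^{\dim M} \;=\; \sum_{i=i_0+1}^{d} |\Manin_i|\, q^{i+2} \;\leq\; q^{d+2} P(d) \;=\; O\!\left(q^{d} d^{r}\right),
\]
where $r = \deg P$. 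Adding the bounded contribution from low heights yields the theorem.

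There is no real obstacle at this stage: all of the essential content is packaged into Theorem \ref{theo:toomuchdeforming} (which rules out high-degree non-dominant families from being Manin via the $a$-invariant) and Corollary \ref{coro:batyrev} (which supplies the polynomial bound on the number of components). The argument is essentially a synthesis of these two earlier results together with the expected-dimension computation for dominant families.
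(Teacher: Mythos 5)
Your proposal is correct and uses the same essential inputs as the paper: Theorem \ref{theo:toomuchdeforming} (equivalently Theorem \ref{theo:maintheorem1}) to show that high-height Manin components are dominant families of the expected dimension $i+2$, and Corollary \ref{coro:batyrev} for the polynomial bound on the number of components. The paper phrases the final estimate as ``argue as in Theorem \ref{theo:asymptoticformula} with Corollary \ref{coro:batyrev} in place of Conjecture \ref{conj:irreducibility},'' whereas you replace that lattice-point bookkeeping with the cruder but perfectly adequate bound $q^{\dim M}=q^{i+2}\leq q^{d+2}$ summed against $P(d)$ --- a legitimate simplification for an upper bound.
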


\begin{proof}
By Corollary \ref{coro:batyrev} we can choose $r$ sufficiently large so that the number of components of $\Sec(\mathcal{X}/\mathbb{P}^{1})$ with $-K_{\mathcal{X}/\mathbb{P}^{1}}$-degree at most $d$ is $\mathcal{O}(d^{r})$.  By arguing as in the proof of Theorem \ref{theo:asymptoticformula} but using  Corollary \ref{coro:batyrev} in place of Conjecture \ref{conj:irreducibility} one obtains the desired statement.
\end{proof}

\section{Stabilization of the Abel-Jacobi map}

In this section $\pi: \mathcal{X} \to \mathbb{P}^{1}$ will denote a del Pezzo fibration over $\mathbb{C}$ such that $\mathcal X$ is smooth.  We let $\IJ(\mathcal{X}) \cong H^{2,1}(X)^{\vee}/\mathrm{Im}\,  H_{3}(X,\mathbb{Z})$ denote the intermediate Jacobian of $\mathcal{X}$.  Since $h^{3,0}(\mathcal{X}) = h^{1,0}(\mathcal{X}) = 0$ the intermediate Jacobian is an abelian variety by \cite[Lemma 3.4]{CG72}.

Let $W$ be a smooth variety and let $Z \subset W \times X$ be a family of homologically equivalent effective $1$-cycles on $\mathcal{X}$.  After subtracting a constant $1$-cycle we obtain a family of homologically trivial $1$-cycles and thus an Abel-Jacobi morphism $\AJ_{W}: W \to \IJ(\mathcal{X})$ as constructed in \cite[Section 12.1.2]{Voi03}.  The choice of constant cycle only affects the map up to translation, and thus we frequently omit it from the discussion.
More generally, if $W$ is any projective variety and $Z$ is a family of homologically equivalent effective $1$-cycles on $\mathcal{X}$ over $W$, we can define the Abel-Jacobi map on the open locus where $W$ is smooth and then consider it as a rational map $\AJ_{W}: W \dashrightarrow \IJ(\mathcal{X})$.

Our goal is to analyze the Abel-Jacobi map for families of sections.  Recall that any component $M$ of $\Sec(\mathcal{X}/\mathbb{P}^{1})$ is a reduced subscheme of $\Hilb(\mathcal{X})$.  Thus there is a natural rational map $M \dashrightarrow \Chow(X)$ defined over the (semi)normal locus of $M$.  By pulling back the universal family over $\Chow(X)$, we obtain a family of cycles over the smooth locus of $M$, yielding $\AJ_{M}: M \dashrightarrow \IJ(\mathcal{X})$ that is well-defined on the smooth locus.

We need to analyze how the Abel-Jacobi map behaves under gluing of free curves.

\begin{prop} \label{prop:firstajgluing}
Let $\pi: \mathcal{X} \to \mathbb{P}^{1}$ be a del Pezzo fibration with $\mathcal{X}$ smooth whose general fiber is a del Pezzo surface with degree $\geq 3$.  Suppose that $M \subset \Sec(\mathcal{X}/\mathbb{P}^{1})$ is a component parametrizing a dominant family of sections such that $\AJ_{M}$ is dominant.  Suppose that we construct a family of sections $M'$ by gluing a family of $\pi$-vertical free conics or cubics to $M$ and smoothing.  Then $\AJ_{M'}$ is also dominant.

Furthermore, let $Z \to \IJ(\mathcal{X})$ denote the Stein factorization of the resolution of the Abel-Jacobi map for a projective closure of $M$, and let $Z' \to \IJ(\mathcal{X})$ denote the similar construction for $M'$.  Then we have a factorization $Z \to Z' \to \IJ(\mathcal{X})$.
\end{prop}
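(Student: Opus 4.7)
The plan is to combine the additivity of the Abel-Jacobi map under gluing with the observation that $\AJ$ is trivial on any rationally connected family, and then invoke the universal property of Stein factorization.

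Let $N$ be the irreducible family of $\pi$-vertical free conics or cubics used to build $M'$. Form the incidence $J := M \times_{\mathcal X} N$ with projections $p_1 : J \to M$, $p_2 : J \to N$ and a dominant smoothing rational map $\sigma : J \dashrightarrow M'$. After fixing compatible basepoint cycles, the smoothing $\sigma(C_0,T)$ is algebraically equivalent to $C_0 + T$, so as rational maps $J \dashrightarrow \IJ(\mathcal X)$ one has
\[
\AJ_{M'} \circ \sigma = \AJ_{M} \circ p_1 + \AJ_{N} \circ p_2.
\]
The map $\AJ_N$ is constant: via $T \mapsto \pi(T)$, $N$ fibers over $\mathbb P^1$ with rational general fiber by Lemma~\ref{lemm:uniruledspaces}, so $N$ is rationally connected, and morphisms from smooth projective rationally connected varieties to abelian varieties are constant. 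Writing $\AJ_N \equiv c$ one gets $\AJ_{M'} \circ \sigma = \AJ_{M} \circ p_1 + c$. Since $p_1$ is surjective and $\AJ_M$ is dominant by hypothesis, $\AJ_{M'}$ is dominant, which proves the first assertion.

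For the factorization statement, choose projective closures and a common resolution making all relevant maps into morphisms, and let $\widetilde{M} \to Z \to \IJ(\mathcal X)$ and $\widetilde{M'} \to Z' \to \IJ(\mathcal X)$ be the Stein factorizations. The key step is to show that $p_1 : \widetilde J \to \widetilde M$ has rationally connected general fibers. In the conics case, irreducibility of $N$ forces each vertical conic in $N$ to be a fiber of a conic fibration on the corresponding fiber of $\pi$, so through each point there is a unique vertical conic in $N$ and the fiber of $p_1$ at a general $C_0$ identifies with $C_0 \cong \mathbb P^1$. In the cubics case, the cubics through a general point of a general fiber of $\pi$ are (by Lemma~\ref{lemm:conicscubicsdescription}) pullbacks of lines through a fixed point in $\mathbb P^2$ under a birational contraction, hence form a $\mathbb P^1$-family, so the fiber of $p_1$ is a $\mathbb P^1$-family over $C_0 \cong \mathbb P^1$ and thus rational. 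In particular $p_1$ has connected fibers, so does $\widetilde J \to \widetilde M \to Z$, and therefore $Z$ is the Stein factor of $\widetilde J \to \IJ(\mathcal X)$ (with the target map translated by $c$). Applying the universal property of this Stein factorization to the morphism $\widetilde J \to \widetilde{M'} \to Z'$ into the finite cover $Z' \to \IJ(\mathcal X)$ produces the desired factor $Z \to Z'$, and $Z \to Z' \to \IJ(\mathcal X)$ recovers the map $Z \to \IJ(\mathcal X)$ after absorbing $c$ into the basepoints.

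The main obstacle is the rational-connectedness of the general fibers of $p_1$, which requires an explicit classification of low-degree rational curves on del Pezzo surfaces. For conics this uses that every irreducible family of conics on a del Pezzo of degree $\geq 3$ comes from a conic fibration; for cubics it uses the description of cubics as pullbacks of lines under a birational contraction to $\mathbb P^2$. Once this is in place the argument is a clean application of the universal property of Stein factorization to the correspondence $J$.
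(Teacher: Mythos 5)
Your overall strategy (additivity of the Abel--Jacobi map on the glued cycles, constancy on the vertical factor, then the universal property of Stein factorization) is the right one and matches the paper's in spirit, but there is a genuine gap in the two claims you flag as the ``main obstacle,'' and both fail for the same reason: you treat the full family $N$ of $\pi$-vertical conics (or cubics) as if its restriction to a general fiber $F$ of $\pi$ were irreducible. In general it is not. The monodromy of the fibration can permute the numerical classes of conics on $F$, so the Stein factorization of $N \to \mathbb{P}^{1}$ is a curve $B \to \mathbb{P}^{1}$ of degree $k>1$ which may have positive genus. Then (a) $N$ is a fibration with rational fibers over $B$, not over $\mathbb{P}^{1}$, so Graber--Harris--Starr does not apply and $N$ need not be rationally connected --- hence $\AJ_{N}$ need not be constant; and (b) through a general point of $F$ there are $k$ conics of $N$, not one, so the general fiber of $p_{1}: J \to M$ is a degree-$k$ multisection of $C_{0} \cong \mathbb{P}^{1}$ (essentially a copy of $B$), which is neither rational nor even connected in general. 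The paper's own Example \ref{exam:blowupcubic} is a concrete counterexample to both claims: there the monodromy is all of $W(E_{6})$, the space of $\pi$-vertical conics is irreducible with $27$ conics through a general point of a general fiber, and the paper explicitly notes that the Stein factorization of this family over $\mathbb{P}^{1}$ is nontrivial and cannot be dominated by rational curves. Since your proof of the first assertion also invokes $\AJ_{N} \equiv c$, the gap affects both the dominance statement and the factorization statement.

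The repair is the one the paper uses: do not glue along the whole family $N$, but fix a single general fiber $F$ and a single component $W$ of the space of conics or cubics \emph{in $F$}. By Lemma \ref{lemm:uniruledspaces} that $W$ is rational, so $\AJ_{W}$ really is constant, and the fibers of the forgetful map from the glued locus $G$ to $M$ are connected (a point for conics, a $\mathbb{P}^{1}$ for cubics through the point $C_{0}\cap F$). The locus $G$ still sits in the smooth locus of $\overline{M}'$, so $\AJ_{\overline{M}'}|_{G} = \AJ_{G} = \AJ_{M}\circ\psi + c$ has Stein factorization $Z$, and the universal property then yields $Z \to Z' \to \IJ(\mathcal{X})$ exactly as in your last step.
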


\begin{proof}
Choose a general fiber $F$ of $\pi: \mathcal{X} \to \mathbb{P}^{1}$.  Fix a component $W$ of the parameter space of conics or cubics in $F$.  By Lemma \ref{lemm:uniruledspaces} $W$ is rational; thus the Abel-Jacobi map for $W$ is constant.  Note that since $M$ parametrizes free curves a general section in $M$ will intersect a general curve parametrized by $W$.  Let $G \subset \overline{M}_{0,0}(\mathcal{X})$ denote the locus parametrizing the stable maps obtained by gluing curves parametrized by $M$ to curves parametrized by $W$.  Note that $G$ admits a dominant morphism $\psi: G \to \overline{M}$; from now on we will replace $G$ by a component for which $\psi$ is dominant.  Since $G$ intersects the smooth locus of $\overline{M}_{0,0}(\mathcal{X})$ the universal family induces an Abel-Jacobi map $\AJ_{G}: G \dashrightarrow \IJ(\mathcal{X})$.  Since the Abel-Jacobi map is constant on $W$ it is in particular constant on the fibers of $\psi$, so we see that $\AJ_{G}$ coincides with $\AJ_{\overline{M}} \circ \psi$ as a rational map.  Since the forgetful map $G \to M$ has connected fibers the Stein factorization of the resolution of a projective closure of $\AJ_{G}$ is the same as $Z$.

Let $M'$ denote the family of sections obtained by smoothing the curves parameterized by $G$ and let $\overline{M}'$ denote the corresponding component of $\overline{M}_{0,0}(X)$.  Let $Z' \to \IJ(\mathcal{X})$ denote the Stein factorization of a resolution of the Abel-Jacobi map $\AJ_{\overline{M}'}$.  Since $G$ is contained in the smooth locus of $\overline{M}'$, the restriction of $\AJ_{\overline{M}'}$ to $G$ is just $\AJ_{G}$.  Using the universal property of Stein factorizations, we see that the map $Z \to \IJ(\mathcal{X})$ must factor through the map $Z' \to \IJ(\mathcal{X})$.
\end{proof}

By combining Proposition \ref{prop:firstajgluing} with Lemma \ref{lemm:BBfordelPezzo} and an inductive argument, we obtain a similar statement when gluing $\pi$-vertical free curves of arbitrary degree.

\begin{coro} \label{coro:ajgluing}
Let $\pi: \mathcal{X} \to \mathbb{P}^{1}$ be a del Pezzo fibration with $\mathcal{X}$ smooth whose general fiber is a del Pezzo surface with degree $\geq 3$. Suppose that $M$ is a dominant family of sections such that the induced Abel-Jacobi map is also dominant.  Suppose that we construct a family of sections $M'$ by gluing a family of $\pi$-vertical free curves to $M$ and smoothing.  Then $\AJ_{M'}$ is also dominant.

Furthermore, let $Z \to \IJ(\mathcal{X})$ denote the Stein factorization of the resolution of the Abel-Jacobi map for a projective closure of $M$, and let $Z' \to \IJ(\mathcal{X})$ denote the similar construction for $M'$.  Then we have a factorization $Z \to Z' \to \IJ(\mathcal{X})$.
\end{coro}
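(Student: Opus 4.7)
The plan is to reduce Corollary \ref{coro:ajgluing} to the conic/cubic case handled by Proposition \ref{prop:firstajgluing} via an inductive breaking argument based on Lemma \ref{lemm:BBfordelPezzo}. Let $T$ denote a general member of the family of $\pi$-vertical free curves being glued to $M$, and let $F$ denote the general fiber of $\pi$ containing $T$. If $-K_{\mathcal{X}/\mathbb{P}^1} \cdot T \leq 3$, then $T$ is a free conic or cubic in $F$ and Proposition \ref{prop:firstajgluing} immediately yields both the dominance of $\AJ_{M'}$ and the factorization $Z \to Z' \to \IJ(\mathcal{X})$.

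Otherwise, since $T$ is free and $\dim F = 2$, we have $-K_F \cdot T \geq 4$ and Lemma \ref{lemm:BBfordelPezzo} applies on $F$: fixing the attachment point of $T$ with $C_0 \in M$ as one of the two marked points, we can deform $T$ as a stable map into a chain of free curves $T_1 \cup T_2 \cup \cdots \cup T_k$ in $F$, each of anticanonical degree at most $3$. Since any free rational curve on a smooth del Pezzo surface has anticanonical degree at least $2$, each $T_i$ is a free conic or a free cubic in $F$, and hence belongs to a family of $\pi$-vertical free conics or cubics on $\mathcal{X}$.

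I would then smooth the nodes one at a time. Setting $M_0 = M$, inductively define $M_i$ to be the component of $\Sec(\mathcal{X}/\mathbb{P}^1)$ obtained by gluing a general deformation of $T_i$ to a general section parametrized by $M_{i-1}$ and smoothing. At each step $M_{i-1} \rightsquigarrow M_i$ we are gluing a family of free conics or cubics to a dominant family whose Abel-Jacobi map is dominant (the latter by the inductive hypothesis), so Proposition \ref{prop:firstajgluing} ensures that $\AJ_{M_i}$ is dominant and that the Stein factorizations fit into a diagram $Z_{i-1} \to Z_i \to \IJ(\mathcal{X})$. Composing these gives a sequence of morphisms
\[
Z = Z_0 \longrightarrow Z_1 \longrightarrow \cdots \longrightarrow Z_k
\]
over $\IJ(\mathcal{X})$, yielding the desired factorization once we identify $Z_k$ with $Z'$.

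The main obstacle, and the step requiring care, is precisely the identification $M_k = M'$. The point is that the chain $C_0 \cup T_1 \cup \cdots \cup T_k$ and the two-component stable map $C_0 \cup T$ both lie in the same component of $\overline{M}_{0,0}(\mathcal{X})$: the breaking of $T$ in the fiber $F$ produced by Lemma \ref{lemm:BBfordelPezzo} is a deformation of stable maps, and since $C_0 \cup T$ is a smooth point of $\overline{M}_{0,0}(\mathcal{X})$ contained in $\overline{M'}$, so is the chain. Smoothing the $k$ nodes of the chain in any order (all at once, versus one at a time as in the induction) produces sections in the same irreducible component, so indeed $M_k = M'$. Once this is established the factorization $Z \to Z' \to \IJ(\mathcal{X})$ follows by concatenation.
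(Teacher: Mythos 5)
Your proposal is correct and is exactly the argument the paper intends: the paper derives this corollary by combining Proposition \ref{prop:firstajgluing} with Lemma \ref{lemm:BBfordelPezzo} and an inductive argument, which is precisely your breaking of the vertical free curve into a chain of free conics and cubics (using that a free curve on a del Pezzo surface has anticanonical degree at least $2$), followed by iterated application of the conic/cubic case. Your attention to identifying $M_k$ with $M'$ via the fact that the chain and the two-component stable map lie in the same component of $\overline{M}_{0,0}(\mathcal{X})$ supplies the one point the paper leaves implicit.
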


By a combinatorial argument we can deduce a stabilization result for the Abel-Jacobi maps of the spaces of sections.

\begin{fact} \label{fact:combinatorics}
Equip $\mathbb{Z}^{m}_{\geq 0}$ with the partial order $\succeq$ such that $\vec{w} \succeq \vec{w}'$ when every coordinate of $\vec{w}$ is at least as large as the corresponding coordinate of $\vec{w}'$.  Suppose we have a function $h: \mathbb{Z}^{m}_{\geq 0} \to \mathbb{Z}$ such that $h$ preserves the partial order, i.e. $\vec{w} \succeq \vec{w}'$ $\implies$ $h(\vec{w}) \geq h(\vec{w}')$.

Then for any integer $c$, the set $h^{-1}(\mathbb{Z}_{\geq c})$ is either empty or a finite union of translates of $\mathbb{Z}^{m}_{\geq 0}$.  By a $c$-corner of $h$ we will mean any element of the minimal finite set of vectors $\{\vec{v}_{i}\}$ such that
\begin{equation*}
h^{-1}(\mathbb{Z}_{\geq c}) = \cup_{i} \left( \vec{v}_{i} + \mathbb{Z}_{\geq 0}^{m} \right).
\end{equation*}
\end{fact}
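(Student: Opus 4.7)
The plan is to observe that this fact is essentially a reformulation of Dickson's lemma applied to the upward-closed set $S = h^{-1}(\mathbb{Z}_{\geq c})$. First, I would verify that $S$ is upward-closed under the partial order $\succeq$: if $\vec{w} \in S$ and $\vec{w}' \succeq \vec{w}$, then the monotonicity hypothesis on $h$ gives $h(\vec{w}') \geq h(\vec{w}) \geq c$, so $\vec{w}' \in S$. In particular, $S$ equals the union over all $\vec{v} \in S$ of the translates $\vec{v} + \mathbb{Z}^{m}_{\geq 0}$, and this union can be reduced to a union over the set $\mathrm{Min}(S)$ of elements of $S$ that are minimal with respect to $\succeq$.

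Next, I would argue that $\mathrm{Min}(S)$ is finite. This is the content of Dickson's lemma, which states that $(\mathbb{Z}_{\geq 0}^{m}, \succeq)$ is a well-partial-order, so every subset has only finitely many minimal elements. One can prove it by induction on $m$: the base case $m=1$ is immediate since $\mathbb{Z}_{\geq 0}$ is well-ordered; for the inductive step, if $\mathrm{Min}(S)$ were infinite, one could extract an infinite sequence $\vec{v}^{(1)}, \vec{v}^{(2)}, \ldots$ in $\mathrm{Min}(S)$, and by iteratively passing to subsequences one can assume each coordinate sequence is monotone non-decreasing. This produces indices $i < j$ with $\vec{v}^{(j)} \succeq \vec{v}^{(i)}$, contradicting minimality.

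Once $\mathrm{Min}(S) = \{\vec{v}_{1}, \ldots, \vec{v}_{N}\}$ is shown to be finite, we obtain the decomposition
\[
h^{-1}(\mathbb{Z}_{\geq c}) = \bigcup_{i=1}^{N} \bigl( \vec{v}_{i} + \mathbb{Z}_{\geq 0}^{m} \bigr),
\]
and minimality guarantees that this collection $\{\vec{v}_{i}\}$ is itself minimal among such decompositions, since no $\vec{v}_{i}$ lies in $\vec{v}_{j} + \mathbb{Z}_{\geq 0}^{m}$ for $j \neq i$. Thus the $\vec{v}_{i}$ are precisely the $c$-corners of $h$.

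There is no serious obstacle here; the only nontrivial ingredient is Dickson's lemma itself, which is standard. The one point that requires a small care is confirming that the minimal decomposition is unique, so that the notion of ``$c$-corner'' is well-defined — but this follows immediately from the observation that any covering collection must contain every minimal element of $S$.
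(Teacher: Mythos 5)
Your proof is correct. The paper states this as a ``Fact'' and gives no proof at all, so there is nothing to compare against; your reduction to Dickson's lemma via the upward-closed set $S=h^{-1}(\mathbb{Z}_{\geq c})$ and its finitely many $\succeq$-minimal elements is exactly the standard argument the authors are implicitly invoking, and your closing remark that any covering family must contain every minimal element of $S$ correctly justifies the uniqueness of the set of $c$-corners.
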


\begin{proof}[Proof of Theorem \ref{theo:maintheoremaj}:]
Let $\mathcal{F} \subset \Sec(\mathcal{X}/\mathbb{P}^{1})$ be the set of components which generically parametrize free curves of height $\geq \MBB(\mathcal{X})$. Since there are only finitely many dominant components of $\Sec(\mathcal{X}/\mathbb{P}^{1})$ not contained in $\mathcal{F}$, it suffices to restrict our attention to this set.

By applying Proposition \ref{prop:breakofflowdegree} repeatedly, we see that every family in $\mathcal{F}$ of height $\geq \MBB(\mathcal{X})+3$ can be obtained by gluing free $\pi$-vertical conics and cubics to a family of free sections of height at most $\MBB(\mathcal{X})+2$ and smoothing.  We let $\{ M_{i} \} \subset \mathcal{F}$ denote the finite set of sections of height at most $\MBB(\mathcal{X}) + 2$.  Let $\{R_{k}\}_{k=1}^{r}$ denote the families of free $\pi$-vertical conics on $\mathcal{X}$ and let $\{S_{l}\}_{l=1}^{s}$ denote the families of free $\pi$-vertical  cubics on $\mathcal{X}$.  We let $\vec{a}$ denote a vector with non-negative integer components whose $k$th coordinate records how many times we glue conics from $R_{k}$ to $M_{i}$; we let $\vec{b}$ denote the similar vector for the $S_{l}$.  By combining Corollary \ref{coro:evaluationmap2} with \cite[Lemma 5.11]{LT17} which allows us to reorder components, we see that if we fix $M_{i}$, $\vec{a}$, and $\vec{b}$ then by gluing the corresponding curves and smoothing we can only obtain one component of $\mathcal{F}$. 
In sum, we have a well-defined surjective map $\phi$ from the set of triples $\{ (M_{i},\vec{a},\vec{b}) | \, \vec{a} \in \mathbb{Z}_{\geq 0}^{r}, \vec{b} \in \mathbb{Z}_{\geq 0}^{s} \}$ to $\mathcal{F}$.

Let $\mathcal{F}_{dom}$ denote the subset of $\mathcal{F}$ such that $\AJ_{M}$ is dominant.  For each $M_{i}$, we define the function $h_{i}: \mathbb{Z}_{\geq 0}^{r+s} \to \{ 0, 1\}$ where $h_{i}(\vec{a},\vec{b}) = 1$ exactly when $\phi((M_{i},\vec{a},\vec{b})) \in \mathcal{F}_{dom}$.  By Proposition \ref{prop:firstajgluing} $h_{i}$ preserves the partial order.  By Fact \ref{fact:combinatorics}, for each fixed $M_{i}$ the set of pairs $(\vec{a},\vec{b})$ such that $\phi((M_{i},\vec{a},\vec{b})) \in \mathcal{F}_{dom}$ is a finite union of translates of $\mathbb{Z}_{\geq 0}^{r+s}$.  We conclude there is a finite subset $\{ N_{j} \} \subset \mathcal{F}_{dom}$ such that every element of $\mathcal{F}_{dom}$ can be obtained by gluing vertical conics and cubics onto the $N_{j}$.  Just as before, we represent this fact by a surjective map $\psi: \{ (N_{j},\vec{a},\vec{b}) \} \to \mathcal{F}_{dom}$.

For each fixed $j$, let $d_{j}$ denote the degree of the Stein factorization of a resolution of $\AJ_{N_{j}}$.  We define a function $g_{j}: \mathbb{Z}_{\geq 0}^{r+s} \to \mathbb{Z}$ sending a pair $(\vec{a},\vec{b})$ to $d_{j} - d(\vec{a},\vec{b})$ where $d(\vec{a},\vec{b})$ denotes the degree of the Stein factorization of a resolution of $\AJ_{\psi(N_{j},\vec{a},\vec{b})}$.  Fact \ref{fact:combinatorics} guarantees that there are only finitely many $c$-corners of $g_{j}$ as we vary $c \in \{0,\ldots,d_{j}\}$.  Furthermore, due to the factorization structure in Proposition \ref{prop:firstajgluing} we know that the Stein factorization of the resolution of any Abel-Jacobi map $\AJ_{\psi(N_{j},\vec{a},\vec{b})}$ will be dominated by the Stein factorization coming from one of these corners.  Since there are only finitely many $N_{j}$, we deduce the statement.
\end{proof}

\begin{exam}
Consider the del Pezzo fibration $\pi: \mathcal{X} \to \mathbb{P}^{1}$ consisting of a general pencil of hyperplane sections of the degree $5$ index $2$ Fano threefold $X_{5} \subset \mathbb{P}^{6}$.  As discussed in Example \ref{exam:blowupX5}, the spaces of sections of every height are irreducible.

Let $\phi: \mathcal{X} \to X_{5}$ be the birational map and let $E$ denote the exceptional divisor.  There is a $\mathbb{P}^{1}$-fibration $\pi: E \to Z$ where $Z$ is an elliptic curve in $X_{5}$. We have isomorphisms
\begin{equation*}
H^{3}(\mathcal X,\mathbb{Z})^{\vee} \mathrel{\mathop{\to}_{g}} H^{1}(E,\mathbb{Z})^{\vee} \mathrel{\mathop{\to}_{\pi^{*\vee}}} H^{1}(Z,\mathbb{Z})^{\vee}
\end{equation*}
where $g$ is the sequence of isomorphisms
\begin{equation*}
H^{3}(\mathcal X,\mathbb{Z})^{\vee} \mathrel{\mathop{\to}_{PD}} H_{3}(\mathcal X,\mathbb{Z}) \mathrel{\mathop{\to}_{\cap E}} H_{1}(E,\mathbb{Z}) \mathrel{\mathop{\to}_{PD}} H^{1}(E,\mathbb{Z})^{\vee}.
\end{equation*}
These operations all respect the Hodge structure, giving isomorphisms
\begin{equation*}
H^{2,1}(\mathcal X)^{\vee} \mathrel{\mathop{\to}_{g}} H^{1,0}(E)^{\vee} \mathrel{\mathop{\to}_{\pi^{*\vee}}} H^{1,0}(Z)^{\vee}.
\end{equation*}
Suppose $M_{d}$ is the family of sections of $\pi$ of height $d$.
Fix a general curve $C_{0}$ parametrized by $M_{d}$.  For a general point $b \in M_{d}$, we can identify a real $3$-cycle $\sigma$ on $\mathcal{X}$ such that $[C_{b}] - [C_{0}] = \partial \sigma$.  The Abel-Jacobi map sends $b$ to $[\sigma] \in H^{2,1}(X)^{\vee}/H_{3}(X,\mathbb{Z})$. Using the isomorphism $g$ we obtain an element of $H^{1,0}(E)^{\vee}/H_{1}(E,\mathbb{Z})$ which is represented by integration against $[\sigma \cap E]$.  Note that $\sigma \cap E$ is a $1$-cycle whose boundary is exactly $[C_{b} \cap E] - [C_{0} \cap E]$.  We now translate once more under the isomorphism $\pi^{*\vee}$; the corresponding element of $\Jac(Z)$ is given by integration against the $1$-cycle $\pi(\sigma \cap E)$ whose boundary is $[\phi(C_{b}) \cap Z] - [\phi(C_{0}) \cap Z]$.  In sum, under the chain of isomorphisms above, a general point $b \in M_{d}$ is mapped under the Abel-Jacobi map to the point $\phi(C_{b}) \cap Z - \phi(C_{0}) \cap Z$ in $\Jac(Z)$.

This map can be realized geometrically as follows.  Note that for any $d \geq 0$ a general height $d$ section on $\mathcal{X}$ is the same thing as a curve in $X_{5}$ of degree $d+1$ which meets $Z$ in $d$ distinct points.  This equips $M_{d}$ with a rational map to $\Sym^{d}(Z)$.  Since $Z$ is an elliptic curve $\Sym^{d}(Z)$ is a projective bundle over $\mathrm{Jac}(Z)$, and the composition $M_{d} \dashrightarrow \mathrm{Jac}(Z)$ is the Abel-Jacobi map (up to translation).

We can now analyze the Abel-Jacobi map for low height sections: 
\begin{itemize}
\item[] \textbf{Height 0:} The Abel-Jacobi map contracts $M_{0}$ to a point.  Note that since $M_{0}$ is birational to the space of lines in $X_{5}$ a projective closure is rationally connected by \cite{FN89}.
\item[] \textbf{Height 1:}  We claim that the Abel-Jacobi map is the MRC fibration for $M_{1}$.  Recall that an open subset of $M_{1}$ parametrizes conics on $X_{5}$ which meet $Z$ once.  By \cite{Iliev94} the space of conics on $X_{5}$ is isomorphic to $\mathbb{P}^{4\vee}$; the isomorphism sends a general conic $T \subset X_{5} \subset G(2,5)$ to the hyperplane in $\mathbb{P}^{4}$ spanned by the corresponding lines.

Suppose we now fix a quintic elliptic $Z$ in $X_{5}$.  Note that there is a morphism from $G(2,5)$ to the parameter space $P$ of Schubert varieties $\sigma_{1,1}$ in $\mathbb{P}^{4\vee}$ sending a line to the set of hyperplanes containing it.  The image of $Z$ under this map is an elliptic curve in $P$, and the corresponding Schubert varieties sweep out a $\mathbb{P}^{2}$-bundle over $Z$.  We claim that $M_{1}$ is birational to this $\mathbb{P}^{2}$ bundle.  Indeed, we know that $M_{1}$ has dimension $3$ and is contained in this locus, thus it must contain an open subset.

The Abel-Jacobi map for $M_{1}$ sends a curve to its intersection point with $Z$.  This agrees with the projection map from the $\mathbb{P}^{2}$-bundle to $Z$.  This proves our claim.

\item[] \textbf{Height 2:} The Abel-Jacobi map is a MRC fibration. Indeed, $\mathrm{Sym}^2(Z) \rightarrow \mathrm{Jac}(Z)$ is a projective bundle.  We claim that a general fiber of $M_2 \dashrightarrow \mathrm{Sym}^2(Z)$ is rationally connected.   Fix two general points $x_{1}, x_{2}$ on $Z$.  \cite[Theorem 4.9]{FGP19} shows that the general fiber of the map from the $2$-pointed cubic curves on $X_{5}$ to $X_{5} \times X_{5}$ is rationally connected.  (Note that this does not yet imply our desired statement since the fiber of interest in our situation is not general.)  Their argument uses the birational map $\phi: X_{5} \dashrightarrow Q$ to the three-dimensional quadric $Q$ given by projection from a line through $x_{1}$. Tracing through the proof, it suffices to show that the space of conics in the three-dimensional quadric $Q$ which:
\begin{itemize}
\item meet the twisted cubic where $\phi^{-1}$ is not defined,
\item meet the line in $Q$ defined by as the fiber over $x_{1}$, and 
\item meet $\phi(x_{2})$
\end{itemize}
is rationally connected. This is the same as the space of planes in $\mathbb{P}^{4}$ meeting these loci, and it is not difficult to see directly that this surface admits a rational map to $\mathbb{P}^{1}$ whose fibers are rational curves.
\end{itemize}

By the argument of Theorem \ref{theo:maintheoremaj} we deduce that the Abel-Jacobi map has connected fibers for the family of sections of any height $\geq 1$.  We conjecture that in this situation the Abel-Jacobi map always agrees with the MRC fibration for a family of sections.
\end{exam}

\subsection{Conic bundles over del Pezzo surfaces}
In this section we discuss a couple examples showing that the Abel-Jacobi map does not need to interact well with other possible stabilization results. 
These examples were suggested to us by Hassett.

The first example shows that there might be no components of $\Sec(\mathcal{X}/\mathbb{P}^{1})$ whose Abel-Jacobi map $M \dashrightarrow \IJ(\mathcal{X})$ is dominant and birationally equivalent to the MRC fibration.

\begin{exam} \label{exam: fibrationoverdegree2}
Let $S$ be a general del Pezzo surface of degree $2$ equipped with a conic fibration $g: S \to \mathbb{P}^{1}$.
Let $q : \mathcal X \rightarrow S$ be a conic bundle over $S$ such that its discriminant divisor $D$ is a very general element in $|-2K_S|$ (as constructed by \cite{AM72}). Then $\pi = g \circ q : \mathcal X \to \mathbb P^1$ is a del Pezzo fibration of degree $4$.   \cite[Theorem 1]{HKT16b} shows that $\mathcal{X}$ does not admit an integral Chow decomposition of the diagonal.  

The variety $\mathcal{X}$ also does not admit an integral homological decomposition of the diagonal.  To see this, let $\mathcal{M}$ be a component of the parameter space for the set of conic bundles over $S$ whose discriminant divisor lies in $|-2K_{S}|$.  Since $|-2K_{S}|$ is basepoint free and contains a union of two smooth elliptic curves meeting transversally in $2$ points, \cite[Proof of Theorem 1]{HKT16b} shows that $\mathcal{M}$ parametrizes a conic bundle which has only ordinary double point singularities and whose resolution has non-trivial Brauer group.  \cite[Corollary 4.5]{Voisin13} shows that this resolution does not admit an integral homological decomposition of the diagonal.  Finally, \cite[Theorem 2.1.(ii)]{Voisin15} (combined with a Hilbert scheme argument) applies in our situation to show that a very general $\mathcal{X}$ parametrized by $\mathcal{M}$ also does not admit an integral homological decomposition of the diagonal.

We next verify that the hypotheses of \cite[Theorem 4.9]{Voisin13} hold in this situation.  Since $\mathcal{X}$ is rationally connected, all the higher cohomology of the structure sheaf vanishes verifying Hypothesis (i).  \cite[Theorem 1.3]{Voisin13} shows that $Z^{4}(\mathcal{X}) = 0$ verifying Hypothesis (ii).  Hypothesis (iii) has two parts.  First, since $\IJ(\mathcal{X})$ is isomorphic to the Jacobian of a genus $2$ curve, there is a one-cycle $\Gamma$ of class $[\Theta]^{g-1}/(g-1)!$ on $\IJ(\mathcal{X})$.  Second, since $\mathcal{X} \to S$ has irreducible smooth discriminant locus, \cite[Theorem 2]{AM72} shows that the Brauer group of $\mathcal{X}$ is trivial. 

Since $\mathcal{X}$ does not admit an integral homological decomposition of the diagonal, \cite[Theorem 4.9]{Voisin13} implies that if $M \subset \Sec(\mathcal{X}/\mathbb{P}^{1})$ is a component then the Abel-Jacobi map is either non-dominant or the projective closure of a general fiber is not rationally connected.   In particular the MRC fibration for a resolution of a projective closure of $M$ cannot coincide with the Abel-Jacobi map whenever the Abel-Jacobi map is dominant.
\end{exam}

\begin{rema}
We expect that in Example \ref{exam: fibrationoverdegree2} the dimension of the base of the MRC fibration is unbounded.  Indeed, consider the map $h: \Sec(\mathcal{X}/\mathbb{P}^{1}) \to \Sec(S/\mathbb{P}^{1})$ induced by pushforward.  Let $M$ be a component of $\Sec(\mathcal{X}/\mathbb{P}^{1})$.  Every component of a general fiber of $h|_{M}$ is rationally connected and the finite part of the Stein factorization of $h|_{M}$ can have very large degree.  Thus it seems quite likely that the Stein factorization of $h|_{M}$ will often coincide with the MRC fibration for $M$.
\end{rema}

The following example shows that there can be an infinite set of components of $\Sec(\mathcal{X}/\mathbb{P}^{1})$ whose Albanese varieties have larger dimension than $\IJ(\mathcal{X})$.

\begin{exam}  \label{exam: fibrationoverdegree1}
Let $S$ be a general del Pezzo surface of degree $2$ equipped with a conic fibration $g: S \to \mathbb{P}^{1}$.  Let $q: \mathcal{X} \to S$ be a conic bundle over $S$ whose discriminant locus $D$ is a general element of $|-2K_{S}|$ (as constructed by \cite{AM72}).  By composing $q$ with $g$ we obtain a degree $4$ del Pezzo surface fibration $\pi: \mathcal{X} \to \mathbb{P}^{1}$.  In this case $\IJ(\mathcal{X})$ is the Jacobian of a genus $2$ curve.

Note that $q$ induces a map $h: \Sec(\mathcal{X}/\mathbb{P}^{1}) \to \Sec(S/\mathbb{P}^{1})$. If $C$ is a section of $g: S \to \mathbb{P}^{1}$ then $C$ is not contained in the discriminant locus, so that its $q$-preimage is generically a rationally connected fibration over $C$.  Thus the map $\Sec(\mathcal{X}/\mathbb{P}^{1}) \to \Sec(S/\mathbb{P}^{1})$ is surjective.

Let $C$ be a section of $g: S \to \mathbb{P}^{1}$ satisfying $-K_{S} \cdot C = 2$ and $C^2 = 0$.
Note that the corresponding component $N$ of $\Sec(S/\mathbb{P}^{1})$ is an open subset of $\mathbb{P}^{1}$.  Assuming $C$ is general, it meets the discriminant locus transversally in $4$ points.  Choose any component $M$ of $\Sec(\mathcal{X}/\mathbb{P}^{1})$ mapping dominantly onto $N$ and let $\overline{M}$ be a smooth projective variety birational to $M$ for which the Abel-Jacobi map is a morphism.    Denote by $h_{1}: \overline{M} \to Z$ and $h_{2}: Z \to \mathbb{P}^{1}$ the Stein factorization of the map $\overline{M} \to \mathbb{P}^{1}$ induced by $h$. We prove that $Z$ is a smooth curve of genus at least $5$.

Let $\gamma: S \to \mathbb{P}^{1}$ be the morphism which contracts the deformations of the conic $C$.  The restriction of $\gamma$ to the discriminant divisor $D$ has degree $4$.  We claim that $\gamma|_{D}: D \to \mathbb{P}^{1}$ is simply branched.  Indeed, by generality we may ensure that the branch points of $\gamma|_{D}$ avoid the singular fibers of $\gamma: S \to \mathbb{P}^{1}$. If we fix a fiber $C$ of $\gamma$, then the locus in $|-2K_{S}|$ parametrizing curves $D$ bitangent to $C$ has codimension $2$. Since there is only a $1$-parameter family of deformations of $C$ we conclude that a general $D$ will not be bitangent to any of $C$. 

We deduce that that $\gamma|_{D}$ has $12$ ramification points each of which is contained in a smooth fiber of $\gamma$.  This also shows that the monodromy of $D$ over $\mathbb{P}^{1}$ is transitive and generated by transpositions, and is thus the entire symmetric group $S_{4}$.

Let $I$ denote the divisor in $\mathcal{X}$ which is the preimage of $D$ in $S$.  Note that $I$ is irreducible because it arises from the construction of \cite{AM72}.
Let $I'$ denote a resolution of $I$ and let $B$ denote the Stein factorization of the map $I' \to \mathbb{P}^{1}$ induced by $\gamma$.  Let $G_{mon}$ denote the mondromy group of $\psi: B \to \mathbb{P}^{1}$.  First of all note that $\psi$ factors through $D$ as a $2:1$ cover.  Furthermore, the map $B \to D$ is \'etale: the discriminant divisor is smooth, so every fiber of $g|_{S}$ consists of two distinct lines and not a double line.  Using this covering map to $D$, we get an exact sequence
\begin{equation}
\label{equation:monodromyexact}
1 \to K \to G_{mon} \to S_{4} \to 1.
\end{equation}
where $K$ is the kernel of $G_{mon} \to S_4$.
Then since $G_{mon}$ is contained in the stabilizer of a conic fibration in the Weyl group $W(D_5)$, we have $\#K = 1, 2, 4$, or $8$. Since $I$ is irreducible, $\#K = 1$ is impossible. Moreover we can conclude that $K$ consists of involutions, since each element will either fix or switch the two irreducible components in each reducible fiber of $\gamma$.  

Suppose we fix a conic $C$ in our family on $S$.  The $g$-preimage of $C$ is a surface such that the map to $C$ has $4$ reducible fibers.  Thus there are 16 possible intersection profiles for curves lying over $C$ in a fixed numerical class on $\mathcal{X}$.  
We next study the action of $G_{mon}$ on these 16 possible intersection profiles. 
First of all note that there are $8$ intersection profiles realized by sections which are $(-1)$-curves, and there are $8$ other intersection profiles which cannot be realized by sections which are $(-1)$-curves. These are preserved by the monodromy action $G_{mon}$.
Our goal is to prove that there are two orbits under the action of $G_{mon}$ of size $8$ given by these intersection profiles.

First we will prove that $K$ has a unique non-trivial involution $\sigma$ in the center of $G_{mon}$. Suppose we fix a section in our family $M$.  This section intersects the $4$ reducible fibers along components $Q_{1}$, $Q_{2}$, $Q_{3}$, $Q_{4}$, and does not intersect the other $4$ components $Q_{1}'$, $Q_{2}'$, $Q_{3}'$, $Q_{4}'$ of the reducible fibers.  We can identify the action of $G_{mon}$ on intersection profiles by analyzing how it permutes these $8$ curves.  Note that the quotient $S_{4}$ of $G_{mon}$ acts on the set of pairs $P_{i} := \{Q_{i}, Q_{i}'\}$ via the natural symmetric group action.  If we have a non-trivial involution $\sigma$ in the center then the action of $\sigma$ is given by $Q_{i} \leftrightarrow Q_{i}'$. The conjugacy classes of other involutions have size $4$, $6$, $4$ respectively. So the only possibilities are that $\#K = 2$ and the decomposition of conjugacy classes on $K$ is $(1,1)$ or $\#K = 8$ and the decomposition of conjugacy classes on $K$ is given by $(1, 1, 6)$. Thus $K$ has a unique non-trivial involution $\sigma$ in the center of $G_{mon}$.  Also note that $K$ does not intersect non-trivially with the stabilizer of any intersection profile and elements in $K$ act trivially on the set of pairs $\{ P_{i} \}$. 

If $K$ has order $2$, the sequence~(\ref{equation:monodromyexact}) cannot split. If it did then we would have a factoring $B \to D' \to \mathbb P^1$ where $D' \to \mathbb P^1$ has degree $2$. Since $D \to \mathbb P^1$ is simply branched and $B \to D$ is \'etale, the fibers of $B\to \mathbb P^1$ would consist of either $8$ points or $6$ points. This implies that $D' \to \mathbb P^1$ is \'etale. But this is impossible. 


%

Now the possible sizes of the $G_{mon}$-orbit of an intersection profile are $2, 4, 6, 8$.  A combinatorial argument shows that if there is more than two orbits then there must be an orbit of size $2$ or $4$, so it suffices to prove there is no orbit of these types. When $K$ has order $8$, orbits of size $2$ and $4$ are impossible as $K$ does not intersect non-trivially with the stabilizer of any intersection profile.

When $K$ has order $2$, an orbit of size $2$ is impossible: since $\sigma$ is not contained in the stabilizer, the map $G_{mon} \to S_4$ would restrict to an isomorphism on the stabilizer group, contradicting the fact that there is no splitting of this map.  An orbit of size $4$ is also impossible: if we had such an orbit, then the stabilizer $G_{stab}$ would be identified with $A_{4}$ under the quotient map $G_{mon} \to S_{4}$.  Let $\tau_{ij}$ for $1 \leq i < j \leq 4$ be an element of $G_{mon}$ mapping to the transposition $(i j)$ in $S_{4}$.  Any product of an even number of the $\tau_{ij}$ must either lie in $G_{stab}$ or $\sigma G_{stab}$.  By writing down the possible actions of the $\tau_{ij}$ on the components of the singular fibers which satisfy this constraint, it is not hard to see that either some $\tau_{ij}$ or some $\sigma \tau_{ij}$ will also be contained in the stabilizer, a contradiction.  




To summarize, we have shown that the action of $G_{mon}$ on the set of intersection profiles of sections lying above $C$ has two orbits.  This implies that there are at most two components $M$ of $\Sec(\mathcal{X}/\mathbb{P}^{1})$ which maps to the component $N$ and which represents a fixed numerical class on $\mathcal{X}$.   The Stein factorization of a resolution of a projective closure of the map $M \to N$ has degree $8$.

Consider again the conics $C$ which are fibers of $\gamma$.  If $C$ is general, then the $g$-preimage $\Gamma$ is birationally ruled over $C$ with $4$ singular fibers.  But when $C$ is tangent to $D$, the singular fibers of $\Gamma$ behave differently.  This will happen at the $12$ simple ramification points of $\gamma|_{D}$. At each such point, formally locally in a neighborhood of the fiber over the tangency point $\Gamma$ looks like the subset of $\mathbb{A}^{2}_{s,t} \times \mathbb{P}^{3}_{x,y, z}$ defined by the equations $s=0$ and
\begin{equation*}
(s-t^{2})z^2 + xy.
\end{equation*}
 Setting $s=0$, we see that $\Gamma$ has a single $A_{1}$-singularity.  Its resolution has three singular fibers under the map to $S$ which have $2$, $2$, and $3$ components. Thus there are possibly $12$ intersection profiles for such a surface, but $6$ of them are realized by sections of $(-1)$-curves and other $6$ of them are not realized by sections of $(-1)$-curves. We deduce that the fiber of $h_{2}$ corresponding to this singular surface has at most $6$ components.  Thus the contribution to the ramification divisor of $h_{2}$ at this point is at least $2$.  Altogether we see that the contribution of such fibers to the ramification divisor $R$ for $h_{2}$ has degree at least $24$, so that the genus of $Z$ is at least $5$.

We have a morphism $\Alb(\overline{M}) \to \Jac(Z)$ induced by $h_{1}$.  Since $h_{1}: \overline{M} \to Z$ is surjective, the image of $\Alb(\overline{M})$ generates $\Jac(Z)$.  Thus the map from $\Alb(\overline{M}) \to \Jac(Z)$ is surjective.  We conclude that the dimension of $\Alb(\overline{M})$ is at least $5$, so it cannot coincide with $\IJ(\mathcal{X})$.
\end{exam}

\section{Gromov-Witten invariants}

Let $\pi: \mathcal{X} \to \mathbb{P}^{1}$ denote a del Pezzo fibration such that $\mathcal X$ is smooth.  We will consider Gromov-Witten invariants on $\mathcal{X}$ with $n$ point insertions, i.e.~$\langle [pt]^{n} \rangle_{0,n}^{\mathcal{X},\beta}$ where $\beta$ is a curve class satisfying $F \cdot \beta = 1$ for a general fiber $F$ of $\pi$.  In order to obtain the correct virtual dimension, we must have $-K_{\mathcal{X}/\mathbb{P}^{1}} \cdot \beta  = 2n-2$.

\begin{prop} \label{prop:gwenumerative}
Let $\pi: \mathcal{X} \to \mathbb{P}^{1}$ be a del Pezzo fibration such that $-K_{\mathcal{X}/\mathbb{P}^{1}}$ is relatively ample and $\mathcal X$ is smooth.  Suppose that \begin{equation*}
n \geq \maxdef(\mathcal{X}) + 2 + \sup\{0, - \neg(\mathcal{X},-K_{\mathcal{X}/\mathbb{P}^{1}})\}.
\end{equation*}
Let $\beta \in N_{1}(\mathcal{X})_{\mathbb{R}}$ denote a curve class satisfying $-K_{\mathcal{X}/\mathbb{P}^{1}} \cdot \beta = 2n-2$ and $F \cdot \beta = 1$ for a general fiber $F$ of $\pi$.  Then the GW invariant $\langle [pt]^{n} \rangle_{0,n}^{\mathcal{X},\beta}$ is enumerative: it counts the number of rational curves of class $\beta$ through $n$ general points of $\mathcal{X}$.
\end{prop}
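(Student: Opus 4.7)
The plan is to use Proposition \ref{prop:evendegreebound}(1) to classify all stable maps whose image contains $n$ general points, and then to verify by a deformation-theoretic calculation that each such map contributes with multiplicity one to the Gromov-Witten integral. Since $F\cdot\beta=1$, any genus-zero stable map $f\colon C\to\mathcal X$ of class $\beta$ has a unique non-$\pi$-vertical component, and that component maps birationally onto a section of $\pi$. The lower bound on $n$ matches exactly the hypothesis of Proposition \ref{prop:evendegreebound}(1), so applying that proposition shows that if $f(C)$ contains $n$ general points then $C=\mathbb P^1$ and $f$ is a birational morphism onto a free section; in particular there are no contracted components, no multiple covers, and no $\pi$-vertical subcurves.

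Let $\mathcal M^{\mathrm{free}}\subset\overline M_{0,n}(\mathcal X,\beta)$ denote the open locus of birational maps onto free sections. For $f\in\mathcal M^{\mathrm{free}}$ one has $f^*T_{\mathcal X}\cong T_{\mathbb P^1}\oplus N_{C/\mathcal X}$ with $N_{C/\mathcal X}$ globally generated, so $H^1(\mathbb P^1,f^*T_{\mathcal X})=0$, the locus $\mathcal M^{\mathrm{free}}$ is smooth of the expected dimension $3n$, and the virtual fundamental class restricts to the ordinary fundamental class there. By the classification above, the evaluation map $ev\colon\overline M_{0,n}(\mathcal X,\beta)\to\mathcal X^n$ sends the complement of $\mathcal M^{\mathrm{free}}$ into a proper closed subset, so the fiber $ev^{-1}(x_1,\ldots,x_n)$ over a general $n$-tuple lies entirely in $\mathcal M^{\mathrm{free}}$. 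Moreover, applying Lemma \ref{lemm:normalbundleestimate} to each such $f$ gives $N_{C/\mathcal X}=\mathcal O(a)\oplus\mathcal O(b)$ with $a,b\geq n-1$, and combining with $a+b=-K_{\mathcal X/\mathbb P^1}\cdot C=2n-2$ forces the balanced splitting $N_{C/\mathcal X}\cong\mathcal O(n-1)^{\oplus 2}$.

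The final step is to check that $ev$ is \'etale at each $f\in ev^{-1}(x_1,\ldots,x_n)$. The kernel of the differential $d_f\,ev$ can be identified with classes $(\xi,(v_i))\in H^0(\mathbb P^1,f^*T_{\mathcal X})\oplus\bigoplus_i T_{p_i}\mathbb P^1$ satisfying $\xi(p_i)+df(v_i)=0$ for every $i$, modulo the three-dimensional action of $\mathrm{Aut}(\mathbb P^1)$. Using the automorphism action to eliminate the tangential part of $\xi$, the condition decomposes into $\xi_N(p_i)=0$ for all $i$ together with $df(v_i)=0$, where $\xi_N$ denotes the normal component of $\xi$. The first set of equations forces $\xi_N\in H^0(\mathbb P^1,N_{C/\mathcal X}(-p_1-\cdots-p_n))=H^0(\mathbb P^1,\mathcal O(-1)^{\oplus 2})=0$, while the second gives $v_i=0$ since $df$ is injective along the free section $C$. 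Hence $d_f\,ev$ is an isomorphism of $3n$-dimensional tangent spaces, and $ev$ is \'etale at $f$. Representing each $[pt]$ by a general point and applying the moving lemma, the Gromov-Witten integral reduces to the cardinality of $ev^{-1}(x_1,\ldots,x_n)$, each point counted with multiplicity $+1$, which is precisely the asserted enumerative count.

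The main obstacle in the plan is the classification of all stable maps through $n$ general points, which is supplied by Proposition \ref{prop:evendegreebound}(1); once this classification is in place, the remaining deformation calculation is routine and follows from the balanced splitting of $N_{C/\mathcal X}$ forced by Lemma \ref{lemm:normalbundleestimate}.
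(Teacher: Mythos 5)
Your proposal is correct and follows essentially the same route as the paper: the key input is Proposition \ref{prop:evendegreebound}(1), which shows that every contributing component generically parametrizes birational maps onto free sections, after which unobstructedness of the free locus reduces the GW invariant to an honest count. The paper states the final transversality step more tersely (expected dimension, generic smoothness, no generic stabilizer), whereas you make it explicit via the balanced normal bundle $\mathcal{O}(n-1)^{\oplus 2}$ and the \'etaleness of the evaluation map, but this is the same argument.
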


This extends \cite[Theorem 4.1]{Tian12} which establishes the non-vanishing of certain GW-invariants for sections of a del Pezzo fibration.  The argument is very similar to the proof of Theorem \ref{theo:maintheorem2}.

\begin{proof}
Fix $n$ general points of $\mathcal{X}$.  If a component $M \subset \overline{M}_{0,n}(\mathcal{X},\beta)$ contributes to the GW invariant, then there must be a stable map parametrized by $M$ whose image $C$ contains all $n$ points.  By Proposition \ref{prop:evendegreebound} we know that $M$ must generically parametrize free curves.  In particular, such components $M$ have the expected dimension, are generically smooth, and do not admit a generic stabilizer.  This suffices to yield the result.
\end{proof}

By essentially the same argument, we have:

\begin{prop} \label{prop:gwenumerative2}
Let $\pi: \mathcal{X} \to \mathbb{P}^{1}$ be a del Pezzo fibration such that $-K_{\mathcal{X}/\mathbb{P}^{1}}$ is relatively ample and $\mathcal X$ is smooth.  Suppose that \begin{equation*}
n \geq \maxdef(\mathcal{X}) + \sup\{0, - \neg(\mathcal{X},-K_{\mathcal{X}/\mathbb{P}^{1}})\}.
\end{equation*}
Let $\beta \in N_{1}(\mathcal{X})_{\mathbb{R}}$ denote a curve class satisfying $-K_{\mathcal{X}/\mathbb{P}^{1}} \cdot \beta = 2n-1$ and $F \cdot \beta = 1$  for a general fiber $F$ of $\pi$.  Fix a $\pi$-vertical curve $C$ that is a general member of a basepoint free linear series in a general fiber.  Then the GW invariant $\langle [pt]^{n}, C \rangle_{0,n+1}^{\mathcal{X},\beta}$ is enumerative: it counts the number of rational curves of class $\beta$ that intersect $C$ and contain $n$ general points of $\mathcal{X}$.
\end{prop}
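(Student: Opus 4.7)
The plan is to imitate the strategy of Proposition \ref{prop:gwenumerative}, but replacing the appeal to Proposition \ref{prop:evendegreebound} by an appeal to Proposition \ref{prop:odddegreebound}, and accounting for the extra incidence condition imposed by the vertical curve $C$.

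First I would fix the data: $n$ general points $x_{1}, \ldots, x_{n}$ of $\mathcal{X}$ together with a general member $C$ of a basepoint free linear series in a general fiber of $\pi$. After choosing these, suppose $N \subset \overline{M}_{0,n+1}(\mathcal{X},\beta)$ is a component that contributes nontrivially to $\langle [pt]^{n}, C \rangle_{0,n+1}^{\mathcal{X},\beta}$. Then $N$ must contain a stable map $f: D \to \mathcal{X}$ whose image contains each $x_{i}$ and meets $C$. Since $F \cdot \beta = 1$, exactly one component $D_{0}$ of $D$ is mapped nonvertically, and $f|_{D_{0}}$ is birational onto a section of $\pi$. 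The assumption $-K_{\mathcal{X}/\mathbb{P}^{1}} \cdot \beta = 2n-1$ together with our lower bound on $n$ puts us exactly in the hypothesis of Proposition \ref{prop:odddegreebound}.

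Next, I would apply Proposition \ref{prop:odddegreebound} to $f$. Case (1) of that proposition covers the situation where $f$ is a birational map onto a free section (with no vertical components carrying general points). Case (2) covers the reducible situation: if some $x_{i}$ lies on a vertical component, the image must be a free section glued to a free vertical conic or cubic in a fiber, and the vertical incidence with $C$ is then arranged by deforming. In both cases every component of the stable map is free, so the component $N$ of the moduli space generically parametrizes stable maps of free curves. Standard deformation theory then gives that $N$ has the expected virtual dimension, is generically smooth, and carries no generic automorphism, so the corresponding contribution to the GW-invariant equals the na\"ive number of such maps.

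Finally I would observe that by genericity of the $x_{i}$ and $C$ the evaluation map from any such component to $\mathcal{X}^{n} \times |C|$ is finite over the chosen point, with reduced fibers at the chosen configuration, and each contribution equals $1$; summing over all components that contribute recovers the enumerative count of rational curves of class $\beta$ through $x_{1}, \ldots, x_{n}$ and meeting $C$. The main subtlety, just as in the even case, is ensuring that all contributing stable maps land in the free locus described by Proposition \ref{prop:odddegreebound}; once this is done the rest of the argument is formal. No new idea beyond Proposition \ref{prop:odddegreebound} and the argument of Proposition \ref{prop:gwenumerative} is needed.
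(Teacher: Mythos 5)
Your proposal is correct and follows essentially the same route as the paper, which simply notes that Proposition \ref{prop:gwenumerative2} is proved ``by essentially the same argument'' as Proposition \ref{prop:gwenumerative}, with Proposition \ref{prop:odddegreebound} replacing Proposition \ref{prop:evendegreebound}. One small simplification: since every contributing stable map both contains the $n$ general points and meets $C$, part (1) of Proposition \ref{prop:odddegreebound} already forces it to be a birational map onto a free irreducible section, so your appeal to part (2) for a ``reducible situation'' is unnecessary (and in fact that situation cannot occur, which is what guarantees the count is of irreducible curves of class $\beta$).
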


Finally, we prove an existence result which can be used to show that certain GW invariants are non-zero.  Recall that a smooth rational curve $C$ contained in a smooth threefold $\mathcal{X}$ is said to have balanced normal bundle if $N_{C/X} \cong \mathcal{O}(a) \oplus \mathcal{O}(b)$ where $|b-a| \leq 1$.

\begin{prop} \label{prop:balancednormalbundle}
Let $\pi: \mathcal{X} \to \mathbb{P}^{1}$ be a del Pezzo fibration such that $-K_{\mathcal{X}/\mathbb{P}^{1}}$ is relatively ample and $\mathcal X$ is smooth.   Suppose that $n \geq \frac{Q(\mathcal{X})+2}{2}$ where $Q(\mathcal{X})$ is defined as in Theorem \ref{theo:mbbmain}.  There exists a free section with balanced normal bundle containing $n$ general points of $\mathcal{X}$.

If the general fiber of $\pi$ is not $\mathbb{P}^{2}$ or $\mathbb{P}^{1} \times \mathbb{P}^{1}$, then for any $a \geq \frac{Q(\mathcal{X})+8}{2}$ there exists a free section with normal bundle $\mathcal{O}(a) \oplus \mathcal{O}(a)$ and a free section with normal bundle $\mathcal{O}(a) \oplus \mathcal{O}(a+1)$.  In particular, the  Gromov-Witten invariant as in Proposition \ref{prop:gwenumerative} or Proposition \ref{prop:gwenumerative2} is positive for the corresponding curve class.  
\end{prop}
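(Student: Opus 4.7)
The core observation underlying the plan is that any free section $C$ of $\mathcal{X}$ of anticanonical height $2n-2$ passing through $n$ general points automatically has balanced normal bundle $\mathcal{O}(n-1) \oplus \mathcal{O}(n-1)$. Indeed, writing $N_{C/\mathcal{X}} = \mathcal{O}(a) \oplus \mathcal{O}(b)$ with $a \leq b$, Lemma~\ref{lemm:normalbundleestimate} forces $a,b \geq n-1$, while adjunction gives $a+b = -K_{\mathcal{X}/\mathbb{P}^{1}} \cdot C = 2n-2$, so $(a,b) = (n-1,n-1)$. An identical argument shows that a free section of height $2n-1$ through $n$ general points has $(a,b) = (n-1,n)$. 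Thus Part~(1) reduces entirely to the existence of a free section of the prescribed height passing through $n$ general points, and the hypothesis $n \geq (Q(\mathcal{X})+2)/2$ puts us within the Movable Bend-and-Break regime of Theorem~\ref{theo:mbbmain}.

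For existence I propose an inductive construction by gluing and smoothing. Starting from a dominant component $M_{k_0}$ of $\Sec(\mathcal{X}/\mathbb{P}^{1})$ of some small height $d_0 = 2k_0 - 2$ containing a free section through $k_0$ general points, I pass from $k$ general points to $k+1$ as follows. Given a free section $C_k$ through $p_0,\ldots,p_{k-1}$, fix a new general point $p_k$ in a different smooth fiber $F_k$ of $\pi$, an auxiliary general point $q_k \in F_k$, and a free $\pi$-vertical conic $T_k \subset F_k$ passing through both $p_k$ and $q_k$ (which exists by Lemma~\ref{lemm:conicscubicsdescription}); deform $C_k$ so that $C_k(\pi(p_k)) = q_k$ and attach $T_k$ at $q_k$. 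The resulting nodal curve is a smooth point of $\overline{M}_{0,0}(\mathcal{X})$ since its components are all free and meet transversally at a general fiber point, so by \cite[II.7.6.1 Theorem]{Kollar} it smooths to a free section $C_{k+1}$ of height $d_0 + 2(k+1-k_0)$; a dimension count shows the smoothing family contains members through $p_0,\ldots,p_k$. After $n-k_0$ iterations we reach height $2n-2$ with passage through $n$ general points, establishing the even case of Part~(1); for the odd case we run the same procedure with one conic replaced by a free $\pi$-vertical line, an option available under the additional hypothesis on the general fiber imposed in Part~(2).

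Part~(2) then follows by applying Part~(1) at $n = a+1$: the even-height construction yields a free section of height $2a$ with $N_C = \mathcal{O}(a) \oplus \mathcal{O}(a)$, and the odd-height variant gives one of height $2a+1$ with $N_C = \mathcal{O}(a) \oplus \mathcal{O}(a+1)$. The existence of a free $\pi$-vertical line in a general fiber is precisely guaranteed by the hypothesis that the general fiber is neither $\mathbb{P}^{2}$ nor $\mathbb{P}^{1} \times \mathbb{P}^{1}$, and the extra buffer in the bound $a \geq (Q(\mathcal{X})+8)/2$ absorbs the slightly stronger point-passage constraints introduced by Propositions~\ref{prop:evendegreebound} and~\ref{prop:odddegreebound}, which are invoked to control the geometry of broken curves through the required general points. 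Positivity of the corresponding Gromov--Witten invariants is then immediate from Propositions~\ref{prop:gwenumerative} and~\ref{prop:gwenumerative2}: these identify the GW invariant with the enumerative count of sections through the required general points, and the balanced sections we construct are free sections with no nontrivial automorphisms fixing their marked points, so the relevant component of the moduli space is smooth of the expected dimension at each such section and contributes $+1$ to the count.

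The main technical obstacle lies in the inductive step: one must verify simultaneously that (i) a general smoothing of $C_k \cup T_k$ is again a free section, (ii) it passes through all the accumulated general points $p_0,\ldots,p_k$, and (iii) its normal bundle is balanced so that the next induction step can be applied. Claims (i) and (ii) follow from standard deformation theory of chains of free curves together with the dimension count at each step. Claim (iii) is the subtlest point: the normal bundle splitting of the smoothing depends on the gluing data, and the most balanced splitting is realized only for a generic choice of attachment point and smoothing parameter. This can be handled either by a direct analysis of the normal bundle exact sequence for the smoothing --- exploiting that $N_{C_k}$ and $N_{T_k}$ are already balanced and globally generated at the attachment point, in the spirit of \cite[\S 2]{Shen12} which also underlies Case~1 of the proof of Theorem~\ref{theo:mbbmain} --- or by reading off the balanced splitting indirectly by verifying point-passage at the $(k+1)$-st step and invoking the converse direction of Lemma~\ref{lemm:normalbundleestimate}.
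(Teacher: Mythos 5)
Your opening reduction is correct as a computation, but it converts the problem into ``there exists a free section of height exactly $2n-2$ (resp.\ $2n-1$) through $n$ general points'' --- and, by that very computation, such a section is automatically balanced, so this existence claim is \emph{equivalent} to the proposition rather than a simplification of it. Your inductive construction does not close the circle. The base case (a free section of height $2k_0-2$ through $k_0$ general points) is itself an instance of the statement and is never established; there is no a priori reason a given fibration has a free section of any particular even height with split type $\mathcal{O}(k_0-1)^{\oplus 2}$. More seriously, item (iii) of your inductive step --- that a general smoothing of $C_k\cup T_k$ is again balanced, equivalently passes through $k+1$ general points --- is the entire content of the induction, not a finishing technicality: gluing a free conic to a balanced section and smoothing generically need not preserve balancedness without an argument, and of the two strategies you offer, the second (``verify point-passage and invoke the converse of Lemma~\ref{lemm:normalbundleestimate}'') is circular, since point-passage through $k+1$ general points is exactly what must be shown, while the first (the normal bundle sequence of the comb) is not carried out. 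There are also two concrete errors: Part~(1) of the proposition allows the general fiber to be $\mathbb{P}^{2}$, which contains no $-K$-conics at all, so free $\pi$-vertical conics are unavailable there; and ``a free $\pi$-vertical line'' does not exist in any case, since lines in del Pezzo surfaces are rigid $(-1)$-curves (cf.\ Lemma~\ref{lemm:genericainvfordp}) --- the odd-degree free vertical curves one can actually glue are cubics, which shift the height by $3$ and break your bookkeeping.

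The paper's proof runs in the opposite direction and sidesteps all of this by descent. It takes a very free section through the $n$ general points of \emph{minimal} height, with no constraint on what that height is, and shows that if $N_{C/\mathcal{X}}=\mathcal{O}(a)\oplus\mathcal{O}(b)$ were unbalanced then the surface swept out by deformations of $C$ through $a+1\geq n$ general points could be broken, as in Case~1 of Theorem~\ref{theo:mbbmain}, into a fiber plus a section of strictly smaller height still passing through $a+1\geq n$ general points and avoiding the singular locus --- contradicting minimality. For Part~(2) it then glues $-K$-cubics (normal bundle increment $(+1,+2)$) and special $-K$-quartics with $T_F|_T\cong\mathcal{O}(2)\oplus\mathcal{O}(2)$ (increment $(+2,+2)$) onto the balanced curve from Part~(1), which realizes both splitting types $\mathcal{O}(a)^{\oplus 2}$ and $\mathcal{O}(a)\oplus\mathcal{O}(a+1)$ for every $a\geq\frac{Q(\mathcal{X})+8}{2}$. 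If you want to keep a constructive approach you must prove balancedness of the smoothing directly from the comb's normal bundle, but the minimality argument is shorter and works uniformly, including for $\mathbb{P}^{2}$-fibrations.
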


\cite[Proof of Theorem 4.1]{Tian12} gives a similar statement with no explicit restriction on the size of $n$ or $a$.

\begin{proof}
We know there exists a very free section $C$ through $n$ general points of $\mathcal{X}$.  Let us choose such a curve of minimal height.  Write $N_{C/\mathcal{X}} = \mathcal{O}(a) \oplus \mathcal{O}(b)$ with $a \leq b$.  Furthermore since $C$ contains $n$ general points we know that if we replace $C$ by a general deformation then $a \geq n-1$ by Lemma \ref{lemm:normalbundleestimate}.

Suppose for a contradiction that the normal bundle is not balanced, so that $a +1 < b$.  By repeating the arguments of \cite{Shen12} as used in Theorem \ref{theo:mbbmain}, we obtain a smooth model $\widetilde{\Sigma}$ of a surface $\Sigma$ swept out by deformations of $C$ through $a+1$ general points.  We further blow up the preimages of the general $a + 1$ points so that the preimages are divisorial.  There is an induced generically $\mathbb{P}^{1}$-fibration $\psi: \widetilde{\Sigma} \to \mathbb{P}^{1}$, and the preimage of each of the general points contains a component of a fiber of $\psi$ that meets the strict transform $\widetilde{C}$ of $C$.  Continuing the argument of Theorem \ref{theo:mbbmain}, we can break the curve $\widetilde{C}$ on $\widetilde{\Sigma}$ into $\widetilde{C}_{1} + F$ where $F$ denotes a fiber of $\psi$. 
Note that $\widetilde{C}_{1}$ must have the same intersection against $\psi$-vertical curves in $\widetilde{\Sigma}$ as $\widetilde{C}$ does, and in particular, the image of $\widetilde{C}_{1}$ must also go through $a+1$ general points of $\mathcal{X}$ and avoid singularities of $\mathcal X$.  This contradicts the minimality of $C$.  Thus $C$ must have balanced normal bundle, proving the first statement.

Now suppose that a general fiber of $\pi$ is not $\mathbb{P}^{2}$ or $\mathbb{P}^{1} \times \mathbb{P}^{1}$.  In particular, a general fiber $F$ contains $-K_{\mathcal{X}/\mathbb{P}^{1}}$-cubics which are the strict transforms of lines on $\mathbb{P}^{2}$ and contains $-K_{\mathcal{X}/\mathbb{P}^{1}}$-quartics $T$ such that $T_{F}|_{T} \cong \mathcal{O}(2) \oplus \mathcal{O}(2)$.  Some elementary deformation arguments show:
\begin{itemize}
\item Suppose we take a general section $C$ with normal bundle $\mathcal{O}(a) \oplus \mathcal{O}(a)$, fix a general fiber $F$, and set $T$ to be a curve in $F$ through $C \cap F$.  If $T$ is a $-K_{\mathcal{X}/\mathbb{P}^{1}}$-cubic with a general tangent direction at $C \cap F$, then a general smoothing of $C \cup T$ has normal bundle $\mathcal{O}(a+1) \oplus \mathcal{O}(a+2)$. 
If $T$ is a $-K_{\mathcal{X}/\mathbb{P}^{1}}$-quartic with $T_F|_{T} = \mathcal{O}(2) \oplus \mathcal{O}(2)$ and a general tangent direction at $C \cap F$, then a general smoothing of $C \cup T$ has normal bundle $\mathcal{O}(a+2) \oplus \mathcal{O}(a+2)$.
\item Suppose we take a general section $C$ with normal bundle $\mathcal{O}(a) \oplus \mathcal{O}(a+1)$, fix a general fiber $F$, and set $T$ to be a curve in $F$ through $C \cap F$.  If $T$ is a $-K_{\mathcal{X}/\mathbb{P}^{1}}$-cubic with a general tangent direction at $C \cap F$, then a general smoothing of $C \cup T$ has normal bundle $\mathcal{O}(a+2) \oplus \mathcal{O}(a+2)$.  If $T$ is a $-K_{\mathcal{X}/\mathbb{P}^{1}}$-quartic with $T_F|_{T} = \mathcal{O}(2) \oplus \mathcal{O}(2)$ and a general tangent direction at $C \cap F$, then a general smoothing of $C \cup T$ has normal bundle $\mathcal{O}(a+2) \oplus \mathcal{O}(a+3)$.
\end{itemize}
By applying this gluing argument twice to a curve with balanced normal bundle, we obtain curves of the desired type with any height at least $6$ more than the original curve.  Since the balanced curve constructed in the first part has height $Q(\mathcal{X})$ or $Q(\mathcal{X})+1$, we obtain the desired statement.
\end{proof}

\bibliographystyle{alpha}
\bibliography{Sections}

\end{document}